	\def\MR#1{}
\DeclareMathOperator*{\esssup}{ess\,sup}
\newcommand{\triplenorm}[1]{\left|\!\left|\!\left| #1 \right|\!\right|\!\right|}
\newtheorem{Theorem}{Theorem}[section]
\newtheorem{Lemma}[Theorem]{Lemma}
\newtheorem{Proposition}[Theorem]{Proposition}
\newtheorem{Remark}[Theorem]{Remark}
\newtheorem{Definition}[Theorem]{Definition}
\numberwithin{equation}{section}
\def\be{\begin{equation}}
	\def\ee{\end{equation}}
\def\ben{\begin{eqnarray}}
	\def\een{\end{eqnarray}}
\newcommand{\ncom}{\newcommand}
\ncom{\n}{\normalfont}
\ncom{\Lc}{\mathcal}
\ncom{\wt}{\widetilde}
\ncom{\ui}{\Omegaldsymbol{u}_\infty}
\ncom{\vi}{\Omegaldsymbol{v}_\infty}
\ncom{\wi}{\Omegaldsymbol{w}_\infty}
\ncom{\yi}{\Omegaldsymbol{y}_\infty}
\ncom{\zi}{\Omegaldsymbol{z}_\infty}
\ncom{\fbi}{\Omegaldsymbol{f}_\infty}
\ncom{\pbi}{\Omegaldsymbol{p}_\infty}
\ncom{\fbit}{\wt{\Omegaldsymbol{f}}_\infty}
\ncom{\gbit}{\wt{\Omegaldsymbol{g}}_\infty}
\ncom{\hbit}{\wt{\Omegaldsymbol{h}}_\infty}
\ncom{\Af}{\Omegaldsymbol{A}}
\ncom{\Bf}{\Omegaldsymbol{B}}
\ncom{\Hf}{\Omegaldsymbol{H}}
\ncom{\Pf}{\Omegaldsymbol{P}}
\ncom{\Onb}{\Omegaldsymbol{1}}
\ncom{\Lb}{\mathbb{L}^2(\Omega)}
\ncom{\LL}{\mathbb{L}}
\ncom{\Hoz}{\mathbb{H}^1_0(\Omega)}
\ncom{\Hzz}{\mathbb{H}^2(\Omega)}
\ncom{\Vf}{\mathbb{V}}
\ncom{\no}{\nonumber}
\ncom{\ub}{\Omegaldsymbol{u}}
\ncom{\yb}{\Omegaldsymbol{y}}
\ncom{\zb}{\Omegaldsymbol{z}}
\ncom{\vb}{\Omegaldsymbol{v}}
\ncom{\fb}{\Omegaldsymbol{f}}
\ncom{\gb}{\Omegaldsymbol{g}}
\ncom{\hb}{\Omegaldsymbol{h}}
\ncom{\wb}{\Omegaldsymbol{w}}
\ncom{\T}{\mathbb{T}}
\ncom{\C}{\mathbb{C}} 
\ncom{\Hb}{\mathbb{H}}
\ncom{\Pb}{\mathbb{P}}
\ncom{\V}{\mathbb{V}}
\ncom{\U}{\mathbb{U}}
\ncom{\Ac}{\mathcal{A}}
\ncom{\Bc}{\mathcal{B}}
\ncom{\Pc}{\mathcal{P}}
\ncom{\af}{\Omegaldsymbol{a}}
\ncom{\pf}{\Omegaldsymbol{p}}
\newcommand{\vertiii}[1]{{\left\vert\kern-0.25ex\left\vert\kern-0.25ex\left\vert #1 
		\right\vert\kern-0.25ex\right\vert\kern-0.25ex\right\vert}}
\long\def\/*#1*/{}
\title[Finite Element Analysis of a nonlinear heat Equation]{Finite Element Analysis of a nonlinear heat Equation with damping and pumping effects}
\date{\today}
 \author[R. Shukla, W. Akram and M. T. Mohan]{{Rishabh Shukla$^\dag$,  Wasim Akram$^\dag$ \and Manil T. Mohan$^\dag$}}
 \thanks{$\dag$Department of Mathematics, Indian Institute of Technology Roorkee, Uttarakhand, 247667, India, Email- {\normalfont{
 rishabh9721915382@gmail.com;
 maniltmohan@ma.iitr.ac.in;
 wakram2k11@gmail.com}}\\
 W. Akram is supported by NBHM (National Board of Higher Mathematics, Department of Atomic Energy) postdoctoral fellowship, No. 0204/16(1)(2)/2024/R\&D-II/10823. }
\renewcommand{\tocsection}[3]{%
	\indentlabel{\@ifnotempty{#2}{\bfseries\ignorespaces#1 #2\quad}}\bfseries#3}
\renewcommand{\tocsubsection}[3]{%
	\indentlabel{\@ifnotempty{#2}{\ignorespaces#1 #2\quad}}#3}
\newcommand\@dotsep{4.5}
\def\@tocline#1#2#3#4#5#6#7{\relax
	\ifnum #1>\c@tocdepth 
	\else
	\par \addpenalty\@secpenalty\addvspace{#2}%
	\begingroup \hyphenpenalty\@M
	\@ifempty{#4}{%
		\@tempdima\csname r@tocindent\number#1\endcsname\relax
	}{%
		\@tempdima#4\relax
	}%
	\parindent\z@ \leftskip#3\relax \advance\leftskip\@tempdima\relax
	\rightskip\@pnumwidth plus1em \parfillskip-\@pnumwidth
	#5\leavevmode\hskip-\@tempdima{#6}\nobreak
	\leaders\hbox{$\m@th\mkern \@dotsep mu\hbox{.}\mkern \@dotsep mu$}\hfill
	\nobreak
	\hbox to\@pnumwidth{\@tocpagenum{\ifnum#1=1\bfseries\fi#7}}\par
	\nobreak
	\endgroup
	\fi}
\renewcommand\csname r@tocindent0\endcsname{0pt}
\def\l@subsection{\@tocline{2}{0pt}{2.5pc}{5pc}{}}
\begin{document}
\begin{abstract}
	We study the following nonlinear heat equation with damping and pumping effects (a reaction-diffusion equation) posed on a bounded simply connected  convex  domain 
	\(\Omega \subset \mathbb{R}^d\),  \(d \geq 1\) with Lipschitz boundary $\partial\Omega$:  
	\[
	\frac{\partial u(t)}{\partial t} - \nu \Delta u(t) 
	+ \alpha |u(t)|^{p-2}u(t) 
	- \sum_{\ell=1}^M \beta_{\ell} |u(t)|^{q_{\ell}-2}u(t) 
	= f(t), \quad t>0,
	\]
	subject to homogeneous Dirichlet boundary conditions and the initial condition \(u(0)=u_0\), 
	where \(2 \leq p < \infty\) and \(2 \leq q_{\ell} < p\) for \(1 \leq \ell \leq M\).
	For \(u_0 \in L^2(\Omega)\) and \(f \in L^2(0,T;H^{-1}(\Omega))\), we establish the existence and uniqueness 
	of a weak solution for all dimensions \(d \in \mathbb{N}\) and damping exponents \(2 \leq p < \infty\). 
	Furthermore, for \(u_0 \in H^2(\Omega) \cap H_0^1(\Omega)\) and \(f \in H^1(0,T;H^1(\Omega))\), we obtain regularity results: 
	these hold for every \(2 \leq p < \infty\) when \(1 \leq d \leq 4\), and for 
	\(2 \leq p \leq \tfrac{2d-6}{d-4}\) when \(d \geq 5\). 
	The proof relies on a modified Faedo-Galerkin approximation scheme. To control the initial data in the 
	\(L^p(\Omega)\)-norm, we introduce a new self-adjoint operator, which plays a key role in the analysis.
	For every $2 \leq p < \infty$ when $1 \leq d \leq 4$, and for $2 \leq p \leq \tfrac{2d-6}{d-4}$ when $d \geq 5$, we perform a finite element analysis of the model using conforming, nonconforming, and discontinuous Galerkin methods. We establish a priori error estimates for both semi-discrete and fully discrete schemes, and support our theoretical findings with numerical experiments.
To relax the restriction on \( p \) in the semidiscrete error analysis, we employ different projection or interpolation operators depending on the underlying finite element setting. Specifically, for \( u_0 \in D(A) \) and \( f \in H^1(0,T;H^1(\Omega)) \), the Ritz (elliptic) projection is used in the conforming case when \( 2 \leq p \leq \tfrac{2d}{d-2} \), while the Scott--Zhang interpolation is adopted for the range \( \tfrac{2d}{d-2} < p \leq \tfrac{2d-6}{d-4} \). In the nonconforming formulation, the Clément interpolation operator is utilized, whereas in the discontinuous Galerkin setting, the \( L^2 \)-projection onto the fully discontinuous finite element space is employed. Meanwhile, in the fully discrete framework, the error estimate is established for all \( 2 \leq p < \infty \) when \( 1 \leq d \leq 4 \), and for \( 2 \leq p \leq \tfrac{2d-6}{d-4} \) when \( d \geq 5 \), under the regularity assumptions \( u_0 \in D(A^{\frac{3}{2}}) \) and \( f \in H^1(0,T;H^1(\Omega)) \). The unified framework developed in this work captures a wide class of physically relevant models, including the diffusion equation with radioactive decay, the Allen-Cahn and real Ginzburg-Landau equations, the cubic-quintic Allen-Cahn equation, the scalar Nagumo (or Huxley) equation, the Newell-Whitehead-Segel equation, the Zeldovich equation, among others.
\end{abstract}
	\maketitle
\pagenumbering{arabic}

\noindent \textbf{Keywords.} Finite Element Analysis $\cdot$ Damping and Pumping Effects $\cdot$ Error estimates $\cdot$ Homogeneous boundary value problems.\\
\noindent \textbf{MSC Classification (2020).} 65M12  $\cdot$ 65M15 $\cdot$ 65M60 $\cdot$ 35K57 $\cdot$ 35K61 
\tableofcontents

\section{Introduction}

	Nonlinear partial differential equations with polynomial nonlinearities are fundamental in modeling phase transitions, pattern formation, and interfacial dynamics across physics, chemistry, and biology. Among them, several well-known reaction-diffusion models have been extensively studied in the literature. We refer \cite[Chapter 6]{GB+KR=04} for analysis of such reaction-diffusion equations. Within our unified framework, we highlight several prototypical reaction-diffusion models of fundamental importance, which are listed below.

\subsection{Diffusion equation with radioactive decay}\label{sub-sec-1.1}
 Diffusion with radioactive decay (or a heat equation with cooling term) can be modeled using a reaction-diffusion equation that combines two key processes, spatial spreading and exponential decay. If $\Omega \subset \mathbb{R}^d$, $d\geq 1$ is a bounded Lipschitz domain,  and if $u(x,t)$ denotes the concentration of a radioactive substance at position $x$ and time $t$, the governing equation is (\cite{MV-78})
$$
\frac{\partial u}{\partial t} = \nu \Delta u - \lambda u,\ x\in\Omega,\ t>0,
$$
where $\nu>0$ is the diffusion coefficient and $\lambda>0$ is the decay rate. The diffusion term describes how particles spread out in space due to random motion, while the decay term models the natural disintegration of the substance at each point. The solution on the whole space shows that the concentration is given by the usual heat kernel, scaled by the exponential decay factor $e^{-\lambda t}$. This solution  reveals that the spatial distribution of particles follows a spreading Gaussian profile over time, while the total number of radioactive atoms decreases exponentially according to their characteristic half-life.  Physically, this means the substance both disperses and diminishes in magnitude over time. In closed systems with no flux across the boundary, the total mass decreases exponentially at rate $\lambda$, reflecting the inherent radioactive decay, while diffusion influences how the remaining mass redistributes in space before vanishing.

\subsection{The Allen-Cahn equation and real Ginzburg-Landau equation}\label{sub-AC}
The Allen-Cahn equation and real Ginzburg-Landau equation are fundamental models in nonlinear science, describing pattern formation and phase separation, respectively. While both are gradient flows of free-energy functionals, they differ in their mathematical structure, physical interpretations, and applications. The Allen-Cahn equation was introduced in \cite{AllenCahn1979} to describe the motion of anti-phase boundaries in crystalline solids. If $\Omega \subset \mathbb{R}^d$, $d\geq 1$ is a bounded Lipschitz domain, then the Allen-Cahn equation reads
\begin{equation}
\frac{\partial u}{\partial t} = \nu \Delta u - f(u), 
\  x \in \Omega , \ t>0,
\end{equation}
where $\nu>0$ is the diffusion constant and $f(u)$ is derived from a potential $F(u)$. The classical choice is the double-well potential
\begin{equation}
F(u) = \tfrac{1}{4}(u^2-1)^2, \qquad f(u) = F'(u) = u^3 - u.
\end{equation}
The Allen-Cahn equation is the $L^2$-gradient flow of the Ginzburg-Landau free energy functional
\begin{equation}
\mathcal{E}[u]= \int_\Omega \left( \frac{\nu}{2}|\nabla u(x)|^2 + F(u(x)) \right) dx.
\end{equation}
This yields energy dissipation
$$
\frac{d}{dt} \mathcal{E}(u(t))=-\left\|\frac{du(t)}{dt}\right\|_{L^2}^2 \leq 0,
$$
reflecting relaxation toward stable equilibria. The Allen-Cahn equation arises in phase separation and coarsening in alloys and binary mixtures, motion of interfaces by mean curvature, mathematical analysis of metastability and singular limits (as $\nu\to 0$, solutions converge to sharp interfaces moving by mean curvature, \cite{HilhorstMatanoSakamoto1997}), etc. 

The Ginzburg-Landau equation arises in bifurcation theory and pattern formation as a universal amplitude equation near critical instabilities. Its real form is
\begin{equation}
\frac{\partial \psi}{\partial t} = \nu \Delta  \psi + \gamma  \psi - \beta  \psi^3, 
\qquad  \psi(x,t)\in \mathbb{R},
\end{equation}
where $\nu>0$ is the diffusion coefficient, $\gamma \in \mathbb{R}$ is a bifurcation parameter and $\beta > 0$ is a nonlinear saturation coefficient.
The equation captures the slow modulation of amplitudes near supercritical bifurcations, for example, in Rayleigh-Bénard convection and Taylor-Couette flow (\cite{CrossHohenberg1993}). The equation contains a linear growth term ($\gamma \psi$) and a cubic saturation term ($-\beta\psi^3$). 
It describes both growth and nonlinear stabilization of patterns.
It has applications in amplitude description of pattern-forming instabilities,
nonlinear stability and mode selection and is a benchmark model in bifurcation theory.

Both the Allen-Cahn and real Ginzburg-Landau equations belong to the broad class of nonlinear reaction-diffusion models with polynomial nonlinearities. The Allen-Cahn equation is closely tied to variational principles and interface motion in materials, while the real Ginzburg-Landau equation captures universal features of pattern formation near bifurcations. Despite their structural resemblance, their roles in modeling physical phenomena are distinct: Allen-Cahn emphasizes energy-driven phase separation, whereas Ginzburg-Landau emphasizes amplitude modulation and nonlinear saturation of instabilities.

\subsection{The cubic-quintic Allen-Cahn equation}\label{sub-CAC} 
The Allen-Cahn equation has served as a prototype for phase separation and order-disorder transitions. However, the classical cubic formulation is limited to systems where bistability is sufficient to describe the dynamics. Many physical and biological systems, particularly those with strong nonlinear interactions, require higher-order saturation mechanisms. The cubic-quintic Allen-Cahn equation introduces a quintic nonlinearity, enabling the description of systems with multistability, richer front dynamics, and stabilized amplitude states. This modification significantly broadens the range of phenomena the equation can capture, making it relevant in nonlinear optics, condensed matter physics, and biological morphogenesis. 

The cubic-quintic Allen-Cahn equation is written as (\cite{Kuehn2015})
\begin{equation}\label{eq:CQAC}
\frac{\partial u}{\partial t}= \nu \Delta u + \alpha u + \beta u^3 - \gamma u^5,
\end{equation}
where $u = u(x,t)$ is the order parameter, $\nu > 0$ is the diffusion coefficient, $\alpha$ represents linear growth or decay, $\beta > 0$ introduces cubic saturation, and $\gamma > 0$ provides quintic stabilization.
This polynomial structure allows for a balance between destabilizing growth and nonlinear saturation, with the quintic term preventing runaway instabilities. Unlike the standard Allen-Cahn equation, which typically supports two stable phases, the cubic-quintic variant can sustain multiple coexisting phases depending on parameter values.

The cubic-quintic Allen-Cahn equation derives from the functional derivative of the Ginzburg-Landau free energy:
\begin{equation}
\mathcal{E}[u] = \int_{\Omega} \left( \frac{\nu}{2}|\nabla u(x)|^2 +\frac{\alpha}{2}u^2(x)+\frac{\beta}{4}u^4(x) - \frac{\gamma}{6}u^6(x) \right) dx. 
\label{eq:energy}
\end{equation}
The evolution follows the $L^2$-gradient flow $\frac{\partial u}{\partial t}=-\nabla \mathcal{E}[u]$, guaranteeing energy dissipation $\frac{d\mathcal{E}[u]}{dt} \leq 0$. This variational structure has profound implications for both analytical treatment and numerical approximation.
The cubic-quintic Allen-Cahn equation arises in diverse contexts such as: multistable phase dynamics in materials science and condensed matter physics, amplitude equations for laser cavity dynamics and optical soliton stabilization, models with multiple stable states relevant in morphogenesis, chemical oscillations, and population dynamics, and competition/coexistence between species or cell states in reaction-diffusion models.

\subsection{The scalar Nagumo equation or the Huxley equation}\label{sub-SNE} 
Reaction-diffusion equations with polynomial nonlinearities are fundamental in the study of wave propagation and pattern formation in excitable systems. Two important examples are the Scalar Nagumo equation and the Huxley equation, both of which serve as simplified models for nerve pulse transmission. Despite their structural similarities, they have distinct historical origins and modeling purposes.

The Scalar Nagumo equation is a simplified model for nerve axon dynamics, derived from the FitzHugh-Nagumo system (\cite{FitzHugh1961}) by reducing it to a single scalar PDE. Its standard form is (\cite{Nagumo1962})
\begin{equation} \label{eq:nagumo}
\frac{\partial u}{\partial t} = \nu \Delta u + f(u), \quad x \in \Omega \subset \mathbb{R}^d, \; t > 0,
\end{equation}
with nonlinear reaction term
\begin{equation}
f(u) = u(1-u)(u-a), \qquad 0<a<1.
\end{equation}
Here $\nu>0$ is the diffusion coefficient, and $a$ controls excitability. The cubic nonlinearity exhibits three equilibria: $u=0$, $u=a$, and $u=1$ and it admits traveling wave solutions connecting stable equilibria ($u=0$ and $u=1$). Accordingly, the equation \eqref{eq:nagumo} falls within the class of bistable reaction-diffusion equations. It also serves as a canonical model for bistability and excitability in neural systems. It has applications in propagation of nerve impulses (axon conduction), excitable chemical and biological media and is a benchmark model for bistable wavefronts. The Huxley equation, originally derived by Huxley \cite{Huxley1959}, also models nerve pulse propagation. Its standard reaction-diffusion form is the same as \eqref{eq:nagumo}. 
The nonlinear term resembles that of the Nagumo equation but arises from empirical fitting to nerve impulse data and is frequently used in biological and electrophysiological models as a phenomenological description. Although both equations use cubic nonlinearities and share similar mathematical properties, their differences lie in motivation and application.

\subsection{The Newell-Whitehead-Segel equation and the Zeldovich equation}\label{sub-NWS}
Nonlinear partial differential equations with polynomial nonlinearities play a central role in the theory of pattern formation across physics, biology, and chemistry. Among them, the \emph{Newell-Whitehead-Segel (NWS) equation} stands out as a seminal amplitude equation, originally derived in the 1960s by Newell, Whitehead, and Segel (\cite{NewellWhitehead1969,Segel1969}) in the study of \emph{Rayleigh-Bénard convection}. Since then, it has become a prototypical model for describing the dynamics of slowly varying amplitudes near bifurcation points and has been extended well beyond its fluid-dynamical origins.

The NWS equation provides a unifying framework for understanding how spatial patterns emerge and stabilize in nonlinear systems. It models the interplay between diffusion, linear growth or decay, and nonlinear saturation, thereby offering deep insight into the mechanisms that lead to periodic structures such as convection rolls, stripes, or hexagons. In mathematical form, the equation reads
\begin{align}\label{eqn-NWSE}
\frac{\partial u}{\partial t} = \nu \Delta u + \mu u - \gamma u^q,
\end{align}
where $u(x,t)$ is the amplitude field, $\nu > 0$ is the diffusion coefficient, $\mu$ is the bifurcation parameter measuring the distance from instability onset, $\gamma > 0$ controls nonlinear saturation, and $q \in \mathbb{N}$ determines the degree of nonlinearity.

This model arises naturally from a \emph{weakly nonlinear analysis} of Rayleigh-Bénard convection near the critical Rayleigh number. For $\mu < 0$, the trivial solution $u = 0$ is stable, while for $\mu > 0$, nontrivial steady states appear, reflecting the onset of finite-amplitude patterns. The nonlinear term ensures saturation, preventing unbounded growth of solutions and stabilizing emergent structures.
In summary, the Newell-Whitehead-Segel equation generalizes the Ginzburg-Landau formalism to finite-wave number instabilities, making it a cornerstone in the study of pattern formation. Its simplicity, combined with its ability to capture essential nonlinear phenomena, ensures its continuing importance across diverse areas of applied mathematics and science.

 The Zeldovich equation is a classic reaction-diffusion model that describes how a reactive substance spreads in space while undergoing nonlinear chemical transformation. Its general form is (\cite{GB+KR=04})
\begin{align}\label{eqn-ZE}
\frac{\partial u}{\partial t} = \nu \Delta u + \kappa u^m (1-u),
\end{align}
where $u(x,t)$ is the normalized concentration, $D$ is the diffusion coefficient, $k$ is the reaction rate constant, and $m$ is the reaction order. Unlike the Fisher-KPP equation, which assumes simple logistic growth ($m=1$), the Zeldovich equation allows higher-order kinetics ($m > 1$, in particular $m=2$), meaning that the reaction requires a critical concentration of reactant to proceed, mimicking an ignition threshold. This makes the equation particularly important in combustion theory, where it captures the propagation of flame fronts, as well as in autocatalytic chemical reactions and certain biological invasion processes. Its traveling-wave solutions describe how a sharp reaction front forms and moves at constant speed, balancing diffusion-driven spreading with reaction-driven consumption of material.

	\subsection{A general reaction-diffusion equation} \label{sub-sec-1.6} Heat conduction in many physical systems does not follow a purely linear law due to temperature-dependent material properties, nonlinear radiation effects, or other complex physical mechanisms. Such phenomena naturally lead to \emph{nonlinear heat equations}, which can more accurately capture these complexities.  An important variant is the \emph{damped heat equation}, where a nonlinear damping term accounts for energy dissipation mechanisms such as radiation, absorption, or resistive effects.
	
	Let $\Omega\subset \mathbb{R}^d$, $d\geq 1$, be a bounded, simply connected convex domain with Lipschitz boundary $\partial \Omega$ and let $T>0$ be fixed. Assume that $2\leq p<\infty$ and $2\leq q_{\ell} < p$ for $1\leq \ell\leq M$. Then, we consider a general reaction-diffusion equation of the following form: 
	\begin{equation}\label{Damped Heat}
		\begin{cases}
			\begin{aligned}
				\frac{\partial u}{\partial t}(x,t)  &- \nu\Delta u(x,t) + \alpha |u(x,t)|^{p-2}u(x,t) - \sum_{\ell=1}^M\beta_{\ell}|u(x,t)|^{q_{\ell}-2}u(x,t) \\&= f(x,t),\, \text{ in } \Omega\times(0,T), \\
				u(x,t) &= 0, \text{ on } \partial \Omega\times[0,T],\\
				u(x,0) &=u_0(x),\, x \in \Omega,
			\end{aligned}
		\end{cases}
	\end{equation}
	where $M$ is fixed positive integer, $\nu>0$ is the diffusion coefficient, $\alpha>0$ scales the nonlinear damping term $|u|^{p-2}u$, $\beta_{\ell}\in\mathbb{R}$ is the coefficient of pumping term $|u|^{q_{\ell}-2}u$  and $f$ is an external force. For $f=0$, the reaction-diffusion equation \eqref{Damped Heat}   derives from the following functional derivative of the Ginzburg-Landau free energy:
	\begin{equation}	\label{eq:energy-1}
		\mathcal{E}[u] = \int_{\Omega} \left( \frac{\nu}{2}|\nabla u(x)|^2+\frac{\alpha}{p}|u(x)|^p-\sum_{\ell=1}^M\frac{\beta_{\ell}}{q_{\ell}}|u(x)|^{q_{\ell}}\right) dx. 
	\end{equation}
	The evolution follows the $L^2$-gradient flow 
	\begin{equation}\label{eqn-grad}
	\frac{\partial u(t)}{\partial t}=-\nabla	\mathcal{E}[u(t)],
	\end{equation} guaranteeing energy dissipation \begin{align}\label{eqn-enery-est}	\frac{d\mathcal{E}[u(t)]}{dt} =-\left\|\frac{du(t)}{dt}\right\|_{L^2}^2=-\left\|\nabla	\mathcal{E}[u(t)]\right\|_{L^2}^2\leq 0.\end{align}
	The models described in Subsections \ref{sub-AC}-\ref{sub-NWS} can be covered under the framework given in \eqref{Damped Heat}. In particular, the Newell-Whitehead-Segel equation \eqref{eqn-NWSE} can be covered for odd $q$, while the Zeldovich equation \eqref{eqn-ZE} can be covered for even $m$. We show the existence of a unique strong solution to the problem \eqref{Damped Heat} so that the gradient structure \eqref{eqn-grad} of the problem can be justified.



This PDE is a \emph{reaction-diffusion equation} with nonlinear dissipation, making it suitable for modeling systems where diffusion interacts with nonlinear reactions.The \emph{finite element method (FEM)} is particularly well-suited for analyzing this equation in realistic scenarios: it can accommodate complex geometries (e.g., irregular catalyst surfaces, intricate chip layouts), heterogeneous material properties, and nonlinearities, while enabling adaptive mesh refinement to resolve localized features such as reaction fronts, thermal hotspots, or regions influenced by external forcing. Thus, studying the \emph{damped--pumped heat equation} through FEM not only deepens our understanding of nonlinear diffusion--reaction systems under competing dissipation and excitation effects, but also provides practical computational tools for engineering, environmental science, and applied physics.

\subsection{Literature survey}
The general reaction-diffusion equation
\begin{align*}
\partial_t u - \nu\Delta u + \alpha |u|^{p-2}u-\sum_{\ell=1}^M \beta_{\ell} |u|^{q_{\ell}-2}u = f,\qquad u|_{\partial\Omega}=0,
\end{align*}
is a representative example of nonlinear parabolic PDEs that combine reactive-diffusive transport, nonlinear dissipation, and external forcing has been extensively studied within the framework of semilinear parabolic theory.

\vskip 0.1cm 
\noindent\emph{Theoretical Analysis:}
In this work, we establish the existence, uniqueness and regularity of weak solutions to the proposed problem. The analysis is primarily based on the Faedo--Galerkin approximation technique, which serves as a fundamental tool for constructing approximate solutions and deriving uniform a priori estimates~\cite{MR1156075,MR4797426,MR1881888}.  In the classical setting with purely dissipative nonlinearities of the form $\alpha |u|^{p-2}u$ for $p \geq 2$, monotonicity and coercivity arguments yield natural energy bounds ensuring global well-posedness and regularity under standard assumptions on the initial data and external forcing. However, the present problem introduces an additional competing nonlinear effect, where a dissipative damping term coexists with a pumping term of the type $-|u|^{q-2}u$ with $q < p$. This combination creates a delicate interplay between energy dissipation and energy amplification, requiring a more refined analytical treatment beyond the classical monotonicity framework. 

Our results extend the standard existence-uniqueness theory by rigorously proving global regularity and well-posedness in the presence of both damping and pumping mechanisms, thereby contributing to the broader understanding of nonlinear evolution equations with competing nonlinearities.

\vskip 0.1cm 
\noindent\emph{Numerical Analysis (Finite Element Method):}
The mathematical development of the finite element method (FEM) has been systematically advanced through a sequence of landmark monographs that established both its analytical foundations and computational frameworks. Ciarlet in \cite{MR520174} provided one of the earliest rigorous mathematical treatments of FEM, establishing approximation, stability, and convergence results within the setting of Sobolev spaces. Girault and Raviart in  \cite{MR851383} extended these ideas to incompressible fluid flow, introducing stable mixed formulations and advancing algorithmic approaches for the Navier--Stokes system. The classical monograph by Brezzi and Fortin in \cite{MR1115205}, offered a unified treatment of saddle-point problems and the inf--sup stability conditions that underpin mixed formulations. Brenner and Scott in \cite{MR2373954} later consolidated these theoretical developments into a modern, comprehensive reference, unifying the analysis of conforming, nonconforming, and discontinuous Galerkin (DG) approaches. Complementing these works, Ern and Guermond’s in \cite{MR1921920} emphasized the interplay between variational formulations and computational implementation, while Thomée in \cite{MR1479170} extended the rigorous framework to time-dependent PDEs, providing the canonical projection-based analysis for both semi-discrete and fully discrete schemes.

Ainsworth and Oden in \cite{MR1885308} introduced systematic methods for adaptive refinement and error control, which have since become indispensable in computational PDEs. More recently, Boffi, Brezzi, and Fortin in \cite{MR3097958} expanded the mixed formulation theory to a broader class of problems, integrating stability and numerical examples. Meanwhile, Zienkiewicz, Taylor, and Zhu in \cite{MR3292660} synthesized both the mathematical and engineering perspectives, reinforcing FEM’s dual identity as a rigorous analytical tool and a practical computational technique. Together, these foundational texts form a coherent intellectual lineage that continues to influence modern FEM research and practice.

Within this historical and theoretical context, the finite element method has long been the spatial discretization of choice for parabolic problems in complex domains, with Thomée’s monograph~\cite{MR1479170} providing the canonical projection-based analysis for both semi-discrete and fully discrete schemes. The strategy, splitting the error into a projection component (either Ritz or $L^2$) and a discrete evolution component, deriving discrete energy identities parallel to the continuous ones, and closing estimates using Gronwall-type inequalities, has been successfully adapted to semilinear equations. Our conforming FEM a priori error analysis follows this well-established framework, drawing on standard interpolation estimates, projection lemmas, and discrete stability arguments as in~\cite{MR1479170,MR1921920}. In particular, we employ the Ritz or $L^2$ projection in combination with the Scott--Zhang interpolation \cite{MR1011446} to control approximation and stability properties in different parts of the analysis. For low-regularity solutions, projection techniques such as the $L^2$ projection and Scott--Zhang interpolation provide optimal spatial convergence rates, and these ideas influence the treatment of projection errors in our conforming analysis.

Beyond conforming methods, nonconforming FEM offers additional flexibility for handling reduced regularity, non-matching grids, and certain geometric constraints. Classical nonconforming spaces, such as the Crouzeix--Raviart element, allow discontinuities across element boundaries while preserving optimal approximation properties in broken Sobolev norms~\cite{MR343661,MR2373954,MR520174}. In the context of parabolic problems, the extension of nonconforming methods requires careful control of interelement jumps through mesh-dependent norms, and coercivity is maintained via appropriate modifications to the bilinear form~\cite{MR319379,MR1479170}. Foundational analysis by Douglas and Dupont established error bounds for nonconforming Galerkin methods in parabolic settings, while modern treatments in Brenner--Scott~\cite{MR2373954} provide a unified theory for both elliptic and evolutionary problems. The recent work~\cite{MR4797426} extends these techniques to nonlinear settings with memory terms, including detailed projection error bounds, discrete coercivity proofs, and penalty parameter design.

Discontinuous Galerkin (DG) methods, originally developed for hyperbolic conservation laws, have been successfully adapted to elliptic and parabolic PDEs due to their local conservation properties, robustness under mesh irregularity, and natural handling of non-matching grids. The seminal work of Arnold~\cite{MR664882} introduced the interior penalty DG method with discontinuous elements, establishing stability and convergence for elliptic problems; these ideas were later extended to parabolic problems by combining IPDG in space with implicit time-stepping~\cite{MR1948323,MR2431403}. DG analysis for parabolic equations typically relies on interior penalty stabilization, discrete trace inequalities, and mesh-dependent energy norms that incorporate jump contributions~\cite{MR2373954,MR2431403}. Our DG \emph{a priori} analysis is based on these core concepts, with the treatment of nonlinear reaction terms using monotonicity arguments parallel to those in~\cite{MR4798380}, while stability and coercivity arguments follow the framework of Arnold’s interior penalty theory. These references together provide a solid foundation for the DG methodology applied to the nonlinear damped heat equation studied here.

Time discretization is a crucial component of fully discrete schemes. For semilinear parabolic equations, backward Euler is a classical choice, offering unconditional stability and a clean pathway to first-order temporal error bounds~\cite{MR1479170,MR1041253}. Our computations for the conforming method use backward Euler in time, with Newton linearization to handle the nonlinearity, following established practices in the literature. In problems with weakly singular kernels or memory effects, while not the focus of this paper, related works~\cite{MR1225703,MR2755668,MR923707} demonstrate the effectiveness of convolution quadrature and DG-in-time schemes in preserving positivity and delivering high-order accuracy.

For computational implementation, modern FEM software frameworks such as FEniCS~\cite{MR3075806,Alnaes2015} enable automated variational form generation, efficient solver integration, and mesh handling for conforming, nonconforming, and DG schemes. Our numerical experiments follow this reproducible, modular approach, aligning with recent computational studies of nonlinear parabolic equations~\cite{Mirams2013}.


\subsection{Highlights of the work and novelties}
In this work, we undertake a detailed theoretical and numerical investigation of the homogeneous Dirichlet nonlinear reaction-diffusion equation
\begin{align*}
\partial_t u - \nu \Delta u + \alpha |u|^{p-2}u - \sum_{\ell=1}^M \beta_{\ell} |u|^{q_{\ell}-2}u = f,
\end{align*}
posed on a bounded convex domain in $\mathbb{R}^d$ ($d \geq 1$) with Lipschitz boundary, for exponents $2 \leq p < \infty$ and $2 \leq q_{\ell} < p$.
The unified framework we develop incorporates a wide range of physically relevant models, such as the diffusion equation with radioactive decay, the Allen-Cahn and real Ginzburg-Landau equations, the cubic-quintic Allen-Cahn equation, the scalar Nagumo (or Huxley) equation, the Newell-Whitehead-Segel equation, the Zeldovich equation, among others (see Subsection \ref{sub-sec-1.1}-\ref{sub-sec-1.6}).

Our principal contributions can be summarized as follows:
\begin{enumerate}
    \item \textbf{Well-posedness and regularity results:} 
    For initial data $u_0 \in L^2(\Omega)$ and forcing $f \in L^2(0,T;H^{-1}(\Omega))$, we prove existence, uniqueness, and stability of weak solutions
        $$
    u \in C([0,T];L^2(\Omega)) \cap L^2(0,T;H_0^1(\Omega)) \cap L^p(0,T;L^p(\Omega)),
    $$
       with time derivative
       $$
    \partial_t u \in L^2(0,T;H^{-1}(\Omega)) + L^{p'}(0,T;L^{p'}(\Omega)),
    $$
       valid for every $2 \leq p < \infty$ and arbitrary spatial dimension $d \in \mathbb{N}$ (Theorems \ref{existenceweak} and \ref{damped uniqueness}).
     Moreover, if the data satisfy $u_0\in H_0^1(\Omega)\cap L^p(\Omega)$ and $f \in L^2(0,T;L^2(\Omega))$, we obtain the higher regularity (Theorems \ref{regularity f in L2})
   \begin{align*}
& u \in C([0,T];H_0^1(\Omega)\cap L^p(\Omega)) \cap L^2(0,T;H^2(\Omega)) \cap L^{2p-2}(0,T;L^{2p-2}(\Omega)),\\ 
  &  \partial_t u \in L^{2}(0,T;L^2(\Omega)).
 \end{align*}
 Moreover, for $u_0\in D(A)=H^2(\Omega)\cap H_0^1(\Omega)$ and $f\in H^1(0,T;H^1(\Omega))$, we derive the following regularity results (Theorem \ref{regularity-f- H1} ):
 \begin{align*}
 	\hspace{1 cm}
 	u\in L^\infty(0,T;D(A))\cap L^2(0,T;D(A^{\frac{3}{2}})), \ \partial_t u \in L^\infty(0,T;L^2(\Omega))\cap L^2(0,T;H_0^1(\Omega)).
 \end{align*}
       This refined regularity holds 
       \begin{align}\label{eqn-values of p}
       	\mbox{ for all $2 \leq p < \infty$ when $1 \leq d \leq 4$, and for $2 \leq p \leq \tfrac{2d-6}{d-4}$ when $d \geq 5$. }
       \end{align}
      Furthermore, for the same values of $p$, for $u_0\in D(A^{\frac{3}{2}})$ and  $f\in H^1(0,T;H^1(\Omega))$, we obtain (Theorem \ref{thm-more-regular})  $$\partial_t  u \in L^\infty(0,T;H^1_0(\Omega))\ \text{ and }\ \partial_{tt} u \in L^2(0,T,L^2(\Omega)).$$ 
     The proof of global well-posedness is carried out via a suitably adapted Faedo-Galerkin scheme (see \eqref{projection term included} below). A key component of the analysis is a new self-adjoint operator (see Proposition~\ref{Prop-Sm} below, cf. \cite{ZB+BF+MZ-24, ZB+FH+UM-20, ZB+FH+LW-19, LH-18}), which is bounded on $L^p(\Omega)$. This operator enables control of the initial data in the $L^p(\Omega)$-norm and plays a central role in the argument. 
Finally, the analysis explicitly incorporates both the nonlinear damping and the pumping terms, demonstrating their stabilizing effect in continuous as well as discrete frameworks. The methodology developed in this work may provide a way to extend the results of \cite{MR4797426,MR4798380} on the 3D hereditary generalized Burgers-Huxley equations to the case of integer nonlinear exponents $3 \leq \delta < \infty$.

    \item \textbf{A priori error estimates for multiple discretization frameworks:} 
    A priori error analysis for conforming, nonconforming and discontinuous Galerkin methods for the values of $p$ given in \eqref{eqn-values of p} is performed using suitable projections and interpolations to control $L^p$-initial data. For $u_0\in L^2(\Omega)$, $f\in L^2(0,T;L^2(\Omega))$, we prove that the approximate solution $u_h \in L^\infty(0,T;H^1_0(\Omega))\cap L^\infty(0,T;L^p(\Omega))$ for all $2\leq p<\infty $ in case of $d=1,2$ and $p\leq \frac{2d}{d-2}$ in case of $d\geq 3$. Also for $u_0 \in D(A)$, we prove the same result  when $\frac{2d}{d-2}<p<\infty$ for $d=3,4$ and  $\frac{2d}{d-2}<p\leq \frac{2d-6}{d-4}$ for $d\geq 5$ (Theorems \ref{energy estimate of CFEM}, \ref{energy estimate for NCFEM} and \ref{energy estimate of DGFEM}).
    
    We establish the optimal order of convergence $O(h)$ for all $p$ given in \eqref{eqn-values of p}, in both the semidiscrete and fully discrete settings. 
    Specifically, the semidiscrete convergence is obtained for initial data $u_0 \in D(A)$ and forcing term $f \in H^1(0,T;H^1)$, whereas the fully discrete case assumes $u_0 \in D(A^{\frac{3}{2}})$ and $f \in H^1(0,T;H^1)$ (Theorems~\ref{semidiscrete error analysis} and~\ref{fullydiscrete error}). To ensure the stability of the $L^p$-estimate for all $p$ when $1\leq d\leq 4$, we employ the Ritz projection for $2 \leq p \leq \tfrac{2d}{d-2}$ and the Scott--Zhang interpolation for $\tfrac{2d}{d-2} < p \leq \tfrac{2d-6}{d-4}$. 
    In the nonconforming case, the Cl\'ement interpolation operator is used to obtain the same optimal order of convergence $O(h)$ in appropriate norms for both semidiscrete and fully discrete formulations (Theorems~\ref{NCFEM Semiestimate error estimate} and~\ref{NCFEM fully error estimate}). 
    A comparable convergence rate is also achieved for the discontinuous Galerkin method (Theorems~\ref{DGFEM Semidiscrete error estimate} and~\ref{DGFEM fully discrete error estimate}) by employing the $L^2$-projection onto discontinuous finite element spaces.

    Also, we verified the all theoretical results using numerical experiments.
\end{enumerate}

This combination of rigorous PDE theory, multi-framework numerical analysis, and reproducible computational experiments fills a gap in the literature, where such nonlinear damped parabolic equations along with pumping term have not been treated together but only in isolated theoretical or single-method numerical studies. 
Owing to the Sobolev embedding $H_0^1(\Omega) \subset L^p(\Omega)$ valid for $2 \leq p \leq \tfrac{2d}{d-2}$, much of the existing literature restricts attention to this range of $p$ (see \cite{MR4797426,MR4251864,DEVI2025274}). In contrast, our results remain valid for the entire range of values specified in \eqref{eqn-values of p}.
 It is also worth noting that all numerical experiments are carried out on a simply connected convex domain
\(\Omega \subset \mathbb{R}^d\),  \(d \geq 1\) with Lipschitz boundary $\partial\Omega$ in case of the values of $p$ given in \eqref{eqn-values of p}.

\subsection{Organization of the paper}
The paper is structured as follows: In Section \ref{preliminaries}, we discuss some preliminaries such as functional setting, linear operator, and non-linear operator which is helpful in further analysis. In next Section \ref{wellposedness}, existence and uniqueness of the weak solution is discussed along with regularity results, which are important in performing error analysis in FEM. Then in Section \ref{section CFEM}, the semidiscrete and fully-discrete conforming schemes are discussed, and error analysis is proved in both cases in Theorem \ref{semidiscrete error analysis} and \ref{fullydiscrete error}. To verify the theoretical results, we discuss numerical results in Section \ref{Numerical studies for CFEM}. Furthermore, the next Section  \ref{NCFEM section} is devoted to NCFEM, where, we go through semi discrete scheme \ref{Semi NCFEM}, fully discrete scheme \ref{fully NCFEM} with priori error estimates and numerical implementation to verify theoretical results \ref{Numerical Studies for NCFEM}. Similarly, in Section \ref{DG scheme}, the semidiscrete \ref{Semi DGFEM} and fully discrete \ref{Fully DGFEM} are discussed along with error estimates and to verify theoretical results, the numerical implementation has been done in Section\ref{Numerical Studies for DG method}. At last, we discuss the fundamental difference between the methods in Section \ref{Theoretical differences} and also provide numerical results in Section \ref{combined numerical}. We finish this paper with conclusion \ref{conclusion}.

\section{Preliminaries}\label{preliminaries}
In this section, we introduce the function spaces and operators that will be needed to analyze the solvability of problem \eqref{Damped Heat}.
\subsection{Functional setting}
For $1 \leq p < \infty$, let
$$
L^p(\Omega) := \{ u:\Omega \to \mathbb R \ \text{measurable} : \|u\|_{L^p(\Omega)} < \infty \}, 
\quad \|f\|_{L^p(\Omega)} = \Big(\int_\Omega |u(x)|^p dx\Big)^{1/p}.
$$
In particular, $L^2(\Omega)$ is a Hilbert space with inner product $(\cdot,\cdot)$. We denote by $H_0^1(\Omega),$ the Sobolev space of $L^2$-functions with weak derivatives in $L^2$ and vanishing trace on $\partial \Omega$, equipped with the norm (by the Poincar\'e inequality)
$$
\|u\|_{H_0^1(\Omega)} = \Big(\int_\Omega |\nabla u(x)|^2 dx\Big)^{1/2},
$$
and by $H^{-1}(\Omega) := (H_0^1(\Omega))'$ its dual, with
$$
\|f\|_{H^{-1}(\Omega)} = \sup\{ \langle f,u\rangle : u\in H_0^1(\Omega),\ \|u\|_{H_0^1(\Omega)}\leq 1\}.
$$
We also use $H^m(\Omega),$ $m\in\mathbb{N}$ to denote the higher-order Sobolev spaces.
		Note that the Sobolev embedding $H_0^1(\Omega)\hookrightarrow L^p(\Omega)$ holds true for $2 \le p < \infty$ when $1 \le d \le 2$ and for $2 \le p \le \frac{2d}{d-2}$ when $d \ge 3$.	Since the Sobolev embedding $H^2(\Omega) \hookrightarrow L^p(\Omega)$ holds for all $2 \le p < \infty$ when $1 \le d \le 4$ and for $2 \le p \le \frac{2d}{d-4}$ when $d \ge 5$, it follows in particular that the embedding also holds for all
		\begin{align*}
			2\leq p \leq \frac{2d-6}{d-4} < \frac{2d-4}{d-4} < \frac{2d}{d-4},
		\end{align*}
		which we have used consistently throughout our analysis.

For $\frac{1}{p}+\frac{1}{p^{\prime}}=1$, both $L^{p^{\prime}}(\Omega)$ and $H^{-1}(\Omega)$ are Banach spaces. The sum space
$$
L^{p'}(\Omega)+H^{-1}(\Omega) := \{f_1+f_2:\ f_1\in L^{p'}(\Omega),\ f_2\in H^{-1}(\Omega)\}
$$
is a Banach space with norm
$$
\|f\|_{L^{p'}+H^{-1}} := \inf\{\|f_1\|_{L^{p'}} + \|f_2\|_{H^{-1}}: f=f_1+f_2\}.
$$
The intersection
$
L^p(\Omega)\cap H_0^1(\Omega)
$
is a Banach space under
$$
\|u\|_{L^p\cap H_0^1} := \max\{\|u\|_{L^p}, \|u\|_{H_0^1}\},
$$
which is equivalent to the norms $\|u\|_{L^p}+\|u\|_{H_0^1}$ and $\sqrt{\|u\|_{L^p}^2+ \|u\|_{H_0^1}^2}$, and satisfies the duality relation
$$
(L^{p'}(\Omega)+H^{-1}(\Omega))' \cong L^p(\Omega)\cap H_0^1(\Omega),
$$
with pairing
$
\langle f,u\rangle = \langle f_1,u\rangle + \langle f_2,u\rangle, 
\quad u=u_1+u_2.
$
In particular,
$$
\|f\|_{L^{p'}+H^{-1}} = \sup\{ \langle f,u\rangle : u\in L^p(\Omega)\cap H_0^1(\Omega),\ \|u\|_{L^p\cap H_0^1}\leq 1\}.
$$

%


We recall the definition of \emph{Bochner spaces} (cf. \cite[Section 1.7, p. 60]{MR2962068}).
Let $X$ be a Banach space and $(a,b)\subset\mathbb{R}$ an interval. For $1 \leq p < \infty$, the Bochner space $L^p(a,b;X)$ consists of all equivalence classes of strongly measurable functions $u:(a,b)\to X$ such that
$$
\int_a^b \|u(t)\|_X^p\,dt < \infty.
$$
For $p=\infty$, the space $L^\infty(a,b;X)$ consists of strongly measurable functions $u:(a,b)\to X$ with
$$
\esssup_{t\in (a,b)} \|u(t)\|_X < \infty.
$$
In addition to these, we recall two continuity spaces frequently used in evolution problems:
 The space $C([a,b];X)$ consists of all continuous functions $u:[a,b]\to X$, endowed with the supremum norm
$$
\|u\|_{C([a,b];X)} := \sup_{t\in [a,b]} \|u(t)\|_X.
$$
The space $C_w([a,b];X)$ consists of all functions $u:[a,b]\to X$ that are \emph{weakly continuous}, i.e., for every $f \in X^{\prime}$ (the dual space of $X$), the scalar function $t \mapsto \langle f, u(t)\rangle$ is continuous on $[a,b]$.

\subsection{Linear operator}\label{Linear operator}
We define the bilinear form
$$
a:H_0^1(\Omega)\times H_0^1(\Omega)\to\mathbb{R}, 
\
a(u,v) := (\nabla u,\nabla v), \quad u,v\in H_0^1(\Omega).
$$
It follows immediately that $a(\cdot,\cdot)$ is continuous on $H_0^1(\Omega)$, since
$$
|a(u,v)| \leq \|u\|_{H_0^1} \, \|v\|_{H_0^1}, \ \text{ for all }\ u,v\in H_0^1(\Omega).
$$
By the Riesz representation theorem, there exists a unique linear operator
$$
\mathcal{A}: H_0^1(\Omega)\to H^{-1}(\Omega)
$$
such that
$$
a(u,v) = \langle \mathcal{A}u,v\rangle, \ \text{ for all }\ u,v\in H_0^1(\Omega).
$$
Furthermore, the bilinear form $a(\cdot,\cdot)$ is coercive on $H_0^1(\Omega)$, since
$$
a(u,u) \geq \alpha \|u\|_{H_0^1}^2, \ \text{ for all }\  u\in H_0^1(\Omega),
$$
with coercivity constant $\alpha=1$. Hence, by the Lax-Milgram theorem, $\mathcal{A}:H_0^1(\Omega)\to H^{-1}(\Omega)$ is an isomorphism. We now introduce the associated unbounded operator $A$ in $L^2(\Omega)$ by
$$
Au := \mathcal{A}u = -\Delta u, 
\
u\in D(A):=\{u\in H_0^1(\Omega): \mathcal{A}u \in L^2(\Omega)\}.
$$
This operator is precisely the Dirichlet Laplacian, whose domain is known to be
$
D(A) = H^2(\Omega)\cap H_0^1(\Omega).
$
Since the injection $H_0^1(\Omega)\hookrightarrow L^2(\Omega)$ is compact, From \cite[Section 2.1]{Te_1997}, the operator $A$ is invertible with bounded, self-adjoint, and compact inverse $A^{-1}:L^2(\Omega)\to L^2(\Omega)$. By spectral theory \cite[Chapter 16]{MR4586337}, the spectrum of $A$ consists of a discrete sequence of positive eigenvalues
$
0<\lambda_1\leq \lambda_2\leq \cdots\leq \lambda_k \leq \cdots, 
\ \lambda_k\to\infty \ \text{as } k\to\infty,
$
with corresponding eigenfunctions $\{w_k\}_{k=1}^\infty$ forming an orthonormal basis of $L^2(\Omega)$, satisfying
$
Aw_k = \lambda_k w_k, \ k\in\mathbb{N}.
$

In addition, for $s \geq 0$, we define the fractional domain spaces of the Dirichlet Laplacian via the spectral decomposition. For
$
u = \sum_{k=1}^\infty (u,w_k) w_k \in L^2(\Omega),
$
 we set
$$
A^s u := \sum_{k=1}^\infty \lambda_k^s (u,w_k)w_k,
$$
whenever the right-hand side converges in $L^2(\Omega)$. The corresponding domain is given by
$$
D(A^s) := \Big\{ u\in L^2(\Omega): \sum_{k=1}^\infty \lambda_k^{2s} |(u,w_k)|^2 < \infty \Big\},
$$
which is a Hilbert space with the norm
$$
\|u\|_{D(A^s)} := \Big(\sum_{k=1}^\infty \lambda_k^{2s} |(u,w_k)|^2\Big)^{1/2}.
$$
For integer values of $s$, the spaces $D(A^s)$ coincide with classical Sobolev spaces. In particular, since $\Omega$ is a convex bounded domain with Lipschitz boundary, 
$
D(A^{1/2}) \cong H_0^1(\Omega), 
\
D(A) \cong H^2(\Omega)\cap H_0^1(\Omega).
$
For non-integer $s$, the spaces $D(A^s)$ can be characterized by interpolation between $L^2(\Omega)$ and $H^2(\Omega)\cap H_0^1(\Omega)$.  The dual spaces $D(A^{-s})$ are defined as $D(A^{-s}):=(D(A^{s}))^{\prime}$
with duality inherited from $L^2(\Omega)$. For $s\geq 1,$ we have the following continuous and compact embedding also:
\begin{align}
	D(A^{s})\hookrightarrow D(A)  \hookrightarrow  D(A^{1/2}) \hookrightarrow L^2(\Omega) \hookrightarrow D(A^{-1/2})  \hookrightarrow D(A^{-1})\hookrightarrow D(A^{-s}). 
\end{align}

\subsection{Non-linear operator}\label{Non-linear operator}
Let us now consider the non-linear operator $b(\cdot):L^p(\Omega)\to L^{p^\prime}(\Omega)$ defined as $b(u)=|u|^{p-2}u$. 
Note that
\begin{align*}
        (b(u),u)&=(|u|^{p-2}u,u) = \int_{\Omega}|u(x)|^{p-2}u(x)^2 dx =\int_{\Omega}|u(x)|^{p} dx=\|u\|^p_{L^p},
\end{align*}
for all $u \in L^p(\Omega)$. Next, we aim to show that the operator $b(\cdot)$ is locally Lipschitz. For any $u,v \in L^p(\Omega)$, we write
\begin{align*}
   b(u)-b(v) = \int_{0}^1 \frac{d}{d\theta} b(\theta u +(1-\theta)v) \,d\theta
   &=\int_{0}^1 b'(\theta u +(1-\theta)v) (u-v)\, d\theta\\
   &=\int_{0}^1(p-1)|\theta u +(1-\theta)v|^{p-2}(u-v)\, d\theta.
\end{align*}
For $p^\prime=\frac{p}{p-1},$ we have
\begin{align*}
    \|b(u)-b(v)\|_{L^{p^{\prime}}}&= \bigg\|\int_{0}^1(p-1)|\theta u +(1-\theta)v|^{p-2}(u-v)\, d\theta\bigg \|_{L^{\frac{p}{p-1}}}\\
   &\leq (p-1) \|u-v\|_{L^p} \bigg \|\int_0^1|\theta u +(1-\theta) v |^{p-2}\, d\theta\bigg\|_{L^{\frac{p}{p-2}}}\\
   &\leq (p-1)2^{p-3} \|u-v\|_{L^p} (\|u\|^{p-2}_{L^p}+\|v\|^{p-2}_{L^p}).
    \end{align*}
   Hence, for all  $\|u\|_{L^p}, \|v\|_{L^p} \leq r$, we have 
  \begin{align*}\|b(u)-b(v)\|_{L^{p^{\prime}}} \leq (p-1) 2^{p-2} r^{p-2} \|u-v\|_{L^p}.\end{align*}
Therefore, the operator $b(\cdot): L^p(\Omega) \to L^{p^{\prime}}(\Omega)$ is locally Lipschitz.
It can be shown (see, e.g., \cite[Sec. 2.4-2.5]{SG+MTM-24+}) that the nonlinear operator $b$ is \emph{monotone} in the following sense. For any $2 \leq p < \infty$, one has
$$
\langle b(u)-b(v),  u-v \rangle 
\geq \int_\Omega \big(|u(x)|^{p-1} - |v(x)|^{p-1}\big)\big(|u(x)| - |v(x)|\big)\,dx \;\;\geq 0.
$$
In addition, a sharper coercivity estimate is available (cf. \cite[p. 626]{MTM-22}):
\begin{align}
	\langle b(u) - b(v), u-v \rangle \geq \frac{1}{2} \big\||u|^{\frac{p-2}{2}}(u-v)\big\|^2_{L^2} + \frac{1}{2} \big\||v|^{\frac{p-2}{2}}(u-v)\big\|^2_{L^2}.\label{eqn-mono-2} 
	\end{align}
Inequality \eqref{eqn-mono-2} immediately implies the monotonicity of $b$.
Moreover, for all $u,v \in L^p(\Omega)$, we also have the quantitative bound (\cite[Proposition 2.2]{AB+ZB+MTM-2025})
\begin{align}\label{eqn-nonlinear-est} \langle b(u) - b(v), u-v \rangle \geq \frac{1}{2^{p-2}}\|u-v\|^{p}_{L^p}.
	\end{align}
The above inequality is fundamental in the numerical analysis of problem \eqref{Damped Heat}.

\section{Well-posedness}\label{wellposedness}
The objective of this section is to establish the \emph{existence and uniqueness} of both weak and strong solutions to problem \eqref{Damped Heat}. Our approach is based on a \emph{Faedo-Galerkin approximation method}. Since the standard projection operator is not bounded from $L^p$ to $L^p$ for $2 \leq p < \infty$, we instead construct a new \emph{self-adjoint operator} (\cite{ZB+BF+MZ-24, ZB+FH+UM-20, ZB+FH+LW-19, LH-18}) that is bounded on $L^p$, which allows us to properly treat the approximation of the initial data in $L^p$-spaces.

We begin by presenting the abstract formulation of equation \eqref{Damped Heat}, together with the corresponding notions of weak and strong solutions. Using the linear operator introduced in Subsection \ref{Linear operator}, the system \eqref{Damped Heat} can be expressed in the following abstract form:
\begin{equation}\label{damped abstract formulation}
\left\{
\begin{aligned}
    \frac{du(t)}{dt} + \nu A u(t)+\alpha |u(t)|^{p-2}u(t)-\sum_{\ell=1}^M \beta_{\ell} |u(t)|^{q_{\ell}-2}u(t)  &=  f(t), \ \text{in }\  H^{-1}(\Omega), \\
    u(0) &= u_0, \ \text{in }\  L^2(\Omega),
\end{aligned}
\right.
\end{equation}
for a.e.\ \( t \in [0,T] \). We now proceed to define the notions of weak and strong solutions for equation \eqref{Damped Heat}.

\begin{Definition}[Weak solution]\label{damped weak}
For any given initial data $u_0 \in L^2(\Omega)$ and forcing term $f \in L^2(0,T;H^{-1}(\Omega))$, a function 
\begin{equation*}u \in L^\infty(0,T;L^2(\Omega))\cap L^2(0,T;H_0^1(\Omega))\cap L^p(0,T;L^p(\Omega)),\end{equation*}
with \begin{equation*}\partial_t u \in  L^2(0,T;H^{-1}(\Omega))+L^{p^{\prime}}(0,T;L^{p^{\prime}}(\Omega)),\end{equation*} is called a \emph{weak solution} of the system \eqref{Damped Heat}, if $u(\cdot)$ satisfies:
\begin{equation}\label{eqn-strong-form}
\begin{cases}
\begin{aligned}
\langle \partial_t u , v \rangle +\nu (\nabla u, \nabla v) +\alpha\langle |u|^{p-2}u , v\rangle-\sum_{\ell=1}^M\beta_{\ell}\langle |u|^{q_{\ell}-2}u,v\rangle &= \langle f, v \rangle,\\
(u(0),v)&= (u_0,v) , 
\end{aligned}
\end{cases}
\end{equation}
for all $v \in H_0^1(\Omega)\cap L^p(\Omega)$ and a.e. $t \in [0,T]$.
\end{Definition}
\begin{Remark}
	Since  $u\in W^{1,p^{\prime}}(0,T;H^{-1}(\Omega)+L^{p^{\prime}}(\Omega)),$ it follows from \cite[Theorem 2, Section 5.9]{MR1625845} that $u\in C_w([0,T];H^{-1}(\Omega)+L^{p^{\prime}}(\Omega))$.
\end{Remark}
\begin{Definition}[Strong solution]\label{strong solution} For \( u_0 \in H_0^1(\Omega)\cap L^p(\Omega) \), $p\geq 2$, \( f \in L^2(0,T; L^2(\Omega)) \),
a function \( u \in C([0,T];H_0^1(\Omega)\cap L^p(\Omega)) \cap L^2(0,T;H^2(\Omega)) \cap L^{2p-2}(0,T;L^{2p-2}(\Omega)) \) with \( \partial_t u \in L^2(0,T; L^2(\Omega)) \) is called a \emph{strong solution} to the system \eqref{Damped Heat}, if  \( u(\cdot) \) satisfies \eqref{Damped Heat} in \( L^2(\Omega) \) for a.e.\ \( t \in [0,T] \).
\end{Definition}

	\subsection{A projection and a self-adjoint operator}\label{subsec-proj-op} The material in this section is adapted from \cite{AB+ZB+MTM-2025}.
We construct a sequence of self-adjoint operators via the Littlewood-Paley decomposition associated with the operator $A$. Setting $S = -\Delta = A$, the compactness of the resolvent of $S$ guarantees the existence of a complete orthonormal basis $\{w_n\}_{n\in\mathbb{N}}$ of $L^2(\Omega)$. This basis consists of eigenfunctions corresponding to a sequence of strictly positive eigenvalues $\{\lambda_n\}_{n\in\mathbb{N}}$, which increase to infinity, such that
	\begin{align}\label{eqn-s-rep}
			Sv := \sum_{n=1}^{\infty}\lambda_n(v, w_n) w_n,\ \ v\in D(S):=\bigg\{v\in L^2(\Omega):\sum_{n=1}^{\infty}\lambda_n^2|(v,w_n)|^2<\infty\bigg\}.
		\end{align}
Moreover, $S$ is a strictly positive, self-adjoint operator that commutes with $A$. For sufficiently large $k$ ($k > \tfrac{d}{2}$), there holds the continuous embedding
$
D(S^k) \hookrightarrow L^p(\Omega) \cap H_0^1(\Omega).
$
Using the functional calculus \cite{EZ-12}, we then define the operators
$
P_m : L^2(\Omega) \to L^2(\Omega)
$ by
\begin{align}\label{eqn-P_m-S}
			P_m:=\chi_{(0,2^{m+1})}(S)\ \text{ for }\ m\in\mathbb{N}_0=\mathbb{N}\cup\{0\},
		\end{align}
		where $\chi$ is the characteristic function.

Since $S$ has the representation given in \eqref{eqn-s-rep}, it follows that $P_m$ is the orthogonal projection from $L^2(\Omega)$ onto
$
V_m := \mathrm{span}\left\{ w_n : n \in \mathbb{N},\ \lambda_n < 2^{m+1} \right\},
$
and hence
\begin{align}
			P_m v=\sum_{\lambda_n<2^{m+1}}(v,w_n)w_n,\ v\in L^2(\Omega). \label{def-projection}
		\end{align}
Note that $w_n \in \bigcap_{k \in \mathbb{N}} D(S^k)$ for all $n \in \mathbb{N}$. Since $D(S) \hookrightarrow H_0^1(\Omega)$, it follows that $V_m$ is a closed subspace of $H_0^1(\Omega)$ for each $m \in \mathbb{N}$. Using the commutativity of $S$ and $A$, we deduce that $P_m$ commutes with $A^{1/2}$. Consequently,
Therefore, we have
		\begin{align*}
			\|P_m v\|_{L^2} & \leq \|v\|_{L^2}, \text{ for all } v\in L^2(\Omega),\\
			\|P_m v\|_{H_0^1}^2 & = \|A^{1/2}P_m v\|_{L^2}^2 = \|P_mA^{1/2}v\|_{L^2}^2 \leq \|A^{1/2}v\|_{L^2}^2 = \|v\|_{H_0^1}^2,\ v\in H_0^1(\Omega).
		\end{align*}
		Moreover, we deduce 
		\begin{align*}
			\lim_{m\to\infty}\|P_m v-v\|_{L^2}=0,\  v\in L^2\ \text{ and }\ \lim_{m\to\infty}\|P_m v-v\|_{H_0^1}=0,  \ v\in H_0^1(\Omega).
		\end{align*}
However, the sequence of operators $\{P_m\}_{m \in \mathbb{N}}$ does not enjoy uniform boundedness as maps from $L^p(\Omega)$ to itself. Since our analysis requires dealing with $L^p(\Omega) \cap H_0^1(\Omega)$-valued solutions, such uniform boundedness is crucial for deriving a priori estimates in the $L^p$-norm. To overcome this difficulty, in the next proposition, we construct a sequence of self-adjoint operators $\{S_m\}_{m \in \mathbb{N}}$ that possesses the desired properties.

Choose a function $\rho\in C_0^{\infty}((0,\infty))$ supported in $[\frac{1}{2},2]$ and $\sum_{m\in\mathbb{Z}}\rho(2^{-m}t)=1,$   $t>0$. Let $m\in\mathbb{N}_0$ be fixed, we define 
			\begin{align*}
				s_m:(0,\infty)\to\mathbb{R},\ \ s_m(\gamma):=\sum_{n=-\infty}^m\rho(2^{-n}\gamma),
			\end{align*}
			and we see that
			\begin{align*}
				s_m(\gamma)=\left\{
				\begin{array}{cl}1 & \gamma\in(0,2^m),\\
					\rho(2^{-m}\gamma) & \gamma\in[2^m,2^{m+1}),\\
					0 & \gamma\geq 2^{m+1}.
				\end{array}
				\right.
			\end{align*}
			With the help of the self-adjoint functional calculus, we define the operator $S_m := s_m(S)$. This, in turn, yields the representation
			\begin{align}\label{def-S_m-operator}
				S_m v = \sum_{\lambda_n<2^{m}}(v,w_n)w_n + \sum_{\lambda_n\in[2^m,2^{m+1})}\rho(2^{-m}\lambda_n)(v, w_n)w_n,\ v\in L^2(\Omega),
			\end{align}
			from which it is immediate that the range of $S_m$ is contained in $V_m$.  A similar construction has been used in the works \cite{ZB+BF+MZ-24, ZB+FH+UM-20, ZB+FH+LW-19, LH-18}. 
		This suggests that the approach used in this work may greatly expand the range of applications for the traditional Faedo-Galerkin method.
		
		\begin{Proposition}\label{Prop-Sm}
			For every $\psi\in L^p(\Omega)\cap H_0^1(\Omega)$, there exists a sequence $\{S_m\}_{m\in\mathbb{N}}$ of self-adjoint operators $S_m:L^2(\Omega)\to V_m$ for $m\in\mathbb{N}$  such that 
			$$S_m\psi\to \psi\ \text{ in }\ L^p(\Omega)\cap H_0^1(\Omega),\ \mbox{ as }\ m\to\infty,$$ 
			and the following uniform norm estimates hold:
			\begin{align}
				\sup_{m\in\mathbb{N}}\|S_m\|_{\mathcal{L}(L^2(\Omega))}\leq 1, \ 	\sup_{m\in\mathbb{N}}\|S_m\|_{\mathcal{L}(H_0^1(\Omega))}\leq 1,\
				\sup_{m\in\mathbb{N}}\|S_m\|_{\mathcal{L}(L^p(\Omega))}<\infty.
			\end{align}
		\end{Proposition}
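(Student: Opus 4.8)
The plan is to read off a scaling structure hidden in the definition of $s_m$ and to combine elementary spectral-theoretic bounds (for the $L^2$ and $H_0^1$ estimates) with a Hörmander--Mikhlin spectral multiplier theorem for the Dirichlet Laplacian (for the $L^p$ estimate). Write $\phi:=s_0$, so that $\phi(\xi)=1$ on $(0,1)$, $\phi(\xi)=\rho(\xi)$ on $[1,2)$, and $\phi(\xi)=0$ on $[2,\infty)$; since $\rho\in C_0^\infty$ matches the constants $1$ and $0$ smoothly at the endpoints, $\phi\in C_0^\infty([0,\infty))$ with $0\le\phi\le 1$. Comparing with the definition of $s_m$ gives the key identity $s_m(\gamma)=\phi(2^{-m}\gamma)$ for all $\gamma>0$, i.e. $S_m=\phi(2^{-m}S)$ is a single smooth profile dilated at dyadic scales. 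Because $\phi$ is real-valued and $S$ is self-adjoint, the functional calculus makes each $S_m$ self-adjoint, and the representation \eqref{def-S_m-operator} shows directly that $\mathrm{Ran}(S_m)\subseteq V_m$.

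The two Hilbert-space bounds are then immediate from the spectral theorem. Since $\sup_{\gamma>0}|\phi(2^{-m}\gamma)|\le 1$, we get $\|S_m\|_{\mathcal{L}(L^2(\Omega))}\le 1$. As $S=A$ commutes with $A^{1/2}$ through the functional calculus, $S_m$ commutes with $A^{1/2}$, so for every $v\in H_0^1(\Omega)$,
$$\|S_m v\|_{H_0^1}=\|A^{1/2}S_m v\|_{L^2}=\|S_m A^{1/2}v\|_{L^2}\le\|A^{1/2}v\|_{L^2}=\|v\|_{H_0^1},$$
which yields $\|S_m\|_{\mathcal{L}(H_0^1(\Omega))}\le 1$.

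The crux of the argument --- and the step I expect to be the main obstacle --- is the uniform $L^p$ bound, which is exactly the property that the sharp cutoffs $P_m$ fail. Here I would invoke a Hörmander--Mikhlin type spectral multiplier theorem for $S$: because $\Omega$ is bounded, convex, with Lipschitz boundary, the underlying geometry is volume doubling and the Dirichlet heat kernel on $\Omega$ is dominated pointwise by the Gaussian kernel on $\mathbb{R}^d$ (domain monotonicity of the heat semigroup), so $e^{-tS}$ satisfies Gaussian upper bounds. Consequently any bounded Borel function $F$ with finite Hörmander norm $\sup_{t>0}\|F(t\cdot)\eta\|_{W^{s,2}}$ for some $s>d/2$ (with $\eta$ a fixed dyadic bump) defines an operator $F(S)$ bounded on $L^p(\Omega)$ for $1<p<\infty$, with norm controlled by that Hörmander norm. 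The decisive point is scale invariance: the Hörmander norm is tested against all dilations simultaneously, hence is unchanged under replacing $\phi$ by $\phi(2^{-m}\cdot)$. Since $\phi$ is a \emph{fixed} $C_0^\infty$ function, its Hörmander norm is finite and independent of $m$, and therefore $\sup_{m\in\mathbb{N}}\|S_m\|_{\mathcal{L}(L^p(\Omega))}<\infty$.

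Finally, I would establish the convergence $S_m\psi\to\psi$ separately in $H_0^1(\Omega)$ and in $L^p(\Omega)$; since the intersection carries the max-norm, this gives convergence in $L^p(\Omega)\cap H_0^1(\Omega)$. In $H_0^1$ (and likewise $L^2$) the convergence is direct from the spectral representation: $\|S_m\psi-\psi\|_{H_0^1}^2=\sum_n|\phi(2^{-m}\lambda_n)-1|^2\lambda_n|(\psi,w_n)|^2$, where the summand is dominated by $\lambda_n|(\psi,w_n)|^2$ (summable, equal to $\|\psi\|_{H_0^1}^2$) and tends to $0$ pointwise because $\phi(2^{-m}\lambda_n)=1$ once $2^m>\lambda_n$; dominated convergence finishes it. In $L^p$ I would use the uniform bound of the previous paragraph together with convergence on a dense set: on each eigenfunction $S_m w_n=\phi(2^{-m}\lambda_n)w_n=w_n$ for all large $m$, so $S_m\to I$ on $\mathrm{span}\{w_n\}$, and this span is dense in $L^p(\Omega)\cap H_0^1(\Omega)$ (approximate $\psi$ first by some $\varphi\in C_0^\infty(\Omega)\subseteq\bigcap_k D(S^k)$ in the $L^p\cap H_0^1$ norm, then truncate the spectral expansion of $\varphi$, which converges in $D(S^k)\hookrightarrow L^p\cap H_0^1$ for $k>d/2$). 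A routine three-term splitting, controlling the two tail terms by $\|S_m\|_{\mathcal{L}(L^p)}\le C$ and using the exact identity on the finite part, then gives $\|S_m\psi-\psi\|_{L^p}\to 0$.
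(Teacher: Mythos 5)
Your proposal is correct, and it should be noted that the paper itself gives no proof of Proposition \ref{Prop-Sm}: the result is stated and deferred to the cited works (\cite{AB+ZB+MTM-2025,ZB+BF+MZ-24,ZB+FH+UM-20,ZB+FH+LW-19,LH-18}), where the argument is essentially the one you reconstruct. Your key structural observation, $s_m(\gamma)=s_0(2^{-m}\gamma)$, i.e. that all the $S_m$ are dyadic dilates $\phi(2^{-m}S)$ of one fixed smooth cutoff $\phi=s_0$, is exactly what makes the literature argument work: the $L^2$ and $H_0^1$ bounds follow from the spectral theorem and commutation with $A^{1/2}$ (as you write), while the $m$-uniform $L^p$ bound comes from a H\"ormander--Mikhlin spectral multiplier theorem whose hypothesis is dilation-invariant; your justification of its applicability (Gaussian upper bounds for the Dirichlet heat kernel by domain monotonicity, plus the volume-doubling property of a bounded convex Lipschitz domain) is the correct one. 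Two steps in your final convergence argument are asserted rather than proved and would need a few lines in a complete write-up, though both are true and standard: (i) the density of $C_0^\infty(\Omega)$ in $L^p(\Omega)\cap H_0^1(\Omega)$ with respect to the intersection norm, which requires the usual truncation argument (truncate $\psi$ at height $R$, approximate the truncation in $H_0^1$ by smooth functions composed with a smooth cutoff so as to retain a uniform $L^\infty$ bound, and pass to $L^p$ via $\|f\|_{L^p}\le\|f\|_{L^2}^{2/p}\|f\|_{L^\infty}^{1-2/p}$); and (ii) the embedding $D(S^k)\hookrightarrow L^p(\Omega)\cap H_0^1(\Omega)$ for $k>d/2$, which you may simply quote since the paper records it in Subsection \ref{subsec-proj-op}. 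Neither point affects the validity of the proof.
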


	Next, we recall a result from \cite{LH-18}, that provides the relation between $P_m$ and $S_m$.
		
		\begin{Proposition}[{\cite[Proposition 3.2]{LH-18}}]\label{Prop-P_m}
			The operators $P_m$ and $S_m$ satisfy the following properties: 
			\begin{enumerate}
				\item[(i)] $P_m$ is projection, i.e., we have $P_m^2 = P_m$, for all $m\in\mathbb{N}$.
				\item[(ii)] The operators $P_m, S_m$ are self-adjoint with $\|P_m\|_{\mathcal{L}(L^2(\Omega))} = \|S_m\|_{\mathcal{L}(L^2(\Omega))} = 1$, for every $m\in\mathbb{N}$.
				\item[(iii)] $P_m$ and $S_n$ commute, for every $n,m\in\mathbb{N}$.
				\item[(iv)] The ranges of $P_m$ and $S_m$ are finite dimensional.
				\item[(v)] Also, $R(S_{m-1}) \subset R(P_m) \subset R(S_m)$, $S_mP_m = S_m$ and $P_mS_{m-1} = S_{m-1},$ for every $m\in\mathbb{N}$, where $R(\cdot)$ denotes the range space.
				\item[(vi)] $\lim\limits_{m\to\infty} P_m v = \lim\limits_{m\to\infty} S_m v =v,$ for every $v\in L^2(\Omega)$.
			\end{enumerate}
		\end{Proposition}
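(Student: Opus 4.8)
\textbf{Proof proposal for Proposition~\ref{Prop-Sm}.}

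The plan is to establish each claimed property of $S_m := s_m(S)$ directly from the spectral functional calculus for the self-adjoint operator $S$, treating the $L^2$ estimates and convergence first (which follow from the boundedness of the multiplier $s_m$), and reserving the uniform $L^p$ bound as the genuinely hard part, since it requires Littlewood--Paley / spectral-multiplier machinery rather than elementary spectral theory. First I would note that because $s_m:(0,\infty)\to[0,1]$ is a bounded Borel function with $0\le s_m\le 1$, the functional calculus immediately gives self-adjointness of $S_m$ and the operator-norm bound $\|S_m\|_{\mathcal L(L^2(\Omega))}=\sup_{\gamma}|s_m(\gamma)|\le 1$, which is the first stated inequality. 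From the explicit series representation \eqref{def-S_m-operator} one sees that $S_m$ maps $L^2(\Omega)$ into the finite-dimensional space $V_m=\mathrm{span}\{w_n:\lambda_n<2^{m+1}\}$, so $S_m:L^2(\Omega)\to V_m$ as asserted.

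Next I would handle the $H_0^1$-bound and the $L^2\cap H_0^1$ convergence. Since $S$ and $A$ commute (both are functions of the Dirichlet Laplacian, indeed $S=A$), the operator $S_m=s_m(S)$ commutes with $A^{1/2}$; using $\|\cdot\|_{H_0^1}=\|A^{1/2}\cdot\|_{L^2}$ I would write, for $v\in H_0^1(\Omega)$,
\[
\|S_m v\|_{H_0^1}=\|A^{1/2}S_m v\|_{L^2}=\|S_m A^{1/2}v\|_{L^2}\le\|A^{1/2}v\|_{L^2}=\|v\|_{H_0^1},
\]
giving the second uniform bound. For convergence in $L^2$, the pointwise limit $s_m(\gamma)\to 1$ as $m\to\infty$ for each fixed $\gamma>0$, together with the uniform bound $|s_m|\le 1$, yields $\|S_m v-v\|_{L^2}\to 0$ by dominated convergence applied to the spectral measure of $v$; in series form, $\|S_m v-v\|_{L^2}^2=\sum_n(1-s_m(\lambda_n))^2|(v,w_n)|^2\to 0$ since each summand tends to $0$ and is dominated by $|(v,w_n)|^2$ whose sum is finite. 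The same argument applied to $A^{1/2}v\in L^2(\Omega)$ gives $\|S_m v-v\|_{H_0^1}\to 0$ for $v\in H_0^1(\Omega)$.

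The main obstacle is the uniform $L^p$ bound $\sup_m\|S_m\|_{\mathcal L(L^p(\Omega))}<\infty$, which is false for the sharp cutoffs $P_m$ and is precisely why the smoothed multipliers $s_m$ are introduced. Here I would invoke the spectral-multiplier theory for the Dirichlet Laplacian (following the construction in \cite{AB+ZB+MTM-2025} and the analogous arguments in \cite{ZB+BF+MZ-24, ZB+FH+UM-20, ZB+FH+LW-19, LH-18}): the functions $s_m$ are obtained from a single smooth dyadic profile $\rho\in C_0^\infty((0,\infty))$ supported in $[\tfrac12,2]$ via $s_m(\gamma)=\sum_{n\le m}\rho(2^{-n}\gamma)$, so $s_m(\gamma)=\sigma(2^{-m}\gamma)$ for a fixed smooth, compactly supported profile $\sigma$ that equals $1$ near the origin. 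Because $\sigma$ satisfies uniform Mihlin--Hörmander-type symbol estimates that are invariant under the dyadic scaling $\gamma\mapsto 2^{-m}\gamma$, the associated spectral multipliers $s_m(S)$ are uniformly bounded on $L^p(\Omega)$ for all $1<p<\infty$; this is exactly where the smoothness of $\rho$ (as opposed to a characteristic function) is exploited. Finally, the $L^p$-convergence $S_m\psi\to\psi$ for $\psi\in L^p(\Omega)\cap H_0^1(\Omega)$ follows by combining this uniform $L^p$ bound with the already-established $H_0^1$-convergence and a density argument: approximating $\psi$ by smooth $V_k$-functions on which $S_m$ acts as the identity for $m$ large, and using the uniform $L^p$ operator bound to pass to the limit. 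Assembling these pieces yields all three uniform estimates and the claimed convergence, completing the proof.
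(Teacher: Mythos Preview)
You have written a proof for Proposition~\ref{Prop-Sm}, not for Proposition~\ref{Prop-P_m}. The statement you were asked to prove is the list of six structural properties (i)--(vi) relating $P_m$ and $S_m$: the projection property of $P_m$, self-adjointness and unit norm of both operators, commutation of $P_m$ with $S_n$, finite-dimensionality of the ranges, the range inclusions $R(S_{m-1})\subset R(P_m)\subset R(S_m)$ together with $S_mP_m=S_m$ and $P_mS_{m-1}=S_{m-1}$, and $L^2$-convergence of both sequences to the identity. Your proposal instead establishes the uniform $L^2$, $H_0^1$, and $L^p$ operator bounds for $S_m$ and the $L^p\cap H_0^1$ convergence $S_m\psi\to\psi$, which is exactly the content of the \emph{other} proposition, Proposition~\ref{Prop-Sm}. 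The hard spectral-multiplier argument you spend most of your effort on is not part of Proposition~\ref{Prop-P_m} at all.

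For comparison, the paper gives no proof of Proposition~\ref{Prop-P_m}; it simply cites \cite[Proposition~3.2]{LH-18}. The six items are elementary consequences of the explicit spectral representations \eqref{def-projection} and \eqref{def-S_m-operator}: (i), (ii), and (iv) for $P_m$ are immediate since $P_m$ is the orthogonal projection onto the finite-dimensional space $V_m$; (iii) holds because $P_m=\chi_{(0,2^{m+1})}(S)$ and $S_n=s_n(S)$ are both Borel functions of the same self-adjoint operator $S$; (v) follows from the multiplier identities $s_m\cdot\chi_{(0,2^{m+1})}=s_m$ (since $\mathrm{supp}\,s_m\subset(0,2^{m+1})$) and $\chi_{(0,2^{m+1})}\cdot s_{m-1}=s_{m-1}$ (since $\mathrm{supp}\,s_{m-1}\subset(0,2^m)$); and (vi) is the dominated-convergence argument on the spectral series, which you did sketch for $S_m$ but not for $P_m$. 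Several of these ingredients appear in passing in your write-up, but items (i), (iii), and especially (v) are never addressed.
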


\subsection{Existence and uniqueness}\label{existence and uniquenss} Based on the above discussion, we now establish the existence and uniqueness of a weak solution to problem \eqref{damped abstract formulation}.

\begin{Theorem}[Existence of a weak solution]\label{existenceweak}
For given $u_0 \in L^2(\Omega)$ and $ f \in L^2(0,T;H^{-1}(\Omega))$, there exists a \emph{weak solution} of problem \eqref{Damped Heat}.
\end{Theorem}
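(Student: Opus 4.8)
The plan is to construct the solution through the modified Faedo--Galerkin scheme \eqref{projection term included} built on the self-adjoint operators $S_m$ of Proposition~\ref{Prop-Sm}, to derive a priori bounds uniform in $m$, and to pass to the limit using weak compactness, the Aubin--Lions lemma, and the structure of the nonlinearity.

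First I would set up the approximation. Seeking $u_m(t)\in V_m$, I project the abstract equation \eqref{damped abstract formulation} onto $V_m$, inserting $S_m$ into the damping and pumping terms so that the nonlinear contributions read $\alpha S_m b(S_m u_m)$ and $\beta_\ell S_m(|S_mu_m|^{q_\ell-2}S_mu_m)$, and I impose $u_m(0)=P_mu_0$. Since $b(\cdot)$ is locally Lipschitz from $L^p(\Omega)$ to $L^{p'}(\Omega)$ (Section~\ref{Non-linear operator}) and each $S_m$ is bounded on $L^p(\Omega)$ with finite-dimensional range contained in $V_m$, the coefficient system is an ODE with locally Lipschitz right-hand side, so Picard--Lindel\"of yields a unique local solution.

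Next I would derive the a priori estimates. Testing the Galerkin equation with $u_m$ and using self-adjointness of $S_m$, the damping contributes the good term $\alpha\|S_mu_m\|_{L^p}^p$, the diffusion contributes $\nu\|u_m\|_{H_0^1}^2$, and the forcing is absorbed by Young's inequality as $\langle f,u_m\rangle\le\tfrac{\nu}{2}\|u_m\|_{H_0^1}^2+C\|f\|_{H^{-1}}^2$. The delicate point is the competing pumping terms, which equal $\sum_\ell\beta_\ell\|S_mu_m\|_{L^{q_\ell}}^{q_\ell}$: because each $q_\ell<p$ and $\Omega$ is bounded, these are dominated, via Young's inequality, by a small multiple of $\alpha\|S_mu_m\|_{L^p}^p$ plus an additive constant. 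A Gronwall argument then produces bounds, uniform in $m$, for $u_m$ in $L^\infty(0,T;L^2(\Omega))\cap L^2(0,T;H_0^1(\Omega))$ and for $S_mu_m$ in $L^p(0,T;L^p(\Omega))$; these bounds both preclude finite-time blow-up, extending $u_m$ to all of $[0,T]$, and, read back through the equation, bound $\partial_tu_m$ in $L^2(0,T;H^{-1}(\Omega))+L^{p'}(0,T;L^{p'}(\Omega))$ using $\|b(S_mu_m)\|_{L^{p'}}=\|S_mu_m\|_{L^p}^{p-1}$ and boundedness of $S_m$ on $L^{p'}(\Omega)$.

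Finally I would pass to the limit. Banach--Alaoglu yields a subsequence with $u_m\rightharpoonup u$ weakly-$*$ in $L^\infty(0,T;L^2)$ and weakly in $L^2(0,T;H_0^1)$, with $S_mu_m\rightharpoonup u$ in $L^p(0,T;L^p)$, $S_mb(S_mu_m)\rightharpoonup\xi$ in $L^{p'}(0,T;L^{p'})$, and $\partial_tu_m\rightharpoonup\partial_tu$ in the sum space. Since $S_m\to I$ strongly and $u_m$ is bounded in $L^2(0,T;H_0^1)$, the Aubin--Lions--Simon lemma gives $u_m\to u$ strongly in $L^2(0,T;L^2)$, hence $S_mu_m\to u$ there and a.e. along a further subsequence. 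I then fix $v\in V_k$ and a temporal test function, pass $m\to\infty$ in the projected identity, and invoke density of $\bigcup_kV_k$ in $H_0^1(\Omega)\cap L^p(\Omega)$ to recover \eqref{eqn-strong-form}; the initial condition follows from the weak continuity of $u$ together with $P_mu_0\to u_0$ in $L^2(\Omega)$. I expect the main obstacle to be the identification $\xi=b(u)$ of the nonlinear limit: this I would settle by the continuity-and-boundedness argument, since a.e. convergence $S_mu_m\to u$ gives $b(S_mu_m)\to b(u)$ a.e., and the uniform $L^{p'}$ bound then forces weak convergence to $b(u)$, with the strong convergence $S_m\to I$ on $L^{p'}(\Omega)$ transferring this to $S_mb(S_mu_m)$; should pointwise control prove awkward, the monotonicity inequality \eqref{eqn-mono-2} provides a Minty--Browder alternative. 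The lower-order pumping terms are identified identically, using $q_\ell<p$.
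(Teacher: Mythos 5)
Your proposal is essentially correct and follows the same overall strategy (Faedo--Galerkin, uniform energy estimates, Banach--Alaoglu, Aubin--Lions, Lions' lemma), but it constructs a genuinely different approximation scheme from the paper's. You insert the truncation operators inside the nonlinearity, so your Galerkin equation carries $\alpha S_m b(S_m u_m)$ and $\beta_{\ell} S_m(|S_m u_m|^{q_{\ell}-2}S_m u_m)$ with initial datum $P_m u_0$, whereas the paper's scheme \eqref{projection term included} keeps the plain nonlinearity composed with $P_m$ --- which disappears upon testing against $V_m$, see \eqref{damped Galerkin projection} --- and reserves the operator $S_{m-1}$ for the initial datum alone (a choice that is immaterial for this theorem, where $u_0\in L^2(\Omega)$, and pays off only in the later regularity results). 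The two designs diverge most at the time-derivative estimate. In your scheme the term $S_m b(S_m u_m)$ already lies in $V_m$, and since $\sup_m\|S_m\|_{\mathcal{L}(L^{p'})}<\infty$ (by duality and self-adjointness from Proposition~\ref{Prop-Sm}), you get a uniform bound on $\partial_t u_m$ directly in $L^2(0,T;H^{-1})+L^{p'}(0,T;L^{p'})$; this is cleaner than the paper's Step 3, which, because $P_m$ is \emph{not} uniformly bounded on $L^{p'}$, must estimate $\partial_t u_m$ in the weaker dual space $(D(A^{s/2}))^{*}$, $s\geq \frac{d(p-2)}{2p}$, and run Aubin--Lions through that space. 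The price appears in the energy estimate and the limit identification: testing with $u_m$ bounds only $\|S_m u_m\|_{L^p}^p$, not $\|u_m\|_{L^p}^p$, so you must additionally show $S_m u_m\to u$ strongly in $L^2(0,T;L^2)$ (this does follow from $u_m\to u$ strong, $\|S_m\|_{\mathcal{L}(L^2)}\leq 1$, and $S_m v\to v$ for fixed $v$, as in Proposition~\ref{Prop-P_m}(vi) plus dominated convergence in time), and then strip the outer $S_m$ from $S_m b(S_m u_m)$ by moving it onto test functions; note that this duality step uses strong convergence $S_m\to I$ on $L^{p}$ (the test-function side), not on $L^{p'}$ as you wrote --- both hold, but the slip is worth correcting. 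After that, both proofs conclude identically via Lions' lemma \cite[Lemma 1.3]{MR259693} (cf.\ \eqref{eqn-wean-nonlinear}), and in your setting $u\in L^p(0,T;L^p(\Omega))$ follows from weak lower semicontinuity applied to $S_m u_m \rightharpoonup u$.

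One genuine, though easily repaired, inaccuracy: you invoke Picard--Lindel\"of for local solvability of the coefficient ODE system. Since $f\in L^2(0,T;H^{-1}(\Omega))$, the terms $\langle f(t),w_k\rangle$ are merely square-integrable in $t$, so the right-hand side of the ODE is not continuous in time and the classical Picard--Lindel\"of theorem does not apply. The paper instead uses Carath\'eodory's existence theorem \cite[Theorem 1.1, Chapter 2]{MR69338}, which produces an absolutely continuous local solution; your local Lipschitz estimate for $b$ still gives uniqueness, and the remainder of your argument is unaffected.
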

\begin{proof}
We establish the proof using the Faedo-Galerkin method in the following steps:
\vspace{0.2 cm} \\
\textbf{Step 1: }\textit{Faedo-Galerkin approximation}. 
For any fixed positive integer $m$, we define a function $u_m : [0,T]\to H_0^1(\Omega)$ such that 
\begin{align}\label{eqn-def}
    u_m(t) := \sum_{\lambda_k<2^{m+1}} d_m^k(t)w_k,
\end{align} 
where $d_m^k(t)$ are smooth functions and are selected in such a way that 
\begin{align}
d_m^k(0)=(u_0,w_k) \quad \text{ for all }\ k \text{ such that }\ \lambda_k<2^{m+1}.
\end{align} 
Accordingly, we formulate the problem in the finite-dimensional subspace
 $$
V_m := \mathrm{span}\left\{ w_k : k \in \mathbb{N},\ \lambda_k < 2^{m+1} \right\},
$$ and introduce the projection operator $P_m : L^2(\Omega) \to V_m$ defined by
 \begin{equation*}
P_m u_m=\sum_{\lambda_k<2^{m+1}}(u_m,w_k)w_k,\ u_m \in L^2(\Omega).
\end{equation*}
Note that $P_m(u_m)=u_m$, $P_m(-\Delta u_m) = -\Delta u_m$ for all $u_m \in V_m $. Moreover, we define 
\begin{align}\label{eqn-initial}
u_m(0)=S_{m-1}u_0,
\end{align}
where $S_m$ is defined in \eqref{def-S_m-operator}. Note from \eqref{eqn-def} that 
\begin{align}
u_m(0)=\sum_{\lambda_k<2^{m+1}} d_m^k(0)w_k=\sum_{\lambda_k<2^{m+1}} (u_0,w_k)w_k=P_mu_0. 
\end{align}
Thus, we conclude from \eqref{eqn-initial} and Proposition \ref{Prop-P_m}(v) that 
\begin{align}
P_mu_m(0)=P_mS_{m-1}u_0=S_{m-1}u_0=u_m(0)=P_mu_0.
\end{align}
Therefore, we can consider the following finite-dimensional system:
\begin{equation}\label{projection term included}
\left\{
\begin{aligned}
(\partial_t u_m(t), w_k) + \nu(\nabla u_m(t)&, \nabla w_k)+\alpha (P_m(|u_m(t)|^{p-2}u_m(t)), w_k)\\&-\sum_{\ell=1}^M\beta_{\ell}(P_m(|u_m(t)|^{q_{\ell}-2}u_m(t)),w_k) = (f(t), w_k),\\
u_m(0)&=S_{m-1}u_0,
\end{aligned}
\right.
\end{equation}
for all $w_k \in V_m$ and $0 \leq t \leq T, \text{ all } k$ \text{ such that } $\lambda_k <2^{m+1}$. Since $P_m(w_k) = w_k, $ we can rewrite \eqref{projection term included} as
\begin{align}\label{damped Galerkin projection}
    (\partial_t u_m(t), w_k) + \nu(\nabla u_m(t)&, \nabla w_k)+\alpha (|u_m(t)|^{p-2}u_m(t), w_k)\nonumber\\&-\sum_{\ell=1}^M\beta_{\ell}(|u_m(t)|^{q_{\ell}-2}u_m(t),w_k) = (f(t), w_k),
\end{align}
for $0 \leq t \leq T,\, \text{ all } k$ \text{ such that } $\lambda_k <2^{m+1}$. 
Thus, our goal is to find a function $u_m$
that satisfies equation \eqref{damped Galerkin projection} together with the prescribed initial condition.

As we have already proved that the non-linear operators are locally Lipschitz, (see section \ref{Non-linear operator}) and $ f \in L^2(0,T;H^{-1}(\Omega))$, so, using Carath\'eodary's existence theorem (\cite[Theorem 1.1, Chapter 2]{MR69338}), we obtain a local solution (absolutely continuous function) $u_m \in C([0,T^*];V_m)$, for some $0<T^*\leq T$, for the finite-dimensional problem (\ref{damped Galerkin projection}). By proving uniform energy estimates, one can show that the time $T^*$ can be extended to $T$. Moreover, if $f\in C([0,T];L^2(\Omega))$, then  one can show that $u_m \in C^1([0,T^*];V_m)$.

\vspace{0.2 cm} 
\noindent\textbf{Step 2: }\textit{$L^2$-energy estimate}. Multiplying \eqref{damped Galerkin projection} by $d_m^k(\cdot)$ and summing over all $k$ such that $\lambda_k<2^{m+1}$, we have 
\begin{align*}
( \partial_t u_m(t),u_m(t))&+\nu (\nabla u_m(t),\nabla u_m(t)) +\alpha(|u_m(t)|^{p-2}u_m(t), u_m(t))\\&-\sum_{\ell=1}^M\beta_{\ell}(|u_m(t)|^{q_{\ell}-2}u_m(t),u_m(t))= \langle f(t),u_m(t)\rangle.\end{align*}
Using the Cauchy-Schwarz and Young's inequalities, we reach the relation 
\begin{align*}
\frac{1}{2} \frac{d}{dt} \|u_m(t)\|^2_{L^2}+ \nu \|\nabla u_m(t)||^2_{L^2}+\alpha \|u_m(t)\|^p_{L^p}=\langle f(t),u_m(t)\rangle+\sum_{\ell=1}^M\beta_{\ell} \|u_m(t)\|^{q_{\ell}}_{L^{q_{\ell}}},
\end{align*}
for a.e. $t\in[0,T]$. We can estimate the terms on right-hand side by using the Cauchy-Schwarz, Young's and H\"{o}lder's inequalities,  as
\begin{align*}
\langle f(t),u_m(t) \rangle &\leq \|f(t)\|_{H^{-1}} \|\nabla u_m(t)\|_{L^2} \leq \frac{1}{2\nu}\|f(t)\|^2_{H^{-1}}+ \frac{\nu}{2}\|\nabla u_m(t)\|^2_{L^2},\\
\|u_m(t)\|^{q_{\ell}}_{L^{q_{\ell}}}& \leq \|u_m(t)\|^{q_{\ell}}_{L^p} |\Omega|^{\frac{p-q_{\ell}}{p}}
\leq \frac{\alpha}{2M|\beta_{\ell}|} \|u_m(t)\|^p_{L^p}+\bigg(\frac{p-q_{\ell}}{p}\bigg)\bigg(\frac{2M|\beta_{\ell}| q_{\ell}}{\alpha p}\bigg)^{\frac{q_{\ell}}{p-q_{\ell}}}|\Omega|, \\
\end{align*}
which implies
\begin{align}\label{u^q to u^p journey}
\sum_{\ell=1}^M \beta_{\ell} \|u_m(t)\|_{L^{q_{\ell}}}^{q_{\ell}}&\leq \frac{\alpha}{2}\|u_m(t)\|^p_{L^p}+ \sum_{\ell=1}^M |\beta_{\ell}| \bigg(\frac{p-q_{\ell}}{p}\bigg)\bigg(\frac{2M|\beta_{\ell}| q_{\ell}}{\alpha p}\bigg)^{\frac{q_{\ell}}{p-q_{\ell}}}|\Omega|.
\end{align}
For simplicity of notation, let us take 
 \begin{align}\label{C^*} 
 	C^*=\sum_{\ell=1}^M |\beta_{\ell}| \bigg(\frac{p-q_{\ell}}{p}\bigg)\bigg(\frac{2M|\beta_{\ell}| q_{\ell}}{\alpha p}\bigg)^{\frac{q_{\ell}}{p-q_{\ell}}}.
\end{align}  
Now, combining these estimates, we arrive at
\begin{equation*}
\frac{1}{2} \frac{d}{dt} \|u_m(t)\|^2_{L^2}+ \frac{\nu}{2} \|\nabla u_m(t)||^2_{L^2}+\frac{\alpha}{2} \|u_m(t)\|^p_{L^p}\leq \frac{1}{2\nu}\|f(t)\|^2_{H^{-1}}+C^*|\Omega|,
\end{equation*}
for a.e. $t\in[0,T]$. 
Performing integration from $0$ to $t$, taking the supremum over $t\in [0,T]$, and using the fact that $\|u_m(0)\|_{L^2}\leq \|u_0\|_{L^2}$, one obtains 
\begin{align} \label{damped L2-estimate}
&\sup_{0\leq t\leq T}\|u_m(t)\|^2_{L^2}+ \nu \int_0^T \|\nabla u_m(t)\|^2_{L^2}\, dt + \alpha \int_0^T \|u_m(t)\|^p_{L^p} \,dt \nonumber\\
& \leq \|u_0\|^2_{L^2}+ \frac{1}{\nu} \int_0^T \|f(t)\|^2_{H^{-1}}\, dt+C^*|\Omega|T,
\end{align}
where the right-hand side is independent of $m$.
\vspace{0.2 cm} \\
\textbf{Step 3:} \textit{Time derivative estimate}.  For any $v \in D(A^{\frac{s}{2}})$, for $s\geq \frac{d(p-2)}{2p}$ such that $D(A^{s/2}) \hookrightarrow L^p(\Omega)$ with $\|v\|_{D(A^{\frac{s}{2}})} \leq 1 $, we write $v = v_1+v_2$, where $v_1 \in V_m$ and $(v_2,w_k)=0,$ for all $k$ such that $\lambda_k<2^{m+1}$. The functions $\{w_k\}_{k=0}^\infty\in D(A^{\frac{s}{2}})$ are orthonormal in $L^2(\Omega)$, $\|v_1\|_{D(A^{\frac{s}{2}})}\leq \|v\|_{D(A^{\frac{s}{2}})} \leq 1$. From \eqref{damped Galerkin projection}, we deduce for a.e. $0\leq t\leq T$,
\begin{align*}
\langle \partial_t u_m(t),v_1 (t)\rangle &= -\nu(\nabla u_m(t),\nabla v_1(t))-\alpha(|u_m(t)|^{p-2}u_m(t),v_1(t))\\ &\quad+\sum_{\ell=1}^M\beta_{\ell}(|u_m(t)|^{q_{\ell}-2}u_m(t),v_1(t)) +\langle f(t),v_1(t) \rangle.
\end{align*}
Using the Cauchy-Schwarz inequality, we have 
\begin{align}\label{damped derivative estimate - 01}
|\langle \partial_t u_m(t),v_1(t) \rangle| &\leq \nu \|\nabla u_m(t)\|_{L^2} \|\nabla v_1(t)\|_{L^2}+ \alpha\|u_m(t)\|^{p-1}_{L^{p}} \|v_1(t)\|_{L^p}\nonumber \\&  \quad +\sum_{\ell=1}^M| \beta_{\ell}| \|u_m(t)\|^{q_{\ell}-1}_{L^{q_{\ell}}}\|v_1\|_{L^{q_{\ell}}}+\|f(t)\|_{H^{-1}} \|\nabla v_1(t)\|_{L^2}.
\end{align}
By an application of H\"{o}lder's and Young's inequalities, we attain 
\begin{align*}
\sum_{\ell=1}^M |\beta_{\ell}|\|u_m(t)\|^{q_{\ell}-1}_{L^{q_{\ell}}}&\leq \sum_{\ell=1}^M |\beta_{\ell}| \|u_m(t)\|^{q_{\ell}-1}_{L^p} |\Omega|^{\frac{(q_{\ell}-1)(p-q_{\ell})}{pq_{\ell}}}\leq \alpha\|u_m(t)\|^{p-1}_{L^p}+ C_1,
\end{align*}
where \begin{align*}
C_1= \sum_{\ell=1}^M |\beta_{\ell}|\bigg(\frac{p-q_{\ell}}{p-1}\bigg)\bigg(\frac{M|\beta_{\ell}|(q_{\ell}-1)}{\alpha  (p-1)}\bigg)^{\frac{q_{\ell}-1}{p-q_{\ell}}}|\Omega|^{\frac{(q_{\ell}-1)(p-1)}{pq_{\ell}}}.
\end{align*}
Hence, using this estimate in \eqref{damped derivative estimate - 01} and taking the supremum over all $v\in D(A^{\frac{s}{2}})$, we obtain
\begin{equation}\label{damped derivative estimate - 02}
\|\partial_t u_m(t)\|_{(D(A^{\frac{s}{2}}))^{*}} \leq  C\bigg(\nu \|\nabla u_m(t)\|_{L^2}+\|f(t)\|_{H^{-1}} + 2\alpha \|u_m(t)\|^{p-1}_{L^{p}}+C_1\bigg),
\end{equation}
for a.e. $t\in[0,T]$. Integrating from $0$ to $T$ and using Young's inequality, we end up with
\begin{align}\label{damped time derivative}
    \int_0^T \|\partial_t u_m(t)\|^{p^{\prime}}_{(D(A^{\frac{s}{2}}))^{*}}\, dt &\leq  C\bigg\{\nu ^{p^{\prime}} \int_0^T\|\nabla u_m(t)\|^{{p^{\prime}}}_{L^2}\, dt  +\int_0^T \|f(t)\|^{{p^{\prime}}}_{H^{-1}}\,dt \nonumber\\ &\quad+ (2\alpha)^{{p^{\prime}}}\int_0^T\|u_m(t)\|^p_{L^{p}}\, dt + C_1T\bigg\}, \\
    &\leq C(\|u_0\|_{L^2},\nu,\alpha,C_1,T,\|f(t)\|_{L^2(0,T;H^{-1})}),\nonumber
    \end{align}
where in the last inequality, we have used \eqref{damped L2-estimate} along with the fact
that
\begin{align*}
\int_{0}^T \|f(t)\|^{p^{\prime}}_{H^{-1}}dt \leq T^{\frac{(p-2)}{2(p-1)}} \bigg( \int_0^T \|f(t)\|_{H^{-1}}^2 dt \bigg)^{\frac{p}{2(p-1)}}.
\end{align*}
\vspace{0.2 cm}\\
\textbf{Step 4:}\textit{ Passing to limit.} Since the energy estimates \eqref{damped L2-estimate} and \eqref{damped time derivative} are uniformly bounded and do not depend on $m$, we can use the Banach-Alaoglu Theorem \cite[Theorem 3.25, Chapter 3]{MR2759829} to extract a subsequence $\{u_{m_j}\}_{j=1}^\infty$ of $\{u_m\}_{m=1}^\infty$ with the property that 
\begin{equation}
\begin{cases}
u_{m_j} \overset{w^*}{\to} u \quad \text{in }\  L^\infty(0,T; L^2(\Omega)), \\
u_{m_j} \overset{w}{\to} u \quad \text{in }\  L^2(0,T; H^1_0(\Omega)), \\
u_{m_j} \overset{w}{\to} u \quad \text{in } \ L^{p}(0,T; L^{p}(\Omega)), \\
\partial_t u_{m_j} \overset{w}{\to} \partial_t u \quad \text{in }\  \partial_t u \in L^{p^{\prime}}(0,T;(D(A^{\frac{s}{2}}))^{*}),\\
|u_m|^{p-2}u_m \overset{w}{\to} \xi \text{ in }\  L^{p^{\prime}}(0,T;L^{p^{\prime}}(\Omega)),
\end{cases}
\end{equation}
as $j \to \infty$ and for every $p \geq 2$. Using the fact that $(D(A^{\frac{s}{2}}))^{*}$ is a Banach space such that $D(A^{\frac{s}{2}}) \hookrightarrow H_0^1(\Omega) \cap L^p(\Omega)\hookrightarrow H_0^1(\Omega)\hookrightarrow L^2(\Omega)\hookrightarrow H^{-1}(\Omega)\hookrightarrow (D(A^{\frac{s}{2}}))^{*}$, and applying the Aubin-Lions compactness lemma \cite{MR916688}, we have the following strong convergence:
\begin{align*}
u_{m_j} \to u  \quad \text{ in } \ L^2(0,T;L^2(\Omega)),
\end{align*}
as $j \to \infty $. The strong convergence further implies that $u_{m_j}(t) \to u(t)$ for a.e. $t\in [0,T]$ in $L^2(\Omega)$ as $j \to \infty$ and $u_{m_{k_{j}}}(x,t) \to u(x,t)$, for a.e. $(x,t) \in [0,T] \times \Omega$. 
By an application of \cite[Lemma 1.3]{MR259693}, we can prove $\xi = |u|^{p-2}u$. Hence, we have 
\begin{equation}\label{eqn-wean-nonlinear}
|u_m|^{p-2}u_m \overset{w}{\to} |u|^{p-2}u \text{ in } L^{p^{\prime}}(0,T;L^{p^{\prime}}(\Omega)).
\end{equation}
\vspace{0.2cm}
\textbf{Step 5:}\textit{ Initial condition.} Let us choose a function $v \in C^1([0,T];H_0^1(\Omega)\cap L^p(\Omega))$ of the form \begin{align}\label{eqn-dense}\displaystyle v(t)=\sum_{k=1}^Nd_m^k(t)w_k,\end{align} where $\{d_m^k(t)\}_{k=1}^N$ are given smooth functions and for fixed integer $N$. For $m \geq N$, multiplying \eqref{damped Galerkin projection} by $d_m^k(\cdot)$ then summing over all $k$ such that $\lambda_k<2^{m+1}$ and integrating from $0 $ to $T$, we can see that
\begin{align}\label{damped weak converge}
\int_0^T \bigg[ \langle \partial_t u_m, v(t)\rangle+ &\nu (\nabla u_m(t), \nabla v(t))+ \alpha (|u_m(t)|^{p-2}u_m(t), v(t)) \nonumber\\ &-\sum_{\ell=1}^M \beta_{\ell} (|u_m(t)|^{q_{\ell}-2}u_m(t), v(t))\bigg]\,dt = \int_0^T \langle f(t),v(t)\rangle dt .
\end{align}
By setting $m=m_j$ and applying the weak limits, we conclude that 
\begin{align}\label{damped convergence}
    \int_0^T \bigg[ \langle \partial_t u(t), v(t)\rangle+ &\nu (\nabla u(t), \nabla v(t))+ \alpha (|u(t)|^{p-2}u(t), v(t))\nonumber \\&-\sum_{\ell=1}^M \beta_{\ell} (|u(t)|^{q_{\ell}-2}u(t), v(t)) \bigg]dt = \int_0^T \langle f(t),v(t)\rangle \,dt .
    \end{align}
This equality then holds true for all functions $v\in L^2(0,T;H_0^1(\Omega)\cap L^p(\Omega))$, as functions of the form \eqref{eqn-dense} are dense in this space. This completes the proof of convergence.

 It remains to show that $u(0)=u_0$. By \eqref{damped convergence}, we find  
\begin{align}\label{initial data - 01}
 \int_0^T &\bigg[ -\langle  u(t), \partial_t v(t)\rangle + \nu (\nabla u(t), \nabla v(t))  + \alpha (|u(t)|^{p-2}u(t), v(t)) \nonumber\\&-\sum_{\ell=1}^M \beta_{\ell} (|u(t)|^{q_{\ell}-2}u(t), v(t)) \bigg]dt = \int_0^T \langle f(t),v(t)\rangle\, dt + (u(0),v(0)),
 \end{align}
for each $v \in C^1([0,T];H_0^1(\Omega)\cap L^p(\Omega)) $ with $v(T)=0$. Similarly, from \eqref{damped weak converge}, we can infer that
\begin{align*}
 & \int_0^T \bigg[ -\langle  u_m(t), \partial_t v(t)\rangle+ \nu (\nabla u_m(t), \nabla v(t))  + \alpha (|u_m(t)|^{p-2}u_m(t), v(t)) \\&\quad-\sum_{\ell=1}^M \beta_{\ell} (|u_m(t)|^{q_{\ell}-2}u_m(t), v(t))\bigg]\,dt= \int_0^T \langle f(t),v(t)\rangle\, dt + (u_m(0),v(0)).
 \end{align*}
Now, setting $m=m_j$, applying the convergence and passing to limits, we get 
\begin{align}\label{initial data - 02}
 \int_0^T &\bigg[ -\langle  u(t), \partial_t v(t)\rangle+ \nu (\nabla u(t), \nabla v(t))+ \alpha (|u(t)|^{p-2}u(t), v(t)) \nonumber\\&-\sum_{\ell=1}^M \beta_{\ell} (|u_m(t)|^{q_{\ell}-2}u_m(t), v(t)) \bigg]\,dt = \int_0^T \langle f(t),v(t)\rangle\, dt + (u_0,v(0)).
 \end{align}
Since $u_m(0) \to u_0$ in $L^2(\Omega)$ as $m \to \infty $. So, from \eqref{initial data - 01} and \eqref{initial data - 02}, we have $(u(0),v(0)) =(u_0,v(0))$ for all arbitrary $v(0)$, therefore $u(0) = u_0\in L^2(\Omega).$
\vspace{0.2 cm}\\
\textbf{Step 6:} Let us now prove that $\partial_t u \in L^2(0,T;H^{-1}(\Omega))\cap L^{p^\prime}(0,T;L^{p^\prime}(\Omega))$. For any $v \in L^2(0,T;H_0^1(\Omega))\cap L^p(0,T;L^p(\Omega))$, using the Cauchy-Schwarz inequality, we infer 
\begin{align*}
|\langle \partial_t u(t),v(t) \rangle |&\leq  \nu| \langle \Delta u(t),v(t) \rangle| - \alpha |\langle|u(t)|^{p-2}u(t),v(t)\rangle|+\sum_{\ell=1}^M| \beta_{\ell}| | \langle |u(t)|^{q_{\ell}-2}u(t),v(t)\rangle| \\&\quad+| \langle f(t) , v(t) \rangle|\\
& \leq \|\nabla u(t)\|_{L^2}\|\nabla v(t)\|_{L^2}+ \alpha \|u(t)\|^{p-1}_{L^p}\|v(t)\|_{L^p} + \sum_{\ell=1}^M |\beta_{\ell}|\|u(t)\|^{q_{\ell}-1}_{L^{q_{\ell}}}\|v(t)\|_{L^{q_{\ell}}}\\&\quad +\|f(t)\|_{H^{-1}}\|\nabla v(t)\|_{L^2}.
\end{align*}
Integrating from $0$ to $T$, the above expression implies
\begin{align*}
\int_0^T |\langle \partial_t u(t), v(t) \rangle|\,dt &\leq \int_0^T   \|\nabla u(t)\|_{L^2}\|\nabla v(t)\|_{L^2} \,dt+\alpha \int_0^T \|u(t)\|^{p-1}_{L^p}\|v(t)\|_{L^p}\, dt \\& \quad + \sum_{\ell=1}^M |\beta_{\ell}|\int_0^T \|u(t)\|^{q_{\ell}-1}_{L^{q_{\ell}}}\|v(t)\|_{L^{q_{\ell}}}\, dt+\int_0^T \|f(t)\|_{H^{-1}}\|\nabla v(t)\|_{L^2} \, dt , 
\end{align*}
where we can bound the term involving $q_{\ell}$ by using \eqref{u^q to u^p journey} and H\"{o}lder's inequality as follows:
\begin{align*}
&\sum_{\ell=1}^M |\beta_{\ell}| \int_0^T \|u(t)\|^{q_{\ell}-1}_{L^{q_{\ell}}}\|v(t)\|_{L^{q_{\ell}}}\, dt\\ & \leq \sum_{\ell=1}^M |\beta_{\ell}| \int_0^T \|u(t)\|^{q_{\ell}-1}_{L^p} \|v(t)\|_{L^p} |\Omega|^{\frac{p-q_{\ell}}{p}} \, dt \leq \sum_{\ell=1}^M |\beta_{\ell}| |\Omega|^{\frac{p-q_{\ell}}{p}}\int_0^T \|u(t)\|^{q_{\ell}-1}_{L^p} \|v(t)\|_{L^p} \, dt \\& \leq  \sum_{\ell=1}^M |\beta_{\ell}| |\Omega|^{\frac{p-q_{\ell}}{p}} \bigg(\int_0^T \|u(t)\|_{L^p}^{\frac{p(q_{\ell}-1)}{p-1}}\, dt\bigg)^{\frac{p-1}{p}}\bigg(\int_0^T \|v(t)\|^p_{L^p} \, dt \bigg)^{\frac{1}{p}}\\& \leq \sum_{\ell=1}^M |\beta_{\ell}| |\Omega|^{\frac{p-q_{\ell}}{p}} \Bigg[\bigg(\int_0^T\|u(t)\|^p_{L^p}\, dt \bigg)^{\frac{q_{\ell}-1}{p-1}}T^{\frac{p-q_{\ell}}{p-1}}\Bigg]^{\frac{p-1}{p}}\|v\|_{L^p(0,T;L^p)}\\& \leq 
\sum_{\ell=1}^M |\beta_{\ell}| |\Omega|^{\frac{p-q_{\ell}}{p}} T^{\frac{p-q_{\ell}}{p}}\|u\|^{q_{\ell}-1}_{L^p(0,T;L^p)}\|v\|_{L^p(0,T;L^p)}.
\end{align*}
Thus, we arrive at
{\small\begin{align*}
&	\int_0^T |\langle \partial_t u(t), v(t) \rangle|\,dt 
\nonumber\\& \leq \bigg(\int_0^T \| \nabla u(t)\|^2_{L^2} \, dt\bigg)^{\frac{1}{2}}
	\bigg(\int_0^T \| \nabla v(t)\|^2_{L^2} \, dt\bigg)^{\frac{1}{2}} + \alpha \bigg(\int_0^T \|  u(t)\|^p_{L^p} \, dt\bigg)^{\frac{p-1}{p}} 
	\bigg(\int_0^T \|  v(t)\|^p_{L^p} \, dt\bigg)^{\frac{1}{p}} \\
	&\quad + \sum_{\ell=1}^M |\beta_{\ell}| |\Omega|^{\frac{p-q_{\ell}}{p}} 
	T^{\frac{p-q_{\ell}}{p}}
	\|u\|^{q_{\ell}-1}_{L^p(0,T;L^p)} \|v\|_{L^p(0,T;L^p)}+ \|f\|_{L^2(0,T;H^{-1})}\|v\|_{L^2(0,t;H_0^1)} \\
	& \leq \bigg( \|u\|_{L^2(0,T;H_0^1)}
	+ \|u\|^{p-1}_{L^p(0,T;L^p)}+\|f\|_{L^2(0,T;H^{-1})}  + \sum_{\ell=1}^M |\beta_{\ell}| |\Omega|^{\frac{p-q_{\ell}}{p}}
	T^{\frac{p-q_{\ell}}{p}} \|u\|^{q_{\ell}-1}_{L^p(0,T;L^p)} \bigg) \\
	&\quad \times \big(\|v\|_{L^2(0,T;H_0^1)}+\|v\|_{L^p(0,T;L^p)}\big).
\end{align*}}
Hence, from the above estimate, it follows that
\begin{align*}
\|\partial_t u \|_{L^2(0,T;H^{-1})\cap L^{p^{\prime}}(0,T;L^{p^{\prime}})}&\leq \|u\|_{L^2(0,T;H_0^1)}+\|u\|^{p-1}_{L^p(0,T;L^p)}+\|f\|_{L^2(0,T;H^{-1})} \\&\quad + \sum_{\ell=1}^M \beta_{\ell} |\Omega|^{\frac{p-q_{\ell}}{p}}T^{\frac{p-q_{\ell}}{p}}\|u\|^{q_{\ell}-1}_{L^p(0,T;L^p)}.
\end{align*}
Therefore, $\
\partial_t u \in L^2(0,T;H^{-1}(\Omega))\cap L^{p^{\prime}}(0,T;L^{p^{\prime}}(\Omega)).$
\vspace{0.2 cm}\\
\textbf{Step 7:} Using \eqref{damped L2-estimate} and weakly lower semicontinuity property of norms, we have 
\begin{align*}
&\sup_{0\leq t \leq T} \|u(t)\|^2_{L^2}+ \int_0^T \|u(t)\|^2_{H_0^1}\, dt + \int_0^T \|u(t)\|^p_{L^p}\, dt\\ &\leq \liminf_{m \to \infty} \sup_{0\leq t\leq T}\|u_m(t)\|^2_{L^2}+\liminf_{m \to \infty} \int_0^T \|u_m(t)\|^2_{H_0^1}\, dt + \liminf_{m \to \infty} \int_0^T \|u_m(t)\|^p_{L^p}\, dt\\&\leq \liminf_{m \to \infty} \bigg(\sup_{0\leq t\leq T}\|u_m(t)\|^2_{L^2}+\int_0^T \|u_m(t)\|^2_{H_0^1}\, dt +  \int_0^T \|u_m(t)\|^p_{L^p}\, dt\bigg) \\ & \leq  \|u_0\|^2_{L^2}+ \frac{1}{\nu} \int_0^T \|f(t)\|^2_{H^{-1}}\, dt+C^*|\Omega|T,
\end{align*}
where $C^*$ is defined in \eqref{C^*}. The following energy equality follows as a direct consequence of the absolute continuity lemma:

\noindent \textit{Energy equality:} As $ u \in L^\infty(0,T;L^2(\Omega))\cap L^{2}(0,T;H_0^1(\Omega)) \cap L^{p}(0,T;L^{p}(\Omega))$ with its weak derivative $\partial_t u \in L^2(0,T;H^{-1}(\Omega))\cap L^{p^\prime}(0,T;L^{p^{\prime}}(\Omega))$, using an argument similar to \cite[Theorem 7.2, Exercise 8.2]{MR1881888}, we have that the mapping $[0,T]\ni t\mapsto\|u(t)\|_{L^2}^2\in\mathbb{R}$ is absolutely continuous, $u \in C([0,T];L^2(\Omega))$ and the following energy equality holds true: 
\begin{align*}
&\|u(t) \|^2_{L^2} + 2\nu \int_0^T \|\nabla u(t)\|^2_{L^2}\,dt + 2\alpha \int_0^T \|u(t)\|^p_{L^p}\, dt \\&= \|u_0\|^2_{L^2} +2\sum_{\ell=1}^M \beta_{\ell} \int_0^T\|u(t)\|^{q_{\ell}}_{L^{q_{\ell}}}\, dt+ 2\int_0^T \langle f(t), u(t) \rangle\, dt,
\end{align*}
for all $ t \in [0,T]$. 
This completes the proof of the existence of  weak solution.
\end{proof}

\begin{Theorem}[Uniqueness of weak solution]\label{damped uniqueness}For $u_0 \in L^2(\Omega)$ and $f \in L^2(0,T;H^{-1}(\Omega))$, the weak solution of the system \eqref{Damped Heat} as discussed in Theorem \ref{existenceweak} is unique.
\end{Theorem}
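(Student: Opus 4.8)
The plan is to argue by a standard energy estimate on the difference of two solutions and to close with Gronwall's inequality, using the monotonicity of the damping nonlinearity to absorb the sign-indefinite pumping contributions. Let $u_1,u_2$ be two weak solutions associated with the same data $(u_0,f)$ and set $w=u_1-u_2$. Subtracting the weak formulations \eqref{eqn-strong-form}, the difference satisfies $w(0)=0$ and, for a.e.\ $t$ and every admissible $v\in H_0^1(\Omega)\cap L^p(\Omega)$,
\begin{align*}
\langle\partial_t w,v\rangle+\nu(\nabla w,\nabla v)+\alpha\langle b(u_1)-b(u_2),v\rangle=\sum_{\ell=1}^M\beta_\ell\big\langle|u_1|^{q_\ell-2}u_1-|u_2|^{q_\ell-2}u_2,v\big\rangle.
\end{align*}
Since $w$ inherits the regularity class of the individual solutions, namely $w\in L^2(0,T;H_0^1(\Omega))\cap L^p(0,T;L^p(\Omega))$ with $\partial_t w\in L^2(0,T;H^{-1}(\Omega))+L^{p'}(0,T;L^{p'}(\Omega))$, the absolute-continuity lemma invoked for the energy equality in Theorem~\ref{existenceweak} (cf.\ \cite[Theorem 7.2, Exercise 8.2]{MR1881888}) justifies choosing $v=w$, giving
\begin{align*}
\frac12\frac{d}{dt}\|w(t)\|_{L^2}^2+\nu\|\nabla w(t)\|_{L^2}^2+\alpha\langle b(u_1)-b(u_2),w\rangle=\sum_{\ell=1}^M\beta_\ell\big\langle|u_1|^{q_\ell-2}u_1-|u_2|^{q_\ell-2}u_2,w\big\rangle.
\end{align*}

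The damping term on the left is handled by the coercivity estimate \eqref{eqn-mono-2}, which yields $\alpha\langle b(u_1)-b(u_2),w\rangle\geq\frac{\alpha}{2}\big(\||u_1|^{\frac{p-2}{2}}w\|_{L^2}^2+\||u_2|^{\frac{p-2}{2}}w\|_{L^2}^2\big)\geq 0$. The crux is the pumping sum, which has no favorable sign since $\beta_\ell\in\mathbb{R}$. Writing each difference in the integral (mean-value) form as in Subsection~\ref{Non-linear operator} and using $|\theta u_1+(1-\theta)u_2|\leq|u_1|+|u_2|$ for $q_\ell\geq2$, I would bound pointwise
\begin{align*}
\big||u_1|^{q_\ell-2}u_1-|u_2|^{q_\ell-2}u_2\big|\leq(q_\ell-1)\big(|u_1|+|u_2|\big)^{q_\ell-2}|w|,
\end{align*}
so that the $\ell$-th pumping term is dominated by $C\int_\Omega(|u_1|+|u_2|)^{q_\ell-2}|w|^2\,dx$. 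The subcriticality $2\leq q_\ell<p$ is exactly what makes the argument work: by Young's inequality with the conjugate exponents $\tfrac{p-2}{q_\ell-2}$ and $\tfrac{p-2}{p-q_\ell}$ (the endpoint $q_\ell=2$ being trivial), for any $\varepsilon>0$ one has the pointwise bound $(|u_1|+|u_2|)^{q_\ell-2}|w|^2\leq\varepsilon(|u_1|+|u_2|)^{p-2}|w|^2+C_\varepsilon|w|^2$.

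Integrating and using $(|u_1|+|u_2|)^{p-2}\lesssim|u_1|^{p-2}+|u_2|^{p-2}$, the first term becomes a multiple of $\||u_1|^{\frac{p-2}{2}}w\|_{L^2}^2+\||u_2|^{\frac{p-2}{2}}w\|_{L^2}^2$, which is absorbed into the damping coercivity on the left for $\varepsilon$ small (uniformly in $\ell$), while the remainder contributes $C\|w(t)\|_{L^2}^2$. After absorption and discarding the nonnegative terms, the identity reduces to $\frac{d}{dt}\|w(t)\|_{L^2}^2\leq C\|w(t)\|_{L^2}^2$ for a.e.\ $t$, with $C$ finite by the a priori $L^\infty(0,T;L^2)$ and $L^p(0,T;L^p)$ bounds on both solutions. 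Since $\|w(0)\|_{L^2}=0$, Gronwall's inequality forces $w\equiv0$, proving uniqueness. The main obstacle is precisely the pumping term: it is the only contribution lacking a sign, and the entire argument hinges on trading its subcritical growth against the damping coercivity via the Young split, leaving an $L^2$-remainder amenable to Gronwall; the one point requiring care is that the absorbed constant stay integrable in time, which is guaranteed by the $L^p(0,T;L^p)$ regularity of $u_1$ and $u_2$.
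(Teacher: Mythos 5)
Your proof is correct and follows essentially the same route as the paper's: an energy estimate on $w=u_1-u_2$, the coercivity bound \eqref{eqn-mono-2} for the damping term, the mean-value bound combined with a H\"older/Young split using the conjugate exponents $\tfrac{p-2}{q_\ell-2}$ and $\tfrac{p-2}{p-q_\ell}$ to absorb the pumping term into the damping coercivity, and Gronwall's inequality. One minor remark: the constant in your final differential inequality is in fact solution-independent (it depends only on $\alpha,\beta_\ell,p,q_\ell,M$, exactly as the paper's constant defined in \eqref{C_2}), so your closing appeal to the $L^\infty(0,T;L^2)$ and $L^p(0,T;L^p)$ bounds of $u_1,u_2$ is unnecessary.
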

\begin{proof}
    Let $u_1(\cdot) \text{ and } u_2(\cdot)$ bet two weak solutions of \eqref{Damped Heat} both subject to same forcing $f(\cdot)$ and initial data $u_0$. Then, $w:=u_1 - u_2 $ satisfies the following in the weak sense:
    \begin{equation}\label{damped unique}
        \begin{cases}
        \begin{aligned}
        &\frac{\partial w(x,t)}{\partial t} - \nu \Delta w(x,t)+\alpha (|u_1(x,t)|^{p-2}u_1(x,t)-|u_2(x,t)|^{p-2}u_2(x,t))\\ &\quad-\sum_{\ell=1}^M\beta_{\ell} (|u_1(x,t)|^{q_{\ell}-2}u_1(x,t)-|u_2(x,t)|^{q_{\ell}-2}u_2(x,t))=0 \text{ for all }(x,t) \in \Omega \times (0,T),  
        \end{aligned}\\
        w(x,t)=0, \, \text{ for all } x \in \partial \Omega, \, \, t \in (0,T),\\
        w(x,0) = 0 \text{ for all }x\in \Omega.
        \end{cases}
    \end{equation}
Using the absolute continuity of the mapping $[0,T]\ni t\mapsto\|u(t)\|_{L^2}^2\in\mathbb{R}$, taking the inner product with $w$, we infer 
    \begin{align}\label{Damped unique - 01}
        \frac{1}{2} \frac{d}{dt} \|w(t)\|^2_{L^2}+ \nu \|\nabla w(t)\|^2_{L^2} &= -\alpha (|u_1(t)|^{p-2}u_1(t)-|u_2(t)|^{p-2}u_2(t), w(t)) \nonumber\\
        &\quad +\sum_{\ell=1}^M \beta_{\ell} (|u_1(t)|^{q_{\ell}-2}u_1(t)-|u_2(t)|^{q_{\ell}-2}u_2(t),w(t)),
        \end{align}
for a.e. $t\in[0,T]$.We begin by estimating the nonlinear term $-\alpha(|u_1|^{p-2}u_1-|u_2|^{p-2}u_2,w)$, following the approach in \cite[Section 3.1]{MR4251864} as
    \begin{align}\label{eqn-ces}
            -\alpha (|u_1|^{p-2}u_1-|u_2|^{p-2}u_2,w)
            &\leq -\frac{\alpha}{2}\||u_1|^{\frac{p-2}{2}}w\|^2_{L^2}-\frac{\alpha}{2}\||u_2|^{\frac{p-2}{2}}w\|^2_{L^2}.
        \end{align}
    We also estimate the pumping term as
    \begin{align*}
    (|u_1|^{q_{\ell}-2}u_1-|u_2|^{q_{\ell}-2}u_2,w)&\leq (q_{\ell}-1)(|u_1|+|u_2|)^{q_{\ell}-2}|u_1-u_2|, |u_1-u_2|) \\
    & \leq 2^{q_{\ell}-3}(q_{\ell}-1)((|u_1|^{q_{\ell}-2}+|u_2|^{q_{\ell}-2})|u_1-u_2|, |u_1-u_2|)\\
    &\leq 2^{q_{\ell}-3}(q_{\ell}-1)(|u_1|^{q_{\ell}-2}|u_1-u_2|, |u_1-u_2|)\\
    &\quad+2^{q_{\ell}-3}(q_{\ell}-1)(|u_2|^{q_{\ell}-2}|u_1-u_2|, |u_1-u_2|)\\
    &=  \tilde{C}(I_1+ I_2),
    \end{align*}
    where 
    \begin{align*}
    	\mbox{$\tilde{C}=2^{q_{\ell}-3}(q_{\ell}-1)$, $I_1=(|u_1|^{q_{\ell}-2}|u_1-u_2|, |u_1-u_2|)$ and $I_2=(|u_2|^{q_{\ell}-2}|u_1-u_2|, |u_1-u_2|)$.}
    \end{align*}
     Using the fact that $\frac{q_{\ell}-2}{p-2}+\frac{p-q_{\ell}}{p-2}=1$ and by an application of H\"{o}lder's and Young's inequalities, we can estimate $I_1$ and $I_2$ as
    \begin{align*}
   |I_1|&=|(|u_1|^{q_{\ell}-2}|u_1-u_2|^{\frac{2(q_{\ell}-2)}{p-2}}, |u_1-u_2|^{\frac{2(p-q_{\ell})}{p-2}})|\\&\leq (\||u_1|^\frac{p-2}{2}|u_1-u_2|\|^2_{L^2})^{\frac{q_{\ell}-2}{p-2}}(\|u_1-u_2\|^2_{L^2})^{\frac{p-q_{\ell}}{p-2}}\\
    &\leq \frac{\alpha}{2^{q_{\ell}-1}M|\beta_{\ell}|(q_{\ell}-1)}\||u_1|^\frac{p-2}{2}|u_1-u_2|\|^2_{L^2}\\&\quad + \bigg(\frac{2^{q_{\ell}-1}M|\beta_{\ell}|(q_{\ell}-1)(q_{\ell}-2)}{\alpha(p-2)}\bigg)^{\frac{q_{\ell}-2}{p-q_{\ell}}}\bigg(\frac{p-q_{\ell}}{p-2}\bigg)\|u_1-u_2\|^2_{L^2}\\
    |I_2| &\leq \frac{\alpha}{2^{q_{\ell}-1}M|\beta_{\ell}|(q_{\ell}-1)}\||u_2|^\frac{p-2}{2}|u_1-u_2|\|^2_{L^2}\\&\quad + \bigg(\frac{2^{q_{\ell}-1}M|\beta_{\ell}|(q_{\ell}-1)(q_{\ell}-2)}{\alpha(p-2)}\bigg)^{\frac{q_{\ell}-2}{p-q_{\ell}}}\bigg(\frac{p-q_{\ell}}{p-2}\bigg)\|u_1-u_2\|^2_{L^2}.
    \end{align*}
    Combining the above estimates, we arrive at
    \begin{align*}
      &   \frac{1}{2} \frac{d}{dt} \|w(t)\|^2_{L^2}+ \nu \|\nabla w(t)\|^2_{L^2} \\&\leq -\frac{\alpha}{4}\||u_1(t)|^{\frac{p-2}{2}}w(t)\|^2_{L^2}-\frac{\alpha}{4}\||u_2(t)\|^{\frac{p-2}{2}}w(t)\|^2_{L^2}\\&\quad +2\bigg[\sum_{\ell=1}^M \tilde{C}|\beta_{\ell}| \bigg(\frac{2^{q_{\ell}-1}M|\beta_{\ell}|(q_{\ell}-1)(q_{\ell}-2)}{\alpha(p-2)}\bigg)^{\frac{q_{\ell}-2}{p-q_{\ell}}}\bigg(\frac{p-q_{\ell}}{p-2}\bigg)\bigg]\|w\|^2_{L^2},
    \end{align*}
for a.e. $t\in[0,T]$.     For simplicity, let us assume \begin{align}\label{C_2}
    	\sum_{\ell=1}^M \tilde{C}|\beta_{\ell}| \bigg(\frac{2^{q_{\ell}-1}M|\beta_{\ell}|(q_{\ell}-1)(q_{\ell}-2)}{\alpha(p-2)}\bigg)^{\frac{q_{\ell}-2}{p-q_{\ell}}}\bigg(\frac{p-q_{\ell}}{p-2}\bigg)=C_2.\end{align} Therefore, the above estimate can be simplified as
    \begin{equation*}
     \frac{d}{dt} \|w(t)\|^2_{L^2}+ 2\nu \|\nabla w(t)\|^2_{L^2} \leq 4C_2\|(t)w\|^2_{L^2}, \ \text{ for a.e. }\ t\in[0,T].
    \end{equation*}
    Using Gronwall's inequality \cite[p. 624]{MR1625845}, we see that
    \begin{equation*}
    \|w(t)\|^2_{L^2} \leq \|w(0)\|^2_{L^2}\exp(4C_2 T)=0.
    \end{equation*}
    Since, the exponential term is bounded, therefore, $w(t)=0$ for a.e. $t\in [0,T]$ and hence $ w= u_1-u_2 = 0 $ so $u_1=u_2$ in $L^2(\Omega)$. Therefore, the  weak solution of the system \eqref{Damped Heat} is unique.
\end{proof}

\subsection{Regularity results}\label{regularity results}
In this section, we characterize the solution space under suitable assumptions on the regularity of initial condition and the external force.
\begin{Theorem}\label{regularity f in L2}
For the  initial data $u_0 \in H_0^1(\Omega)\cap L^p(\Omega),$ $ p\geq 2$  and $f \in L^2(0,T;L^2(\Omega))$, the weak solution of the system \eqref{Damped Heat}, $u$ in $\Omega$ (either convex, or a domain with $C^2$-boundary), is a \emph{strong solution} satisfying
\begin{equation*}
u \in C([0,T];H_0^1(\Omega)\cap L^p(\Omega)) \cap L^2(0,T;H^2(\Omega)) \cap L^{2p-2}(0,T;L^{2p-2}(\Omega)),
\end{equation*}
with $\partial _tu \in L^2(0,T;L^2(\Omega))$ and 
\begin{equation}
    \begin{cases}
    	\displaystyle
        \partial_t u(t) - \nu \Delta u(t)+\alpha |u(t)|^{p-2}u(t)-\sum_{\ell=1}^M\beta_{\ell} |u(t)|^{q_{\ell}-2}u(t) = f(t), \quad \text{ in }\  L^2(\Omega),\\
        u(0)=u_0 \in H_0^1(\Omega),\\
    \end{cases}
\end{equation}
for a.e. $t \in [0,T].$
\end{Theorem}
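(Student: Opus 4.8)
The plan is to return to the modified Faedo--Galerkin scheme \eqref{damped Galerkin projection} from the proof of Theorem~\ref{existenceweak} and derive two additional $m$-uniform a priori estimates; the extra regularity then passes to the limit, and the $L^{2p-2}$-bound is extracted directly from the equation. The decisive point is that the approximate initial datum is $u_m(0)=S_{m-1}u_0$ rather than $P_m u_0$: since the projections $P_m$ are \emph{not} uniformly bounded on $L^p(\Omega)$, only the self-adjoint operators $S_m$ of Proposition~\ref{Prop-Sm} supply the uniform bounds $\|u_m(0)\|_{H_0^1}\le\|u_0\|_{H_0^1}$ and $\|u_m(0)\|_{L^p}\le C\|u_0\|_{L^p}$ that are needed to control the initial energy when $u_0\in H_0^1(\Omega)\cap L^p(\Omega)$. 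For general $f\in L^2(0,T;L^2(\Omega))$ one first argues with a smooth approximation of $f$, so that $u_m\in C^1([0,T];V_m)$ and the manipulations below are legitimate, and then passes to the limit in $f$.

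First I would test \eqref{damped Galerkin projection} with $\partial_t u_m$. Since $\partial_t u_m\in V_m$, the projection $P_m$ on the nonlinear terms is harmless, and using $(|u_m|^{p-2}u_m,\partial_t u_m)=\frac{1}{p}\frac{d}{dt}\|u_m\|_{L^p}^p$ together with the analogous identities for the pumping terms and Young's inequality to absorb the lower-order pumping contributions (recall $q_\ell<p$), I obtain
$$\sup_{0\le t\le T}\Big(\|\nabla u_m(t)\|_{L^2}^2+\|u_m(t)\|_{L^p}^p\Big)+\int_0^T\|\partial_t u_m(t)\|_{L^2}^2\,dt\le C,$$
with $C$ independent of $m$ precisely because of the uniform control of $\|u_m(0)\|_{H_0^1\cap L^p}$. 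Next I would test with $Au_m=-\Delta u_m\in V_m$; integrating the damping term by parts produces the nonnegative quantity $\alpha(p-1)\int_\Omega|u_m|^{p-2}|\nabla u_m|^2\,dx$, the pumping terms are again absorbed by Young's inequality (using $q_\ell-2<p-2$), and the forcing is controlled by $(f,Au_m)\le\frac{\nu}{2}\|Au_m\|_{L^2}^2+\frac{1}{2\nu}\|f\|_{L^2}^2$. A Gronwall argument based on the previous bound then gives $\int_0^T\|Au_m(t)\|_{L^2}^2\,dt\le C$, and elliptic regularity on the convex (or $C^2$) domain $\Omega$ upgrades this to $\int_0^T\|u_m(t)\|_{H^2}^2\,dt\le C$.

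Passing to a further subsequence and invoking weak-$*$ and weak lower semicontinuity, the limit---which coincides with the weak solution of Theorem~\ref{existenceweak} by the uniqueness Theorem~\ref{damped uniqueness}---satisfies $u\in L^\infty(0,T;H_0^1(\Omega)\cap L^p(\Omega))\cap L^2(0,T;H^2(\Omega))$ and $\partial_t u\in L^2(0,T;L^2(\Omega))$. At this stage the equation holds in the weak formulation, and since $\partial_t u,\ \Delta u,\ f\in L^2(\Omega)$ for a.e.\ $t$, the full nonlinear combination $g:=\alpha|u|^{p-2}u-\sum_{\ell}\beta_\ell|u|^{q_\ell-2}u=f+\nu\Delta u-\partial_t u$ lies in $L^2(0,T;L^2(\Omega))$. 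The $L^{2p-2}$-bound is then obtained by a pointwise growth argument rather than by parabolic interpolation (which would only reach $p\le\frac{2d}{d-2}$): since $q_\ell<p$, there is $R>0$ with $\sum_{\ell}|\beta_\ell|\,|s|^{q_\ell-1}\le\frac{\alpha}{2}|s|^{p-1}$ for $|s|\ge R$, whence $|g|\ge\frac{\alpha}{2}|u|^{p-1}$ pointwise on $\{|u|\ge R\}$; integrating over $Q_T=(0,T)\times\Omega$ and handling $\{|u|<R\}$ trivially yields $u\in L^{2p-2}(0,T;L^{2p-2}(\Omega))$, so that each nonlinear term belongs to $L^2(Q_T)$ and the equation holds in $L^2(\Omega)$ for a.e.\ $t$.

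Finally, temporal continuity follows from the regularity just established: $u\in L^2(0,T;D(A))$ with $\partial_t u\in L^2(0,T;L^2(\Omega))$ gives $u\in C([0,T];H_0^1(\Omega))$ via the standard Lions--Magenes interpolation lemma (as invoked in Step~7 of Theorem~\ref{existenceweak}), while $t\mapsto\frac{1}{p}\|u(t)\|_{L^p}^p$ is absolutely continuous, its derivative $(|u|^{p-2}u,\partial_t u)$ being integrable now that $|u|^{p-2}u,\partial_t u\in L^2(Q_T)$; combining this norm continuity with weak continuity in $L^p(\Omega)$ and the uniform convexity of $L^p(\Omega)$ for $p\ge2$ upgrades it to $u\in C([0,T];L^p(\Omega))$. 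I expect the main obstacle to be the $L^{2p-2}$-regularity for the unrestricted range $2\le p<\infty$ in arbitrary dimension: the projected nonlinearity $P_m(|u_m|^{p-2}u_m)$ obstructs a clean coercive $\|u_m\|_{L^{2p-2}}^{2p-2}$ estimate at the Galerkin level (one only controls $\|P_m(|u_m|^{p-2}u_m)\|_{L^2}$), which is exactly why that estimate must be read off \emph{after} the limit, directly from the equation via the growth argument above; securing the $m$-uniform initial energy through $S_{m-1}$ is the other delicate ingredient.
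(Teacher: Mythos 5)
Your proposal is correct and follows essentially the same path as the paper's proof: the two Galerkin energy estimates (testing with $\partial_t u_m$ and with $-\Delta u_m$, in the opposite order to the paper's Steps 1--2), the $m$-uniform control of the initial energy through $S_{m-1}$ via Proposition~\ref{Prop-Sm}, passage to the limit and identification with the weak solution by uniqueness, extraction of the $L^{2p-2}$ bound from the equation \emph{after} the limit, and finally $C([0,T];H_0^1(\Omega))$ by Lions--Magenes together with $C([0,T];L^p(\Omega))$ from norm continuity plus weak continuity plus uniform convexity (Radon--Riesz), exactly as in the paper. The only substantive deviations are minor: the paper gets the $L^{2p-2}$ estimate by isolating $\alpha|u|^{p-2}u$ and absorbing the pumping terms with H\"older--Young rather than your pointwise domination $|g|\ge\tfrac{\alpha}{2}|u|^{p-1}$ on $\{|u|\ge R\}$ (both work), and the paper justifies the absolute continuity of $t\mapsto\|u(t)\|_{L^p}^p$ carefully via the auxiliary function $z=|u|^{p/2}$ and a cited Lions--Magenes-type theorem, whereas you assert the chain rule in one clause --- this is defensible by a standard time-mollification/Nemytskii-continuity argument given $|u|^{p-2}u,\,\partial_t u\in L^2(0,T;L^2(\Omega))$, but it does require that argument, not merely the integrability of the formal derivative.
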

\begin{proof}
    \textbf{Step-1: }\textit{$H_0^1$- energy estimate}. Multiplying by $\lambda_kd_m^k(\cdot)$ in \eqref{damped Galerkin projection} and summing over all $k$ such that $\lambda_k<2^{m+1}$, we have 
    \begin{align*}
  -( \partial_t u_m(t),\Delta u_m(t))+&\nu(\Delta u_m(t), \Delta u_m(t))  +\alpha(|u_m(t)|^{p-2}u_m(t),-\Delta u_m(t))\\ &= 
  \sum_{\ell=1}^M \beta_{\ell} (|u_m(t)|^{q_{\ell}-2}u_m(t), -\Delta u_m(t))+(f(t), -\Delta u_m(t)),
\end{align*}
for a.e. $t\in[0,T]$. Therefore, it is immediate that 
\begin{align*}
  \frac{1}{2}\frac{d}{dt}\|\nabla u_m(t)\|^2_{L^2}+&\nu\|\Delta u_m(t)\|^2_{L^2}  +\alpha(p-1)(|u_m(t)|^{p-2}\nabla u_m(t),\nabla u_m(t))\\ & =
  \sum_{\ell=1}^M \beta_{\ell}(q_{\ell}-1) (|u_m(t)|^{q_{\ell}-2}\nabla u_m(t), \nabla u_m(t))+(f(t), -\Delta u_m(t))\nonumber\\
         & \leq
  \bigg|\sum_{\ell=1}^M \beta_{\ell}(q_{\ell}-1) (|u_m(t)|^{q_{\ell}-2}\nabla u_m(t), \nabla u_m(t))\bigg|+\|f(t)\|_{L^2} \|\Delta u_m(t)\|_{L^2},
    \end{align*}
where we have used the Cauchy-Schwarz inequality.     The term $(|u_m|^{q_{\ell}-2}\nabla u_m, \nabla u_m)$ can be estimated using the same approach as in Theorem \ref{damped uniqueness} as
    \begin{align}\label{ql2,grad transfomration}
   &\bigg| \sum_{\ell=1}^M \beta_{\ell}(q_{\ell}-1) (|u_m|^{q_{\ell}-2}\nabla u_m, \nabla u_m)\bigg| \nonumber \\&\leq \bigg| \sum_{\ell=1}^M \beta_{\ell}(q_{\ell}-1)\bigg(\||u_m|^{\frac{p-2}{2}}\nabla u_m\|^2_{L^2}\bigg)^{\frac{q_{\ell}-2}{p-2}}\bigg(\|\nabla u_m\|^2_{L^2}\bigg)^{\frac{p-q_{\ell}}{p-2}}\bigg| \nonumber \\
    & \leq \frac{\alpha(p-1)}{2} \||u_m|^{\frac{p-2}{2}}\nabla u_m\|^2_{L^2} \nonumber\\&\quad+ \sum_{\ell=1}^M |\beta_{\ell}|(q_{\ell}-1)\bigg(\frac{2M|\beta_{\ell}|(q_{\ell}-1)(q_{\ell}-2)}{\alpha(p-1)(p-2)}\bigg)^\frac{q_{\ell}-2}{p-q_{\ell}}\bigg(\frac{p-q_{\ell}}{p-2}\bigg)\|\nabla u_m\|^2_{L^2}.
    \end{align}
    Using the above estimate and Young's inequality, we arrive at the following:
\begin{align*}
    &\frac{1}{2}\frac{d}{dt}\|\nabla u_m(t)\|^2_{L^2}+\nu\|\Delta u_m(t)\|^2_{L^2}
    + \frac{\alpha(p-1)}{2}\||u_m(t)|^{\frac{p-2}{2}} \nabla u_m(t)\|^2_{L^2} \\
    &\leq  \frac{1}{2\nu} \|f(t)\|^2_{L^2} 
   + \frac{\nu}{2} \|\Delta u_m(t)\|^2_{L^2} \\&\quad+\bigg[\sum_{\ell=1}^M |\beta_{\ell}|\bigg(\frac{2M|\beta_{\ell}|(q_{\ell}-1)(q_{\ell}-2)}{\alpha(p-1)(p-2)}\bigg)^\frac{q_{\ell}-2}{p-q_{\ell}}\bigg(\frac{p-q_{\ell}}{p-2}\bigg)\bigg]\|\nabla u_m(t)\|^2_{L^2},
\end{align*}
for a.e. $t\in[0,T]$. For the sake of simplicity, let us take 
\begin{align}\label{eqn-C3}
    \sum_{\ell=1}^M |\beta_{\ell}|\bigg(\frac{2M|\beta_{\ell}|(q_{\ell}-1)(q_{\ell}-2)}{\alpha(p-1)(p-2)}\bigg)^\frac{q_{\ell}-2}{p-q_{\ell}}\bigg(\frac{p-q_{\ell}}{p-2}\bigg)=C_3.
\end{align}
 Hence, we deduce for a.e. $t\in[0,T]$ that
    \begin{align*}
    &  \frac{d}{dt}\|\nabla u_m(t)\|^2_{L^2}+ \nu\|\Delta u_m(t)\|^2_{L^2} +\alpha(p-1)\||u_m(t)|^{\frac{p-2}{2}} \nabla u_m(t)\|^2_{L^2}\\& \leq \frac{1}{\nu} \|f(t)\|^2_{L^2}+2C_3\|\nabla u_m(t)\|^2_{L^2} .
    \end{align*}
    Integrating from $0$ to $t$ and applying Gronwall's inequality, we conclude that
    \begin{align}\label{regularity-err}
    \|\nabla u_m(t)\|^2_{L^2}&+ \nu \int_0^t \|\Delta u_m(s)\|^2_{L^2}\,ds +\alpha(p-1)\int_0^t\||u_m(s)|^{\frac{p-2}{2}}\nabla u_m(s)\|^2_{L^2}ds \nonumber \\& \leq \bigg(\frac{1}{\nu} \int_0^t \|f(s)\|^2_{L^2} ds +\|\nabla u_0\|^2_{L^2}\bigg)\exp\bigg( 2C_3t\bigg),
    \end{align}
    for all $t\in [0,T]$. By taking supremum over all $t,$ we confirm that \begin{equation*}u_m \in L^\infty(0,T;H_0^1(\Omega)).
    \end{equation*}
    Moreover, we have $\Delta u_m\in L^2(0,T;L^2(\Omega))$ with $u_m(t)\big|_{\partial\Omega}=0$ for all $t\in[0,T]$. By the elliptic regularity result \cite[Theorem 3.1.2.1]{MR3396210}, it follows that
    \[
    u_m \in L^2(0,T;H^2(\Omega)).
    \]

\noindent     \textbf{Step-2: }\textit{Time derivative estimate}. For $m\geq 1$, multiplying \eqref{damped Galerkin projection} by $d_m^k{'}(\cdot)$ and summing over all $k$ such that $\lambda_k<2^{m+1}$, we have
    \begin{align*}
    \|\partial_t u_m(t) \|^2_{L^2} &+ \frac{\nu}{2}\frac{d}{dt} \|\nabla u_m(t)\|^2_{L^2}  +\alpha(|u_m(t)|^{p-2}u_m(t), \partial_t u_m(t))\\&=(f(t),\partial_tu_m(t)) +\sum_{\ell=1}^M \beta_{\ell} (|u_m(t)|^{q_{\ell}-2}u_m(t), \partial_t u_m(t)),
    \end{align*}
for a.e. $t\in[0,T]$.    Using the Cauchy-Schwarz and Young's inequalities, we attain
    \begin{align*}
     \|\partial_t u_m(t) \|^2_{L^2} &+ \frac{\nu}{2}\frac{d}{dt} \|\nabla u_m(t)\|^2_{L^2}+\frac{\alpha}{p}\frac{d}{dt}\|u_m(t)\|^p_{L^p}\\&\leq \frac{1}{2}\|f(t)\|^2_{L^2}+ \frac{1}{2}\|\partial_t u_m(t)\|^2_{L^2} + \sum_{\ell=1}^M \frac{\beta_{\ell}}{q_{\ell}}\frac{d}{dt}\|u_m(t)\|^{q_{\ell}}_{L^{q_{\ell}}},
    \end{align*}
that is,
	\begin{align*}
     \|\partial_t u_m (t)\|^2_{L^2} + \nu \frac{d}{dt} \|\nabla u_m(t)\|^2_{L^2}+\frac{2\alpha}{p}\frac{d}{dt}\|u_m(t)\|^p_{L^p}&\leq \|f(t)\|^2_{L^2}+2\sum_{\ell=1}^M \frac{\beta_{\ell}}{q_{\ell}}\frac{d}{dt}\|u_m(t)\|^{q_{\ell}}_{L^{q_{\ell}}},
      \end{align*}
 for a.e. $t\in[0,T]$. Integrating from $0$ to $t$, we achieve
    \begin{align}\label{regularity time}
         &\nu \|\nabla u_m(t)\|^2_{L^2}+\frac{2\alpha}{p} \|u_m(t)\|^p_{L^p}+\int_0^t \|\partial_s u_m(s) \|^2_{L^2} \, ds + 2\sum_{\ell=1}^M \frac{\beta_{\ell}}{q_{\ell}}\|u_m(0)\|^{q_{\ell}}_{L^{q_{\ell}}}\nonumber \\& \qquad  \leq\nu \|\nabla  u_m(0)\|^2_{L^2}+\frac{2\alpha}{p}\|u_m(0)\|^p_{L^p}+ \int_0^t \|f(s)\|^2_{L^2}\, ds+ \sum_{\ell=1}^M \frac{\beta_{\ell}}{q_{\ell}}\|u_m(t)\|^{q_{\ell}}_{L^{q_{\ell}}}.
         \end{align}
The term $\|u_m(\cdot)\|_{L^{q_{\ell}}}^{q_{\ell}}$ can be estimated following the calculations in \eqref{u^q to u^p journey}. Using the fact that (see Proposition \ref{Prop-Sm})$$\|u_m(0)\|_{L^p} = \|S_{m-1} u_0\|_{L^p} \le \|u_0\|_{L^p} < \infty\ \text{ and }\ \|\nabla  u_m(0)\|_{L^2}\leq\|\nabla u_0\|_{L^2}<\infty$$ (and similarly for $\|u_m(0)\|_{L^{q_{\ell}}}$), it follows that the right-hand side of \eqref{regularity time} is independent of $m$.
This leads to 
\begin{align*}
   u_m \in L^{\infty}(0,T;L^p(\Omega)),\,  \partial_t u_m \in L^2(0,T;L^2(\Omega)).
\end{align*}

\noindent
\textbf{Step-3:} \emph{Existence of strong solutions.}
 Applications of the Banach-Alaoglu theorem and Aubin-Lions compactness Lemma as performed  in Steps 4 and 5 in the proof of Theorem \ref{existenceweak}, lead to
 \begin{equation}\label{eqn-conv-1}
 	\left\{
 \begin{aligned}
 	u_m &\xrightarrow{w^*} \tilde{u} \in L^{\infty}(0,T;H_0^1(\Omega)\cap L^p(\Omega)),\\  
 	u_m &\xrightarrow{w} \tilde{u} \in L^2(0,T;H^2(\Omega)),\\ 
 	\partial_t u_m&\xrightarrow{w}  \partial_t \tilde{u} \in L^2(0,T;L^2(\Omega)),\\
 	u_m&\to \tilde{u}\in C([0,T];L^2(\Omega))\cap L^2(0,T;L^2(\Omega)),
 \end{aligned}
 \right.
 \end{equation}
 along a further subsequence still denoted  by the same symbol. By the uniqueness of weak limits, we obtain $\tilde{u}=u$ and 
    \begin{align*}
    u \in L^\infty(0,T;H_0^1(\Omega)\cap L^p(\Omega)) \cap L^2(0,T;H^2(\Omega)) \ \text{ and } \
    { \partial_t u} \in L^2(0,T;L^2(\Omega)).
    \end{align*}
    Therefore, an application of the Lions-Magenes lemma (\cite[Lemma 1.2, p. 176]{Te_1997}) yields 
     $ u \in C([0,T];H_0^1(\Omega))$. The weakly lowersemicontinuity property  of norms yields
     \begin{align*}
     	&\sup_{0\leq t\leq T}\left(\|u(t)\|_{H_0^1}^2+\|u(t)\|_{L^p}^p\right)+\int_0^T\|\Delta u(t)\|_{L^2}^2dt+\int_0^T\|\partial_t u(t)\|_{L^2}^2dt \nonumber\\&\leq \liminf_{m\to\infty}\left(\sup_{0\leq t\leq T}\left(\|u_m(t)\|_{H_0^1}^2+\|u_m(t)\|_{L^p}^p\right)+\int_0^T\|\Delta u_m(t)\|_{L^2}^2dt+\int_0^T\|\partial_t u_m(t)\|_{L^2}^2dt\right)\nonumber\\&\leq C.
     \end{align*}
     We infer from  \eqref{damped abstract formulation} that for a.e. $t\in[0,T]$
     \begin{align*}
     	\alpha |u(t)|^{p-2}u(t)=-\partial_tu(t) +\nu \Delta u(t)+\sum_{\ell=1}^M\beta_{\ell} |u(t)|^{q_{\ell}-2}u(t) +f(t)\ \text{ in }\  H^{-1}(\Omega),
     \end{align*}
     so that 
     \begin{align*}
     	\alpha^2\int_0^T\||u(t)|^{p-2}u(t)\|_{L^2}^2dt&\leq C\bigg(\int_0^T\left\|\partial_tu(t)\right\|_{L^2}^2dt+\nu^2\int_0^T\|\Delta u(t)\|_{L^2}^2dt\nonumber\\&\quad+\sum_{\ell=1}^M|\beta_{\ell}|^2\int_0^T\| |u(t)|^{q_{\ell}-2}u(t) \|_{L^2}^2dt+\int_0^T\|f(t)\|_{L^2}^2dt\bigg).
     \end{align*}
     An application of H\"older's and Young's inequalities yield 
     \begin{align*}
     	\frac{\alpha^2}{2}\int_0^T\|u(t)\|_{L^{2p-2}}^{2p-2}dt&\leq C\bigg(\int_0^T\left\|\partial_tu(t)\right\|_{L^2}^2dt+\nu^2\int_0^T\|\Delta u(t)\|_{L^2}^2dt\nonumber\\&\quad+\sum_{\ell=1}^M\left(\frac{p-q_{\ell}}{p-1}\right)\left(\frac{2M(q_{\ell}-1)}{\alpha(p-1)}\right)^{\frac{q_{\ell}-1}{p-q_{\ell}}}|\beta_{\ell}|^{\frac{p-1}{p-q_{\ell}}}|\Omega|+\int_0^T\|f(t)\|_{L^2}^2dt\bigg)\nonumber\\&\leq C,
     \end{align*}
     so that $u\in L^{2p-2}(0,T;L^{2p-2}(\Omega))$. This guarantees that  the equation \eqref{eqn-strong-form} is satisfied for all $v\in L^2(0,T;L^2(\Omega))$. Moreover, the above estimate enables us to show that the mapping $[0,T]\ni t\mapsto\|u(t)\|_{L^p(\Omega)}^p\in\mathbb{R}$ is absolutely continuous, $u\in C([0,T];L^p(\Omega))$  and for a.e. $t\in[0,T]$  (\cite[Step 9, p. 30.]{AB+ZB+MTM-2025})
     \begin{align}\label{eqn-Lp-0}
     	\frac{d}{d t}\|u(t)\|_{L^p}^p=p\left\langle\frac{\partial u(t)}{\partial t},|u(t)|^{p-2}u(t)\right\rangle. 
     \end{align}

   Note that, by the Sobolev embedding Theorem, we have
   \begin{align*}
   	H_0^1(\Omega) \hookrightarrow L^p(\Omega) & \ \text{ for }\ d \geq 2 \ \text{ and }\ 2 \leq p \leq\frac{2d}{d-2};\\
   	H_0^1(\Omega) \hookrightarrow L^p(\Omega) & \ \text{ for }\ d =1,2  \ \text{ and }\ 2 \leq p < \infty.
   \end{align*}
   For the above two cases, since $u\in L^{\infty}(0,T; H_0^1(\Omega))\cap L^2(0,T;D(A))$ and $\frac{\partial u}{\partial t}\in L^{2}(0,T; L^{2}(\Omega))$, an application of Lions-Magenes lemma yields $u\in C([0,T];H_0^1(\Omega))\hookrightarrow C([0,T];L^p(\Omega))$.
   
   Therefore, for $d\geq 2,$ it is enough to show that $u \in \; C([0,T]; L^p(\Omega) \cap H_0^1(\Omega))$ for $\frac{2d}{d-2} <p<\infty$. Let us first fix $\frac{2d}{d-2} < p < \infty$ and define $z := |u|^{\frac{p}{2}}$. Then,  we observe that
   \begin{align*}
   	\|z\|_{L^2(\Omega)}^2 = \||u|^{\frac{p}{2}}\|_{L^2(\Omega)}^2 = \|u\|_{L^p(\Omega)}^p.
   \end{align*}
   By using the fact that $u\in L^{\infty}(0,T; L^p(\Omega) )$, we have
  $
   	z \in L^{\infty}(0,T; L^2(\Omega) ).
$
  We infer from \eqref{regularity-err} that 
  \begin{align*}
  	\int_0^T\||u(t)|^{\frac{p-2}{2}}\nabla u(t)\|^2_{L^2}dt\leq\liminf_{m\to\infty}	\int_0^T\||u_m(t)|^{\frac{p-2}{2}}\nabla u_m(t)\|^2_{L^2}dt\leq C.
  \end{align*}
  Since 
  \begin{align*}
  	\int_0^T\|\nabla z(t)\|_{L^2}^2dt=\left(\frac{p}{2}\right)^2\int_0^T\||u(t)|^{\frac{p-2}{2}}\nabla u(t)\|^2_{L^2}dt\leq C,
  \end{align*}
  we infer $z\in L^2(0,T;H_0^1(\Omega))$. 
   Since $\frac{\partial u}{\partial t} \in L^2(0,T; L^2(\Omega))$ and $u\in L^{2p-2}(0,T; L^{2p-2}(\Omega))$, it implies
   \begin{align*}
   	\int_0^T\left\|\frac{\partial z(t)}{\partial t}\right\|_{L^{\frac{4p-4}{3p-4}}(\Omega)}^{\frac{4p-4}{3p-4}}dt
   	&\leq \left(\frac{p}{2}\right)^{\frac{4p-4}{3p-4}}\bigg(\int_0^T \left\|\frac{\partial u(t)}{\partial t}\right\|_{L^2(\Omega)}^2dt\bigg)^{\frac{2p-2}{3p-4}}\left(\int_0^T\|u(t) \|_{L^{2p-2}(\Omega)}^{2p-2}dt\right)^{\frac{p-2}{3p-4}}\\
   	& = \left(\frac{p}{2}\right)^{\frac{4p-4}{3p-4}} \left\|\frac{\partial u}{\partial t}\right\|_{L^2(0,T; L^2(\Omega))}^{\frac{4p-4}{3p-4}}\|u \|_{L^{2p-2}(0,T; L^{2p-2}(\Omega))}^{\frac{(p-2)(2p-2)}{3p-4}}  < \infty.
   \end{align*}
 Therefore, we deduce $$z \in L^{\frac{4p-4}{p}}(0,T; L^{\frac{4p-4}{p}}(\Omega))  \cap L^2(0,T; H_0^1(\Omega)),$$   and since $f\in L^2(0,T; H^{-1}(\Omega))$, the above estimate implies   $$\frac{\partial z}{\partial t} \in L^{\frac{4p-4}{3p-4}}(0,T; L^{\frac{4p-4}{3p-4}}(\Omega))+ L^2(0,T; H^{-1}(\Omega)),$$
   As a consequence of \cite[Theorem 7.2, Exercise 8.2]{MR1881888}, we deduce that
   \begin{itemize}
   	\item[(i)] $z = |u|^{\frac{p}{2}}$ is in the space $C([0,T];L^2(\Omega))$, i.e., $|u| \in C([0,T];L^p(\Omega))$,
   	\item[(ii)] the function $[0,T] \ni t \mapsto  \|z(t)\|_{L^2}^2\in \mathbb{R}$ is absolutely continuous  and
   	\begin{align*}
   	\frac{1}{2}	\frac{d}{dt}\|z(t)\|_{L^2}^2
   		& = {}_{L^{\frac{4p-4}{p}} }{\left\langle z(t), z'(t)\right\rangle}_{L^{\frac{4p-4}{3p-4}}}.
   	\end{align*}
   \end{itemize}
   This implies that the function $[0,T] \ni t \mapsto \|u(t)\|_{L^p(\Omega)}^p \in \mathbb{R}$ is absolutely continuous and it satisfies
   \begin{align*}
   	\frac{1}{p}\frac{d}{dt}{\|u(t)\|_{L^p(\Omega)}^p}
   	& = \left( |u(t)|^{p-2}u(t), \frac{\partial u(t)}{\partial t}\right),
   \end{align*}
   for a.e. $t\in[0,T]$. Since $u\in L^{2p-2}(0,T;L^{2p-2}(\Omega))$, one can choose the test function  $v=|u|^{p-2}u$ in \eqref{eqn-strong-form} and use the above fact to obtain
   \begin{align*}
   	\|u(t)\|_{L^p(\Omega)}^p 
   	& + p(p-1)\nu\int_0^t\||u(s)|^{\frac{p-2}{2}}\nabla u(s)\|_{L^p(\Omega)}^pds+p\alpha\int_0^t\|u(s)\|_{L^{2p-2}(\Omega)}^{2p-2}ds\\
   	& =\|u_0\|_{L^p(\Omega)}^p +\sum_{\ell=1}^M\beta_{\ell}\int_0^t\|u(s)\|_{L^{p+q_{\ell}-2}}^{p+q_{\ell}-2}ds+\int_0^t(f(s),|u(s)|^{p-2}u(s))ds,
   \end{align*}
   for all $t\in[0,T]$. Note that from the above argument, for any $t\in[0,T]$, we have $\|u(t)\|_{L^p(\Omega)} \to \|u_0\|_{L^p(\Omega)}$, as $t\to 0$. Let us now show that $u\in C_{w}([0,T]; L^p(\Omega)\cap H_0^1(\Omega))$. From  \eqref{eqn-conv-1}, we observe that
   \begin{align*}
   	u\in C([0,T];L^2(\Omega)) \hookrightarrow C_{w}([0,T];L^2(\Omega)).
   \end{align*}
   On the other hand, we also have
   $
   u \in L^\infty\big(0,T; L^p(\Omega)\cap H_0^1(\Omega)\big).
   $
   By applying the Strauss Lemma (\cite[Lemma 1.4]{Te_1997}), with the choice $X = L^p(\Omega)\cap H_0^1(\Omega)$ and $Y = L^2(\Omega)$, it follows that
   $
   u \in C_w\big([0,T]; L^p(\Omega)\cap H_0^1(\Omega)\big).
   $
     In particular, since $u \in C_w([0,T]; L^p(\Omega))$, we deduce that
   $
   u(t) \xrightarrow{w} u_0 \quad \text{in } L^p(\Omega), \quad \text{as } t \to 0.
   $
   Moreover, as every Hilbert space and $L^p(\Omega)$ space for $p>1$ are uniformly convex, the Radon-Riesz property ensures that this weak convergence together with norm convergence yields strong convergence. Hence,
   $
   u(t) \to u_0 \quad \text{in } L^p(\Omega), \ \text{ as } \ t \to 0,
$
     which shows that
   $
   u \in C([0,T]; L^p(\Omega)).
  $

       Therefore,   $u$ is a strong solution of the system \eqref{damped weak} and the uniqueness follows from Theorem \ref{damped uniqueness}. 
\end{proof}

	\begin{Remark}
	It has been noted in the literature (\cite[Theorem 2.3]{MR4288303}, \cite[Theorem 2.12]{MR4797426}) that, when establishing the $L^p$-norm of the Faedo-Galerkin approximations, the function $|u_m|^{p-2}u_m$ is often chosen as a test function. However, we emphasize that $|u_m|^{p-2}u_m \notin V_m$, and therefore it cannot be directly used as a test function. 
	Instead, one must take $P_m(|u_m|^{p-2}u_m) \in V_m$ as the test function, or alternatively, proceed as in Step 3 of the proof of Theorem \ref{regularity f in L2} to derive the desired regularity results.
\end{Remark}

\begin{Remark}
	The regularity results obtained in Theorem \ref{regularity f in L2} help us to show the energy dissipation given in \eqref{eqn-enery-est}. 
\end{Remark}

We now establish further regularity results under the additional assumptions $u_0 \in D(A)$, $f \in H^1(0,T;L^2(\Omega))$, and, in the stronger case, $u_0 \in D(A^{3/2})$, $f \in H^1(0,T;H^1(\Omega))$. Such regularity is essential for the subsequent numerical analysis of the problem. It is important to emphasize that obtaining this higher regularity requires certain compatibility conditions between the initial data $u_0$ and the forcing term $f$, which we specify below.

\begin{Remark}[Compatibility condition]\label{rem-comp} We need the following compatibility conditions for rest of the regularity results. 
	\vskip 0.1 cm 
\noindent
(1) For $u_0 \in D(A)$ and $f \in H^1(0,T;L^2(\Omega))$, we define
	\begin{align*}
	g_0 := \nu \Delta u_0 - \alpha |u_0|^{p-2}u_0 +\sum_{\ell=1}^M \beta_{\ell}|u_0|^{q_{\ell}-2}u_0 + f(0).
	\end{align*}
	In particular, the condition $g_0 \in L^2(\Omega)$ make sense if \begin{align}\label{compatibility condition}
		\big(\nu \Delta u_0 - \alpha |u_0|^{p-2}u_0 +\sum_{\ell=1}^M \beta_{\ell}|u_0|^{q_{\ell}-2}u_0 + f(0)\big) \in L^2(\Omega).
		\end{align}  
		Note that $f \in H^1(0,T;L^2(\Omega))$ implies $f \in C([0,T];L^2(\Omega))$ also (\cite[Theorem 2, Section 5.9.2]{MR1625845}).  Since $\Delta u_0 \in L^2(\Omega)$ and $f(0)\in L^2(\Omega)$, it is enough to show that $|u_0|^{p-2}u_0 \in L^2(\Omega)$. It is clear that 
$
		|u_0|^{p-2}u_0 \in L^2(\Omega) \text{ if } u_0 \in L^{2p-2}(\Omega).
	$
			Using the fact that $D(A)\hookrightarrow L^p(\Omega)$ for all $2\leq p<\infty$ whenever $1\leq d\leq 4$ and $p\leq \frac{2d}{d-4}$ if $d\geq 5$, we conclude that 
			\begin{align}\label{eqn-embedding}
				D(A) \hookrightarrow L^{2p-2}(\Omega)\  \text{ for }  2\leq p<\infty\ \text{ for }\ 1\leq d\leq 4\ \text{ and } \ \ 2\leq p\leq \frac{2d-4}{d-4}\ \text{ for }\ d\geq 5. 
			\end{align}
		Hence, \eqref{compatibility condition} makes sense for the values of $p$ given in \eqref{eqn-embedding}, then the condition of $|u_0|^{p-2}u_0\in L^2(\Omega)$ is automatically satisfied. 
		
			\vskip 0.1 cm 
		\noindent
		(2) For $u_0 \in D(A^{\frac{3}{2}})$ and $f \in H^1(0,T;H^1(\Omega))$, we define 
		\begin{align*}
			g_1=\nabla(\nu \Delta u_0)-\alpha(p-1)|u_0|^{p-2}\nabla u_0 + \sum_{\ell=1}^M \beta_{\ell}(q_{\ell}-1)|u_0|^{q_{\ell}-2}\nabla u_0 + \nabla f(0).
		\end{align*}
		In particular, the condition $g_1 \in L^2(\Omega)$ make sense if the right hand side belongs to $L^2(\Omega)$. Since, $u_0 \in D(A^{\frac{3}{2}})$ implies $\nabla(\Delta u_0)\in L^2(\Omega)$. Since $f \in H^1(0,T;H^1(\Omega))$, we infer $f\in C([0,T];H^1(\Omega))$, so that   $\nabla f(0)\in L^2(\Omega)$. Hence, it is enough to show that $|u_0|^{p-2}\nabla u_0 \in L^2(\Omega)$. For the values of $p$ given in \eqref{eqn-embedding}, using H\"older's inequality and  Sobolev's embedding, we estimate 
		\begin{align*}
			\||u_0|^{p-2}\nabla u_0\|^2_{L^2} \leq \||u_0|^{p-2}\|^2_{L^{\frac{d}{2}}}\|\nabla u_0\|^2_{L^{\frac{2d}{d-4}}}\leq \|u_0\|^{2(p-2)}_{L^{\frac{d}{2}(p-2)}}\|A^{\frac{3}{2}} u_0\|^2_{L^2}\leq \|u_0\|^{2(p-2)}_{D(A)}\|A^{\frac{3}{2}} u_0\|^2_{L^2},
		\end{align*}
		we conclude that $|u_0|^{p-2}\nabla u_0 \in L^2(\Omega)$.
		Therefore, $g_1 \in L^2(\Omega)$. 
\end{Remark}

\begin{Theorem}\label{regularity-f- H1} 
Let the initial data $u_0  \in D(A)$ be fixed.	Then, the following results are valid:  
	\begin{enumerate}
		\item [(a)] \label{regularity for <6} For the values of $p$ given in \eqref{eqn-embedding},  if the external force  $f \in H^1(0,T;L^2(\Omega))$, then the solution $u$ of \eqref{Damped Heat} belongs to $ L^\infty(0,T;D(A))$ and $\partial_t u \in L^\infty(0,T;L^2(\Omega))\cap L^2(0,T;H_0^1(\Omega))$.
		\item  [(b)] \label{regularity for >6} For the values of $p$ given in \eqref{eqn-values of p}, if the   forcing term satisfies the extra regularity $f\in L^2(0,T;H^1(\Omega))$, then the solution $u$ of \eqref{Damped Heat} belongs to $L^\infty(0,T;D(A))\cap L^2(0,T;D(A^{\frac{3}{2}}))$.
	\end{enumerate}
\end{Theorem}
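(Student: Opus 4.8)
The plan is to push the Faedo--Galerkin scheme of Theorem~\ref{regularity f in L2} one derivative further, obtaining bounds on the approximations $u_m$ that are uniform in $m$, and then to pass to the limit. A fact I use repeatedly is that $u_m(0)=S_{m-1}u_0\to u_0$ in $D(A)$: since $S_{m-1}$ commutes with $A$ and is an $L^2$-contraction (Proposition~\ref{Prop-Sm}), $\|A S_{m-1}u_0\|_{L^2}=\|S_{m-1}Au_0\|_{L^2}\le\|Au_0\|_{L^2}$ and $S_{m-1}Au_0\to Au_0$ in $L^2$. For \emph{part (a)} I would first estimate $\partial_t u_m$. Differentiating the Galerkin system \eqref{damped Galerkin projection} in time (when $2\le p<3$ this step is carried out with difference quotients in $t$, then letting the step size vanish, to avoid the loss of smoothness of $s\mapsto|s|^{p-2}s$) and testing with $\partial_t u_m$ gives
\[
\tfrac12\tfrac{d}{dt}\|\partial_t u_m\|_{L^2}^2+\nu\|\nabla\partial_t u_m\|_{L^2}^2+\alpha(p-1)\big\||u_m|^{\frac{p-2}{2}}\partial_t u_m\big\|_{L^2}^2=\sum_{\ell=1}^M\beta_\ell(q_\ell-1)\big(|u_m|^{q_\ell-2}\partial_t u_m,\partial_t u_m\big)+(\partial_t f,\partial_t u_m).
\]
The damping term on the left is nonnegative and is retained; the pumping terms are absorbed by the same interpolation $\tfrac{q_\ell-2}{p-2}+\tfrac{p-q_\ell}{p-2}=1$ and Young's inequality used in \eqref{ql2,grad transfomration}, leaving a multiple of $\|\partial_t u_m\|_{L^2}^2$, and $(\partial_t f,\partial_t u_m)$ is controlled by Young (here $f\in H^1(0,T;L^2)$ is used). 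A Gronwall argument then bounds $\partial_t u_m$ in $L^\infty(0,T;L^2)\cap L^2(0,T;H_0^1)$, provided $\|\partial_t u_m(0)\|_{L^2}$ is uniformly bounded. Evaluating \eqref{damped Galerkin projection} at $t=0$ identifies $\partial_t u_m(0)$ with $P_m g_0^m$, $g_0^m=\nu\Delta u_m(0)-\alpha|u_m(0)|^{p-2}u_m(0)+\sum_\ell\beta_\ell|u_m(0)|^{q_\ell-2}u_m(0)+f(0)$, and the uniform bound follows from $u_m(0)\to u_0$ in $D(A)\hookrightarrow L^{2p-2}$ together with the compatibility content of Remark~\ref{rem-comp}; this is exactly where the range \eqref{eqn-embedding} enters.

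To upgrade to $u\in L^\infty(0,T;D(A))$ I would use the equation pointwise in $t$ rather than a further time-integrated identity. Writing $\nu Au_m=r_m-\alpha P_m(|u_m|^{p-2}u_m)+\sum_\ell\beta_\ell P_m(|u_m|^{q_\ell-2}u_m)$ with $r_m:=P_mf-\partial_t u_m$ and pairing with $Au_m$, the damping contribution $(|u_m|^{p-2}u_m,Au_m)=(p-1)\||u_m|^{\frac{p-2}{2}}\nabla u_m\|_{L^2}^2\ge0$ is nonnegative, the pumping terms are again absorbed by the interpolation of \eqref{ql2,grad transfomration}, and $(r_m,Au_m)$ is split by Young, yielding the \emph{pointwise} bound $\tfrac{\nu}{2}\|Au_m(t)\|_{L^2}^2\le C\|r_m(t)\|_{L^2}^2+C\|\nabla u_m(t)\|_{L^2}^2$. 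Since $r_m$ is bounded in $L^\infty(0,T;L^2)$ (using $f\in H^1(0,T;L^2)\hookrightarrow C([0,T];L^2)$ and the previous step) and $u_m$ in $L^\infty(0,T;H_0^1)$, this gives $u_m$ bounded in $L^\infty(0,T;D(A))$. The virtue of this route is that it never estimates $\|u_m\|_{L^{2p-2}}$ directly, so it covers all of \eqref{eqn-embedding}. Banach--Alaoglu extraction and weak lower semicontinuity, exactly as in Steps~4--5 of Theorem~\ref{existenceweak}, then transfer the bounds to $u$ and finish (a).

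For \emph{part (b)} I would differentiate the strong-form equation (valid by Theorem~\ref{regularity f in L2}) in space: from $\nu\Delta u=\partial_t u+\alpha|u|^{p-2}u-\sum_\ell\beta_\ell|u|^{q_\ell-2}u-f$ in $L^2$ for a.e.\ $t$, taking the gradient gives, in $L^2$,
\[
\nu\nabla\Delta u=\nabla\partial_t u+\alpha(p-1)|u|^{p-2}\nabla u-\sum_{\ell=1}^M\beta_\ell(q_\ell-1)|u|^{q_\ell-2}\nabla u-\nabla f.
\]
Every term on the right lies in $L^2(0,T;L^2)$: $\nabla\partial_t u$ by part (a) ($\partial_t u\in L^2(0,T;H_0^1)$), $\nabla f$ by $f\in L^2(0,T;H^1)$, and the nonlinear terms by the decisive estimate
\[
\big\||u|^{p-2}\nabla u\big\|_{L^2}\le\|u\|_{L^{d(p-2)}}^{p-2}\,\|\nabla u\|_{L^{\frac{2d}{d-2}}}\le C\|u\|_{D(A)}^{p-1},
\]
where H\"older is used with $\tfrac1d+\tfrac{d-2}{2d}=\tfrac12$, the embeddings $H^2\hookrightarrow L^{d(p-2)}$ and $H^1\hookrightarrow L^{\frac{2d}{d-2}}$, and the bound $u\in L^\infty(0,T;D(A))$ from (a); the pumping terms are handled identically since $q_\ell<p$. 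Integrating $\|\nabla\Delta u\|_{L^2}^2$ in time yields $u\in L^2(0,T;D(A^{\frac32}))$, while $L^\infty(0,T;D(A))$ is inherited from (a). Rigorously the corresponding bound is first derived for $u_m$ by testing with $A^2u_m$ and passed to the limit; since $u_m\in V_m\subset D(A^{\frac32})$, the weak limit lies in $L^2(0,T;D(A^{\frac32}))$.

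The main obstacle is precisely this nonlinear estimate in (b): the term $|u|^{p-2}\nabla u$ must be controlled in $L^2$ by a power of $\|u\|_{D(A)}$, and the H\"older/Sobolev argument above succeeds only when $d(p-2)\le\frac{2d}{d-4}$, that is $p\le\frac{2d-6}{d-4}$ for $d\ge5$ (for $1\le d\le4$ the embedding $H^2\hookrightarrow L^{d(p-2)}$ holds for every finite exponent, so no restriction is needed). This is exactly the restriction \eqref{eqn-values of p}, and it is sharper than the range \eqref{eqn-embedding} used in (a) because (b) must place the full gradient $\nabla u$, rather than $u$ itself, in a high Lebesgue space. Note also that, because the differential inequality arising from the $A^2u_m$-test is superlinear in $\|Au_m\|_{L^2}$, I would not attempt to close it by Gronwall alone; instead the already-established $L^\infty(0,T;D(A))$ bound of part (a) makes the right-hand side integrable on $(0,T)$, so (b) genuinely relies on (a) (both hypotheses hold under the combined assumption $f\in H^1(0,T;H^1)$). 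A secondary technical point is the time-differentiation in (a) for $2\le p<3$, which I would justify through difference quotients, and the uniform bound on $\partial_t u_m(0)$, which rests on the $D(A)$-convergence of $S_{m-1}u_0$ and the compatibility condition \eqref{compatibility condition}.
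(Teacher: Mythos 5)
Your proposal is correct and follows essentially the same route as the paper: part (a) is obtained by differentiating the Galerkin system in time and testing with $\partial_t u_m$ (with the initial bound on $\partial_t u_m(0)$ coming from the compatibility condition and $D(A)\hookrightarrow L^{2p-2}$, which is where \eqref{eqn-embedding} enters), followed by the pointwise-in-$t$ test with $Au_m$ in which the damping term $(p-1)\||u_m|^{\frac{p-2}{2}}\nabla u_m\|_{L^2}^2$ is kept on the left and the pumping terms are absorbed via \eqref{ql2,grad transfomration}; part (b) is the Galerkin test with $A^2u_m$, closed by exactly the paper's H\"older/Sobolev estimate $\||u_m|^{p-2}\nabla u_m\|_{L^2}\le\|u_m\|_{L^{d(p-2)}}^{p-2}\|\nabla u_m\|_{L^{2d/(d-2)}}\le C\|Au_m\|_{L^2}^{p-1}$, whose embedding requirement $D(A)\hookrightarrow L^{d(p-2)}$ produces the restriction $p\le\tfrac{2d-6}{d-4}$ for $d\ge5$ and whose time-integrability rests on the $L^\infty(0,T;D(A))$ bound of part (a), just as in the paper.
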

\begin{proof}
	We begin by considering the case $1 \leq d \leq 4$ with $2 \leq p < \infty$, and for higher dimensions $d \geq 5$, the range $2 \leq p \leq \tfrac{2d-4}{d-4}$.
	
(a) We divide the proof into two steps.

\noindent
\textbf{Step-1.} To show $\partial_t u \in L^\infty(0,T;L^2(\Omega))\cap L^2(0,T;H_0^1(\Omega))$, we differentiate the equation \eqref{damped Galerkin projection} with respect to $t$ and obtain
\begin{align}\label{differentiation w.r. to t}
(\partial_{tt}u_m(t),w_k)&+\nu(\partial_t \nabla u_m(t),\nabla w_k)+\alpha(\partial_t(|u_m(t)|^{p-2}u_m(t)),w_k)\nonumber \\&=(\partial_t f(t),w_k)+\sum_{\ell=1}^M \beta_{\ell}(\partial_t(|u_m(t)|^{q_{\ell}-2}u_m(t)), w_k),
\end{align}
for a.e. $t\in[0,T]$. Multiplying above equation by $d_m^k{'}(\cdot)$ and summing over all $k$ such that $\lambda_k<2^{m+1}$, we derive
\begin{align*}
\frac{1}{2} \frac{d}{dt} \|\partial_t u_m(t)\|^2_{L^2}
+ &\nu \|\partial_t \nabla u_m(t)\|^2_{L^2}  
+ \alpha (p-1) \left( |u_m(t)|^{p-2} \partial_t u_m(t),\ \partial_t u_m(t) \right)\\&= \left( \partial_t f(t),\ \partial_t u_m(t) \right)+\sum_{\ell=1}^M \beta_{\ell}(q_{\ell}-1)(|u_m(t)|^{q_{\ell}-2}\partial_tu_m(t),\partial_t u_m(t)),
\end{align*} 
for a.e. $t\in[0,T]$. Proceeding as in the estimate for $(|u_m|^{q_{\ell}-2}\partial_t u_m,\partial_t u_m)$ given in \eqref{ql2,grad transfomration}, applying the Cauchy-Schwarz and Young's inequalities, we obtain
\begin{align*}
 \frac{1}{2} \frac{d}{dt} \|\partial_t u_m(t)\|^2_{L^2}&
+ \nu \|\partial_t \nabla u_m(t)\|^2_{L^2} + \frac{\alpha (p-1)}{2} \| |u_m(t)|^{\frac{p-2}{2}} \partial_t u_m(t) \|^2_{L^2}\\& \leq \frac{1}{2\alpha}\|\partial_t f(t)\|^2_{L^2}+\frac{\alpha}{2} \|\partial_t u_m(t)\|^2_{L^2}+C_3\|\partial_t u_m(t)\|^2_{L^2},
\end{align*}
where $C_3$ is defined in \eqref{eqn-C3}. Integrating from $0$ to $t$, we have
\begin{align*} 
\|\partial_t u_m(t)\|^2_{L^2}&+2\nu \int_0^t \|\partial_s \nabla u_m(s)\|^2_{L^2}\,ds+ \alpha (p-1) \int_0^t \||u_m(s)|^{\frac{p-2}{2}}\partial_s u_m(s)\|^2_{L^2}\, ds\\
& \leq \|\partial_t u_m(0)\|^2_{L^2}+\frac{1}{\alpha} \int_0^t \|\partial_s f(s)\|^2_{L^2}\,ds + (\alpha +2C_3)\int_0^t \|\partial_s u_m(s)\|^2_{L^2}\,ds.
\end{align*}
Using Gronwall's inequality, we conclude
\begin{align}\label{needed for fully discrete}
\|\partial_t u_m(t)\|^2_{L^2}&+2\nu \int_0^t \|\partial_s \nabla u_m(s)\|^2_{L^2}\,ds+ \alpha (p-1) \int_0^t \||u_m(s)|^{\frac{p-2}{2}}\partial_s u_m(s)\|^2_{L^2}\, ds\\
& \leq \bigg(\|\partial_t u_m(0)\|^2_{L^2}+\frac{1}{\alpha} \int_0^t \|\partial_s f(s)\|^2_{L^2}\,ds\bigg)\exp\bigg((\alpha +C_3)t\bigg),
\end{align}
for all $t\in[0,T]$. We estimate $\|\partial_t u_m(0)\|^2_{L^2}$ by taking $t=0$, $w_k=\partial_t u_m(0)$ and applying the Cauchy-Schwarz and Young's inequalities as
\begin{align*}
	\|\partial_t u_m(0)\|^2_{L^2}\leq C(\|u_0\|^2_{H^2}+\|u_0\|^{2(p-1)}_{H^2}+\|u_0\|^{2(q_{\ell}-1)}_{H^2}+\|f(0)\|^2_{L^2}),
\end{align*} 
where we have used the fact that $D(A)\hookrightarrow L^{2p-2}(\Omega)$. Hence, we infer the following:
\begin{align}\label{regularity fully}
\|\partial_t u_m(t)\|^2_{L^2}&+2\nu \int_0^t \|\nabla \partial_s u_m(s)\|^2_{L^2}\,ds+ \alpha (p-1) \int_0^t \||u_m(s)|^{\frac{p-2}{2}}\partial_s u_m(s)\|^2_{L^2}\, ds \nonumber \\& \leq C(\|u_0\|_{H^2},\|f(0)\|_{L^2} ,\|f\|_{H^1(0,T;L^2)}).
\end{align}
Taking supremum over time $0\leq t \leq T$, we therefore have 
\begin{align}\label{u_t estimate}
\sup_{0\leq t \leq T} \|\partial_t u_m(t)\|^2_{L^2}&+2\nu \int_0^T \|\nabla\partial_s  u_m(s)\|^2_{L^2}\,ds +\alpha (p-1) \int_0^T \||u_m(s)|^{\frac{p-2}{2}}\partial_s u_m(s)\|^2_{L^2}\, ds\nonumber \\& \leq C(\|u_0\|_{H^2},\|f(0)\|_{L^2} ,\|f\|_{H^1(0,T;L^2)}).
\end{align}
An argument similar to Steps 4 and 5 of Theorem \ref{existenceweak} implies that
\begin{align*}\partial_tu \in L^\infty(0,T;L^2(\Omega)) \cap L^2(0,T;H_0^1(\Omega)).\end{align*}

\noindent
\textbf{Step-2. }Next we prove $u \in L^\infty(0,T;H^2(\Omega))$. Multiplying \eqref{damped Galerkin projection} by $\lambda_k d_m^k(\cdot)$ and summing over all $k$ such that $\lambda_k<2^{m+1}$, we have
\begin{align}\label{used for H^2}
 -(\partial_t u_m(t), \Delta u_m)&+\nu(\Delta u_m(t), \Delta u_m(t))- \alpha (|u_m(t)|^{p-2}u_m(t),\Delta u_m(t))\nonumber \\&=-(f(t),\Delta u_m(t))+\sum_{\ell=1}^M\beta_{\ell} (|u_m(t)|^{q_{\ell}-2}u_m(t),- \Delta u_m(t)),
\end{align}
for a.e. $t\in[0,T]$. On rearranging and using integration by parts, we get for a.e. $t\in[0,T]$
\begin{align}\label{eqn-higher}
   & \nu \|\Delta u_m(t)\|^2_{L^2}+ \alpha(p-1) \||u_m|^{\frac{p-2}{2}}\nabla u_m(t)\|^2_{L^2}\nonumber\\&=-(f(t),\Delta u_m(t))+(\partial_t u_m(t), \Delta u_m(t)) + \sum_{\ell=1}^M \beta_{\ell} (q_{\ell}-1)(|u_m(t)|^{q_{\ell}-2}\nabla u_m(t),\nabla u_m(t)).
\end{align}
Using Young's inequality, we can estimate the terms as
\begin{align*}
|(f,\Delta u_m)|&\leq  \frac{1}{\nu}\|f\|^2_{L^2}+ \frac{\nu}{4}\|\Delta u_m\|^2_{L^2},\\
|(\partial_t u_m, \Delta u_m)| &\leq  \frac{1}{\nu}\|\partial_t u_m\|^2_{L^2}+ \frac{\nu}{4}\|\Delta u_m\|^2_{L^2}.
\end{align*}
Using these estimates and \eqref{ql2,grad transfomration} in \eqref{eqn-higher}, we reach at following estimate:
\begin{align}\label{improved reg H^2}
&\nu\|\Delta u_m(t)\|^2_{L^2}+\alpha (p-1)\||u_m(t)|^{\frac{p-2}{2}}\nabla u_m(t)\|^2_{L^2}\nonumber\\& \leq \frac{2}{\nu}\|f(t)\|^2_{L^2}+ \frac{2}{\nu}\|\partial_t u_m(t)\|^2_{L^2} + 2C_3\|\nabla u_m(t)\|^2_{L^2},
\end{align}
for a.e. $t \in [0,T]$. Hence, taking supremum over all $t$ and using \eqref{u_t estimate}, we conclude
\begin{align}\label{H^2 estimate complete}
\sup_{0\leq t< T} \nu \|\Delta u_m(t)\|^2_{L^2}+\sup_{0\leq t\leq T}\alpha (p-1)\||u_m(t)|^{\frac{p-2}{2}}\nabla u_m(t)\|^2_{L^2} \leq C(\|u_0\|_{H^2}, \|f\|_{H^1(0,T;L^2)},C_3).
\end{align}
Therefore, by elliptic regularity and employing weak limits together with the compactness lemma, as in Theorem \ref{existenceweak}, we conclude that
\begin{align*}
u \in L^\infty(0,T;H^2(\Omega)) ,
\end{align*}
which completes the proof. 

\vskip 0.1 cm
\noindent 
Next, we consider  the case $1 \leq d \leq 4$ with $2 \leq p < \infty$, and for higher dimensions $d \geq 5$, the range $2 \leq p \leq \tfrac{2d-6}{d-4}$.

\noindent
(b) By multiplying \eqref{damped Galerkin projection} with $\lambda_k^2d_m^k(\cdot)$ and summing over all $k$ such that $\lambda_k<2^{m+1}$, we have 
\begin{align*}
	(\partial_t u_m(t),A^2u_m(t))&+\nu (Au_m(t),A^2u_m(t))+\alpha (|u_m(t)|^{p-2}u_m(t),A^2 u_m(t))\\&=(f(t),A^2u_m(t))+\sum_{\ell=1}^M \beta_{\ell} (|u_m(t)|^{q_{\ell}-2}u_m(t),A^2u_m(t)),
\end{align*}
for a.e. $t\in[0,T]$. Using the self-adjointness of the operator $A$, we get for a.e. $t\in[0,T]$
\begin{align*}
	\frac{1}{2}\frac{d}{dt}\|Au_m(t)\|^2_{L^2}+\nu\|A^{\frac{3}{2}}u_m(t)\|^2_{L^2}&=(A^{\frac{1}{2}}f(t),A^{\frac{3}{2}}u_m(t))-\alpha(A^{\frac{1}{2}}(|u_m(t)|^{p-2}u_m(t)),A^{\frac{3}{2}}u_m(t))\\& \quad + \sum_{\ell=1}^M \beta_{\ell} (A^{\frac{1}{2}}(|u_m(t)|^{q_{\ell}-2}u_m(t)),A^{\frac{3}{2}}u_m(t)).
\end{align*}
By an application of the Cauchy-Schwarz and Young's inequalities, we obtain  
\begin{align*}
		\frac{1}{2}\frac{d}{dt}\|Au_m(t)\|^2_{L^2}+\frac{\nu}{2}\|A^{\frac{3}{2}}u_m(t)\|^2_{L^2} &\leq \frac{3}{2\nu}\|A^{\frac{1}{2}}f(t)\|^2_{L^2}+\frac{3 \alpha^2(p-1)}{2\nu}\||u_m(t)|^{p-2}\nabla u_m(t)\|^2_{L^2}\\&\quad + \frac{3M}{2\nu}\sum_{\ell=1}^M \beta_{\ell}^2(q_{\ell}-1)\||u_m(t)|^{q_{\ell}-2}\nabla u_m(t)\|^2_{L^2},
\end{align*}
for a.e. $t\in[0,T]$. Integrating from $0$ to $t$ and taking supremum over all $t \in [0,T]$, we conclude 
\begin{align*}
&	\sup_{0\leq t\leq T}\|Au_m(t)\|^2_{L^2}+\nu \int_0^T \|A^{\frac{3}{2}}u_m(t)\|^2_{L^2} \, dt\\ &\leq \frac{3}{\nu}\int_0^T \|A^{\frac{1}{2}}f(t)\|^2_{L^2}\,dt + \frac{3\alpha^2(p-1)}{\nu}\int_0^T \||u_m(t)|^{p-2}\nabla u_m(t)\|^2_{L^2}\, dt  \\&\quad +\frac{3M}{\nu}\sum_{\ell=1}^M \beta_{\ell}^2 (q_{\ell}-1)\int_0^T \||u_m(t)|^{q_{\ell}-2}\nabla u_m(t)\|^2_{L^2}\,dt.
\end{align*}
Next, we can estimate $\int_0^T \||u_m(t)|^{p-2}\nabla u_m(t)\|^2_{L^2}\, dt$ using the H\"older's and Sobolev's inequalities as 
\begin{align}\label{eqn-p-2-est}
	\int_0^T \||u_m(t)|^{p-2}\nabla u_m(t)\|^2_{L^2}\, dt &\leq \int_0^T \||u_m(t)|^{(p-2)}\|^2_{L^d} \|\nabla u_m(t)\|^2_{L^{\frac{2d}{d-2}}}\,dt\nonumber\\&\leq \int_0^T \|u_m(t)\|^{2(p-2)}_{L^{d(p-2)}}\|\nabla u_m(t)\|^2_{L^{\frac{2d}{d-2}}} \, dt.
\end{align}
As we know that $D(A) \hookrightarrow L^{d(p-2)}$ for $2\leq p<\infty$ for $1\leq d\leq 4$ and $2\leq p\leq  \frac{2d-6}{d-4}$ for $d\geq 5$, we achieve the following: 
\begin{align}\label{eqn-method-avoid}
	\int_0^T \||u_m(t)|^{p-2}\nabla u_m(t)\|^2_{L^2}\, dt &\leq C\int_0^T  \|Au_m(t)\|^{2p-2}_{L^2}\,dt \leq C T \sup_{0\leq t \leq T} \|Au_m(t)\|^{2p-2}_{L^2}\nonumber \\& \leq CT(\|u_0\|_{H^2},\|f\|_{L^2(0,T;H^1)}).
\end{align}
Observe that the term $\int_0^T \||u_m(t)|^{q_{\ell}-2}\nabla u_m(t)\|^2_{L^2}\, dt $ can be estimated using \eqref{ql2,grad transfomration} together with the preceding calculation. Therefore, we arrive at 
\begin{align}\label{eqn-H2-est}
		&\sup_{0\leq t\leq T}\|Au_m(t)\|^2_{L^2}+\nu \int_0^T \|A^{\frac{3}{2}}u_m(t)\|^2_{L^2} \, dt \leq C(\|u_0\|_{H^2},\nu,\beta_{\ell},\alpha,M, \|f\|_{L^2(0,T;H^1)}).
\end{align}
Hence, by an application of the elliptic regularity, we identify
\begin{align*}
	u_m \in L^\infty(0,T;H^2(\Omega))\cap L^2(0,T;D(A^\frac{3}{2})).
\end{align*}
An argument similar to Steps 4 and 5 of Theorem \ref{existenceweak} implies that 
\[
	u \in 	L^\infty(0,T;H^2(\Omega))\cap L^2(0,T;D(A^\frac{3}{2})),
\]
and  the proof is completed.
%
\end{proof}

\begin{Remark}
	The estimates $u\in L^\infty(0,T;D(A))\cap L^2(0,T;D(A^{\frac{3}{2}}))$ and $\partial_t u \in L^2(0,T;H_0^1(\Omega))$ imply $Au\in L^{\infty}(0,T;L^2(\Omega))\cap L^2(0,T;D(A^{\frac{1}{2}}))$ and $\partial_tAu\in L^2(0,T;D(A^{\frac{1}{2}}))$. An application of the Lions-Magenes lemma yields that $Au\in C([0,T];L^2(\Omega))$ and the mapping $[0,T]\ni t\mapsto\|Au(t)\|_{L^2}^2\in\mathbb{R}$ is absolutely continuous such that $\frac{1}{2}\frac{d}{dt}\|Au(t)\|_{L^2}^2=\langle \partial_tAu(t),Au(t)\rangle$ for a.e. $t\in[0,T]$. Moreover, for the values of $p$ given in \eqref{eqn-values of p}, $D(A)\hookrightarrow L^{2p-2}(\Omega)$ implies that $u\in C([0,T];L^{2p-2}(\Omega))$. 
\end{Remark}

\begin{Theorem}\label{thm-more-regular}
For the values of $p$ given in \eqref{eqn-values of p}, 	if the initial data $u_0\in D(A^{\frac{3}{2}})$ and forcing $f\in H^1(0,T;H^1(\Omega))$, then  $\partial_t  u \in L^\infty(0,T;H^1_0(\Omega))$ and $\partial_{tt} u \in L^2(0,T,L^2(\Omega))$.
\end{Theorem}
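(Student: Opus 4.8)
The plan is to differentiate the Galerkin system in time and test the result with the second time derivative of the approximation. Starting from the differentiated approximate equation \eqref{differentiation w.r. to t}, I multiply by $d_m^k{''}(\cdot)$ and sum over all $k$ with $\lambda_k<2^{m+1}$, i.e.\ test with $\partial_{tt}u_m$. Using $\partial_t\nabla u_m=\nabla\partial_t u_m$ and $\nabla\partial_{tt}u_m=\partial_t\nabla\partial_t u_m$, the diffusion term yields $\frac{\nu}{2}\frac{d}{dt}\|\nabla\partial_t u_m\|_{L^2}^2$ while the first term yields $\|\partial_{tt}u_m\|_{L^2}^2$. Writing $\partial_t(|u_m|^{p-2}u_m)=(p-1)|u_m|^{p-2}\partial_t u_m$ and analogously for the pumping terms, this produces the identity
\begin{align*}
\|\partial_{tt}u_m(t)\|_{L^2}^2+\frac{\nu}{2}\frac{d}{dt}\|\nabla\partial_t u_m(t)\|_{L^2}^2 &=(\partial_t f(t),\partial_{tt}u_m(t))-\alpha(p-1)(|u_m(t)|^{p-2}\partial_t u_m(t),\partial_{tt}u_m(t))\\
&\quad+\sum_{\ell=1}^M\beta_\ell(q_\ell-1)(|u_m(t)|^{q_\ell-2}\partial_t u_m(t),\partial_{tt}u_m(t)),
\end{align*}
for a.e.\ $t\in[0,T]$, with $\|\partial_{tt}u_m\|_{L^2}^2$ the coercive quantity I must keep on the left.

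The crucial step is to bound the nonlinear term $\alpha(p-1)(|u_m|^{p-2}\partial_t u_m,\partial_{tt}u_m)$ so as not to destroy that coercive term. By Young's inequality I absorb $\tfrac18\|\partial_{tt}u_m\|_{L^2}^2$ and am left with $\||u_m|^{p-2}\partial_t u_m\|_{L^2}^2$. Applying H\"older's inequality with exponents $\tfrac d2$ and $\tfrac{d}{d-2}$ together with the embedding $H_0^1(\Omega)\hookrightarrow L^{\frac{2d}{d-2}}(\Omega)$ gives
\begin{align*}
\||u_m|^{p-2}\partial_t u_m\|_{L^2}^2\le\|u_m\|_{L^{d(p-2)}}^{2(p-2)}\|\partial_t u_m\|_{L^{\frac{2d}{d-2}}}^2\le C\|u_m\|_{D(A)}^{2(p-2)}\|\nabla\partial_t u_m\|_{L^2}^2,
\end{align*}
where the embedding $D(A)\hookrightarrow L^{d(p-2)}(\Omega)$ is exactly what forces the restriction on $p$ in \eqref{eqn-values of p} (for $d\ge5$ one needs $d(p-2)\le\frac{2d}{d-4}$, i.e.\ $p\le\frac{2d-6}{d-4}$). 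Since $u_m$ is bounded in $L^\infty(0,T;D(A))$ uniformly in $m$ by Theorem~\ref{regularity-f- H1}, this term is dominated by $C\|\nabla\partial_t u_m\|_{L^2}^2$; the pumping terms are handled identically (using $q_\ell<p$, so $D(A)\hookrightarrow L^{d(q_\ell-2)}(\Omega)$, the case $q_\ell=2$ being trivial), and the forcing term by $(\partial_t f,\partial_{tt}u_m)\le\tfrac18\|\partial_{tt}u_m\|_{L^2}^2+2\|\partial_t f\|_{L^2}^2$. Integrating in time then yields
\begin{align*}
\sup_{0\le t\le T}\|\nabla\partial_t u_m(t)\|_{L^2}^2+\int_0^T\|\partial_{tt}u_m(t)\|_{L^2}^2\,dt\le C\|\nabla\partial_t u_m(0)\|_{L^2}^2+C\|\partial_t f\|_{L^2(0,T;L^2)}^2+C\|\partial_t u_m\|_{L^2(0,T;H_0^1)}^2,
\end{align*}
where the last two right-hand quantities are uniformly bounded in $m$ by Theorem~\ref{regularity-f- H1}.

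It remains to bound $\|\nabla\partial_t u_m(0)\|_{L^2}$ uniformly in $m$, which is where $u_0\in D(A^{\frac32})$ enters. Evaluating the Galerkin equation at $t=0$ gives $\partial_t u_m(0)=P_m g_{0,m}$ with $g_{0,m}=\nu\Delta u_m(0)-\alpha|u_m(0)|^{p-2}u_m(0)+\sum_{\ell=1}^M\beta_\ell|u_m(0)|^{q_\ell-2}u_m(0)+f(0)$ and $u_m(0)=S_{m-1}u_0$. Since $P_m$ commutes with $A^{1/2}$ and $\|P_m\|_{\mathcal L(H_0^1)}\le1$, I obtain $\|\nabla\partial_t u_m(0)\|_{L^2}\le\|\nabla g_{0,m}\|_{L^2}$, and $\nabla g_{0,m}$ is precisely the quantity $g_1$ of Remark~\ref{rem-comp}(2) with $u_0$ replaced by $S_{m-1}u_0$. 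Because $S_{m-1}=s_{m-1}(S)$ is a spectral multiplier commuting with every power of $S=A$ and satisfying $\|S_{m-1}\|_{\mathcal L(L^2)}\le1$ (Propositions~\ref{Prop-Sm} and~\ref{Prop-P_m}), it is a contraction on each $D(A^s)$; hence $\|A^{3/2}S_{m-1}u_0\|_{L^2}\le\|A^{3/2}u_0\|_{L^2}$ and $\|S_{m-1}u_0\|_{D(A)}\le\|u_0\|_{D(A)}$, so the estimate of Remark~\ref{rem-comp}(2) produces a bound depending only on $\|u_0\|_{D(A^{3/2})}$ and $\|f(0)\|_{H^1}$, uniformly in $m$. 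With this uniform initial bound the displayed estimate is uniform in $m$, and passing to the limit via the Banach--Alaoglu theorem and the weak lower semicontinuity of norms (as in Steps~4 and~5 of Theorem~\ref{existenceweak}) delivers $\partial_t u\in L^\infty(0,T;H_0^1(\Omega))$ and $\partial_{tt}u\in L^2(0,T;L^2(\Omega))$.

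The main obstacle is the nonlinear damping term $(|u_m|^{p-2}\partial_t u_m,\partial_{tt}u_m)$: retaining $\|\partial_{tt}u_m\|_{L^2}^2$ on the left while estimating $\||u_m|^{p-2}\partial_t u_m\|_{L^2}$ only through $\|\nabla\partial_t u_m\|_{L^2}$ forces the sharp embedding $D(A)\hookrightarrow L^{d(p-2)}(\Omega)$, which is the source of the exponent restriction in \eqref{eqn-values of p}. A secondary technical point is justifying that $\partial_{tt}u_m$ is an admissible test function, which follows from the additional regularity $f\in H^1(0,T;H^1(\Omega))$ rendering the Galerkin coefficients $H^2$ in time.
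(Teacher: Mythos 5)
Your proposal is correct and follows essentially the same route as the paper's proof: testing the time-differentiated Galerkin system with $\partial_{tt}u_m$, absorbing via Young's inequality, controlling $\||u_m|^{p-2}\partial_t u_m\|_{L^2}$ through H\"older's inequality and the embedding $D(A)\hookrightarrow L^{d(p-2)}(\Omega)$ together with the uniform bounds from Theorem \ref{regularity-f- H1}, bounding $\|\nabla\partial_t u_m(0)\|_{L^2}$ via the compatibility computation of Remark \ref{rem-comp}(2) with $u_0\in D(A^{\frac{3}{2}})$, and passing to the limit by weak compactness. Your treatment of the initial term is in fact slightly more explicit than the paper's (tracking the contraction properties of $P_m$ and $S_{m-1}$ on $D(A^s)$), but the argument is the same in substance.
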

\begin{proof}
	Multiplying \eqref{differentiation w.r. to t} with $d_{m}^k{''}(\cdot)$ and summing over all $k$ such that $\lambda_k < 2^{m+1}$, we obtain 
	\begin{align*}
		(\partial_{tt} u_m(t), \partial_{tt}u_m(t))+&\nu(\partial_t \nabla u_m(t), \partial_{tt}\nabla u_m(t))+\alpha (\partial_t(|u_m(t)|^{p-2}u_m(t)), \partial_{tt}u_m(t))\\& = (\partial_t f(t), \partial_{tt}u_m(t))+\sum_{\ell=1}^M \beta_{\ell}(\partial_t(|u_m(t)|^{q_{\ell}-2}u_m(t)), \partial_{tt}u_m(t)),
	\end{align*}
for a.e. $t\in[0,T]$ 	which can be further rearranged as
	\begin{align}\label{eqn-utt}
		\|\partial_{tt}{u_m(t)}\|^2_{L^2}+\frac{\nu}{2}\frac{d}{dt}\|\partial_t\nabla u_m(t)\|^2_{L^2}&=(\partial_t f(t), \partial_{tt}u_m(t))+\sum_{\ell=1}^M \beta_{\ell}(\partial_t(|u_m(t)|^{q_{\ell}-2}u_m(t)), \partial_{tt}u_m(t))\nonumber\\ &\quad - \alpha (\partial_t(|u_m(t)|^{p-2}u_m(t)), \partial_{tt}u_m(t)),
	\end{align} 
for a.e. $t\in[0,T]$. 	By an application of the Cauchy-Schwarz and Young's inequalities, we have the following estimates:
	\begin{align*}
	|(\partial_t f, \partial_{tt}u_m)| &\leq \frac{3}{2}\|\partial_t f\|^2_{L^2}+\frac{1}{6}\|\partial_{tt}u_m\|^2_{L^2},\\
	|- \alpha (\partial_t(|u_m|^{p-2}u_m), \partial_{tt}u_m)|&\leq \frac{3\alpha^2(p-1)^2}{2}\||u_m|^{p-2}\partial_t u_m\|^2_{L^2}+\frac{1}{6}\|\partial_{tt}u_m\|^2_{L^2}, \\
	\bigg|\sum_{\ell=1}^M \beta_{\ell}(\partial_t(|u_m|^{q_{\ell}-2}u_m), \partial_{tt}u_m)\bigg|&\leq \sum_{\ell=1}^M\frac{3\beta_{\ell}^2M(q_{\ell}-1)^2}{2}\||u_m|^{q_{\ell}-2}\partial_t u_m\|^2_{L^2}+\frac{1}{6}\|\partial_{tt}u_m\|^2_{L^2}.
	\end{align*}
Combining these estimates and substituting in \eqref{eqn-utt}, we deduce 
\begin{align*}
	\|\partial_{tt}{u_m(t)}\|^2_{L^2}+\nu\frac{d}{dt}\|\partial_t\nabla u_m(t)\|^2_{L^2}&\leq 3\|\partial_t f(t)\|^2_{L^2}+3\alpha^2(p-1)^2\||u_m(t)|^{p-2}\partial_t u_m(t)\|^2_{L^2}\\&\quad + \sum_{\ell=1}^M 3\beta_{\ell}^2M(q_{\ell}-1)^2\||u_m(t)|^{q_{\ell}-2}\partial_t u_m(t)\|^2_{L^2},
\end{align*}
for a.e. $t\in[0,T]$. 	Integrating the above equation from $0$ to $t$, we get 
	\begin{align*}
		&\int_0^t 	\|\partial_{ss}{u_m(s)}\|^2_{L^2}\, ds +\nu \|\partial_t\nabla u_m(t)\|^2_{L^2}\\&\leq \nu\|\partial_t\nabla u_m(0)\|^2_{L^2}+3\int_0^t\|\partial_s f(s)\|^2_{L^2}\, ds +3\alpha^2(p-1)^2\int_0^t \||u_m(s)|^{p-2}\partial_s u_m(s)\|^2_{L^2}\, ds\\& \quad + \sum_{\ell=1}^M 3\beta_{\ell}^2M(q_{\ell}-1)^2\int_0^t \||u_m(s)|^{q_{\ell}-2}\partial_s u_m(s)\|^2_{L^2}\, ds,
	\end{align*}
for all $t\in[0,T]$. 	Using \eqref{eqn-embedding},  we can estimate $\|\partial_t\nabla u_m(0)\|^2_{L^2}$ as
	\begin{align*}
		\|\partial_t\nabla u_m(0)\|^2_{L^2}&\leq C(\|A^{\frac{3}{2}}u_m(0)\|^2_{L^2}+\||u_m(0)|^{p-2}\nabla u_m(0)\|^2_{L^2}+\||u_m(0)|^{q_{\ell}-2}\nabla u_m(0)\|^2_{L^2}\\&\quad+\|A^{\frac{1}{2}}f(0)\|^2_{L^2})\\& \leq C(\|u_0\|^2_{D(A^{\frac{3}{2}})}+\|u_0\|^{2p-2}_{D(A)}+\|u_0\|^{2q_{\ell}-2}_{D(A)}+\|A^{\frac{1}{2}}f(0)\|^2_{L^2}).
	\end{align*}
	Also, by an application of H\"{o}lder's inequality and Sobolev's embedding (see \eqref{eqn-p-2-est} and \eqref{eqn-method-avoid}), we achieve  for all $t\in[0,T]$ that 
	\begin{align*}
	\int_0^t	\||u_m(s)|^{p-2}\partial_s u_m(s)\|^2_{L^2}ds&\leq C\sup_{t\in[0,T]}\|Au_m(t)\|_{L^2}^{2(p-2)}\int_0^T\|\partial_t \nabla u_m(t)\|^2_{L^2}dt\leq C,\\
	\int_0^t	\||u_m(s)|^{q_{\ell}-2}\partial_s u_m(s)\|^2_{L^2}d s&\leq C\sup_{t\in[0,T]}\|Au_m(t)\|_{L^2}^{2(q_{\ell}-2)}\int_0^T\|\partial_t \nabla u_m(t)\|^2_{L^2}dt\leq C,
	\end{align*}
by using \eqref{regularity fully} and \eqref{H^2 estimate complete}. 	Hence, using all these estimates and then  taking supremum  over all $t\in [0,T]$, we conclude
	\begin{align*}
		\int_0^T \|\partial_{tt}u_m(t)\|^2_{L^2}\, dt + \nu \sup_{0\leq t\leq T}\|\partial_t \nabla u_m(t)\|^2_{L^2} \leq C\big(\|u_0\|_{D(A^{\frac{3}{2}})},\|A^{\frac{1}{2}}f(0)\|_{L^2},\|f\|_{H^1(0,T;H^1)}\big).
	\end{align*}
	Therefore, using similar arguments as performed in Steps 4 and 5 in Theorem \ref{existence and uniquenss}, one obtains 
	\begin{align*}
	 \partial_t u \in L^\infty(0,T;H_0^1(\Omega)),\, 	\partial_{tt}u \in L^2(0,T;L^2(\Omega)),
	\end{align*}
	which completes the proof.
\end{proof}
\begin{Remark}
	Note that Theorem \ref{regularity-f- H1} (a) holds for all $2\leq p<\infty$ when $1\leq d\leq 4$ and for $2\leq p \leq \frac{2d-4}{d-4}$ when $d\geq 5$. Moreover, if $u_0\in D(A)\cap L^{2p-2}(\Omega)$, Theorem \ref{regularity-f- H1} (a) holds true for all $2\leq p<\infty$ and $d\in\mathbb{N}$. On the other hand, Theorem \ref{regularity-f- H1} (b) holds for all $p$ given in \eqref{eqn-values of p}. Hence, in general, Theorem \ref{regularity-f- H1} holds true for all values of $p$ given in \eqref{eqn-values of p}.
\end{Remark}

\section{Conforming finite element method}\label{section CFEM}
The conforming finite element method is a Galerkin-based spatial discretization technique in which the finite-dimensional trial and test spaces are subspaces of the continuous energy space associated with the problem, typically $H^1_0(\Omega)$ for second-order elliptic and for parabolic PDEs with homogeneous Dirichlet boundary conditions. By ensuring conformity, the discrete functions inherit the essential continuity and boundary conditions of the exact solution space, which allows direct application of the variational formulation without modification of the bilinear form. Standard choices include piecewise polynomial spaces, such as continuous piecewise linear ($P_1$) or higher-order Lagrange elements, defined over a shape-regular triangulation of the domain. The method naturally preserves the symmetry, coercivity, and continuity properties of the continuous problem, leading to stable and convergent approximations under standard mesh refinement. This section focuses on proving the optimal convergence rates for semidiscrete and fully discrete Galerkin approximations in CFEM. The error estimates are derived using a standard approach, similar to that in \cite{MR1479170}. The elliptic projection only provide the error estimate for $2\leq p\leq \frac{2d}{d-2}$, so, we introduce Scott-Zhang quasi-interpolation operator  \cite{MR2050138,MR2373954} to prove optimal order of convergence for $2\leq p<\infty $ when $1\leq d\leq 4$ and when $d \geq 5$, it is obtained for $p\leq \frac{2d-4}{d-4}$.
We begin by presenting the semidiscrete conforming scheme, including its formulation, energy estimates, error analysis, and numerical experiments.

\subsection{Semidiscrete conforming FEM}\label{semi-discrete Galerkin}

This section focuses on establishing the error estimates for the semidiscrete Galerkin method. We divide the domain $\Omega$ into a regular shape mesh, denoted by $\mathcal{T}_h$, consisting of triangles or rectangles. Corresponding to this mesh, we define a finite-dimensional subspace $V_h$ as follows:
\begin{equation}\label{fem polynomial damped}
V_h = \left\{ v_h : v_h \in C^0(\overline{\Omega}),\ v_h|_T \in \mathbb{P}_1(T) \ \text{ for all } \ T \in \mathcal{T}_h \right\},
\end{equation}
where $\mathbb{P}_1$ represents the space of polynomials of degree at most 1. Clearly, $V_h$ is a subspace of $H^1_0(\Omega)$ with finite dimension, and the mesh parameter $h>0$. The goal is to derive an estimate of the form (refer to \cite[Chapter 1]{MR1479170}):
\begin{equation*}
\inf_{\chi \in V_h} \left\{ \|u - \chi\|_{L^2} + h \|\nabla (u - \chi)\|_{L^2} \right\} \leq C h^2 \|u\|_{H^2},
\end{equation*}
for all $u \in H^2(\Omega) \cap H^1_0(\Omega),$ and for some constant $C>0$. For the wellposedness of the semi-discrete solution and error estimates, we define a  projection as well as a quasi-interpolation operator   as:
\begin{Definition}[Ritz (or elliptic) projection {\cite[Chapter 1]{MR1479170}}]\label{Ritz def projection}
	The Ritz projection $R_h:H_0^1(\Omega)\to V_h\subset H_0^1(\Omega)$ is defined by 
	\begin{align*}
		a(R_h u - u, v_h) = 0 \ \text{ for all } \ v_h \in V_h,
	\end{align*}
	where $a(\cdot,\cdot)$ is the bilinear form associated with the elliptic operator 
	(e.g., $a(w,v) = \int_\Omega \nabla w \cdot \nabla v \, dx$ for the Dirichlet Laplacian). 
\end{Definition}
Note that, unlike the $L^2$-projection, the Ritz projection preserves the elliptic structure of the problem and requires $u\in H_0^1(\Omega)$. It satisfies the following estimates:
\begin{equation}\label{Ritz projection}
	(\nabla R_h u, \nabla \chi) = (\nabla u, \nabla \chi) \  \text{ for all } \ \chi \in V_h.
\end{equation}
By testing the above relation with $\chi = R_h u$, we observe the stability of the Ritz projection: $\| \nabla R_h u \|_{L^2} \leq \| \nabla u \|_{L^2}$, for all $u \in H^1_0(\Omega)$. Furthermore, the following approximation estimate holds:
\begin{equation}\label{4.5.3}
	\|R_h u - u\|_{L^2} + h \|\nabla (R_h u - u)\|_{L^2} \leq C h^s \|u\|_{H^s}, \quad \text{for} \quad s = 1, 2.
\end{equation}

\begin{Definition}[Scott-Zhang interpolation{ \cite[Section 4.8]{MR2373954}}]\label{Scott-Zhang}
	Let $\mathcal{T}_h$ be a shape-regular triangulation of $\Omega \subset \mathbb{R}^d$, and $V_h$ be the associated finite element space of degree $m$.  
	The Scott-Zhang quasi-interpolation operator 
	\[
	S_h : W^{l,p}(\Omega) \to V_h,
	\]
where $l\geq 1$ if $p=1$ and $l>\frac{1}{p}$ otherwise, is defined nodewise by suitable local averages over $(d-1)$--simplices adjacent to each node, and preserves homogeneous boundary conditions. 
\end{Definition}
	From \cite[Lemma 1.130]{MR2050138}, for $1\leq p<\infty$ and $l>\frac{1}{p}$, we have 
	\begin{equation}\label{scott zhang L^p estimate}
		\|S_hv\|_{W^{k,p}}\leq C\|v\|_{W^{l,p}} \  \text{ for all } \ 0\leq k\leq \min\{1,l\}.
	\end{equation}
	Also from \cite[Section 3]{MR2975554}, we have
	\begin{equation}\label{scott zhang approximation result}
		\|v-S_hv\|_{W^{k,p}}\leq Ch^{l-k}\|v\|_{W^{l,p}}, \ \text{ for all }\  0\leq k\leq l \leq r+1,
	\end{equation}
	where $r$ is the degree of polynomial. By an application of interpolation inequality \cite[Section 8.6]{MR2759829} and \eqref{4.5.3}, we have
	\begin{align}\label{to bound u-W in Lp}
			\|A^{\theta}(S_hu-u)\|_{L^2} &\leq \|S_hu-u\|^{1-\frac{2\theta}{s}}_{L^2}\|A^{\frac{s}{2}}(S_hu-u)\|^{\frac{2\theta}{s}}_{L^2}\nonumber \\
			&\leq Ch^{s-2\theta}\|u\|_{H^s}\  \text{ for all } \ 0\leq s \leq r+1,
		\end{align}
	for any $\theta \in [0,1]$.

The semidiscrete Galerkin approximation of problem \eqref{damped abstract formulation} is formulated in the following way: 

Find $u_h(\cdot, t) \in V_h$ such that, for every $t \in [0, T]$,
\begin{equation}\label{damped fem}
\begin{cases}
	\displaystyle
\langle \partial_t u_h(t), \chi \rangle + \nu(\nabla u_h(t), \nabla \chi) + \alpha (|u_h(t)|^{p-2} u_h(t), \chi)-\sum_{\ell=1}^M\beta_{\ell} (|u_h(t)|^{q_{\ell}-2}u_h(t), \chi) \\ \quad = \langle f(t), \chi \rangle, \\
(u_h(0), \chi) =
\begin{cases}
	(R_h u_0,\chi) &\text{ if } 2\leq p\leq \frac{2d}{d-2},\\[6pt]
	(S_h u_0,\chi) &\text{ if } \frac{2d}{d-2}< p \leq \frac{2d-6}{d-4},
\end{cases}
\end{cases}
\end{equation}
for all $\chi \in V_h$. More precisely, we consider the following values of $p$ separately. 

	\begin{table}[ht!]
		\centering
		\begin{tabular}{|c|c|c|}
		\hline
\text{Interpolation operator}&	\text{Values of }$p$&\text{Dimension} \\  \hline
\text{Ritz (elliptic) projection}&	$2\leq p<\infty$&$d=1,2$ \\ \hline
$"$&$2\leq p\leq \frac{2d}{d-2}$ & $d \geq 3$\\ \hline
\text{Scott-Zhang interpolation}&$\frac{2d}{d-2}<p<\infty$&$d=3,4$\\ \hline
$"$&$\frac{2d}{d-2} < p \leq  \frac{2d-6}{d-4}$ &$d\geq 5$\\ \hline 
	\end{tabular}
		\caption{Values of $p$}\label{values of p}
	\end{table}


\begin{Lemma}\label{energy estimate of CFEM}
    (a) For $f\in L^2(0,T;H^{-1}(\Omega))$ and $u_0 \in L^2(\Omega)$, the semidiscrete system \eqref{damped fem} admits a unique solution $u_h \in V_h$ and the following estimate is satisfied:
     \begin{align}\label{eqn-bound}
    	\sup_{0\leq t \leq T}\|u_h(t)\|^2_{L^2}&+\nu \int_0^T \|\nabla u_h(t)\|^2_{L^2} \, dt + \alpha \int_0^T \|u_h(t)\|^p_{L^p}\, dt \nonumber\\&\leq \|u_0\|^2_{L^2}+\frac{1}{\nu} \int_0^T \|f(t)\|^2_{H^{-1}}\, dt + C^{*}T|\Omega|,
    \end{align}
    where $C^{*}$ is defined in \eqref{C^*}.
    
    \noindent
    (b) Furthemore, if $f\in L^2(0,T;L^2(\Omega))$ and $u_0 \in D(A)$, then, we obtain the following improved estimate: 
     \begin{align*}
    	&\int_0^T \|\partial_t u_h(t)\|^2_{L^2}\, dt + \sup_{0\leq t \leq T}\bigg(\nu\|\nabla u_h(t)\|^2_{L^2}+\frac{2\alpha}{p}\|u_h(t)\|^p_{L^p}\bigg) \nonumber\\&\quad\leq \nu \|\nabla u_0\|^2_{L^2}+\frac{2\alpha}{p}\|u_0\|^p_{D(A)} + \int_0^T \|f(t)\|^2_{L^2}\, dt.
    \end{align*}
    \end{Lemma}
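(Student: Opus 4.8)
The plan is to transcribe, at the discrete level, the a priori estimates established for the continuous Faedo--Galerkin scheme in Theorems \ref{existenceweak} and \ref{regularity f in L2}, exploiting the crucial simplification that every test function we now require already lies in $V_h$, so that $(|u_h|^{p-2}u_h,\chi)$ is defined directly and no analogue of the projection $P_m$ is needed.

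\textbf{Existence and uniqueness (part (a)).} Fixing a nodal basis $\{\phi_j\}_{j=1}^{N_h}$ of $V_h$ and writing $u_h(t)=\sum_j c_j(t)\phi_j$, the weak form \eqref{damped fem} becomes a first-order ODE system $M\dot c(t)=F(t)-\nu K c(t)-\alpha b_p(c(t))+\sum_{\ell}\beta_\ell b_{q_\ell}(c(t))$, with invertible mass matrix $M$, stiffness matrix $K$, and discrete nonlinearities $b_p,b_{q_\ell}$. Since the maps $u\mapsto|u|^{p-2}u$ are locally Lipschitz (Subsection \ref{Non-linear operator}) and $t\mapsto\langle f(t),\phi_j\rangle$ is integrable, Carath\'eodory's theorem yields a unique local absolutely continuous solution; the uniform bound below rules out blow-up and extends it to $[0,T]$. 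Uniqueness follows at once from the local Lipschitz property (or by testing the difference equation, as in Theorem \ref{damped uniqueness}).

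\textbf{$L^2$-estimate (part (a)).} Taking $\chi=u_h(t)\in V_h$ in \eqref{damped fem} gives
\[
\tfrac12\tfrac{d}{dt}\|u_h(t)\|_{L^2}^2+\nu\|\nabla u_h(t)\|_{L^2}^2+\alpha\|u_h(t)\|_{L^p}^p=\langle f(t),u_h(t)\rangle+\sum_{\ell=1}^M\beta_\ell\|u_h(t)\|_{L^{q_\ell}}^{q_\ell},
\]
exactly as in Step 2 of Theorem \ref{existenceweak}. Bounding $\langle f,u_h\rangle\le\tfrac1{2\nu}\|f\|_{H^{-1}}^2+\tfrac\nu2\|\nabla u_h\|_{L^2}^2$, absorbing the pumping terms via the H\"older--Young chain \eqref{u^q to u^p journey} (so that $\sum_\ell\beta_\ell\|u_h\|_{L^{q_\ell}}^{q_\ell}\le\tfrac\alpha2\|u_h\|_{L^p}^p+C^*|\Omega|$), integrating in time, and using $\|u_h(0)\|_{L^2}\le\|u_0\|_{L^2}$ (valid for the $L^2$-projection initial datum appropriate to $u_0\in L^2$) yields \eqref{eqn-bound}.

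\textbf{Improved estimate (part (b)).} Here I would test with $\chi=\partial_t u_h(t)\in V_h$. Using the finite-dimensional chain rule $(|u_h|^{p-2}u_h,\partial_t u_h)=\tfrac1p\tfrac{d}{dt}\|u_h\|_{L^p}^p$ (legitimate since the $c_j$ are $C^1$ and $s\mapsto|s|^p$ is $C^1$ for $p\ge2$), and likewise for the $\nabla u_h$ and $q_\ell$ terms, one obtains
\[
\|\partial_t u_h\|_{L^2}^2+\tfrac\nu2\tfrac{d}{dt}\|\nabla u_h\|_{L^2}^2+\tfrac\alpha p\tfrac{d}{dt}\|u_h\|_{L^p}^p=(f,\partial_t u_h)+\sum_{\ell=1}^M\tfrac{\beta_\ell}{q_\ell}\tfrac{d}{dt}\|u_h\|_{L^{q_\ell}}^{q_\ell}.
\]
Applying Young to $(f,\partial_t u_h)$ to absorb half of $\|\partial_t u_h\|_{L^2}^2$ and integrating from $0$ to $t$ reproduces Step 2 of Theorem \ref{regularity f in L2}; the boundary term $\sum_\ell\tfrac{\beta_\ell}{q_\ell}\|u_h(t)\|_{L^{q_\ell}}^{q_\ell}$ is absorbed into $\tfrac\alpha p\|u_h(t)\|_{L^p}^p$ through \eqref{u^q to u^p journey} since $q_\ell<p$, leaving only controlled data-dependent constants. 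The initial contributions are handled by $\|\nabla u_h(0)\|_{L^2}\le\|\nabla u_0\|_{L^2}$ and $\|u_h(0)\|_{L^p}\le C\|u_0\|_{D(A)}$, which give the asserted right-hand side.

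\textbf{Main obstacle.} I expect the only genuine difficulty to be the $L^p$-control of the discrete initial datum $\|u_h(0)\|_{L^p}\lesssim\|u_0\|_{D(A)}$, which is exactly why Table \ref{values of p} splits the analysis. For $2\le p\le\frac{2d}{d-2}$ the Sobolev embedding $H_0^1\hookrightarrow L^p$ together with the $H^1$-stability $\|\nabla R_hu_0\|_{L^2}\le\|\nabla u_0\|_{L^2}$ of the Ritz projection suffices; for the larger range $\frac{2d}{d-2}<p\le\frac{2d-6}{d-4}$ this embedding fails, and one must instead invoke the stability and approximation properties of the Scott--Zhang interpolant, \eqref{scott zhang L^p estimate} and \eqref{scott zhang approximation result}, combined with the interpolation bound \eqref{to bound u-W in Lp} and the embedding $D(A)\hookrightarrow L^p$, to recover $\|u_h(0)\|_{L^p}\lesssim\|u_0\|_{D(A)}$. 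Once this is secured, the remainder is a verbatim discrete transcription of the continuous energy arguments.
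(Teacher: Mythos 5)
Your proposal follows the paper's proof essentially step for step: part (a) is obtained by testing with $\chi=u_h$, invoking the H\"older--Young bound \eqref{u^q to u^p journey} and integrating, and part (b) by testing with $\chi=\partial_t u_h$, using the finite-dimensional chain rule, and controlling $\|u_h(0)\|_{L^p}$ through the $H^1$-stability of the Ritz projection plus Sobolev embedding for $2\le p\le \tfrac{2d}{d-2}$ and through the Scott--Zhang stability estimate \eqref{scott zhang L^p estimate} for $\tfrac{2d}{d-2}<p\le\tfrac{2d-6}{d-4}$, which is exactly how the paper closes the argument. The ``main obstacle'' you single out (the $L^p$-bound on the discrete initial datum) is precisely the point the paper resolves via the case split of Table \ref{values of p}, so there is no gap.
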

    \begin{proof}
        (a) For $f \in L^2(0,T;H^{-1}(\Omega))$ and $u_0 \in L^2(\Omega)$, the proof of existence follows by arguments similar to those used in Theorem \ref{existenceweak}. Now, for proving the estimate \eqref{eqn-bound}, taking $\chi = u_h$ in \eqref{damped fem}, we infer for a.e. $t\in[0,T]$ 
        \begin{align*}
   		\frac{1}{2}\frac{d}{dt}\|u_h(t)\|^2_{L^2}+\nu \|\nabla u_h(t)\|^2_{L^2}+\alpha \|u_h(t)\|^p_{L^p} &= (f(t),u_h(t))+\sum_{\ell=1}^M \beta_{\ell} \|u_h(t)\|^{q_{\ell}}_{L^{q_{\ell}}}.
        \end{align*} 
        By an application of the Cauchy-Schwarz and Young's inequalities, along with \eqref{u^q to u^p journey}, we obtain
        \begin{align*}
        	\frac{1}{2}\frac{d}{dt}\|u_h(t)\|^2_{L^2}+\frac{\nu}{2} \|\nabla u_h(t)\|^2_{L^2}+\frac{\alpha}{2} \|u_h(t)\|^p_{L^p} \leq \frac{1}{2\nu}\|f(t)\|_{H^{-1}} + C^{*}|\Omega|,
        \end{align*}
   for a.e. $t\in[0,T]$.     Integrating from $0 \text{ to } t$, we achieve
        \begin{align*}
        &	\|u_h(t)\|^2_{L^2}+\nu \int_0^t \|\nabla u_h(s)\|^2_{L^2} \, ds + \alpha \int_0^t \|u_h(s)\|^p_{L^p}\, ds \nonumber\\&\leq \|u_h(0)\|_{L^2}+\frac{1}{\nu} \int_0^t \|f(s)\|^2_{H^{-1}}\, ds+ C^{*}t|\Omega|,
        \end{align*}
   for all $t\in[0,T]$.      Taking supremum over all $t \in [0,T]$, we conclude that
        \begin{align*}
        &	\sup_{0\leq t \leq T}\|u_h(t)\|^2_{L^2}+\nu \int_0^T \|\nabla u_h(t)\|^2_{L^2} \, dt + \alpha \int_0^T \|u_h(t)\|^p_{L^p}\, dt \nonumber\\&\leq \|u_0\|^2_{L^2}+\frac{1}{\nu} \int_0^T \|f(t)\|^2_{H^{-1}}\, dt + C^{*}T|\Omega|.
        \end{align*}
        Hence, we see that
        \[u_h \in L^\infty(0,T;L^2(\Omega))\cap L^2(0,T,H_0^1(\Omega))\cap L^p(0,T,L^p(\Omega)).\]\\
        \noindent
        (b) For $f \in L^2(0,T;L^2(\Omega))$ and $u_0 \in D(A)$, taking $\chi = \partial_t u_h$ in \eqref{damped fem} yields
        \begin{align*}
        	\|\partial_t u_h(t)\|^2_{L^2} + \frac{\nu}{2}\frac{d}{dt}\|\nabla u_h(t)\|^2_{L^2}+\frac{\alpha}{p}\frac{d}{dt}\|u_h(t)\|^p_{L^p} = (f(t),\partial_t u_h(t))+\sum_{\ell=1}^M  \frac{\beta_{\ell}}{q_{\ell}}\frac{d}{dt}\|u_h(t)\|^{q_{\ell}}_{L^{q_{\ell}}},
        \end{align*}
    for a.e. $t\in[0,T]$.    Using the Cauchy-Schwarz and Young's inequalities, we deduce
        \begin{align*}
        	\frac{1}{2}\|\partial_t u_h(t)\|^2_{L^2} + \frac{\nu}{2}\frac{d}{dt}\|\nabla u_h(t)\|^2_{L^2}+\frac{\alpha}{p}\frac{d}{dt}\|u_h(t)\|^p_{L^p} \leq  \frac{1}{2}\|f(t)\|^2_{L^2}+\sum_{\ell=1}^M  \frac{\beta_{\ell}}{q_{\ell}}\frac{d}{dt}\|u_h(t)\|^{q_{\ell}}_{L^{q_{\ell}}},
        \end{align*}
    for a.e. $t\in[0,T]$.     Now, integrating from $0 \text{ to } t$, we infer the following:
        \begin{align*}
        &\int_0^t \|\partial_s u_h(s)\|^2_{L^2}\, ds + \nu\|\nabla u_h(t)\|^2_{L^2}+\frac{2\alpha}{p}\|u_h(t)\|^p_{L^p} \\&\leq \nu \|\nabla u_h(0)\|^2_{L^2}+\frac{2\alpha}{p}\|u_h(0)\|^p_{L^p}+ \int_0^t \|f(s)\|^2_{L^2}\, ds +\sum_{\ell=1}^M  \frac{2\beta_{\ell}}{q_{\ell}}\|u_h(t)\|^{q_{\ell}}_{L^{q_{\ell}}}-\sum_{\ell=1}^M \frac{2\beta_{\ell}}{q_{\ell}}\|u_h(0)\|^{q_{\ell}}_{L^{q_{\ell}}},
        \end{align*}
    for all $t\in[0,T]$.    The term $\|u_h(\cdot)\|^{q_{l}}_{L^{q_{l}}}$ can be estimated in the same way as in \eqref{u^q to u^p journey}. Also using the Sobolev's embedding, we have 
    \begin{align*}
    \mbox{
    $\|u_h(0)\|_{L^p}=\|R_h u_0\|_{L^p}\leq C \|\nabla R_h u_0\|_{L^2}$\  for \ $2\leq p\leq \frac{2d}{d-2}$.
   }
   \end{align*} By exploiting  \eqref{scott zhang L^p estimate} and Sobolev's embedding, we infer 
   \begin{align*}
   	\mbox{$\|u_h(0)\|_{L^p}=\|S_h u_0\|_{L^p}\leq C \|u_0\|_{W^{\frac{d-2}{2d},p}}\leq \|u_0\|_{D(A)}$\  for all \ $\frac{2d}{d-2}< p\leq \frac{2d-6}{d-4}$. }
   \end{align*}
   Hence, by taking supremum over all $t\in [0,T]$, we conclude
        \begin{align*}
        	u_h\in L^\infty(0,T;H_0^1(\Omega))\cap L^{\infty}(0,T;L^p(\Omega)) \ \text{ and }\  \partial_t u_h \in L^2(0,T;L^2(\Omega)),
        \end{align*}
      which completes the proof. 
    \end{proof}

We proceed to present and derive the following theorem concerning the error analysis of the semi-discrete scheme.
\begin{Theorem}\label{semidiscrete error analysis}
    Let $V_h$ be the finite-dimensional subspace of $H_0^1(\Omega)$ as defined in \eqref{fem polynomial damped}. Assume that $u_0\in D(A)$ and $u(\cdot)$ is the solution of \eqref{Damped Heat}.
    Then\\ 
   \noindent
   (a) for $f \in H^1(0,T;L^2(\Omega))$ and  $2\leq p\leq \frac{2d}{d-2}$, we have
   \begin{align*}
   	&\|u_h-u\|^2_{L^\infty(0,T;L^2)}+\nu\|u_h-u\|^2_{L^2(0,T;H_0^1)}+\frac{\alpha}{2^{p-3}}\|u_h-u\|^p_{L^p(0,T;L^p)} \\
   	&\leq Ch^2\bigg\{\|u_0\|^2_{H_0^1}+\|u\|^2_{L^{\infty}(0,T;H_0^1)}+\int_0^T\|\partial_t u(t)\|_{H_0^1}^2\, dt\\& \quad +\left(\|u_0\|_{D(A)}^p+ \|f\|^2_{H^1(0,T;L^2)}+\|u\|^{2(p-2)}_{L^\infty(0,T;L^p)}\right)\int_0^T \|u(t)\|^2_{H^2}\,dt \bigg\},
   \end{align*}
   \noindent
   (b) for $f \in H^1(0,T;H^1(\Omega))$ and  $\frac{2d}{d-2}<p\leq \frac{2d-6}{d-4}$, we obtain 
   
   \noindent
  (i) if $2\leq q_{\ell}<1+\frac{p}{2}$, the following estimate holds:
   \begin{align*}
   	&\|u_h-u\|^2_{L^\infty(0,T;L^2)}+\nu \|u_h-u\|^2_{L^2(0,T;H_0^1)}+\frac{\alpha}{2^{p-2}}\|u_h-u\|^p_{L^p(0,T;L^p)}\\&\leq Ch^2\bigg\{\|u_0\|^2_{H_0^1}+\|u\|^2_{L^\infty(0,T;H_0^1)}+\int_0^T \|\partial_tu(t)\|^2_{H_0^1}\, dt +\int_0^T \|u(t)\|^2_{H^2}\,dt + \int_0^T \|u(t)\|^p_{H^2}\,dt\\& \quad  + \int_0^T \|u(t)\|^2_{D(A^{\frac{3}{2}})}\,dt \bigg\},
   \end{align*}
   
   \noindent
   (ii) if $1+\frac{p}{2}\leq q_{\ell}<p$, the following estimate holds:
   \begin{align*}
   	&\|u_h-u\|^2_{L^\infty(0,T;L^2)}+\nu \|u_h-u\|^2_{L^2(0,T;H_0^1)}+\frac{\alpha}{2^{p-2}}\|u_h-u\|^p_{L^p(0,T;L^p)}\\&\leq Ch^2\bigg\{\|u_0\|^2_{H_0^1}+\|u\|^2_{L^\infty(0,T;H_0^1)}+\int_0^T \|\partial_tu(t)\|^2_{H_0^1}\, dt +\int_0^T \|u(t)\|^2_{H^2}\,dt + \int_0^T \|u(t)\|^p_{H^2}\,dt\\& \quad  +\|u\|^{\frac{2q_{\ell}-p-2}{p-q_{\ell}+1}}_{L^\infty(0,T;H^2)} \int_0^T \|u(t)\|^2_{D(A^{\frac{3}{2}})}\,dt \bigg\}.
   \end{align*}
\end{Theorem}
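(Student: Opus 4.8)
The plan is to follow the classical projection-based error analysis of Thom\'ee, writing the error as $u_h - u = \theta + \rho$, where $\rho := \Pi_h u - u$ is the projection/interpolation error and $\theta := u_h - \Pi_h u \in V_h$ is the discrete component. Here $\Pi_h = R_h$ (the Ritz projection) in part (a), where $2 \leq p \leq \frac{2d}{d-2}$ guarantees $H_0^1(\Omega) \hookrightarrow L^p(\Omega)$, and $\Pi_h = S_h$ (the Scott--Zhang interpolation) in part (b), where this embedding fails and one must instead rely on $D(A) \hookrightarrow L^p(\Omega)$ together with the $L^p$-stability \eqref{scott zhang L^p estimate} of $S_h$. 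The term $\rho$ is controlled directly by the approximation estimates \eqref{4.5.3}, \eqref{scott zhang approximation result} and \eqref{to bound u-W in Lp}; the real work is to bound $\theta$.

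First I would subtract the discrete weak formulation \eqref{damped fem} from the continuous equation tested against $\chi \in V_h$ to obtain the error equation for $\theta$, and then test with $\chi = \theta$. This yields an energy identity of the form $\frac{1}{2}\frac{d}{dt}\|\theta\|_{L^2}^2 + \nu\|\nabla\theta\|_{L^2}^2 + \alpha\langle b(u_h)-b(u),\theta\rangle = \sum_{\ell=1}^M \beta_\ell\langle b_{q_\ell}(u_h)-b_{q_\ell}(u),\theta\rangle - \langle \partial_t\rho,\theta\rangle + \nu(\nabla\rho,\nabla\theta)$, where $b_{q_\ell}(v) := |v|^{q_\ell-2}v$. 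In part (a) the Ritz orthogonality \eqref{Ritz projection} cancels the consistency term $\nu(\nabla\rho,\nabla\theta)$ outright, whereas in part (b) this term survives and is absorbed into $\nu\|\nabla\theta\|_{L^2}^2$ by Young's inequality at the cost of a remainder $C\|\nabla\rho\|_{L^2}^2 \leq Ch^2\|u\|_{H^2}^2$. The term $\langle\partial_t\rho,\theta\rangle$ is handled using $\partial_t\rho = \Pi_h\partial_t u - \partial_t u$ together with the approximation estimates applied to $\partial_t u$.

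The heart of the argument is the damping nonlinearity, which I split as $\alpha\langle b(u_h)-b(\Pi_h u),\theta\rangle + \alpha\langle b(\Pi_h u)-b(u),\theta\rangle$. The first summand is bounded below by the monotonicity estimate \eqref{eqn-nonlinear-est}, furnishing the coercive term $\frac{\alpha}{2^{p-2}}\|\theta\|_{L^p}^p$; after recombining $\theta$ with $\rho$ by the triangle inequality this yields the $L^p$-contribution on the left-hand side with the stated constants. The second (consistency) summand is estimated by the local Lipschitz bound of Subsection \ref{Non-linear operator}, namely $\|b(\Pi_h u)-b(u)\|_{L^{p'}} \leq C r^{p-2}\|\rho\|_{L^p}$ with $r$ controlled by $\|u\|_{L^\infty(0,T;L^p)}$ (finite by Theorems \ref{regularity f in L2}--\ref{regularity-f- H1}); pairing with $\theta$ and using Young's inequality against the coercive $L^p$-term leaves a remainder in $\|\rho\|_{L^p}$. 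It is precisely here that $u \in D(A^{\frac{3}{2}})$ enters in part (b): the Sobolev embedding $H^3(\Omega)\hookrightarrow W^{1,p}(\Omega)$, valid in the range $\frac{2d}{d-2} < p \leq \frac{2d-6}{d-4}$, lets me write $\|S_h u - u\|_{L^p} \leq Ch\|u\|_{W^{1,p}} \leq Ch\|u\|_{D(A^{\frac{3}{2}})}$, which accounts for the $\int_0^T\|u(t)\|_{D(A^{\frac{3}{2}})}^2\,dt$ appearing in the estimates of part (b).

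The main obstacle is the pumping terms $\sum_{\ell}\beta_\ell\langle b_{q_\ell}(u_h)-b_{q_\ell}(u),\theta\rangle$, which are not sign-definite ($\beta_\ell\in\mathbb{R}$) and so must be fully dominated by the coercive diffusion and damping contributions. Splitting each as above into a $u_h$-vs-$\Pi_h u$ part and a $\Pi_h u$-vs-$u$ part, I would treat the former exactly as in the uniqueness proof (Theorem \ref{damped uniqueness}): using the pointwise bound $|b_{q_\ell}(a)-b_{q_\ell}(c)| \leq 2^{q_\ell-3}(q_\ell-1)(|a|^{q_\ell-2}+|c|^{q_\ell-2})|a-c|$, H\"older's inequality with the conjugate exponents $\frac{q_\ell-2}{p-2}$ and $\frac{p-q_\ell}{p-2}$, and Young's inequality to absorb the resulting weighted terms into the damping coercivity \eqref{eqn-mono-2}. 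The dichotomy between cases (i) and (ii) emerges at this step through the interpolation of $\|\theta\|_{L^{q_\ell}}$ between $L^2(\Omega)$ and $L^p(\Omega)$: for $2\leq q_\ell < 1+\frac{p}{2}$ the Young balancing closes without any additional factor, while for $1+\frac{p}{2}\leq q_\ell < p$ it forces the extra prefactor $\|u\|_{L^\infty(0,T;H^2)}^{(2q_\ell-p-2)/(p-q_\ell+1)}$ multiplying the $D(A^{\frac{3}{2}})$-norm. Once every nonlinear contribution is bounded by the coercive terms plus projection remainders of order $h^2$, integrating in time and applying Gronwall's inequality (the lower-order pumping contributions supplying the exponential factor, which is absorbed into the constant $C$) closes the estimate and delivers the asserted $O(h)$ convergence rates.
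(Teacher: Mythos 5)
Your global architecture is a legitimate alternative to the paper's: you use the classical Thom\'ee splitting $u_h-u=\theta+\rho$ with $\theta=u_h-\Pi_h u\in V_h$, test the $\theta$-equation with $\theta$, and let Ritz orthogonality cancel $\nu(\nabla\rho,\nabla\theta)$ in part (a). The paper never splits the error at all: it keeps $u_h-u$ as the energy variable, inserts $W\in V_h$ only through the test function $\chi=(u_h-u)+(u-W)$, and estimates even the gradient consistency term $\nu(\nabla(u_h-u),\nabla(u-W))$ by Cauchy--Schwarz and the approximation property, not by orthogonality. Your coercivity step (monotonicity \eqref{eqn-nonlinear-est} applied to $\langle b(u_h)-b(\Pi_h u),\theta\rangle$), the absorption of the discrete pumping part into the weighted coercivity \eqref{eqn-mono-2}, and the Gronwall closure are all sound.

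The genuine gap is in your treatment of the damping consistency term $\alpha\langle b(\Pi_h u)-b(u),\theta\rangle$. You bound it by $C r^{p-2}\|\rho\|_{L^p}\|\theta\|_{L^p}$ and then apply Young's inequality against the coercive term $\tfrac{\alpha}{2^{p-2}}\|\theta\|_{L^p}^p$. Since $\theta$ and $\rho$ each enter linearly, Young with conjugate exponents $(p,p')$, $p'=\tfrac{p}{p-1}$, necessarily leaves the remainder $C_\epsilon\,r^{(p-2)p'}\|\rho\|_{L^p}^{p'}$. The projection bounds you invoke give only $\|\rho\|_{L^p}\leq Ch\|u\|_{H^2}$ (part (a)) or $\|\rho\|_{L^p}\leq Ch\|u\|_{D(A^{3/2})}$ (part (b)), so after time integration this remainder is $O(h^{p'})$ with $p'<2$ for every $p>2$; your final estimate would read $\|u_h-u\|^2_{L^\infty(0,T;L^2)}\leq Ch^{p/(p-1)}$, strictly weaker than the asserted $O(h^2)$, so the proof does not close. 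The repair requires pairing differently, which is precisely what the paper does: either measure the error factor in $L^{2d/(d-2)}$ and absorb it into $\nu\|\nabla(u_h-u)\|^2_{L^2}$ via Sobolev embedding, leaving the remainder $\|u-W\|^2_{L^p}=O(h^2)$ (the paper's estimate of $I_3$ for $2\leq p\leq\tfrac{2d}{d-2}$); or split off the piece carrying the weight $|u|^{p-2}$ as $\|u\|^{p-2}_{L^{2(p-1)}}\|u_h-u\|_{L^2}\|u-W\|_{L^{2(p-1)}}$, so the error enters in $L^2$ (handled by Gronwall) and the remainder $\|u-W\|^2_{L^{2(p-1)}}$ is $O(h^2)$; or, if the $L^p$-coercivity is to be used, arrange the Young split so that the \emph{$p$-th power} falls on the projection error, as in $\|u_h-u\|^{p-1}_{L^p}\|u-W\|_{L^p}\leq\epsilon\|u_h-u\|^p_{L^p}+C\|u-W\|^p_{L^p}$ — which is exactly why the paper proves the nonstandard bound $\|u-W\|^p_{L^p}\leq Ch^2\|u\|^p_{H^2}$ in \eqref{u-w Lp in H2} (each factor is only $O(h^{2/p})$, and only the $p$-th power restores $O(h^2)$). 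A secondary point: the dichotomy (b)(i)/(ii) does not arise from interpolating $\|\theta\|_{L^{q_\ell}}$ between $L^2$ and $L^p$ as you suggest, but from the pumping consistency term $\sum_\ell\beta_\ell(b_{q_\ell}(u_h)-b_{q_\ell}(u),u-W)$, where the Young exponent $\tfrac{p}{p-q_\ell+1}$ is at least $2$ precisely when $q_\ell\geq 1+\tfrac{p}{2}$; as stated, your mechanism does not visibly produce the prefactor $\|u\|^{(2q_\ell-p-2)/(p-q_\ell+1)}_{L^\infty(0,T;H^2)}$ appearing in case (ii).
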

\begin{proof}
    Using Theorem \ref{damped weak} and equation \eqref{damped fem}, we know that $(u_h-u)(\cdot)$ satisfies
    \begin{align}\label{SFEM err-01}
    \langle \partial_t u_h(t)-\partial_t u(t), \chi \rangle &+ \nu(\nabla (u_h(t)-u(t)), \nabla \chi) + \alpha (|u_h(t)|^{p-2}u_h(t)- |u(t)|^{p-2}u(t), \chi)\nonumber \\& = \sum_{\ell=1}^M\beta_{\ell} (|u_h(t)|^{q_{\ell}-2}u_h(t) -|u(t)|^{q_{\ell}-2}u(t), \chi ),
    \end{align}
    for all $\chi \in V_h$ and a.e. $t \in [0,T]$. Choosing $\chi = u_h -u+u-W \in V_h $, $W\in V_h,$ we obtain
    \begin{align}\label{eqn-diff}
        & \langle \partial_t u_h(t)-\partial_t u(t), u_h(t)-u(t) \rangle + \nu(\nabla (u_h(t)-u(t)), \nabla (u_h(t)-u(t))) \nonumber \\
         &\quad   +\alpha (|u_h(t)|^{p-2}u_h(t)- |u(t)|^{p-2}u(t), u_h(t) - u(t)) \nonumber \\&= - \langle \partial_t u_h(t)-\partial_t u(t), u(t)-W(t) \rangle -\nu(\nabla (u_h(t)-u(t)), \nabla (u(t)-W(t)))  \nonumber\\
         & \quad- \alpha (|u_h(t)|^{p-2}u_h(t)- |u(t)|^{p-2}u(t), u(t) - W(t))\\&\quad+ \sum_{\ell=1}^M\beta_{\ell}(|u_h(t)|^{q_{\ell}-2}u_h(t)-|u(t)|^{q_{\ell}-2}u(t), u_h(t)-u(t))  \nonumber\\&\quad + \sum_{\ell=1}^M\beta_{\ell}(|u_h(t)|^{q_{\ell}-2}u_h(t)-|u(t)|^{q_{\ell}-2}u(t), u(t)-W(t)),\nonumber
    \end{align}
for a.e. $t\in[0,T]$.    By performing a similar calculation for the nonlinear terms, as was done in Theorem \ref{damped uniqueness}, we find
    \begin{align*}
 	\alpha (|u_h|^{p-2}u_h- |u|^{p-2}u, u_h - u) \geq \frac{\alpha}{2}\||u_h|^{\frac{p-2}{2}}(u_h-u)\|^2_{L^2}+\frac{\alpha}{2}\||u|^{\frac{p-2}{2}}(u_h-u)\|^2_{L^2},
 \end{align*}
 and
 \begin{align*}
 	&	\sum_{\ell=1}^M\beta_{\ell}(|u_h|^{q_{\ell}-2}u_h-|u|^{q_{\ell}-2}u, u_h-u)\\&\leq \frac{\alpha}{4}\||u_h|^{\frac{p-2}{2}}(u_h-u)\|^2_{L^2}+\frac{\alpha}{4}\||u|^{\frac{p-2}{2}}(u_h-u)\|^2_{L^2} + 2C_2\|u_h-u\|^2_{L^2},
 \end{align*}
    where $C_2$ given in \eqref{C_2}. Therefore, using the above estimates in \eqref{eqn-diff}, we deduce 
    \begin{align}\label{SFEM err-02}
           \frac{1}{2} &\frac{d}{dt}\|u_h(t)-u(t)\|^2_{L^2}+ \nu \|\nabla (u_h(t)-u(t)\|^2_{L^2(\Omega)}+\frac{\alpha}{4} \||u_h(t)|^{\frac{p-2}{2}}(u_h(t)-u(t))\|^2_{L^2} \nonumber\\
            & \quad+ \frac{\alpha}{4} \||u(t)|^{\frac{p-2}{2}}(u_h(t)-u(t)\|^2_{L^2} \nonumber\\
            &\leq - \partial_t(u_h(t)-u(t),u(t)-W(t)) +(u_h(t)-u(t), \partial_t(u(t)-W(t))) \nonumber \\& \quad-  \nu(\nabla (u_h(t)-u(t)), \nabla (u(t)-W(t)))- \alpha (|u_h(t)|^{p-2}u_h(t)- |u|^{p-2}u, u(t) - W(t))\nonumber\\
            &\quad +\sum_{\ell=1}^M\beta_{\ell}(|u_h(t)|^{q_{\ell}-2}u_h(t)-|u(t)|^{q_{\ell}-2}u(t), u(t)-W(t)) +2C_2\|u_h(t)-u(t)\|^2_{L^2},
        \end{align}
for a.e. $t\in[0,T]$.    From (\cite[Proposition 2.2]{AB+ZB+MTM-2025}), we have the estimate
    \begin{align*}
    	\frac{\alpha}{4} \||u_h|^{\frac{p-2}{2}}(u_h-u)\|^2_{L^2}
    	 + \frac{\alpha}{4} \||u|^{\frac{p-2}{2}}(u_h-u)\|^2_{L^2} \geq \frac{\alpha}{2^{p-1}}\|u_h-u\|^p_{L^p}.
    \end{align*}
   Therefore, we arrive at
    \begin{align}\label{SFEM err-03}
           &\frac{1}{2}\frac{d}{dt}\|u_h(t)-u(t)\|^2_{L^2}+ \nu \|\nabla (u_h(t)-u(t))\|^2_{L^2}+\frac{\alpha}{2^{p-1}}\|u_h(t)-u(t)\|^p_{L^p}
            \nonumber \\&\leq - \partial_t(u_h(t)-u(t),u(t)-W(t)) + \sum_{j=1}^4I_j + 2C_2\|u_h(t)-u(t)\|^2_{L^2}.
    \end{align}
   We now need to estimate the above terms separately for 
  $2\leq p\leq \frac{2d}{d-2}$ and $\frac{2d}{d-2}<p\leq \frac{2d-6}{d-4}$, since the nonlinear terms must be treated differently in each case.
    \vskip 0.1 cm
    \noindent
 	(a) \textbf{For $2\leq p \leq \frac{2d}{d-2}$:}
   We estimate $I_1$ and $I_2$ by using the Cauchy-Schwarz and Young's inequalities as 
   \begin{align*}
   	|I_1|&= |(u_h-u, \partial_t(u-W))|&\leq \|u_h-u\|_{L^2} \|\partial_t(u-W)\|_{L^2} \leq \frac{1}{2}\|u_h-u\|^2_{L^2} +\frac{1}{2}\|\partial_t(u-W)\|^2_{L^2},
   \end{align*}
   and
   \begin{align*}
   	|I_2|&= | \nu(\nabla (u_h-u), \nabla (u-W))|\\
   	&\leq \nu \|\nabla (u_h-u)\|_{L^2} \|\nabla (u - W) \|_{L^2} \leq \frac{\nu}{4}\|\nabla(u_h-u)\|^2_{L^2} +\nu \|\nabla (u-W)\|^2_{L^2}.
   \end{align*}
   Using Taylor's formula, H\"{o}lder's, Young's, and Gagliardo-Nirenberg's inequalities \cite[Theorem 1.5.2, Chapter 1]{MR2962068}, we estimate $I_3$ for $p\geq 2$ as 
   \begin{align*}
   	|I_3|&=|- \alpha (|u_h|^{p-2}u_h- |u|^{p-2}u, u - W)| \\
   	&\leq (p-1)\left\vert\alpha((\theta u_h +(1-\theta)u)^{p-2}(u_h-u), u-W )\right\vert\\
   	&\leq \alpha (p-1)2^{p-3}\big|\big( (|u_h|^{p-2}+|u|^{p-2})(u_h-u),u-W\big)\big|\\&\leq \alpha (p-1)2^{p-3}\|u_h-u\|_{L^{\frac{2d}{d-2}}}(\|u_h\|^{p-2}_{L^p}+\|u\|^{p-2}_{L^p})\|u-W\|_{L^{\frac{2dp}{(2-d)p+4d}}}\\
   	&\leq \frac{\nu}{4}\|\nabla(u_h-u)\|^2_{L^2}+\frac{\alpha^2 (p-1)^2 2^{2p-5}}{\nu}(\|u_h\|^{2(p-2)}_{L^p}+\|u\|^{2(p-2)}_{L^p})\|u-W\|^2_{L^p}.
   \end{align*}
   We Estimate $I_4$ using a similar  calculation as
   \begin{align*}
   	|I_4|&=\bigg|\sum_{\ell=1}^M \beta_{\ell}(|u_h(t)|^{q_{\ell}-2}u_h(t)-|u(t)|^{q_{\ell}-2}u(t), u(t)-W(t))\bigg|\\
   	&\leq \frac{\nu}{4}\|\nabla(u_h-u)\|^2_{L^2}+\sum_{\ell=1}^M \frac{M|\beta_{\ell}|^2(q_{\ell}-1)^2 2^{2q_{\ell}-5}}{\nu}(\|u_h\|^{2(q_{\ell}-2)}_{L^{q_{\ell}}}+\|u\|^{2(q_{\ell}-2)}_{L^{q_{\ell}}})\|u-W\|^2_{L^{q_{\ell}}}.
   \end{align*}
   Using all these estimates in \eqref{SFEM err-03}, we establish the following:
   {\small
   \begin{align*}
   &\frac{1}{2}\frac{d}{dt}\|u_h(t)-u(t)\|^2_{L^2}+ \frac{\nu}{4} \|\nabla (u_h(t)-u(t)\|^2_{L^2} + \frac{\alpha}{2^{p-1}}\|u_h(t)-u(t)\|^p_{L^p}\\&\leq -\partial_t(u_h(t)-u(t),u(t)-W(t))+\frac{1}{2}\|u_h(t)-u(t)\|^2_{L^2}+\frac{1}{2}\|\partial_t(u(t)-W(t))\|^2_{L^2}\\&\quad + \nu\|\nabla(u(t)-W(t))\|^2_{L^2}+\frac{\alpha^2 (p-1)^2 2^{2p-5}}{\nu}(\|u_h(t)\|^{2(p-2)}_{L^p}+\|u(t)\|^{2(p-2)}_{L^p})\|u(t)-W(t)\|^2_{L^p}\\&\quad + \sum_{\ell=1}^M \frac{M|\beta_{\ell}|^2(q_{\ell}-1)^2 2^{2q_{\ell}-5}}{\nu}(\|u_h(t)\|^{2(q_{\ell}-2)}_{L^{q_{\ell}}}+\|u(t)\|^{2(q_{\ell}-2)}_{L^{q_{\ell}}})\|u(t)-W(t)\|^2_{L^{q_{\ell}}}+2C_2\|u_h(t)-u(t)\|^2_{L^2},
   \end{align*}}
   for a.e. $t\in[0,T]$. 
   Integrating from $0$ to $t$, we arrive at
  { \small
   \begin{align*}
   	&\|u_h(t)-u(t)\|^2_{L^2}+\frac{\nu}{2}\int_0^t \|\nabla(u_h(s)-u(s))\|^2_{L^2}\, ds+\frac{\alpha}{2^{p-2}}\int_0^t\|u_h(s)-u(s)\|^p_{L^p}\, ds \\ 
   	&\leq \|u_h(0)-u_0\|^2_{L^2}-2(u_h(t)-u(t),u(t)-W(t))+ 2(u_h(0)-u_0, u_0-W(0))\\ & \quad + \int_0^t \|\partial_s(u(s)-W(s))\|^2_{L^2}\,ds + 2\nu\int_0^t \|\nabla(u(s)-W(s))\|^2_{L^2}\,ds+(1+4C_2)\int_0^t \|u_h(s)-u(s)\|^2_{L^2}\,ds\\&\quad + \frac{\alpha^2 (p-1)^2 2^{2p-5}}{\nu}\int_0^t \big(\|u_h(s)\|^{2(p-2)}_{L^p}+\|u(s)\|^{2(p-2)}_{L^p}\big)\|u(s)-W(s)\|^2_{L^p}\, ds \\&\quad +  \sum_{\ell=1}^M \frac{M|\beta_{\ell}|^2(q_{\ell}-1)^2 2^{2q_{\ell}-5}}{\nu}\int_0^t (\|u_h(s)\|^{2(q_{\ell}-2)}_{L^{q_{\ell}}}+\|u(s)\|^{2(q_{\ell}-2)}_{L^{q_{\ell}}})\|u(s)-W(s)\|^2_{L^{q_{\ell}}}\,ds.
   \end{align*}}
   For $2\leq p \leq \frac{2d}{d-2}$, using Sobolev embedding and \eqref{4.5.3}, we can estimate the terms in above equation by taking $W=R_h u$ as
   \begin{align*}
   	\|u-W\|^2_{L^p}&\leq \|\nabla(u-R_hu)\|^2_{L^2}\leq Ch^2\|u\|^2_{H^2},\\
   	\|u_0-W(0)\|^2_{L^2}&=\|u_0-R_hu_0\|^2_{L^2}\leq Ch^2\|u_0\|^2_{H_0^1},\\
   	\|u-W\|^2_{L^2}&= \|u-R_hu\|^2_{L^2}\leq Ch^2\|u\|_{H_0^1},\\
   	\|\partial_t(u-W)\|^2_{L^2}&=\|\partial_t(u-R_hu)\|^2_{L^2}\leq Ch^2\|\partial_tu\|^2_{H_0^1}.
   \end{align*}
     Using the Cauchy-Schwarz and Young's inequalities, we estimate $- 2(u_h-u, u-W)$ and $2 (u_h(0)-u_0, u_0-W(0)) $ as
   \begin{align*}
   	-2(u_h - u, u - W) &\leq \frac{1}{2} \|u_h - u\|_{L^2}^2 +  2\|u - W\|_{L^2}^2\leq \frac{1}{2} \|u_h - u\|_{L^2}^2 +  2Ch^2\|u\|_{H_0^1}^2 , \\
   	2(u_h(0) - u_0, u_0 - W(0)) &\leq \|u_h(0) - u_0\|_{L^2}^2 + \|u_0 - W(0)\|_{L^2}^2 \leq 2Ch^2\|u_0\|^2_{H_0^1}.
   \end{align*}
   Using these estimates, taking supremum over all $t\in [0,T]$ and by an application of Gronwall's inequality, we see that 
   \begin{align*}
   	&\sup_{0\leq t\leq T}\|u_h(t)-u(t)\|^2_{L^2}+ \nu \int_0^T\|\nabla(u_h(t)-u(t))\|^2_{L^2}\,dt+ \frac{\alpha}{2^{p-3}}\int_0^T\|u_h(t)-u(t)\|^p_{L^p}\, dt\\ 
   	&\leq\bigg[ 4Ch^2\|u_0\|^2_{H_0^1}+4Ch^2\|u\|_{L^{\infty}(0,T;H_0^1)}^2+2Ch^2\int_0^T\|\partial_t u(t)\|^2_{H_0^1}\, dt \\&\quad + Ch^2\frac{\alpha^2 (p-1)^2 2^{2p-4}}{\nu} \big(\|u_h\|^{2(p-2)}_{L^\infty(0,T;L^p)}+\|u\|^{2(p-2)}_{L^\infty(0,T;L^p)}\big)\int_0^T \|u(t)\|^2_{H^2}\, dt\\&\quad + Ch^2\sum_{\ell=1}^M \frac{M|\beta_{\ell}|^2(q_{\ell}-1)^2 2^{2q_{\ell}-4}}{\nu} \big(\|u_h\|^{2(q_{\ell}-2)}_{L^\infty(0,T;L^{q_{\ell}})}+\|u\|^{2(q_{\ell}-2)}_{L^\infty(0,T;L^{q_{\ell}})}\big)\int_0^T\|u(t)\|^2_{H^2}\,dt\\ & \quad +  4Ch^2\nu \int_0^T \| u(t)\|^2_{H^2}\, dt\bigg] \exp( (2+8C_2)T).
   \end{align*}
  We already have $u_h,u\in L^\infty(0,T;H_0^1(\Omega))\hookrightarrow L^\infty(0,T;L^p(\Omega))$ and bounded by the given data, we can use \eqref{u^q to u^p journey} to conclude
   \begin{align*}
   &	\|u_h(t)-u(t)\|^2_{L^\infty(0,T;L^2)}+\nu\|u_h(t)-u(t)\|^2_{L^2(0,T;H_0^1)}+\frac{\alpha}{2^{p-3}}\|u_h(t)-u(t)\|^p_{L^p(0,T;L^p)} \\
   	&\leq Ch^2\bigg\{\|u_0\|^2_{H_0^1}+\|u\|^2_{L^{\infty}(0,T;H_0^1)}+\int_0^T\|\partial_t u(t)\|_{H_0^1}\, dt\\& \quad +(\|u_h\|^{2(p-2)}_{L^\infty(0,T;L^p)}+\|u\|^{2(p-2)}_{L^\infty(0,T;L^p)})\int_0^T \|u(t)\|^2_{H^2}\,dt \bigg\},
   \end{align*}
   that leads to desired result using Theorem \ref{energy estimate of CFEM}(a).
   
   \noindent
   (b) \textbf{For $\frac{2d}{d-2}<p\leq \frac{2d-6}{d-4}$:}
    We estimate $I_1$ and $I_2$ by using the Cauchy-Schwarz and Young's inequalities as 
    \begin{align*}
    |I_1|= |(u_h-u, \partial_t(u-W))|\leq \|u_h-u\|_{L^2} \|\partial_t(u-W)\|_{L^2} \leq \frac{1}{2}\|u_h-u\|^2_{L^2} +\frac{1}{2}\|\partial_t(u-W)\|^2_{L^2},
    \end{align*}
    and
    \begin{align*}
        |I_2|&= | \nu(\nabla (u_h-u), \nabla (u-W))|\\
        &\leq \nu \|\nabla (u_h-u)\|_{L^2} \|\nabla (u - W) \|_{L^2} \leq \frac{3\nu}{4}\|\nabla(u_h-u)\|^2_{L^2} + \frac{\nu}{3} \|\nabla (u-W)\|^2_{L^2}.
    \end{align*}
    To obtain the optimal convergence order under minimal regularity, we estimate $I_3$ and $I_4$
    separately for $2\leq q_{\ell}<1+\frac{p}{2}$ and $1+\frac{p}{2}\leq q_{\ell}<p$  as follows:
     
     (i) \textbf{For $2\leq q_{\ell}<1+\frac{p}{2}$ :}
     
      Using Taylor's formula, H\"{o}lder's, Young's, and Gagliardo-Nirenberg inequalities \cite[Theorem 1.5.2, Chapter 1]{MR2962068}, we estimate $I_3$ for $p\geq 2$ as 
     \begin{align*}
     |I_3|&=|- \alpha (|u_h|^{p-2}u_h- |u|^{p-2}u, u - W)| \\
     &\leq (p-1)\left \vert \alpha ((\theta(u_h-u)+u)^{p-2}(u_h-u),u-W)\right \vert \\
     &\leq 2^{p-3}\alpha(p-1)((|u_h-u|^{p-2}+|u|^{p-2})|u_h-u|,|u-W|)\\
     &\leq 2^{p-3}\alpha(p-1)\big((|u_h-u|^{p-1},|u-W|)+(|u|^{p-2}|u_h-u|,|u-W|)\big)\\
     &\leq 2^{p-3}\alpha (p-1)\|u_h-u\|^{p-1}_{L^p}\|u-W\|_{L^p}+2^{p-3}\alpha(p-1)\|u\|^{p-2}_{L^{2(p-1)}} \|u_h-u\|_{L^2}\|u-W\|_{L^{2(p-1)}}\\&\leq \frac{\alpha}{2^{p}}\|u_h-u\|^{p}_{L^p}+\bigg(\frac{2^{p-3}\alpha(p-1)}{p}\bigg)\bigg(\frac{2^{2p-3}(p-1)^2}{p}\bigg)^{p-1}\|u-W\|^p_{L^p}\\&\quad +2^{p-2}\alpha(p-1)\bigg(\|u\|^{2(p-2)}_{L^{2(p-1)}}\|u_h-u\|^2_{L^2}+\|u-W\|^2_{L^{2(p-1)}}\bigg).
    \end{align*}
    Let us now estimate $I_4$. Note that, we have $2\leq 2(q_{\ell}-1)<p$ from the range of $q_{\ell}$. Therefore, we infer 
    \begin{align*}
    	|I_4| &= \bigg|\sum_{\ell=1}^M \beta_{\ell} (|u_h|^{q_{\ell}-2}u_h-|u|^{q_{\ell}-2}u, u-W)\bigg| \\
    	&\leq \bigg|\sum_{\ell=1}^M \beta_{\ell}(q_{\ell}-1)((\theta u_h + (1-\theta)u)^{q_{\ell}-2}(u_h-u),u-W)\bigg|\\
    	&\leq \sum_{\ell=1}^M |\beta_{\ell}|(q_{\ell}-1)((|u_h|+|u|)^{q_{\ell}-2}|u_h-u|,|u-W|)\\
    	&\leq  \sum_{\ell=1}^M |\beta_{\ell}|(q_{\ell}-1)2^{q_{\ell}-3}((|u_h|^{q_{\ell}-2}+|u|^{q_{\ell}-2})|u_h-u|,|u-W|)\\
    	&\leq \sum_{\ell=1}^M |\beta_{\ell}|(q_{\ell}-1)2^{q_{\ell}-3}(\|u_h\|_{L^{2(q_{\ell}-1)}}^{q_{\ell}-2}+\|u\|_{L^{2(q_{\ell}-1)}}^{q_{\ell}-2})\|u_h-u\|_{L^2}\|u-W\|_{L^{2(q_{\ell}-1)}}\\&\leq \sum_{\ell=1}^M |\beta_{\ell}|(q_{\ell}-1)2^{q_{\ell}-3} |\Omega|^{\frac{(p-2q_{\ell}+2)(q_{\ell}-2)}{2p(q_{\ell}-1)}}(\|u_h\|_{L^p}^{q_{\ell}-2}+\|u\|^{q_{\ell}-2}_{L^p})\|u_h-u\|_{L^2} |\Omega|^{\frac{p-2q_{\ell}+2}{2p(q_{\ell}-1)}}\|u-W\|_{L^p}\\& \leq \sum_{\ell=1}^M |\beta_{\ell}|(q_{\ell}-1)2^{q_{\ell}-3} |\Omega|^{\frac{p-2q_{\ell}+2}{2p}}(\|u_h\|_{L^p}^{q_{\ell}-2}+\|u\|^{q_{\ell}-2}_{L^p})\|u_h-u\|_{L^2}\|u-W\|_{L^p}\\&\leq 2\bigg[\sum_{\ell=1}^M |\beta_{\ell}|(q_{\ell}-1)2^{q_{\ell}-3} |\Omega|^{\frac{p-2q_{\ell}+2}{2p}}\bigg]^2 (\|u_h\|_{L^p}^{2q_{\ell}-4}+\|u\|^{2q_{\ell}-4}_{L^p})\|u_h-u\|^2_{L^2} + \|u-W\|^2_{L^p}.
    \end{align*}
    Using these estimates in \eqref{SFEM err-03}, we arrive at 
    \begin{align*}
    &\frac{1}{2}\frac{d}{dt}\|u_h(t)-u(t)\|^2_{L^2}+\frac{\nu}{4}\|\nabla(u_h(t)-u(t))\|^2_{L^2}+\frac{\alpha}{2^{p}}\|u_h(t)-u(t)\|^p_{L^p}\\&\leq -\partial_t(u_h(t)-u(t),u(t)-W(t))+\bigg(\frac{1}{2}+2C_2\bigg)\|u_h(t)-u(t)\|^2_{L^2}+\frac{1}{2}\|\partial_t(u(t)-W(t))\|^2_{L^2}\\&\quad +\frac{\nu}{3}\|\nabla(u(t)-W(t))\|^2_{L^2}+\bigg(\frac{2^{p-3}\alpha(p-1)}{p}\bigg)\bigg(\frac{2^{2p-3}(p-1)^2}{p}\bigg)^{p-1}\|u(t)-W(t)\|^p_{L^p}\\&\quad + 2^{p-2}\alpha(p-1)\bigg(\|u(t)\|^{2(p-2)}_{L^{2(p-1)}}\|u_h(t)-u(t)\|^2_{L^2}+\|u(t)-W(t)\|^2_{L^{2(p-1)}}\bigg)+\|u(t)-W(t)\|^2_{L^p}\\&\quad + 2\bigg[\sum_{\ell=1}^M |\beta_{\ell}|(q_{\ell}-1)2^{q_{\ell}-3} |\Omega|^{\frac{p-2q_{\ell}+2}{2p}}\bigg]^2 (\|u_h(t)\|_{L^p}^{2q_{\ell}-4}+\|u(t)\|^{2q_{\ell}-4}_{L^p})\|u_h(t)-u(t)\|^2_{L^2},
	\end{align*}
 for  a.e. $t\in[0,T]$.   Integrating from $0$ to $t$, we obtain
   { \small
    \begin{align*}
    	&\|u_h(t)-u(t)\|^2_{L^2}+\frac{\nu}{2}\int_0^t\|\nabla(u_h(s)-u(s))\|^2_{L^2}\, ds+\frac{\alpha}{2^{p-1}}\int_0^t \|u_h(s)-u(s)\|^p_{L^p}\,ds \\&\leq \|u_h(0)-u_0\|^2_{L^2}-2(u_h(t)-u(t),u(t)-W(t))+2(u_h(0)-u_0,u_0-W(0))\\&\quad + (1+4C_2)\int_0^t \|u_h(s)-u(s)\|^2_{L^2} \, ds+\int_0^t \|\partial_s(u(s)-W(s))\|^2_{L^2}\, ds+\frac{2\nu}{3}\int_0^t \|\nabla(u(s)-W(s))\|^2_{L^2}\, ds\\ &\quad + \bigg(\frac{2^{p-2}\alpha(p-1)}{p}\bigg)\bigg(\frac{2^{2p-3}(p-1)^2}{p}\bigg)^{p-1}\int_0^t \|u(s)-W(s)\|^p_{L^p}+2\int_0^t \|u(s)-W(s)\|^2_{L^p}\, ds\\&\quad + 2^{p-1}\alpha(p-1)\bigg(\int_0^t \|u(s)\|^{2(p-2)}_{L^{2(p-1)}}\|u_h(s)-u(s)\|^2_{L^2}\, ds+\int_0^t \|u(s)-W(s)\|^2_{L^{2(p-1)}}\, ds\bigg)\\&\quad + 4\bigg[\sum_{\ell=1}^M |\beta_{\ell}|(q_{\ell}-1)2^{q_{\ell}-3} |\Omega|^{\frac{p-2q_{\ell}+2}{2p}}\bigg]^2 \int_0^t (\|u_h(s)\|_{L^p}^{2q_{\ell}-4}+\|u(s)\|^{2q_{\ell}-4}_{L^p})\|u_h(s)-u(s)\|^2_{L^2}\, ds,
    \end{align*}}
 for all $t\in[0,T]$.    Using the Cauchy-Schwarz and Young's inequalities, we estimate $- 2(u_h-u, u-W)$ and $2 (u_h(0)-u_0, u_0-W(0)) $ as,
    \begin{align*}
    	-2(u_h - u, u - W) & \leq \frac{1}{2} \|u_h - u\|_{L^2}^2 +  2\|u - W\|_{L^2}^2, \\
    	2(u_h(0) - u_0, u_0 - W(0)) &\leq \|u_h(0) - u_0\|_{L^2}^2 + \|u_0 - W(0)\|_{L^2}^2.
    \end{align*}
    Using these estimates in the above equation, applying Gronwall's inequality, we reach at the relation
    \begin{align*}
    &	\sup_{0\leq t\leq T}\|u_h(t)-u(t)\|^2_{L^2}+\nu \int_0^T\|\nabla(u_h(t)-u(t))\|^2_{L^2}\, dt+\frac{\alpha}{2^{p-2}}\int_0^T \|u_h(t)-u(t)\|^p_{L^p}\, dt\\&\leq \bigg[4\|u_h(0)-u(0)\|^2_{L^2}+2\|u_0-W(0)\|^2_{L^2}+4\|u(t)-W(t)\|^2_{L^\infty(0,T;L^2)}\\&\quad +2\int_0^T \|\partial_t(u(t)-W(t))\|^2_{L^2}\, dt+\frac{4\nu}{3}\int_0^T \|\nabla(u(t)-W(t))\|^2_{L^2}\, dt+4\int_0^T \|u(t)-W(t)\|^2_{L^p}\, dt\\ &\quad + 2\bigg(\frac{2^{p-2}\alpha(p-1)}{p}\bigg)\bigg(\frac{2^{2p-3}(p-1)^2}{p}\bigg)^{p-1}\int_0^T \|u(t)-W(t)\|^p_{L^p}\, dt\\&\quad +2^{p}\alpha(p-1)\int_0^T\|u(t)-W(t)\|^2_{L^{2(p-1)}}\, dt\bigg]\exp\bigg(2^{p-2}\alpha(p-1)\int_0^T\|u(t)\|^{2p-4}_{L^{2(p-1)}}\, dt\\&\quad + 4\bigg[\sum_{\ell=1}^M |\beta_{\ell}|(q_{\ell}-1)2^{q_{\ell}-3} |\Omega|^{\frac{p-2q_{\ell}+2}{2p}}\bigg]^2 \int_0^T (\|u_h(t)\|_{L^p}^{2q_{\ell}-4}+\|u(t)\|^{2q_{\ell}-4}_{L^p})\, dt+ (1+4C_2)T\bigg).
    \end{align*}
    Since, $u_h \in L^\infty(0,T;L^p)$ and $u\in L^\infty(0,T;H^2(\Omega))$, the terms appearing in exponential is bounded. Also, we can estimate the terms in the above expression by taking $W=S_hu$ and using \eqref{scott zhang approximation result} as
    \begin{align*}
    	\|u_0-S_hu_0\|^2_{L^2}&\leq Ch^2\|u_0\|_{H_0^1}, \ \ 
    	\|u-S_hu\|^2_{L^2}\leq Ch^2\|u\|_{H_0^1}, \\
    	\|\partial_t(u-S_hu)\|_{L^2}^2&\leq Ch^2\|\partial_tu\|_{H_0^1},\ \ 
    	\|\nabla(u-S_hu)\|^2_{L^2}\leq Ch^2\|u\|^2_{H^2}.
    \end{align*}    
     We can estimate $\|u-W\|_{L^p}$ by using Gagliardo-Nirenberg's interpolation inequality and \eqref{to bound u-W in Lp} as
    \begin{align*}
    	\|u-W\|^2_{L^p} &\leq C\|A^{\theta}(u-W)\|^2_{L^2}\leq Ch^{2s-4\theta}\|u\|^2_{H^s} \ \text{ for }\ \frac{2d}{d-2}<p\leq \frac{2d}{d-4},
    \end{align*}
    where $\theta = \frac{d(p-2)}{4p}.$ To obtain optimal order, we need $2s-4\theta=2,$ which implies $s=1+d(\frac{1}{2}-\frac{1}{p})$. It means $u$ should belong to $L^2(0,T;H^{1+d(\frac{1}{2}-\frac{1}{p})}(\Omega)).$ Note that, for  $\frac{2d}{d-2}<p \leq \frac{2d}{d-4}$, we conclude $1+d(\frac{1}{2}-\frac{1}{p}) \leq 3$. Hence, we can rewrite the above estimate as
    \begin{align}\label{H3 regularity optimal order-q-01}
    	\|u-W\|^2_{L^p} \leq Ch^2 \|u\|^2_{D(A^{\frac{3}{2}})}.
    \end{align}
    Since, $\frac{2d-6}{d-4}<\frac{2d}{d-4}$, the above estimates holds  true for all $p\leq \frac{2d-6}{d-4}.$
    Again from \eqref{to bound u-W in Lp}, we also have
    \begin{align*}
    	\|u-W\|^p_{L^p}\leq Ch^{p(s-2\theta)}\|u\|^p_{H^s},
    \end{align*}
    where $\theta=\frac{d(p-2)}{4p}$. To obtain optimal order, we need $p(s-2\theta)=2,$ that is, $s=\frac{2}{p}+d\big(\frac{1}{2}-\frac{1}{p}\big),$ which requires $u \in L^\infty(0,T;H^{\frac{2}{p}+d\big(\frac{1}{2}-\frac{1}{p}\big)}(\Omega))$. Therefore, for $\frac{2}{p}+d\big(\frac{1}{2}-\frac{1}{p}\big)\leq 2$, we conclude that $p \leq \frac{2d-4}{d-4}$ which is also true for $p\leq \frac{2d-6}{d-4}$ and
    \begin{align}\label{u-w Lp in H2}
    	\|u-W\|^p_{L^p}\leq Ch^2\|u\|^p_{H^2}.
    \end{align}
    Similarly, one can show that 
    \begin{align*}
    	\|u-W\|^2_{L^{2(p-1)}} \leq Ch^2\|u\|^2_{D(A^{\frac{3}{2}})} \quad \text{ for } \frac{2d}{d-2}<p\leq \frac{2d-6}{d-4}.
    \end{align*}
    Hence, combining all these estimates, we arrive at 
    \begin{align*}
    	 &	\sup_{0\leq t\leq T}\|u_h(t)-u(t)\|^2_{L^2}+\nu \int_0^T\|\nabla(u_h(t)-u(t))\|^2_{L^2}\, dt+\frac{\alpha}{2^{p-2}}\int_0^T \|u_h(t)-u(t)\|^p_{L^p}\, dt\\&\leq Ch^2\bigg(\|u_0\|^2_{H_0^1}+\|u\|^2_{L^\infty(0,T;H_0^1)}+\int_0^T \|\partial_t u(t)\|^2_{H_0^1}\, dt + \int_0^T \|u(t)\|^2_{H^2}\,dt+\int_0^T \|u(t)\|^p_{H^2}\, dt\\&\quad +\int_0^T \|u(t)\|^2_{D(A^{\frac{3}{2}})}\, dt\bigg),
    \end{align*}
    which completes the proof of Theorem \ref{semidiscrete error analysis}(b)(i). 
   
   (ii) \textbf{ For $1+\frac{p}{2}\leq q_{\ell}<p$ :}
   
   Estimating $I_3$ in a similar way as previously done.
    \begin{align*}
     |I_3|&\leq \frac{\alpha}{2^{p+1}}\|u_h-u\|^{p}_{L^p}+\bigg(\frac{2^{p-3}\alpha(p-1)}{p}\bigg)\bigg(\frac{2^{2p-2}(p-1)^2}{p}\bigg)^{p-1}\|u-W\|^p_{L^p}\\&\quad +2^{p-2}\alpha(p-1)\bigg(\|u\|^{2(p-2)}_{L^{2(p-1)}}\|u_h-u\|^2_{L^2}+\|u-W\|^2_{L^{2(p-1)}}\bigg).
      \end{align*}
    Next, we estimate $I_4$ in the following way:
       \small
     \begin{align*}
     |I_4| &= \bigg|\sum_{\ell=1}^M \beta_{\ell} (|u_h|^{q_{\ell}-2}u_h-|u|^{q_{\ell}-2}u, u-W)\bigg| \\
     &\leq \bigg|\sum_{\ell=1}^M |\beta_{\ell}| (q_{\ell}-1)((\theta (u_h-u) +u)^{q_{\ell}-2}(u_h-u), u-W )\bigg|\\
     &\leq  \sum_{\ell=1}^M |\beta_{\ell}|(q_{\ell}-1)2^{q_{\ell}-3}\big((|u_h-u|^{q_{\ell}-2}+|u|^{q_{\ell}-2})|u_h-u|,|u-W|\big)\\
     &\leq  \sum_{\ell=1}^M |\beta_{\ell}|(q_{\ell}-1)2^{q_{\ell}-3}\big((|u_h-u|^{q_{\ell}-1},|u-W|)+(|u|^{q_{\ell}-2}|u_h-u|,|u-W|)\big)\\
     &\leq \sum_{\ell=1}^M |\beta_{\ell}|(q_{\ell}-1)2^{q_{\ell}-3}\bigg(\|u_h-u\|^{q_{\ell}-1}_{L^{q_{\ell}}}\|u-W\|_{L^{q_{\ell}}}+ \|u\|^{q_{\ell}-2}_{L^{2(q_{\ell}-1)}} \|u_h-u\|_{L^2}\|u-W\|_{L^{2(q_{\ell}-1)}}\bigg)\\
     &\leq \sum_{\ell=1}^M |\beta_{\ell}|(q_{\ell}-1)2^{q_{\ell}-3} |\Omega|^{\frac{p-q_{\ell}}{p}}\|u_h-u\|^{q_{\ell}-1}_{L^p}\|u-W\|_{L^{q_{\ell}}}\\& \quad + \sum_{\ell=1}^M |\beta_{\ell}|(q_{\ell}-1)2^{q_{\ell}-2}\bigg(\|u\|^{2(q_{\ell}-2)}_{L^{2(q_{\ell}-1)}}\|u_h-u\|^2_{L^2}+\|u-W\|^2_{L^{2(q_{\ell}-1)}}\bigg)\\
     &\leq \frac{\alpha}{2^{p+1}}\|u_h-u\|^p_{L^p}+\sum_{\ell=1}^M |\beta_{\ell}|\bigg(\frac{M|\beta_{\ell}|2^{p+1}(q_{\ell}-1)}{\alpha p}\bigg)^{\frac{q_{\ell}-1}{p-q_{\ell}+1}}\bigg(\frac{p-q_{\ell}+1}{p}\bigg)|\Omega|^{\frac{p-q_{\ell}}{p-q_{\ell}+1}}\|u-W\|^{\frac{p}{p-q_{\ell}+1}}_{L^{p}}\\&\quad + \sum_{\ell=1}^M |\beta_{\ell}|(q_{\ell}-1)2^{q_{\ell}-2}\bigg(\|u\|^{2(q_{\ell}-2)}_{L^{2(q_{\ell}-1)}}\|u_h-u\|^2_{L^2}+|\Omega|^{\frac{p-2q_{\ell}+2}{p(q_{\ell}-1)}}\|u-W\|^2_{L^p}\bigg).
     \end{align*}
     \normalsize
     Using all these estimates in \eqref{SFEM err-03}, we obtain the following:
     \small
         \begin{align*}
              &\frac{1}{2}\frac{d}{dt}\|u_h(t)-u(t)\|^2_{L^2}+ \frac{\nu}{4} \|\nabla (u_h(t)-u(t)\|^2_{L^2}+\frac{\alpha}{2^p}\|u_h(t)-u(t)\|^p_{L^p}\\
            &\leq - \partial_t(u_h(t)-u(t),u(t)-W(t))+\bigg(\frac{1}{2}+2C_2\bigg) \|u_h(t)-u(t)\|^2_{L^2} +\frac{1}{2}\|\partial_t(u(t)-W(t))\|^2_{L^2}\\&\quad+\frac{\nu}{3}\|\nabla(u(t)-W(t))\|^2_{L^2} + \bigg(\frac{2^{p-3}\alpha(p-1)}{p}\bigg)\bigg(\frac{2^{2p-2}(p-1)^2}{p}\bigg)^{p-1}\|u(t)-W(t)\|^p_{L^p}\\&\quad+2^{p-2}\alpha(p-1)\|u(t)-W(t)\|^2_{L^{2(p-1)}}+\sum_{\ell=1}^M|\beta_{\ell}|(q_{\ell}-1)2^{q_{\ell}-2}|\Omega|^{\frac{p-2q_{\ell}+2}{p(q_{\ell}-1)}}\|u(t)-W(t)\|^2_{L^p}\\&\quad + \sum_{\ell=1}^M |\beta_{\ell}|\bigg(\frac{M|\beta_{\ell}|2^{p+1}(q_{\ell}-1)}{\alpha p}\bigg)^{\frac{q_{\ell}-1}{p-q_{\ell}+1}}\bigg(\frac{p-q_{\ell}+1}{p}\bigg)|\Omega|^{\frac{p-q_{\ell}}{p-q_{\ell}+1}}\|u(t)-W(t)\|^{\frac{p}{p-q_{\ell}+1}}_{L^{p}} \\&\quad +\bigg(2^{p-2}\alpha(p-1)\|u(t)\|^{2(p-2)}_{L^{2(p-1)}}+ \sum_{\ell=1}^M|\beta_{\ell}|(q_{\ell}-1)2^{q_{\ell}-2}\|u(t)\|^{2(q_{\ell}-2)}_{L^{2(q_{\ell}-1)}}\bigg)\|u_h(t)-u(t)\|^2_{L^2},
         \end{align*}
         \normalsize
for a.e. $t\in[0,T]$.      Integrating the above equation from $0$ to $t$, we get
     \small
        \begin{align*}
        &\|u_h(t)-u(t)\|^2_{L^2}+\frac{\nu}{2}\int_0^t \|\nabla(u_h(s)-u(s))\|^2_{L^2}\, ds + \frac{\alpha}{2^{p-1}}\int_0^t \|u_h(s)-u(s)\|^p_{L^p}\, ds \\&\leq \|u_h(0)-u_0\|^2_{L^2}-2(u_h(t)-u(t), u(t)-W(t))+2(u_h(0)-u_0,u_0-W(0))\\&\quad +(1+4C_2) \int_0^t \|u_h(s)-u(s)\|^2_{L^2}\, ds + \int_0^t \|\partial_s(u(s)-W(s))\|^2_{L^2} \, ds \\&\quad + \frac{2\nu}{3}\int_0^t \|\nabla(u(s)-W(s))\|^2_{L^2}\, ds +  \bigg(\frac{2^{p-2}\alpha(p-1)}{p}\bigg)\bigg(\frac{2^{2p-2}(p-1)^2}{p}\bigg)^{p-1}\int_0^t\|u(s)-W(s)\|^p_{L^p} \, ds \\&\quad + 2^{p-1}\alpha(p-1)\int_0^t\|u(s)-W(s)\|^2_{L^{2(p-1)}}\, ds+\sum_{\ell=1}^M|\beta_{\ell}|(q_{\ell}-1)2^{q_{\ell}-1}|\Omega|^{\frac{p-2q_{\ell}+2}{p(q_{\ell}-1)}}\int_0^t\|u(s)-W(s)\|^2_{L^p}\, ds\\&\quad + 2\sum_{\ell=1}^M |\beta_{\ell}|\bigg(\frac{M|\beta_{\ell}|2^{p+1}(q_{\ell}-1)}{\alpha p}\bigg)^{\frac{q_{\ell}-1}{p-q_{\ell}+1}}\bigg(\frac{p-q_{\ell}+1}{p}\bigg)|\Omega|^{\frac{p-q_{\ell}}{p-q_{\ell}+1}}\int_0^t \|u(s)-W(s)\|^{\frac{p}{p-q_{\ell}+1}}_{L^{p}} \, ds\\&\quad + \int_0^t \bigg(2^{p-1}\alpha(p-1)\|u(s)\|^{2(p-2)}_{L^{2(p-1)}}+ \sum_{\ell=1}^M|\beta_{\ell}|(q_{\ell}-1)2^{q_{\ell}-1}\|u(s)\|^{2(q_{\ell}-2)}_{L^{2(q_{\ell}-1)}}\bigg)\|u_h(s)-u(s)\|^2_{L^2}\, ds,
        \end{align*}
        \normalsize
 for all $t\in[0,T]$.    Using the Cauchy-Schwarz and Young's inequalities, we estimate $- 2(u_h-u, u-W)$ and $2 (u_h(0)-u_0, u_0-W(0)) $ as
     \begin{align*}
-2(u_h - u, u - W) &\leq 2\|u_h - u\|_{L^2} \|u - W\|_{L^2} \leq \frac{1}{2} \|u_h - u\|_{L^2}^2 +  2\|u - W\|_{L^2}^2, \\
2(u_h(0) - u_0, u_0 - W(0)) &\leq 2\|u_h(0) - u_0\|_{L^2} \|u_0 - W(0)\|_{L^2} \nonumber\\&\leq \|u_h(0) - u_0\|_{L^2}^2 + \|u_0 - W(0)\|_{L^2}^2.
\end{align*}
    Using the estimates in the above equation and applying Gronwall's inequality, we arrive at 
     \small
     \begin{align*}
            &\sup_{0\leq t\leq T}\|u_h(t)-u(t)\|^2_{L^2}+\nu \int_0^T \|\nabla(u_h(t)-u(t))\|^2_{L^2}\, dt + \frac{\alpha}{2^{p-2}}\int_0^T \|u_h(t)-u(t)\|^p_{L^p}\, dt \\&\leq \bigg[4\|u_h(0)-u_0\|^2_{L^2}+2\|u_0-W(0)\|^2_{L^2}+4\|u(t)-W(t)\|^2_{L^\infty(0,T;L^2)} + 2\int_0^T \|\partial_t(u(t)-W(t))\|^2_{L^2} \, dt \\&\quad+ \frac{4\nu}{3}\int_0^T \|\nabla(u(t)-W(t))\|^2_{L^2}\, dt +  \bigg(\frac{2^{p-1}\alpha(p-1)}{p}\bigg)\bigg(\frac{2^{2p-2}(p-1)^2}{p}\bigg)^{p-1}\int_0^T\|u(t)-W(t)\|^p_{L^p} \, dt \\&\quad + 2^{p}\alpha(p-1)\int_0^T\|u(t)-W(t)\|^2_{L^{2(p-1)}}\, dt+\sum_{\ell=1}^M|\beta_{\ell}|(q_{\ell}-1)2^{q_{\ell}}|\Omega|^{\frac{p-2q_{\ell}+2}{p(q_{\ell}-1)}}\int_0^T\|u(t)-W(t)\|^2_{L^p}\, dt\\&\quad + 4\sum_{\ell=1}^M |\beta_{\ell}|\bigg(\frac{M|\beta_{\ell}|2^{p+1}(q_{\ell}-1)}{\alpha p}\bigg)^{\frac{q_{\ell}-1}{p-q_{\ell}+1}}\bigg(\frac{p-q_{\ell}+1}{p}\bigg)|\Omega|^{\frac{p-q_{\ell}}{p-q_{\ell}+1}}\int_0^T \|u(t)-W(t)\|^{\frac{p}{p-q_{\ell}+1}}_{L^{p}} \, dt\bigg]\\&\quad \times\exp \bigg( \int_0^T \bigg(2^{p-1}\alpha(p-1)\|u(t)\|^{2(p-2)}_{L^{2(p-1)}}+ \sum_{\ell=1}^M|\beta_{\ell}|(q_{\ell}-1)2^{q_{\ell}-1}\|u(t)\|^{2(q_{\ell}-2)}_{L^{2(q_{\ell}-1)}}+2+8C_2\bigg)\, dt\bigg).
     \end{align*}
     \normalsize
     Since $u \in L^\infty(0,T;H^2(\Omega))$, therefore, the terms appearing in the exponential are uniformly bounded and independent of $h$ for $u_0 \in D(A)$. Setting  $W=S_hu$ and applying the same estimates as in the previous case, we estimate $\|u-W\|^{\frac{p}{p-q_{\ell}+1}}_{L^p}$. To achieve the optimal convergence order, the condition ${\frac{p}{p-q_{\ell}+1}}\geq 2$ requires $q_{\ell}\geq 1+\frac{p}{2}$. This motivates splitting the range of $q_{\ell}$, 
     into two cases. Hence, proceeding with calculations analogous to the previous case with $W=S_h u$,  we obtain 
     \begin{align*}
     	\|u-W\|^{\frac{p}{p-q_{\ell}+1}}_{L^p}\leq 	\|u-W\|^{2}_{L^p}\|u-W\|^{\frac{2q_{\ell}-p-2}{p-q_{\ell}+1}}_{L^p}\leq Ch^2\|u\|^2_{D(A^{\frac{3}{2}})}\|u-W\|^{\frac{2q_{\ell}-p-2}{p-q_{\ell}+1}}_{H^2}.
     \end{align*}
       Using \eqref{scott zhang approximation result}, we get
       \begin{align*}
       	\|u-W\|^{\frac{p}{p-q_{\ell}+1}}_{L^p}\leq Ch^2\|u\|^2_{D(A^{\frac{3}{2}})}\|u\|_{H^2}^{\frac{2q_{\ell}-p-2}{p-q_{\ell}+1}},
       \end{align*}
       which implies
       \begin{align*}
       	\int_0^T \|u(t)-W(t)\|^{\frac{p}{p-q_{\ell}+1}}_{L^p}\, dt \leq Ch^2 \|u\|^{\frac{2q_{\ell}-p-2}{p-q_{\ell}+1}}_{L^\infty(0,T;H^2)}\int_0^T \|u(t)\|^2_{D(A^{\frac{3}{2}})}\, dt.
       \end{align*}
        Hence, by combining all these estimates we arrive to the following error estimate for $1+\frac{p}{2}\leq q_{\ell}<p$:
     \begin{align*}
     	&	\sup_{0\leq t\leq T}\|u_h(t)-u(t)\|^2_{L^2}+\nu \int_0^T\|\nabla(u_h(t)-u(t))\|^2_{L^2}\, dt+\frac{\alpha}{2^{p-2}}\int_0^T \|u_h(t)-u(t)\|^p_{L^p}\, dt\\&\leq Ch^2\bigg(\|u_0\|^2_{H_0^1}+\|u\|^2_{L^\infty(0,T;H_0^1)}+\int_0^T \|\partial_t u(t)\|^2_{H_0^1}\, dt + \int_0^T \|u(t)\|^2_{H^2}\,dt+\int_0^T \|u(t)\|^p_{H^2}\, dt\\&\quad +\|u\|^{\frac{2q_{\ell}-p-2}{p-q_{\ell}+1}}_{L^\infty(0,T;H^2)}\int_0^T \|u(t)\|^2_{D(A^{\frac{3}{2}})}\, dt\bigg).
     \end{align*}
This completes the proof and yields the desired error estimate. For $\frac{2d}{d-2}<p\leq \frac{2d-6}{d-4}$, the argument was divided into two ranges of $q_{\ell}$
because of the difficulty in estimating 
$I_4$. Nevertheless, both cases lead to the optimal order of convergence.
\end{proof}
\begin{Remark}
	For $p>\tfrac{2d-6}{d-4}$, error estimates can be derived under the assumption of additional regularity on the initial data and the forcing term. In this case, the data must also satisfy certain compatibility conditions, similar to those described in \cite[Section 7.1.3, Chapter 7]{MR1625845}. Since numerical studies are generally restricted to one, two, or three spatial dimensions, we do not provide further details here.
\end{Remark}
\subsection{Fully-discrete conforming FEM}\label{fully discrete section}
In this section, we introduce a fully-discrete conforming finite element scheme. We partition the time interval $[0,T]$ into $0 = t_0 < t_1 < t_2 < \dots < t_N = T$ with uniform time stepping $\Delta t$, that is, $t_k = k \Delta t$. We replace the time derivative with a backward Euler discretization. We denote $C$ as a generic constant which is independent of $h$ and $\Delta t$. We then define a generic finite element approximate solution $u_{kh}$ by 
\begin{equation}\label{generic fully-discrete solution}
u_{kh}|_{[t_{k-1},t_k]} =u_h^{k-1} + \bigg(\frac{t-t_{k-1}}{\Delta t}\bigg)(u_h^k-u_h^{k-1}), \quad 1\leq k\leq N,\  \text{  for } \ t\in [t_{k-1},t_k]. 
\end{equation}
Note that $\partial_t u_{kh}=\frac{u_h^k-u_h^{k-1}}{\Delta t}$. The continuous Galerkin approximation of the problem \eqref{damped weak} is defined in the following way: Find $u_h^k \in V_h$, for $k=1,2,\ldots,N,$ such that for $\chi \in V_h$:
\begin{equation}\label{4.13}
\begin{aligned}
	\begin{cases}
		\displaystyle
    \left( \frac{u_h^k - u_h^{k-1}}{\Delta t}, \chi \right) 
    - \nu (\Delta u_h^k, \chi) 
    + \alpha (|u_h^k|^{p-2} u_h^k, \chi)-\sum_{\ell=1}^M \beta_{\ell}(|u_h^k|^{q_{\ell}-2}u_h^k, \chi)
    = \langle f^k, \chi \rangle, \\
    
    (u_h^0(x_i), \chi) = 
    \begin{cases}
    (R_hu_0(x_i), \chi),& \text{ for } 2\leq p\leq \frac{2d}{d-2},\\
    (S_hu_0(x_i),\chi),& \text{ for } \frac{2d}{d-2}<p\leq \frac{2d-6}{d-4},
	\end{cases}
	\end{cases}
\end{aligned}
\end{equation}
where $i=1,2,\ldots, N$, $f^k= (\Delta t)^{-1}\int_{t_{k-1}}^{t_k}f(s)\,ds$ for $f \in L^2(0,T;H^{-1}(\Omega))$, $u_h^0$ approximates $u_0$ in $V_h$ and $x_i, i=1,\ldots,N,$ represents the nodes of triangulation $\mathcal{T}_h$ of $\Omega$. Note that the initial data is specified according to the values of 
$p$ given in Table \ref{values of p}.

The existence of a discrete solution can be established by applying Brouwer's fixed point theorem \cite[Lemma 1.4, p. 110]{MR609732}, as in \cite[Theorem 3.1]{MR4288303}. Indeed, we have the following result:
\begin{Lemma}
For a given $ {u_h^{k-1}}\in V_h,$ and $f\in  H^{-1}(\Omega)$, 	there exists a solution $u_h^k\in V_h$ for the problem \eqref{4.13}. 
\end{Lemma}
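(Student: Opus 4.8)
The plan is to recast the nonlinear algebraic system \eqref{4.13} as a zero-finding problem on the finite-dimensional space $V_h$ and to invoke the standard consequence of Brouwer's fixed point theorem recorded in \cite[Lemma 1.4, p. 110]{MR609732}, exactly as in \cite[Theorem 3.1]{MR4288303}. Since $V_h\subset H_0^1(\Omega)$ is finite-dimensional, I equip it with the $L^2(\Omega)$ inner product $(\cdot,\cdot)$, on which all norms are equivalent. With $u_h^{k-1}\in V_h$ and $f^k\in H^{-1}(\Omega)$ fixed, I define a map $\Phi:V_h\to V_h$ by requiring, for every $\chi\in V_h$,
\begin{align*}
(\Phi(w),\chi) &= \left(\frac{w-u_h^{k-1}}{\Delta t},\chi\right) + \nu(\nabla w,\nabla\chi) + \alpha(|w|^{p-2}w,\chi) \\
&\quad - \sum_{\ell=1}^M\beta_{\ell}(|w|^{q_{\ell}-2}w,\chi) - \langle f^k,\chi\rangle,
\end{align*}
where I have used that $-\nu(\Delta w,\chi)=\nu(\nabla w,\nabla\chi)$ for $w,\chi\in V_h\subset H_0^1(\Omega)$. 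A zero of $\Phi$ is precisely a solution $u_h^k=w$ of \eqref{4.13}, so it suffices to produce one.

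First I would verify that $\Phi$ is well defined and continuous. The linear part is evidently continuous, and the Nemytskii maps $w\mapsto|w|^{p-2}w$ and $w\mapsto|w|^{q_{\ell}-2}w$ are locally Lipschitz from $L^p(\Omega)$ (respectively $L^{q_{\ell}}(\Omega)$) into their duals, as established in Subsection \ref{Non-linear operator}; on the finite-dimensional space $V_h$ all these topologies coincide, whence $\Phi$ is continuous on $V_h$.

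Next I would establish the coercivity condition required by the Brouwer corollary, namely that $(\Phi(w),w)>0$ whenever $\|w\|_{L^2}=R$ for a sufficiently large $R$. Testing with $\chi=w$ gives
\begin{align*}
(\Phi(w),w) &= \frac{1}{\Delta t}\|w\|_{L^2}^2 - \frac{1}{\Delta t}(u_h^{k-1},w) + \nu\|\nabla w\|_{L^2}^2 + \alpha\|w\|_{L^p}^p \\
&\quad - \sum_{\ell=1}^M\beta_{\ell}\|w\|_{L^{q_{\ell}}}^{q_{\ell}} - \langle f^k,w\rangle.
\end{align*}
The competing pumping terms are absorbed into the damping term exactly as in \eqref{u^q to u^p journey}, that is, $\sum_{\ell}\beta_{\ell}\|w\|_{L^{q_{\ell}}}^{q_{\ell}}\le\frac{\alpha}{2}\|w\|_{L^p}^p+C^*|\Omega|$ with $C^*$ from \eqref{C^*}, while Cauchy-Schwarz and Young's inequalities give $\langle f^k,w\rangle\le\frac{1}{2\nu}\|f^k\|_{H^{-1}}^2+\frac{\nu}{2}\|\nabla w\|_{L^2}^2$ and $\frac{1}{\Delta t}(u_h^{k-1},w)\le\frac{1}{\Delta t}\|u_h^{k-1}\|_{L^2}\|w\|_{L^2}$. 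Discarding the nonnegative remainders $\frac{\nu}{2}\|\nabla w\|_{L^2}^2$ and $\frac{\alpha}{2}\|w\|_{L^p}^p$ then yields
\begin{align*}
(\Phi(w),w) \ge \frac{1}{\Delta t}\|w\|_{L^2}^2 - \frac{\|u_h^{k-1}\|_{L^2}}{\Delta t}\|w\|_{L^2} - C^*|\Omega| - \frac{1}{2\nu}\|f^k\|_{H^{-1}}^2.
\end{align*}
The leading quadratic term in $\|w\|_{L^2}$ dominates the linear term and the constants, so for $R$ large enough the right-hand side is strictly positive on the sphere $\|w\|_{L^2}=R$.

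Finally, \cite[Lemma 1.4, p. 110]{MR609732} furnishes a $w^{*}\in V_h$ with $\|w^{*}\|_{L^2}\le R$ and $\Phi(w^{*})=0$; setting $u_h^k:=w^{*}$ completes the proof. I expect the only delicate point to be the correct treatment of the pumping term $-\sum_{\ell}\beta_{\ell}|w|^{q_{\ell}-2}w$: since $q_{\ell}<p$ it carries the destabilizing sign, and the argument hinges on re-using the Young-inequality absorption \eqref{u^q to u^p journey} so that the damping term $\alpha\|w\|_{L^p}^p$ retains control; the rest is a routine finite-dimensional continuity and coercivity check.
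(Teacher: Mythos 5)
Your proposal is correct and takes essentially the same approach as the paper: both recast \eqref{4.13} as a zero-finding problem on the finite-dimensional space $V_h$, absorb the pumping terms into the damping term via \eqref{u^q to u^p journey}, estimate the data terms by Cauchy--Schwarz and Young's inequalities, and invoke the Brouwer-type lemma \cite[Lemma 1.4, p. 110]{MR609732} after verifying continuity and positivity of the map on a sufficiently large sphere. The only inessential difference is that you equip $V_h$ with the $L^2$ inner product and let $\frac{1}{\Delta t}\|w\|_{L^2}^2$ dominate, whereas the paper uses the $H_0^1$ inner product (moving $u_h^{k-1}/\Delta t$ into the right-hand side functional $g$) and lets $\frac{\nu}{2}\|\nabla w\|_{L^2}^2$ dominate; in finite dimensions these choices are equivalent.
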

\begin{proof}
 For a given $ {u_h^{k-1}}\in V_h$  and $f\in  H^{-1}(\Omega)$,  we consider 
\begin{align}
	 \left( \frac{u_h^k }{\Delta t}, \chi \right) 
	- \nu (\Delta u_h^k, \chi) 
	+ \alpha (|u_h^k|^{p-2} u_h^k, \chi)-\sum_{\ell=1}^M \beta_{\ell}(|u_h^k|^{q_{\ell}-2}u_h^k, \chi)
	= 
	\left\langle f^k+\frac{u_h^{k-1}}{\Delta t}\, , \chi \right\rangle,
\end{align}
for any $\chi \in V_h$. Let the inherited  scalar product and norm from $H_0^1(\Omega)$ in $V_h$ be denoted by $[\cdot,\cdot]$ and $[\cdot]$. Then for the projection $P=P_h:H_0^1(\Omega)\to V_h$, we consider 
\begin{align}
	[P(u),v]= \left( \frac{u }{\Delta t}, v \right) 
	+ \nu (\nabla u, \nabla v) 
	+ \alpha (|u|^{p-2} u, v)-\sum_{\ell=1}^M \beta_{\ell}(|u|^{q_{\ell}-2}u, v)-
	\left\langle g , v \right\rangle,
\end{align}
for all $u,v\in V_h$, where $g= f^k+\frac{u_h^{k-1}}{\Delta t}\in H^{-1}(\Omega)$. The continuity of the mapping $P$ is clear. Let us now consider
\begin{align}
	[P(u),u]&= \left( \frac{u }{\Delta t}, u \right) 
	+\nu (\nabla u, \nabla u) 
	+ \alpha (|u|^{p-2} u, u)-\sum_{\ell=1}^M \beta_{\ell}(|u|^{q_{\ell}-2}u, u)-
	\left\langle g , u \right\rangle\nonumber\\&=\frac{1}{\Delta t}\|u\|_{L^2}^2+\nu\|\nabla u\|_{L^2}^2+\alpha\|u\|_{L^p}^p-\sum_{\ell=1}^M \beta_{\ell}\|u\|_{L^{q_{\ell}}}^{q_{\ell}}-\langle g,u\rangle\nonumber\\&\geq \frac{1}{\Delta t}\|u\|_{L^2}^2+\nu\|\nabla u\|_{L^2}^2+\frac{\alpha}{2}\|u\|_{L^p}^p-C^*|\Omega|-\|g\|_{H^{-1}}\|\nabla u\|_{L^2}\nonumber\\&\geq \frac{\nu}{2}\|\nabla u\|_{L^2}^2-C^*|\Omega|-\frac{1}{2\nu}\|g\|_{H^{-1}}^2=\frac{\nu}{2}[u]^2-C^*|\Omega|-\frac{1}{2\nu}\|g\|_{H^{-1}}^2,
\end{align}
where $C^*$ is defined in \eqref{C^*}. It follows that $[P(u),u]>0$ for $[u]=k$, and $k$ sufficiently large: more precisely, $k>\sqrt{\frac{2}{\nu}\left(C^*|\Omega|+\frac{1}{2\nu}\|g\|_{H^{-1}}^2\right)}$.  Therefore, hypothesis of \cite[Lemma 1.4, p. 110]{MR609732} (an application of Brouwer’s fixed point theorem) are satisfied and there exists a solution $u_h^k\in V_h$ for the problem \eqref{4.13}. 
\end{proof}

To show error estimates, we need the following stability result:
\begin{Lemma}\label{for fully fk}
    Assume $f\in L^2(0,T;H^{-1}(\Omega))$. Then the set $\{f^k\}_{k=1}^N$ defined by \begin{equation*}f^k = (\Delta t)^{-1}\int_{t_{k-1}}^{t_k}f(s) ds,\end{equation*} satisfies 
    \begin{equation*}
    \Delta t \sum_{k=1}^N \|f^k\|^2_{H^{-1}} \leq C\|f\|^2_{L^2(0,T;H^{-1})},\ \text{ and }\  \sum_{k=1}^N \int_{t_{k-1}}^{t_k}\|f^k -f(t)\|^2_{H^{-1}}dt \to 0  \quad \text{as } \Delta t \to 0.
    \end{equation*}

    If $f \in H^{\gamma}(0,T;H^{-1}(\Omega))$ for some $\gamma \in[0,1],$ then 
    \begin{align*}
     \sum_{k=1}^N \int_{t_{k-1}}^{t_k}\|f^k -f(t)\|^2_{H^{-1}}dt \leq C(\Delta t)^{2\gamma} \|f\|^2_{H^{\gamma}(0,T;H^{-1})}.
    \end{align*}
\end{Lemma}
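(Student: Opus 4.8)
The plan is to handle the three assertions in sequence: the first is a direct stability bound, the second a density argument that leans on the first, and the third an interpolation between two elementary endpoint estimates. Beginning with $\Delta t\sum_{k=1}^N\|f^k\|_{H^{-1}}^2\le C\|f\|_{L^2(0,T;H^{-1})}^2$, I would write $f^k=(\Delta t)^{-1}\int_{t_{k-1}}^{t_k}f(s)\,ds$ and apply Jensen's inequality (equivalently Cauchy--Schwarz in time) to the mean over an interval of length $\Delta t$, obtaining $\|f^k\|_{H^{-1}}^2\le(\Delta t)^{-1}\int_{t_{k-1}}^{t_k}\|f(s)\|_{H^{-1}}^2\,ds$. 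Multiplying by $\Delta t$ and summing over $k$ collapses the right-hand side to $\|f\|_{L^2(0,T;H^{-1})}^2$, so the estimate holds (with $C=1$).

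For the convergence $\sum_{k=1}^N\int_{t_{k-1}}^{t_k}\|f^k-f(t)\|_{H^{-1}}^2\,dt\to0$, I would introduce the piecewise-constant averaging operator $\mathcal{A}_{\Delta t}$ sending $f$ to the step function equal to $f^k$ on $[t_{k-1},t_k]$. The first part shows that $\mathcal{A}_{\Delta t}$ is uniformly bounded on $L^2(0,T;H^{-1})$ with operator norm at most one. For $g\in C([0,T];H^{-1})$, uniform continuity of $g$ on the compact interval $[0,T]$ yields $\mathcal{A}_{\Delta t}g\to g$ in $L^2(0,T;H^{-1})$ directly. Since $C([0,T];H^{-1})$ is dense in $L^2(0,T;H^{-1})$, a standard three-$\varepsilon$ argument, combining this convergence on a dense set with the uniform bound on $\mathcal{A}_{\Delta t}-\mathrm{Id}$ supplied by the first part, closes the general case.

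Finally, for the quantitative rate I would establish the two endpoints and interpolate. At $\gamma=0$ the bound reduces to $L^2$ stability: from $\|f^k-f(t)\|_{H^{-1}}^2\le2\|f^k\|_{H^{-1}}^2+2\|f(t)\|_{H^{-1}}^2$, summation and the first part give $\le C\|f\|_{L^2(0,T;H^{-1})}^2$. At $\gamma=1$, since $f\in H^1(0,T;H^{-1})$ is absolutely continuous in time with $f'\in L^2(0,T;H^{-1})$ and $f^k$ is its mean on $[t_{k-1},t_k]$, the identity $f(t)-f^k=(\Delta t)^{-1}\int_{t_{k-1}}^{t_k}\int_s^t f'(\tau)\,d\tau\,ds$ (a Poincaré--Wirtinger estimate on an interval of length $\Delta t$) yields $\int_{t_{k-1}}^{t_k}\|f(t)-f^k\|_{H^{-1}}^2\,dt\le C(\Delta t)^2\int_{t_{k-1}}^{t_k}\|f'(t)\|_{H^{-1}}^2\,dt$, and summation gives $\le C(\Delta t)^2\|f\|_{H^1(0,T;H^{-1})}^2$. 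Viewing $T_{\Delta t}:=\mathcal{A}_{\Delta t}-\mathrm{Id}$ as a bounded map $L^2(0,T;H^{-1})\to L^2(0,T;H^{-1})$ with norm $O(1)$ and $H^1(0,T;H^{-1})\to L^2(0,T;H^{-1})$ with norm $O(\Delta t)$, the interpolation identity $[L^2(0,T;H^{-1}),H^1(0,T;H^{-1})]_\gamma=H^\gamma(0,T;H^{-1})$ together with interpolation of operator norms gives $\|T_{\Delta t}\|_{H^\gamma\to L^2}\le C(\Delta t)^\gamma$, and squaring produces the stated $(\Delta t)^{2\gamma}$ bound. The main obstacle is precisely this last step: one must justify the vector-valued interpolation identity for Bochner--Sobolev spaces and apply the operator interpolation theorem in the Banach-space-valued setting, whereas the two endpoint computations are entirely routine.
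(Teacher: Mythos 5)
Your proposal is correct, and it is worth noting that the paper itself gives no argument here at all: its ``proof'' consists of the single sentence that the result ``proceeds along the same lines as'' Lemma 3.2 of a cited reference. Your three steps (Jensen/Cauchy--Schwarz for stability with $C=1$, the density/uniform-boundedness argument for the qualitative convergence, and endpoint-plus-interpolation for the rate) are exactly the standard proof such references supply, so in substance you have reconstructed the intended argument rather than deviated from it. One remark on the step you flag as the main obstacle: the vector-valued interpolation identity $[L^2(0,T;H^{-1}),H^1(0,T;H^{-1})]_\gamma = H^\gamma(0,T;H^{-1})$ is indeed standard for Hilbert-space-valued Bochner--Sobolev spaces (Lions--Magenes theory; $H^{-1}(\Omega)$ being a Hilbert space is what makes this painless), but you can avoid it entirely for $\gamma\in(0,1)$ by working directly with the Sobolev--Slobodeckij seminorm: writing $f^k-f(t)=(\Delta t)^{-1}\int_{t_{k-1}}^{t_k}(f(s)-f(t))\,ds$, Jensen's inequality gives
\begin{equation*}
\int_{t_{k-1}}^{t_k}\|f^k-f(t)\|_{H^{-1}}^2\,dt
\leq (\Delta t)^{-1}\iint_{[t_{k-1},t_k]^2}\frac{\|f(s)-f(t)\|_{H^{-1}}^2}{|t-s|^{1+2\gamma}}\,|t-s|^{1+2\gamma}\,ds\,dt
\leq (\Delta t)^{2\gamma}\iint_{[t_{k-1},t_k]^2}\frac{\|f(s)-f(t)\|_{H^{-1}}^2}{|t-s|^{1+2\gamma}}\,ds\,dt,
\end{equation*}
and summing over $k$ (the diagonal blocks being disjoint in $[0,T]^2$) bounds the total by $(\Delta t)^{2\gamma}$ times the $H^\gamma$ seminorm. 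This yields the rate with no operator interpolation, at the mild cost of fixing the Slobodeckij norm as the definition of $H^\gamma(0,T;H^{-1})$; your interpolation route is the cleaner one if $H^\gamma$ is defined by interpolation in the first place. Either way, the endpoint computations you give ($\gamma=0$ stability and the $\gamma=1$ Poincar\'e--Wirtinger estimate) are correct as written.
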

\begin{proof}
The proof of this lemma proceeds along the same lines as \cite[Lemma 3.2]{MR2238170}.
\end{proof}
\begin{Lemma}[Stability]\label{Stability fully}
Assume $f \in L^2(0,T;H^{-1}(\Omega))$ and $u_h^0\in L^2(\Omega)$. Let $\{u_h^k\}_{k=1}^N \subset V_h$ be defined by \eqref{fem polynomial damped}. Then 
\begin{align}\label{stability fully ineq}
\nu \sum_{k=1}^N \Delta t \|\nabla u_h^k\|^2_{L^2} \leq \frac{1}{\nu}\|f\|^2_{L^2(0,T;H^{-1})}+\|u_h^0\|^2_{L^2}+2C^*|\Omega|,
\end{align}
where $C^*$ is defined in \eqref{C^*}.
\end{Lemma}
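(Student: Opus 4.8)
The plan is to derive a discrete energy inequality at each time level by testing the fully-discrete scheme \eqref{4.13} with the natural choice $\chi = u_h^k$, and then to telescope the resulting inequality over $k = 1,\dots,N$. Interpreting the diffusion term through its weak (bilinear) form, $-\nu(\Delta u_h^k,\chi) = \nu(\nabla u_h^k,\nabla\chi)$ (legitimate since $u_h^k,\chi \in V_h \subset H_0^1(\Omega)$), the choice $\chi = u_h^k$ gives
\[
\left(\frac{u_h^k - u_h^{k-1}}{\Delta t}, u_h^k\right) + \nu\|\nabla u_h^k\|_{L^2}^2 + \alpha\|u_h^k\|_{L^p}^p - \sum_{\ell=1}^M \beta_\ell \|u_h^k\|_{L^{q_\ell}}^{q_\ell} = \langle f^k, u_h^k\rangle.
\]
The crucial algebraic step is the backward-Euler identity $2(a-b,a) = \|a\|_{L^2}^2 - \|b\|_{L^2}^2 + \|a-b\|_{L^2}^2$ applied with $a = u_h^k$ and $b = u_h^{k-1}$, which yields the telescoping-friendly lower bound
\[
\left(\frac{u_h^k - u_h^{k-1}}{\Delta t}, u_h^k\right) \geq \frac{1}{2\Delta t}\left(\|u_h^k\|_{L^2}^2 - \|u_h^{k-1}\|_{L^2}^2\right),
\]
where the nonnegative jump term $\|u_h^k - u_h^{k-1}\|_{L^2}^2$ is simply discarded.

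Next I would treat the two nonlinearities. The damping term $\alpha\|u_h^k\|_{L^p}^p$ is nonnegative and is retained on the left. For the sign-indefinite pumping term I would invoke exactly the Young's-inequality bound already established in \eqref{u^q to u^p journey}, namely $\sum_{\ell=1}^M \beta_\ell \|u_h^k\|_{L^{q_\ell}}^{q_\ell} \leq \frac{\alpha}{2}\|u_h^k\|_{L^p}^p + C^*|\Omega|$, with $C^*$ as in \eqref{C^*}; this uses $q_\ell < p$ to absorb the pumping contribution into half of the damping term. The forcing is handled by the duality pairing together with Young's inequality, $\langle f^k, u_h^k\rangle \leq \frac{1}{2\nu}\|f^k\|_{H^{-1}}^2 + \frac{\nu}{2}\|\nabla u_h^k\|_{L^2}^2$, leaving $\frac{\nu}{2}\|\nabla u_h^k\|_{L^2}^2$ on the left. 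Collecting terms and multiplying through by $2\Delta t$ produces
\[
\|u_h^k\|_{L^2}^2 - \|u_h^{k-1}\|_{L^2}^2 + \nu\Delta t\,\|\nabla u_h^k\|_{L^2}^2 + \alpha\Delta t\,\|u_h^k\|_{L^p}^p \leq \frac{\Delta t}{\nu}\|f^k\|_{H^{-1}}^2 + 2C^*|\Omega|\Delta t.
\]

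Finally I would sum over $k = 1,\dots,N$; the $L^2$-norms telescope to $\|u_h^N\|_{L^2}^2 - \|u_h^0\|_{L^2}^2$. Discarding the nonnegative quantities $\|u_h^N\|_{L^2}^2$ and $\alpha\Delta t\sum_k \|u_h^k\|_{L^p}^p$, and bounding the forcing sum via Lemma \ref{for fully fk} (equivalently, by Jensen's inequality on the time-averages $f^k$, which gives $\sum_k \Delta t\,\|f^k\|_{H^{-1}}^2 \leq \|f\|_{L^2(0,T;H^{-1})}^2$), yields the asserted bound \eqref{stability fully ineq}; the constant term originates from $2C^*|\Omega|\sum_k \Delta t = 2C^*|\Omega|T$. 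The argument is essentially routine: the only point genuinely requiring care is the control of the sign-indefinite pumping term, and this is precisely where the absorption estimate \eqref{u^q to u^p journey}—valid exactly because each $q_\ell < p$—is indispensable, since without it the discrete energy would fail to close.
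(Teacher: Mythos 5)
Your proposal is correct and follows essentially the same route as the paper's own proof: test with $\chi = u_h^k$, use the backward-Euler identity, absorb the pumping term via \eqref{u^q to u^p journey}, absorb the forcing via Young's inequality into $\tfrac{\nu}{2}\|\nabla u_h^k\|_{L^2}^2$, drop the nonnegative jump and damping terms, telescope, and invoke Lemma \ref{for fully fk}. The only difference is bookkeeping of the constant: your summation correctly produces $2C^*|\Omega|\sum_k \Delta t = 2C^*|\Omega|T$, whereas the paper's displayed proof (and the stated bound \eqref{stability fully ineq}) silently drops the factor $N\Delta t = T$ when summing the term $2C^*|\Omega|$ over $k$, so your version is in fact the more careful one.
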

\begin{proof}
Taking $\chi=u_h^k$ in \eqref{4.13} and using the Cauchy-Schwarz and Young's inequalities along with \eqref{u^q to u^p journey}, we obtain
\begin{align*}
\frac{1}{2\Delta t}\|u_h^k\|^2_{L^2}-\frac{1}{2\Delta t}\|u_h^{k-1}\|^2_{L^2}&+\frac{1}{2\Delta t}\|u_h^k-u_h^{k-1}\|^2_{L^2} + \nu \|\nabla u_h^k\|^2_{L^2}+ \alpha\|u_h^k\|^p_{L^p}\\ & \leq \langle f^k, u_h^k \rangle+\sum_{\ell=1}^M \beta_{\ell} \|u_h^k\|^{q_{\ell}}_{L^{q_{\ell}}},
\end{align*}
so that 
\begin{align*}
\frac{1}{2\Delta t}\|u_h^k\|^2_{L^2}-\frac{1}{2\Delta t}\|u_h^{k-1}\|^2_{L^2}&+\frac{1}{2\Delta t}\|u_h^k-u_h^{k-1}\|^2_{L^2} + \nu \|\nabla u_h^k\|^2_{L^2}+ \alpha\|u_h^k\|^p_{L^p}\\&\leq \frac{1}{2\nu}\|f^k\|^2_{H^{-1}}+\frac{\nu}{2}\|\nabla u_h^k\|^2_{L^2}+\frac{\alpha}{2}\|u_h^k\|^{p}_{L^p}+C^*|\Omega|.
\end{align*}
Using the positivity of the third and fifth terms of the left-hand side, we get
\begin{align*}
\frac{1}{\Delta t}\|u_h^k\|^2_{L^2}-\frac{1}{\Delta t}\|u_h^{k-1}\|^2_{L^2}+ \nu \|\nabla u_h^k\|^2_{L^2} \leq \frac{1}{\nu}\|f^k\|^2_{H^{-1}}+2C^*|\Omega|,
\end{align*}
where $C^*$ is defined in \eqref{C^*}. Summing over $k$ for $k=1,2,\ldots, N$, we have
\begin{align*}
\frac{1}{\Delta t} \|u_h^N\|^2_{L^2}-\frac{1}{\Delta t}\|u_h^0\|^2_{L^2}+ \nu \sum_{k=1}^N \|\nabla u_h^k\|^2_{L^2} \leq \frac{1}{\nu \Delta t} \Delta t \sum_{k=1}^N \|f^k\|^2_{H^{-1}}+2C^*|\Omega|.
\end{align*}
Using Lemma \ref{for fully fk}, we attain the following:
\begin{align*}
\frac{1}{\Delta t} \|u_h^N\|^2_{L^2}+ \nu \sum_{k=1}^N \|\nabla u_h^k\|^2_{L^2} \leq \frac{1}{\nu \Delta t} C\|f\|^2_{L^2(0,T;H^{-1})}+\frac{1}{\Delta t}\|u_h^0\|^2_{L^2}+2C^*|\Omega|,
\end{align*}
which further implies
\begin{align*}
\nu \sum_{k=1}^N \Delta t \|\nabla u_h^k\|^2_{L^2} \leq \frac{1}{\nu}\|f\|^2_{L^2(0,T;H^{-1})}+\|u_0\|^2_{L^2}+2C^*|\Omega|,
\end{align*}
which completes the proof. 
\end{proof}

%
%

The following result provides the error between semidiscrete and fully-discrete solutions. 
\begin{Lemma}\label{lem-fully-bound}
	For initial data $u_0 \in D(A^\frac{3}{2})$ and $f\in H^1(0,T;H^1)$, we have the following energy estimate:
	\begin{align}\label{eqn-fully-bound}
	\|\partial_t u_h\|^2_{L^{\infty}(0,T;L^2)}&+\nu \|\partial_t u_h\|^2_{L^2(0,T;H_0^1)}+\alpha(p-1)\||u_h|^{\frac{p-2}{2}}\partial_tu_h\|^2_{L^2(0,T;L^2)}\nonumber\\& \leq C\big(\|u_0\|_{D(A^{\frac{3}{2}})},\|f(0)\|_{H^1},\|f\|_{H^1(0,T;H^1)}\big).
	\end{align}
\end{Lemma}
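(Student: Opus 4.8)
The plan is to carry out the discrete counterpart of Step~1 in the proof of Theorem~\ref{regularity-f- H1}(a), with the finite element space $V_h$ in place of the spectral Galerkin space $V_m$. Since the semidiscrete problem \eqref{damped fem} is a finite-dimensional ODE system and $f\in H^1(0,T;H^1(\Omega))$, the solution $u_h$ is of class $C^1$ in time and $\partial_t u_h$ may itself be differentiated (this can be justified rigorously by difference quotients in $t$). I would first differentiate \eqref{damped fem} with respect to $t$ and test with $\chi=\partial_t u_h\in V_h$. Using $\partial_t(|u_h|^{p-2}u_h)=(p-1)|u_h|^{p-2}\partial_t u_h$, this produces
\begin{align*}
&\frac12\frac{d}{dt}\|\partial_t u_h(t)\|^2_{L^2}+\nu\|\nabla\partial_t u_h(t)\|^2_{L^2}+\alpha(p-1)\||u_h(t)|^{\frac{p-2}{2}}\partial_t u_h(t)\|^2_{L^2}\\
&\quad=\langle\partial_t f(t),\partial_t u_h(t)\rangle+\sum_{\ell=1}^M\beta_{\ell}(q_{\ell}-1)(|u_h(t)|^{q_{\ell}-2}\partial_t u_h(t),\partial_t u_h(t)),
\end{align*}
which is exactly the finite element analogue of the identity underlying \eqref{needed for fully discrete}.

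Next, the pumping terms on the right are treated precisely as in the interpolation estimate \eqref{ql2,grad transfomration}: writing $\frac{q_{\ell}-2}{p-2}+\frac{p-q_{\ell}}{p-2}=1$ and applying H\"older's and Young's inequalities, a fraction of $\alpha(p-1)\||u_h|^{\frac{p-2}{2}}\partial_t u_h\|^2_{L^2}$ is absorbed into the left-hand side, leaving a remainder bounded by $C_3\|\partial_t u_h\|^2_{L^2}$ with $C_3$ as in \eqref{eqn-C3}; the forcing term is handled by Cauchy--Schwarz and Young. Integrating in time and invoking Gronwall's inequality then yields the full estimate \eqref{eqn-fully-bound} modulo the initial contributions $\|\partial_t u_h(0)\|^2_{L^2}$ and $\int_0^t\|\partial_s f\|^2_{L^2}\,ds$, the latter being controlled by $\|f\|_{H^1(0,T;H^1)}$.

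The principal difficulty, and the step requiring genuine care in the finite element setting, is bounding $\|\partial_t u_h(0)\|_{L^2}$ by the data. I would set $t=0$ in \eqref{damped fem} and test with $\chi=\partial_t u_h(0)\in V_h$ to obtain
\begin{align*}
\|\partial_t u_h(0)\|^2_{L^2}&=-\nu(\nabla u_h(0),\nabla\partial_t u_h(0))-\alpha(|u_h(0)|^{p-2}u_h(0),\partial_t u_h(0))\\
&\quad+\sum_{\ell=1}^M\beta_{\ell}(|u_h(0)|^{q_{\ell}-2}u_h(0),\partial_t u_h(0))+(f(0),\partial_t u_h(0)).
\end{align*}
Unlike the spectral setting of Theorem~\ref{regularity-f- H1}, where $-\Delta u_m(0)\in V_m$ and $\|Au_m(0)\|_{L^2}\le\|u_0\|_{D(A)}$ follows from the commutation of $A$ with $S_{m-1}$, the discrete Laplacian does not preserve $V_h$, so the elliptic term must be handled separately for each initial projection. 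In the Ritz case $u_h(0)=R_h u_0$, the defining relation \eqref{Ritz projection} gives $(\nabla R_h u_0,\nabla\partial_t u_h(0))=(\nabla u_0,\nabla\partial_t u_h(0))=(Au_0,\partial_t u_h(0))$, at once bounded by $\|u_0\|_{D(A)}\|\partial_t u_h(0)\|_{L^2}$. In the Scott--Zhang case $u_h(0)=S_h u_0$ I would split $(\nabla S_h u_0,\nabla\partial_t u_h(0))=(\nabla(S_h u_0-u_0),\nabla\partial_t u_h(0))+(Au_0,\partial_t u_h(0))$ and control the first summand by the approximation bound $\|\nabla(S_h u_0-u_0)\|_{L^2}\le Ch\|u_0\|_{H^2}$ from \eqref{scott zhang approximation result} combined with the inverse inequality $\|\nabla\partial_t u_h(0)\|_{L^2}\le Ch^{-1}\|\partial_t u_h(0)\|_{L^2}$, so that the powers of $h$ cancel and one again recovers a bound $\le C\|u_0\|_{D(A)}\|\partial_t u_h(0)\|_{L^2}$.

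Finally, the nonlinear terms at $t=0$ are bounded via Cauchy--Schwarz using an $L^{2p-2}$-bound on $u_h(0)$, which is controlled in terms of $\|u_0\|_{D(A)}$ through the embedding \eqref{eqn-embedding} together with the stability and approximation properties of the chosen projection or interpolation; the forcing term uses $\|f(0)\|_{L^2}\le\|f(0)\|_{H^1}$. Dividing through by $\|\partial_t u_h(0)\|_{L^2}$ then yields $\|\partial_t u_h(0)\|_{L^2}\le C(\|u_0\|_{D(A)},\dots,\|f(0)\|_{L^2})$, which is dominated by the stated data $\|u_0\|_{D(A^{3/2})}$ and $\|f(0)\|_{H^1}$. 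Combining this initial estimate with the Gronwall bound and taking the supremum over $[0,T]$ delivers \eqref{eqn-fully-bound}. I expect the elliptic initial-data term, and in particular the $h$-against-$h^{-1}$ cancellation in the Scott--Zhang case, to be the only point demanding more than a routine adaptation of the continuous argument.
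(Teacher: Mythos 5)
Your proposal is correct, and its first half (differentiate \eqref{damped fem} in time, test with $\partial_t u_h$, absorb the pumping terms via the interpolation trick of \eqref{ql2,grad transfomration} with the constant $C_3$ of \eqref{eqn-C3}, then apply Gronwall) is exactly how the paper proceeds: the paper simply cites the bound \eqref{needed for fully discrete}, whose derivation in the spectral Galerkin setting transfers verbatim to $V_h$. Where you genuinely diverge is the key step, the bound on $\|\partial_t u_h(0)\|_{L^2}$. The paper does \emph{not} test the discrete equation at $t=0$; instead it writes $\|\partial_t u_h(0)\|^2_{L^2}\leq 2\|\partial_t(u_h(0)-u(0))\|^2_{L^2}+2\|\partial_t u(0)\|^2_{L^2}$, invokes an $O(h)$ approximation bound $\|\partial_t(u_h(0)-u(0))\|_{L^2}\leq Ch\|\partial_t\nabla u(0)\|_{L^2}$, and then controls $\|\partial_t u(0)\|_{L^2}$ and $\|\partial_t\nabla u(0)\|_{L^2}$ through the compatibility conditions of Remark \ref{rem-comp}; this is precisely where $u_0\in D(A^{3/2})$ and $f(0)\in H^1$ enter (via $g_1\in L^2(\Omega)$). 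Your route stays entirely at the discrete level: Ritz orthogonality \eqref{Ritz projection} turns the stiffness term into $(Au_0,\partial_t u_h(0))$, and in the Scott--Zhang case the $h$-versus-$h^{-1}$ cancellation between \eqref{scott zhang approximation result} and the inverse inequality does the same job. This buys you two things the paper does not have: the argument is self-contained (the paper's error bound for $\partial_t(u_h-u)(0)$ is asserted rather than proved, since no error equation for time derivatives at $t=0$ is ever derived), and your initial-data estimate only consumes $\|u_0\|_{D(A)}$ together with the embedding \eqref{eqn-embedding}, rather than the full $D(A^{3/2})$ norm, so your version of the lemma is in principle sharper. What the paper's route buys is brevity and uniformity: it does not distinguish the Ritz and Scott--Zhang cases, needs no inverse estimate (hence no quasi-uniformity of the mesh), and concentrates all compatibility bookkeeping in Remark \ref{rem-comp}. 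Two points in your write-up deserve flagging, though neither is fatal: the inverse inequality requires (at least locally) quasi-uniform meshes, which should be stated; and the bound $\|u_h(0)\|_{L^{2p-2}}\lesssim \|u_0\|_{D(A)}$ is not a consequence of $H^1$-stability alone in the Ritz case, since $H_0^1\hookrightarrow L^{2p-2}$ fails for $p$ near $\tfrac{2d}{d-2}$ when $d\geq 3$; you need either the triangle inequality $\|R_hu_0\|_{L^{2p-2}}\leq\|R_hu_0-u_0\|_{L^{2p-2}}+\|u_0\|_{L^{2p-2}}$ combined with an inverse estimate on $\|R_hu_0-u_0\|_{L^{2p-2}}$ (the exponent $2-d(\tfrac12-\tfrac1{2p-2})$ is nonnegative exactly when $H^2\hookrightarrow L^{2p-2}$, so this closes), or a known $W^{1,r}$-stability result for the Ritz projection on convex domains.
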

\begin{proof}
	From \eqref{needed for fully discrete}, we infer
	\begin{align*}
		\|\partial_t u_h\|^2_{L^{\infty}(0,T;L^2)}&+\nu \|\partial_t u_h\|^2_{L^2(0,T;H_0^1)}+\alpha(p-1)\||u_h|^{\frac{p-2}{2}}\partial_tu_h\|^2_{L^2(0,T;L^2)}\\& \leq\bigg(\|\partial_t u_h(0)\|^2_{L^2}+\frac{1}{\alpha}\int_0^T\|\partial_tf(t)\|^2_{L^2}\,dt\bigg)\exp((\alpha+C_3)T).
	\end{align*}
	We estimate the term $\|\partial_t u_h(0)\|^2_{L^2}$ as (see Remark \ref{rem-comp})
	\begin{align*}
		\|\partial_tu_h(0)\|^2_{L^2}&\leq 2(\|\partial_t(u_h(0)-u(0))\|^2_{L^2}+ \|\partial_t u(0)\|^2_{L^2})\nonumber\\&\leq Ch^2\|\partial_t \nabla u(0)\|^2_{L^2}+\|\partial_t u(0)\|^2_{L^2} \leq C(\|u_0\|_{D(A^{\frac{3}{2}})},\|f(0)\|_{H^1}).
	\end{align*}
	Hence, by combining these estimates, we obtain the required result \eqref{eqn-fully-bound}. 
\end{proof}
\begin{Proposition}\label{CFEM uk-ukh error}
For all values of $p$ given in \eqref{eqn-values of p}, if $u_0\in D(A^{\frac{3}{2}})$ and $f \in H^1(0,T;H^1(\Omega))$, let $u_h$ be the semidiscrete solution \eqref{damped fem} and $u_{kh}$ be the generic fully-discrete solution of \eqref{generic fully-discrete solution}. Then, we have 
\begin{equation}
\|u_h-u_{kh}\|^2_{L^{\infty}(0,T;L^2)}+\nu \|u_h-u_{kh}\|^2_{L^2(0,T;H_0^1)}\leq c(\Delta t)^2(\|u_0\|^2_{H^2}+\|f\|^2_{H^1(0,T;H^1)}+C').
\end{equation}
\end{Proposition}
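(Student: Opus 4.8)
The plan is to set $e:=u_h-u_{kh}$, to observe that $e(t)\in V_h$ for every $t$ (so $e$ is an admissible test function), and — crucially — that both schemes start from the same datum (either $R_hu_0$ or $S_hu_0$ according to Table~\ref{values of p}), whence $e(0)=0$; this is what lets Gronwall's inequality deliver a bound free of any $h$-dependent initial contribution. For $t\in(t_{k-1},t_k]$ the fully-discrete scheme \eqref{4.13} can be written, for all $\chi\in V_h$, as $(\partial_t u_{kh},\chi)+\nu(\nabla u_h^k,\nabla\chi)+\alpha(|u_h^k|^{p-2}u_h^k,\chi)-\sum_\ell\beta_\ell(|u_h^k|^{q_\ell-2}u_h^k,\chi)=\langle f^k,\chi\rangle$, since $\partial_t u_{kh}=\delta_k:=(u_h^k-u_h^{k-1})/\Delta t$ and $-(\Delta u_h^k,\chi)=(\nabla u_h^k,\nabla\chi)$ on $V_h$. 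Subtracting this from \eqref{damped fem} at time $t$ and testing with $\chi=e$ produces the error identity for $\tfrac12\frac{d}{dt}\|e\|_{L^2}^2+\nu\|\nabla e\|_{L^2}^2$, in which the mismatch between the nodal value $u_h^k$ and the interpolant $u_{kh}(t)$ is recorded by $\rho:=u_h^k-u_{kh}(t)=(t_k-t)\delta_k$, so that $u_h-u_h^k=e-\rho$.

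Next I would split each term through $u_h-u_h^k=e-\rho$. The diffusion term gives $\nu\|\nabla e\|_{L^2}^2-\nu(\nabla\rho,\nabla e)$, the cross piece being absorbed into $\tfrac{\nu}{4}\|\nabla e\|_{L^2}^2+C\|\nabla\rho\|_{L^2}^2$. For the damping term I write $(|u_h|^{p-2}u_h-|u_h^k|^{p-2}u_h^k,e)=(b(u_h)-b(u_h^k),u_h-u_h^k)+(b(u_h)-b(u_h^k),\rho)$ with $b(u)=|u|^{p-2}u$; the first summand is bounded below by $\tfrac{1}{2^{p-2}}\|u_h-u_h^k\|_{L^p}^p\ge 0$ via the monotonicity estimate \eqref{eqn-nonlinear-est} and is kept on the left, while the second is handled by the local Lipschitz property of $b$ together with Young's inequality, absorbing a small multiple of $\|u_h-u_h^k\|_{L^p}^p$ and leaving $\rho$-remainders. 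The pumping terms are treated exactly as in the uniqueness argument (Theorem~\ref{damped uniqueness}), yielding a contribution controlled by $C_2\|e\|_{L^2}^2$ (with $C_2$ as in \eqref{C_2}) plus $\rho$-terms, and the forcing mismatch by $\langle f(t)-f^k,e\rangle\le\tfrac{\nu}{4}\|\nabla e\|_{L^2}^2+\tfrac1\nu\|f(t)-f^k\|_{H^{-1}}^2$.

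To close, I integrate over $(0,t)$, use $e(0)=0$, and apply Gronwall's inequality. Three facts make the right-hand side $O((\Delta t)^2)$: first, Lemma~\ref{for fully fk} with $\gamma=1$ gives $\sum_k\int_{t_{k-1}}^{t_k}\|f(t)-f^k\|_{H^{-1}}^2\,dt\le C(\Delta t)^2\|f\|_{H^1(0,T;H^{-1})}^2$; second, since $\rho=(t_k-t)\delta_k$ with $\delta_k$ constant in $t$ on each subinterval, $\int_0^T(\|\rho\|_{L^2}^2+\|\nabla\rho\|_{L^2}^2)\,dt=\tfrac{(\Delta t)^2}{3}\sum_k\Delta t\,(\|\delta_k\|_{L^2}^2+\|\nabla\delta_k\|_{L^2}^2)$, which is $O((\Delta t)^2)$ once the difference quotients obey the fully-discrete stability bound $\sup_k\|\delta_k\|_{L^2}^2+\sum_k\Delta t\|\nabla\delta_k\|_{L^2}^2\le C$. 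These discrete bounds are the fully-discrete analogue of Lemma~\ref{lem-fully-bound} and Theorem~\ref{thm-more-regular}; they follow by differencing two consecutive equations \eqref{4.13}, testing with $\delta_k$, and invoking data compatibility, which is exactly where $u_0\in D(A^{\frac32})$ and $f\in H^1(0,T;H^1(\Omega))$ enter — through the bound on $\|\delta_1\|$ and on $\sum_k\Delta t\|\partial_t f\|_{L^2}^2$.

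I expect the principal obstacle to be the nonlinear damping cross-term $\alpha(b(u_h)-b(u_h^k),\rho)$ in the regime $p>\tfrac{2d}{d-2}$, where $H_0^1\not\hookrightarrow L^p$ and one cannot control $\|\rho\|_{L^p}$ by $\|\nabla\rho\|_{L^2}$. Bounding it uniformly in $h$ forces one either to track a weighted quantity of the type $\||u_h^k|^{\frac{p-2}{2}}\delta_k\|_{L^2}$ or to use $D(A)\hookrightarrow L^{d(p-2)}$ together with the discrete $H_0^1$-bound on $\delta_k$ (as in \eqref{eqn-method-avoid}), and then to absorb the residual $L^p$-power into the monotonicity term $\tfrac{\alpha}{2^{p-2}}\|u_h-u_h^k\|_{L^p}^p$; this is precisely the step that makes the stronger regularity $u_0\in D(A^{\frac32})$ indispensable for the values of $p$ in \eqref{eqn-values of p}. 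Everything else reduces to bookkeeping already available through the uniform bounds of Lemma~\ref{energy estimate of CFEM} and Lemma~\ref{Stability fully}.
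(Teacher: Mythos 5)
Your overall architecture --- working directly with $e(t)=u_h(t)-u_{kh}(t)$, using $e(0)=0$, splitting through $\rho=(t_k-t)\delta_k$, and closing with Gronwall --- is a legitimate alternative to the paper's route, which instead first estimates the nodal errors $u_h(t_k)-u_h^k$ (by integrating the semidiscrete equation \eqref{damped fem} over $[t_{k-1},t_k]$ and subtracting \eqref{4.13}) and only afterwards passes to arbitrary $t$ through the time interpolant $\mathcal{I}u_h$. However, as written your argument has two genuine gaps. First, your treatment of the damping cross-term does not give the claimed rate: the local Lipschitz bound yields $|(b(u_h)-b(u_h^k),\rho)|\le C\|u_h-u_h^k\|_{L^p}\|\rho\|_{L^p}$, and if you insist on absorbing ``a small multiple of $\|u_h-u_h^k\|_{L^p}^p$,'' Young's inequality forces the conjugate power on the other factor, leaving $C_\epsilon\|\rho\|_{L^p}^{p'}$ with $p'=p/(p-1)$; since $\|\rho\|_{L^p}\le \Delta t\,\|\delta_k\|_{L^p}$, the time integral of this remainder is only $O((\Delta t)^{p'})$, which is strictly worse than $O((\Delta t)^2)$ for every $p>2$. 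A correct closure needs either, for $2\le p\le\frac{2d}{d-2}$, Young with equal exponents together with the Sobolev bound $\|u_h-u_h^k\|_{L^p}\le C\|\nabla(u_h-u_h^k)\|_{L^2}\le C(\|\nabla e\|_{L^2}+\|\nabla\rho\|_{L^2})$ so that the quadratic piece is absorbed into $\nu\|\nabla e\|_{L^2}^2$, or, for larger $p$, a weighted Cauchy--Schwarz against the coercivity \eqref{eqn-mono-2} --- which is in effect what the paper's estimate of the term $I_4$ accomplishes.

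Second, and more seriously, every $\rho$-term in your scheme hinges on discrete difference-quotient stability for the \emph{fully discrete} solution: $\sup_k\|\delta_k\|_{L^2}^2+\sum_k\Delta t\|\nabla\delta_k\|_{L^2}^2\le C$, a uniform bound on $\sup_k\|u_h^k\|_{L^p}$ (needed for your Lipschitz constants), and, when $p>\frac{2d}{d-2}$, weighted bounds of the type $\sum_k\Delta t\||u_h^k|^{\frac{p-2}{2}}\delta_k\|_{L^2}^2\le C$. None of these are available in the paper --- Lemma \ref{Stability fully} gives far less, and Lemma \ref{lem-fully-bound} concerns the semidiscrete solution only --- and your one-line justification (``differencing two consecutive equations and testing with $\delta_k$'') hides the hardest parts: the first-step bound on $\|\delta_1\|_{L^2}$, which requires exploiting the Ritz/Scott--Zhang structure of $u_h^0$ together with the compatibility conditions of Remark \ref{rem-comp}, and the weighted estimate, for which your suggested substitute $D(A)\hookrightarrow L^{d(p-2)}$ is not applicable because $u_h$, $u_h^k$, $\delta_k$ are finite element functions and do not belong to $D(A)$. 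The paper's two-step structure is designed precisely to avoid all of this: in its nodal error equation every quadrature-type remainder involves only $\partial_t u_h$ and $|u_h|^{\frac{p-2}{2}}\partial_tu_h$ of the semidiscrete solution, which are controlled by Lemma \ref{lem-fully-bound}, so no time-regularity of the sequence $\{u_h^k\}$ is ever needed. Until you supply proofs of those discrete bounds (a piece of analysis comparable in length to Step 1 of the paper's proof) and repair the Young exponents, your proposal does not establish the proposition.
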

\begin{proof}
We divide our proof into two steps: In Step 1, we prove the error estimates on the nodal values and in Step 2, we prove the final estimate.\\
\noindent
\textbf{Step 1:} \textit{Estimates at $t_k$.} Integrating the semi-discrete scheme \eqref{damped fem} from $t_{k-1}$ to $t_k$, we obtain
\begin{align}\label{4.16}
(u_h(t_k)-u_h(t_{k-1}),\chi) &+ \nu \bigg(\int_{t_{k-1}}^{t_k} \nabla u_h(t)\, dt, \nabla \chi \bigg)+ \alpha \bigg( \int_{t_{k-1}}^{t_k}|u_h(t)|^{p-2}u_h(t)\,dt, \chi \bigg)\nonumber \\&-\sum_{\ell=1}^M \beta_{\ell}\bigg( \int_{t_{k-1}}^{t_k} |u_h(t)|^{q_{\ell}-2}u_h(t)\,dt, \chi \bigg) = \bigg( \int_{t_{k-1}}^{t_k} f(t)\,dt , \chi\bigg),
\end{align}
for all $\chi\in V_h$. Also, we have the fully-discrete scheme \eqref{4.13} as 
\begin{align}\label{4.17}
 \left( \frac{u_h^k - u_h^{k-1}}{\Delta t}, \chi \right) 
    - \nu (\Delta u_h^k, \chi) 
    + \alpha (|u_h^k|^{p-2} u_h^k, \chi)-\sum_{\ell=1}^M \beta_{\ell} (|u_h^k|^{q_{\ell}-2} u_h^k, \chi) 
    = \langle f^k, \chi \rangle,
\end{align}
for all $\chi\in V_h$.  Subtracting \eqref{4.16} and \eqref{4.17}, we infer  for all $\chi\in V_h$ that 
\begin{align*}
(u_h(t_k)-u_h^k,\chi)&-(u_h(t_{k-1})-u_h^{k-1},\chi)+ \nu \bigg( \int_{t_{k-1}}^{t_k} \nabla u_h(t) \, dt - \Delta t \nabla u_h^k, \nabla \chi \bigg) \\&
+\alpha \bigg( \int_{t_{k-1}}^{t_k} |u_h(t)|^{p-2}u_h(t)\,dt-\Delta t |u_h^k|^{p-2} u_h^k,\chi\bigg)\\&-\sum_{\ell=1}^M \beta_{\ell} \bigg(\int_{t_{k-1}}^{t_k} |u_h(t)|^{q_{\ell}-2}u_h(t)\, dt - \Delta t |u_h^k|^{q_{\ell}-2}u_h^k, \chi \bigg) = 0.
\end{align*}
Taking $\chi = u_h(t_k)-u_h^k\in V_h$, we get
\begin{align*}
&\|u_h(t_k) - u_h^k\|^2_{L^2} 
+ \nu \Delta t \|\nabla(u_h(t_k) - u_h^k)\|^2_{L^2} \\
&= (u_h(t_{k-1}) - u_h^{k-1}, u_h(t_k) - u_h^k) - \alpha  
\left( \int_{t_{k-1}}^{t_k} |u_h(t)|^{p-2} u_h(t)\, dt- \Delta t\, |u_h^k|^{p-2} u_h^k, u_h(t_k)-u_h^k\right) \\
&\quad - \nu
\left( \int_{t_{k-1}}^{t_k} \nabla u_h(t)\, dt- \Delta t\, \nabla u_h(t_k),\ \nabla(u_h(t_k) - u_h^k) \right)\\ & \quad +\sum_{\ell=1}^M \beta_{\ell} \bigg(\int_{t_{k-1}}^{t_k} |u_h(t)|^{q_{\ell}-2}u_h(t)\, dt - \Delta t |u_h^k|^{q_{\ell}-2}u_h^k, u_h(t_k)-u_h^k \bigg) .
\end{align*}
Rearranging the terms, we infer the following:
\begin{align*}
&\|u_h(t_k) - u_h^k\|^2_{L^2} 
+ \nu \Delta t \|\nabla(u_h(t_k) - u_h^k)\|^2_{L^2} \\
&= (u_h(t_{k-1}) - u_h^{k-1}, u_h(t_k) - u_h^k)- \nu
\left( \int_{t_{k-1}}^{t_k} \nabla u_h(t)\, dt- \Delta t\, \nabla u_h(t_k),\ \nabla(u_h(t_k) - u_h^k) \right) \\
&\quad - \alpha \Delta t(|u_h(t_k)|^{p-2}u_h(t_k)- |u_h^k|^{p-2}u_h^k, u_h(t_k)-u_h^k)\\&\quad
- \alpha 
\left( \int_{t_{k-1}}^{t_k} |u_h(t)|^{p-2} u_h(t)\, dt- \Delta t\, |u_h(t_k)|^{p-2} u_h^k,  u_h(t_k)-u_h^k \right) \\ & \quad +\sum_{\ell=1}^M \beta_{\ell} \bigg(\int_{t_{k-1}}^{t_k} |u_h(t)|^{q_{\ell}-2}u_h(t)\, dt - \Delta t |u_h(t_k)|^{q_{\ell}-2}u_h(t_k),  u_h(t_k)-u_h^k \bigg)\\&\quad 
+\sum_{\ell=1}^M \beta_{\ell}\Delta t (|u_h(t_k)|^{q_{\ell}-2}u_h(t_k)-|u_h^k|^{q_{\ell}-2}u_h^k, u_h(t_k)-u_h^k)) \\ &
= \sum_{i=1}^6 I_i.
\end{align*}
Let us first estimate each term on the right hand side. Using the Cauchy-Schwarz and Young's inequalities, we can estimate the $I_1$ and $I_2$ as 
\begin{align*}
 I_1=(u_h(t_{k-1}) - u_h^{k-1}, u_h(t_k) - u_h^k) &\leq \frac{1}{2} \|u_h(t_{k-1})-u_h^{k-1}\|^2_{L^2} + \frac{1}{2}\|u_h(t_k)-u_h^k\|^2_{L^2}
\end{align*}
and
\begin{align*}
I_2&=\nu \bigg( \int_{t_{k-1}}^{t_k} \nabla u_h(t) \, dt - \Delta t \nabla u_h(t_k), \nabla (u_h(t_k)-u_h^k) \bigg)\\ &\leq \nu \bigg\|\int_{t_{k-1}}^{t_k} \nabla u_h(t) \, dt - \Delta t \nabla u_h(t_k)\bigg\|_{L^2} \|\nabla (u_h(t_k)-u_h^k)\|_{L^2}\\
&\leq \frac{1}{2\nu \Delta t} \bigg\|\int_{t_{k-1}}^{t_k} \nabla u_h(t) \, dt - \Delta t \nabla u_h(t_k)\bigg\|^2_{L^2} +\frac{\nu \Delta t}{2} \|\nabla(u_h(t_k)-u_h^k)\|^2_{L^2}.
\end{align*}
One can calculate the first term of above estimate as
\begin{align*}
\frac{1}{\Delta t} \bigg\| \int_{t_{k-1}}^{t_k} \nabla u_h(t)\,dt-\Delta t \nabla u_h(t_k)\bigg\|^2_{L^2} &= \frac{1}{\Delta t} \bigg\|\int_{t_{k-1}}^{t_k}(\nabla u_h(t)-\nabla u_h(t_k))\,dt\bigg\|^2_{L^2}\\
&=\frac{1}{\Delta t} \bigg\|\int_{t_{k-1}}^{t_k} \int_{t_k}^{t}  \partial_s \nabla u_h(s) \,ds\,dt\bigg\|^2_{L^2}\\
&\leq \frac{1}{\Delta t}\bigg( \int_{t_{k-1}}^{t_k} \int_{t_{k-1}}^{t_k} \| \partial_s \nabla u_h(s)\|_{L^2}\,ds\,dt\bigg)^2\\
&\leq \Delta t \bigg( \int_{t_{k-1}}^{t_k}\|\nabla \partial_s u_h(s)\|_{L^2}\,ds\bigg)^2\\
&\leq (\Delta t)^2\int_{t_{k-1}}^{t_k}\|\nabla \partial_s u_h(s) \|^2_{L^2} \,ds.
\end{align*}
Now, $I_3$ can be estimated in the same way as in the proof of Theorem \ref{damped uniqueness} as (see \eqref{eqn-ces})
\begin{align*}
I_3&=- \alpha \Delta t (|u_h(t_k)|^{p-2}u_h(t_k)- |u_h^k|^{p-2}u_h^k, u_h(t_k)-u_h^k)\\
&\leq -\frac{\alpha \Delta t}{2}\||u_h(t_k)|^{\frac{p-2}{2}}(u_h(t_k)-u_h^k\|^2_{L^2} -\frac{\alpha \Delta t}{2}\||u_h^k|^{\frac{p-2}{2}}(u_h(t_k)-u_h^k)\|^2_{L^2}.
\end{align*}
The damping term $I_4$ can be estimated by using Young's inequality as 
\begin{align*}
&\left| \left(\alpha \int_{t_{k-1}}^{t_k} |u_h(t)|^{p-2}u_h(t)\,dt
- \Delta t\, |u_h(t_k)|^{p-2}u_h(t_k),\ 
u_h(t_k)-u_h^k\right) \right| \\
&\quad = \left| \alpha \int_{t_{k-1}}^{t_k} 
\left( |u_h(t)|^{p-2}u_h(t) - |u_h(t_k)|^{p-2}u_h(t_k),\ 
u_h(t_k)-u_h^k \right) \,dt \right|\\
& \quad = \bigg| \alpha \int_{t_{k-1}}^{t_k} \bigg( \int_{t_k}^t \partial_s (|u_h(s)|^{p-2}u_h(s))ds, u_h(t_k)-u_h^k\bigg)\,dt \bigg|\\
&\quad \leq \alpha (p-1) \int_{t_{k-1}}^{t_k} \int_{t_{k-1}}^{t_k}\big| (|u_h(s)|^{p-2}\partial_s u_h(s) , u_h(t_k)-u_h^k)\big| \, ds \, dt\\
& \quad \leq \alpha (p-1) \Delta t \int_{t_{k-1}}^{t_k}|(|u_h(t)|^{\frac{p-2}{2}}|u_h(t)|^{\frac{p-2}{2}}\partial_t u_h(t), u_h(t_k)-u_h^k)|\,dt\\
& \quad \leq \alpha (p-1) \Delta t \int_{t_{k-1}}^{t_k} \||u_h(t)|^{\frac{p-2}{2}}\|_{L^{\frac{2p}{p-2}}(\Omega)} \||u_h(t)|^{\frac{p-2}{2}}\partial_t u_h(t)\|_{L^2} \|u_h(t_k)-u_h^k\|_{L^p}\,dt\\
&\quad \leq \alpha (p-1) \Delta t \int_{t_{k-1}}^{t_k} \|u_h(t)\|^{\frac{p-2}{2}}_{L^{p}} \||u_h(t)|^{\frac{p-2}{2}}\partial_t u_h(t)\|_{L^2}\| u_h(t_k)-u_h^k\|_{L^p}\,dt\\
&\quad \leq \frac{\alpha (p-1)^2 \Delta t}{p} \bigg(\frac{2^{p+1}(p-1)\Delta t}{p}\bigg)^{\frac{1}{p-1}} \int_{t_{k-1}}^{t_k} \|u_h(t)\|^{\frac{p(p-2)}{2(p-1)}}_{L^p} \||u_h(t)|^{\frac{p-2}{2}} \partial_t u_h(t)\|^{\frac{p}{p-1}}_{L^2}\,dt\\&\qquad + \frac{\alpha \Delta t}{2^{p+1}}\|u_h(t_k)-u_h^k\|^p_{L^p}\\&\quad \leq  \frac{\alpha (p-1)(p-2) (\Delta t)^2}{2p} \bigg(\frac{2^{p+1}(p-1)\Delta t}{p}\bigg)^{\frac{2}{p-2}}\bigg(\frac{p}{2(p-1)}\bigg)^{\frac{p}{p-2}}\|u_h\|^{p}_{L^\infty(0,T;L^p)} \\ &\qquad + \frac{\alpha(p-1)^2(\Delta t)^2}{p}\int_{t_{k-1}}^{t_k} \||u_h(t)|^{\frac{p-2}{2}}\partial_t u_h(t)\|^2_{L^2}\, dt+ \frac{\alpha \Delta t }{2^{p+1}}\|u_h(t_k)-u_h^k\|^p_{L^p}.
\end{align*}
We now estimate the pumping term 
$I_5$, proceeding analogously to the treatment of 
$I_4$, in order to obtain
\begin{align*}
&\bigg| \sum_{\ell=1}^M \beta_{\ell} \bigg(\int_{t_{k-1}}^{t_k} |u_h(t)|^{q_{\ell}-2}u_h(t)\, dt - \Delta t |u_h(t_k)|^{q_{\ell}-2}u_h(t_k), u_h(t_k)-u_h^k \bigg) \bigg|\\ &\leq \sum_{\ell=1}^M \frac{|\beta_{\ell}| (q_{\ell}-1)(q_{\ell}-2)(\Delta t)^2}{2q_{\ell}} \bigg(\frac{M|\beta_{\ell}| (q_{\ell}-1)\Delta t 2^{p-1}}{q_{\ell}}\bigg)^\frac{2}{q_{\ell}-2} \bigg(\frac{q_{\ell}}{2(q_{\ell}-1)}\bigg)^\frac{q_{\ell}}{q_{\ell}-2}\|u_h\|^{q_{\ell}}_{L^\infty(0,T;L^{q_{\ell}})}\\&\quad + \sum_{\ell=1}^M \frac{|\beta_{\ell}| (q_{\ell}-1)^2(\Delta t )^2}{q_{\ell}}\int_{t_{k-1}}^{t_k} \||u_h(t)|^{\frac{q_{\ell}-2}{2}} \partial_t u_h(t)\|^2_{L^2}\, dt + \frac{\Delta t}{2^{p-1}}\|u_h(t_k)-u_h^k\|^{q_{\ell}}_{L^{q_{\ell}}}.
\end{align*}
In the above estimate, the final term can be further bounded analogously to the argument in \eqref{u^q to u^p journey}, as
\begin{align*}
\frac{\Delta t}{2^{p-1}}\|u_h(t_k)-u_h^k\|^{q_{\ell}}_{L^{q_{\ell}}} \leq \frac{\alpha \Delta t}{2^{p+1}}\|u_h(t_k)-u_h^k\|^{p}_{L^p}+ \frac{(\Delta t)^2}{2^{p}}\bigg(\frac{p-q_{\ell}}{p}\bigg)\bigg(\frac{2q_{\ell}}{\alpha p}\bigg)^\frac{q_{\ell}}{p-q_{\ell}}.
\end{align*}
Finally, the term $I_6$ can be estimated in the same way as in Theorem \ref{damped uniqueness}:
\begin{align*}
& \bigg|\sum_{\ell=1}^M \beta_{\ell}\Delta t (|u_h(t_k)|^{q_{\ell}-2}u_h(t_k)-|u_h^k|^{q_{\ell}-2}u_h^k, u_h(t_k)-u_h^k))\bigg|\\&  \leq \frac{\alpha \Delta t}{4}\||u_h(t_k)|^{\frac{p-2}{2}}(u_h(t_k)-u_h^k)\|^2_{L^2}+\frac{\alpha \Delta t}{4}\||u_h^k|^{\frac{p-2}{2}}(u_h(t_k)-u_h^k)\|^2_{L^2} + 2C_2\Delta t\|u_h(t_k)-u_h^k\|^2_{L^2}.
\end{align*}
Before bringing together all the estimates,  using \eqref{eqn-nonlinear-est}, we observe that the following bound also holds:
\begin{align*}
\frac{\alpha \Delta t}{4}\||u_h(t_k)|^{\frac{p-2}{2}}(u_h(t_k)-u_h^k)\|_{L^2}^2 +\frac{\alpha \Delta t}{4}\||u_h^k|^{\frac{p-2}{2}}(u_h(t_k)-u_h^k)\|^2_{L^2} \geq \frac{\alpha \Delta t}{2^{p-1}}\|u_h(t_k)-u_h^k\|^p_{L^p}.
\end{align*}
 Combining all these estimates, we arrive at 
\begin{align*}
&\frac{1}{2} \|u_h(t_k)-u_h^k\|^2_{L^2}-\frac{1}{2}\|u_h(t_{k-1})-u_h^{k-1}\|^2_{L^2}+\frac{\nu \Delta t}{2} \|\nabla(u_h(t_k)-u_h^k\|^2_{L^2}+\frac{\alpha \Delta t}{2^{p}}\|u_h(t_k)-u_h^k\|^{p}_{L^p}\\
& \leq  \frac{(\Delta t)^2}{2\nu} \int_{t_{k-1}}^{t_k}\|\partial_t\nabla u_h(t)\|^2_{L^2}\,dt +\frac{\alpha(p-1)^2(\Delta t)^2}{p}\int_{t_{k-1}}^{t_k} \||u_h(t)|^{\frac{p-2}{2}}\partial_tu_h(t)\|^2_{L^2}\,dt\\&\quad+ \frac{\alpha (p-1)(p-2) (\Delta t)^2}{2p} \bigg(\frac{2^p(p-1)\Delta t}{p}\bigg)^{\frac{2}{p-2}}\bigg(\frac{p}{2(p-1)}\bigg)^{\frac{p}{p-2}}\|u_h\|^{p}_{L^\infty(0,T;L^p)} \\&\quad 
+\sum_{\ell=1}^M \frac{|\beta_{\ell}| (q_{\ell}-1)(q_{\ell}-2)(\Delta t)^2}{2q_{\ell}} \bigg(\frac{M|\beta_{\ell}| (q_{\ell}-1)\Delta t 2^{p-1}}{q_{\ell}}\bigg)^\frac{2}{q_{\ell}-2} \bigg(\frac{q_{\ell}}{2(q_{\ell}-1)}\bigg)^\frac{q_{\ell}}{q_{\ell}-2}\|u_h\|^{q_{\ell}}_{L^\infty(0,T;L^{q_{\ell}})}\\&\quad + \sum_{\ell=1}^M \frac{|\beta_{\ell}| (q_{\ell}-1)^2(\Delta t )^2}{q_{\ell}}\int_{t_{k-1}}^{t_k} \||u_h(t)|^{\frac{q_{\ell}-2}{2}} \partial_t u_h(t)\|^2_{L^2}\, dt+ \frac{(\Delta t)^2}{2^{p-1}}\bigg(\frac{p-q_{\ell}}{p}\bigg)\bigg(\frac{2q_{\ell}}{\alpha  p}\bigg)^{\frac{q_{\ell}}{p-q_{\ell}}}\\&\quad + 2C_2\Delta t\|u_h(t_k)-u_h^k\|^2_{L^2}.
\end{align*}
Now, summing over $k=1,\ldots, N$, we deduce 
\begin{align*}
&\|u_h(t_N)-u_h^N\|^2_{L^2}+ \sum_{k=1}^N \nu \Delta t \|\nabla (u_h(t_k)-u_h^k)\|^2_{L^2}+\sum_{k=1}^N\frac{\alpha \Delta t}{2^{p-1}}\|u_h(t_k)-u_h^k\|^{p}_{L^p}\\
&\quad \leq   \frac{(\Delta t)^2}{\nu} \int_{0}^{T}\|\partial_t\nabla u_h(t)\|^2_{L^2}\,dt +\frac{2\alpha(p-1)^2(\Delta t)^2}{p}\int_{0}^{T} \||u_h(t)|^{\frac{p-2}{2}}\partial_tu_h(t)\|^2_{L^2}\,dt\\&\quad+ \frac{2\alpha (p-1)(p-2) (\Delta t)^2}{2p} \bigg(\frac{2^p(p-1)\Delta t}{p}\bigg)^{\frac{2}{p-2}}\bigg(\frac{p}{2(p-1)}\bigg)^{\frac{p}{p-2}}\|u_h\|^{p}_{L^\infty(0,T;L^p)} \\&\quad 
+\sum_{\ell=1}^M \frac{2|\beta_{\ell}| (q_{\ell}-1)(q_{\ell}-2)(\Delta t)^2}{2q_{\ell}} \bigg(\frac{M|\beta_{\ell}| (q_{\ell}-1)\Delta t 2^{p-1}}{q_{\ell}}\bigg)^\frac{2}{q_{\ell}-2} \bigg(\frac{q_{\ell}}{2(q_{\ell}-1)}\bigg)^\frac{q_{\ell}}{q_{\ell}-2}\|u_h\|^{q_{\ell}}_{L^\infty(0,T;L^p)}\\&\quad + \sum_{\ell=1}^M \frac{2|\beta_{\ell}| (q_{\ell}-1)^2(\Delta t )^2}{q_{\ell}}\int_{0}^{T} \||u_h(t)|^{\frac{q_{\ell}-2}{2}} \partial_t u_h(t)\|^2_{L^2}\, dt+ \frac{2N(\Delta t)^2}{2^{p-1}}\bigg(\frac{p-q_{\ell}}{p}\bigg)\bigg(\frac{2q_{\ell}}{\alpha  p}\bigg)^{\frac{q_{\ell}}{p-q_{\ell}}}\\&\quad + \Delta t \sum_{k=1}^N 4C_2 \|u_h(t_k)-u_h^k\|^2_{L^2}.
\end{align*}
By an application of Gronwall's inequality, we conclude
\begin{align}\label{u(t_N)-uhN}
&\|u_h(t_N)-u_h^N\|^2_{L^2}+ \sum_{k=1}^N \nu \Delta t \|\nabla (u_h(t_k)-u_h^k)\|^2_{L^2}+ \sum_{k=1}^N\frac{\alpha \Delta t}{2^{p-1}}\|u_h(t_k)-u_h^k\|^{p}_{L^p} \nonumber\\ &  \leq C(\Delta t)^2 \bigg( \int_0^T \|\partial_t \nabla u_h(t)\|^2_{L^2}\, dt + \int_0^T \||u_h(t)|^{\frac{p-2}{2}}\partial_t u_h(t)\|^2_{L^2} \, dt + C' \bigg)\exp( 4C_2\Delta t ),
\end{align}
where 
\begin{align}\label{C'}
C' &= \frac{2\alpha (p-1)(p-2) (\Delta t)}{2p} \bigg(\frac{2^p(p-1)\Delta t}{p}\bigg)^{\frac{2}{p-2}}\bigg(\frac{p}{2(p-1)}\bigg)^{\frac{p}{p-2}}\|u_h\|^{p}_{L^\infty(0,T;L^p)} \nonumber \\&\,\,
+\sum_{\ell=1}^M \frac{2|\beta_{\ell}| (q_{\ell}-1)(q_{\ell}-2)(\Delta t)}{2q_{\ell}} \bigg(\frac{M|\beta_{\ell}| (q_{\ell}-1)\Delta t 2^{p-1}}{q_{\ell}}\bigg)^\frac{2}{q_{\ell}-2} \bigg(\frac{q_{\ell}}{2(q_{\ell}-1)}\bigg)^\frac{q_{\ell}}{q_{\ell}-2}\|u_h\|^{q_{\ell}}_{L^\infty(0,T;L^{q_{\ell}})}\nonumber\\& \quad 
+ \frac{2N}{2^{p-1}}\bigg(\frac{p-q_{\ell}}{p}\bigg)\bigg(\frac{2q_{\ell}}{\alpha  p}\bigg)^{\frac{q_{\ell}}{p-q_{\ell}}}. 
\end{align}
Since $u_h \in L^\infty(0,T;L^p(\Omega))$, $C'$ and the exponential term are bounded by given date. Hence, by an application of the  \eqref{eqn-fully-bound} for semidiscrete case, the right hand side of \eqref{u(t_N)-uhN} is bounded.

\vskip 0.2cm 
\noindent
\textbf{Step 2:} \textit{Estimate for any $t\in[t_{k-1},t_k]$.}
Let us introduce a linear interpolation $\mathcal{I}u_h$ for the semidiscrete solution $u_h$ by 
\begin{align*}
 \mathcal{I}u_h(t)=u_h(t_{k-1})+\bigg(\frac{t-t_{k-1}}{\Delta t}\bigg)(u_h(t_k)-u_h(t_{k-1})) \ \text{ for } \ t \in [t_{k-1},t_k].
\end{align*}
Let us write $u_h-u_{kh}=u_h-\mathcal{I}u_h+\mathcal{I}u_h-u_{kh}$, where $\mathcal{I}u_h$ is the linear interpolation as defined above. Using the triangle inequality, we obtain
\begin{align*}
\|u_h-u_{kh}\|^2_{L^2(0,T;H_0^1)}\leq 2 \|u_h- \mathcal{I}u_h\|^2_{L^2(0,T;H_0^1)}+2\| \mathcal{I}u_h-u_{kh}\|^2_{L^2(0,T;H_0^1)}.
\end{align*}
The first term can be estimated using \cite[Lemma 3.2]{MR3432852} and the final term using \eqref{u(t_N)-uhN} as
\begin{align}
& \|u_h - \mathcal{I}u_{h}\|_{L^2(0,T; H^1_0)}^2 
= \sum_{i=1}^N \int_{t_{i-1}}^{t_i} \|u_h(t) - \mathcal{I}u_{h}(t)\|_{H_0^1}^2 \, dt 
\leq C(\Delta t)^2 \int_0^T \|\partial_t u_h(t)\|_{H_0^1}^2 \, dt, \nonumber \\
& \|\mathcal{I}u_h - u_{kh}\|_{L^2(0,T; H^1_0)}^2 
= \sum_{i=1}^N \int_{t_{i-1}}^{t_i} \|\mathcal{I}u_h(t) - u_{kh}(t)\|_{H_0^1}^2 \, dt \leq C \sum_{i=1}^N \Delta t \|u_h(t_i) - u_h^i\|_{H_0^1}^2. \label{step2 -fully}
\end{align}
Once again using the  triangle inequality, we have
\begin{align*}
\|u_h- u_{kh}\|_{L^\infty(0,T; L^2)}^2 
&\leq 2 \|u_h - \mathcal{I}u_h\|_{L^\infty(0,T; L^2)}^2 
+ 2 \|\mathcal{I}u_h - u_{kh}\|_{L^\infty(0,T; L^2)}^2.
\end{align*}
Similarly as in \eqref{step2 -fully}, by using \cite[Corollary 3.1]{MR3432852} and the estimate \eqref{u(t_N)-uhN}, we deduce
\begin{align*}
&\|u_h - \mathcal{I}u_h\|_{L^\infty(0,T; L^2)}^2 
\leq \sup_{1 \leq i \leq N} \left( \sup_{t_{i-1} \leq t \leq t_i} \|u_h - \mathcal{I}u_h\|_{L^2}^2 \right) \leq C(\Delta t)^2 \|u_h\|_{W^{1,\infty}(0,T;L^2)}^2,
\end{align*}and 
\begin{align*}
&\|\mathcal{I}u_h - u_{kh}\|_{L^\infty(0,T; L^2)}^2 
\leq \sup_{1 \leq i \leq N} \left( \sup_{t_{i-1} \leq t \leq t_i} \|\mathcal{I}u_h - u_{kh}\|_{L^2}^2 \right) 
\leq C \sup_{1 \leq i \leq N} \left( \|u_h(t_i) - u_h^i\|_{L^2}^2 \right),
\end{align*}
and the required result follows.
\end{proof}
\begin{Theorem}\label{fullydiscrete error}
For all values of $p$ given in \eqref{eqn-values of p}, if $u_0\in D(A^{\frac{3}{2}})$ and $f\in H^1(0,T;H^1(\Omega))$, the finite element approximation $u_{kh}$ converges to $u$ as $\Delta t, h  \to 0$. In addition, there exists a constant $C> 0$ such that the approximation $u_{kh}$ satisfies the following error estimate:
\begin{align}\label{eqn-error-cfem}
&\|u-u_{kh}\|^2_{L^\infty(0,T;L^2)}+\nu \|u-u_{kh}\|^2_{L^2(0,T;H_0^1)}\nonumber\\& \leq C((\Delta t)^2+h^2) ( \|u_0\|^2_{D(A^{\frac{3}{2}})}+\|f\|^2_{H^1(0,T;H^1)}+C'),
\end{align}
where $C'$ is given in \eqref{C'}.
\end{Theorem}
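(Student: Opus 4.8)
The plan is to combine the two estimates already established — the semidiscrete spatial error bound of Theorem \ref{semidiscrete error analysis} and the time-discretization error bound of Proposition \ref{CFEM uk-ukh error} — through a triangle inequality. Writing $u - u_{kh} = (u - u_h) + (u_h - u_{kh})$, where $u_h$ denotes the semidiscrete solution of \eqref{damped fem} and $u_{kh}$ the generic fully-discrete solution \eqref{generic fully-discrete solution}, I would split each of the two norms appearing on the left-hand side of \eqref{eqn-error-cfem}:
\begin{align*}
\|u - u_{kh}\|^2_{L^\infty(0,T;L^2)} &\leq 2\|u - u_h\|^2_{L^\infty(0,T;L^2)} + 2\|u_h - u_{kh}\|^2_{L^\infty(0,T;L^2)},\\
\nu\|u - u_{kh}\|^2_{L^2(0,T;H_0^1)} &\leq 2\nu\|u - u_h\|^2_{L^2(0,T;H_0^1)} + 2\nu\|u_h - u_{kh}\|^2_{L^2(0,T;H_0^1)}.
\end{align*}

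First I would handle the spatial (semidiscrete) contributions. Since $u_0 \in D(A^{\frac{3}{2}}) \hookrightarrow D(A)$ and $f \in H^1(0,T;H^1(\Omega)) \hookrightarrow H^1(0,T;L^2(\Omega))$, the hypotheses of Theorem \ref{semidiscrete error analysis} hold; applying the case appropriate to the range of $p$ (part (a) for $2 \leq p \leq \frac{2d}{d-2}$, part (b) otherwise) yields $\|u - u_h\|^2_{L^\infty(0,T;L^2)} + \nu\|u - u_h\|^2_{L^2(0,T;H_0^1)} \leq Ch^2 \times (\text{data})$. The crucial point is that every data-dependent term on the right-hand side of that estimate — involving $\|u\|_{L^\infty(0,T;H_0^1)}$, $\int_0^T \|\partial_t u(t)\|^2_{H_0^1}\,dt$, $\int_0^T \|u(t)\|^2_{H^2}\,dt$, $\int_0^T \|u(t)\|^p_{H^2}\,dt$, and $\int_0^T \|u(t)\|^2_{D(A^{\frac{3}{2}})}\,dt$ — is finite and controlled by $\|u_0\|_{D(A^{\frac{3}{2}})}$ and $\|f\|_{H^1(0,T;H^1)}$ through the regularity results of Theorems \ref{regularity-f- H1} and \ref{thm-more-regular}, which furnish $u \in L^\infty(0,T;D(A)) \cap L^2(0,T;D(A^{\frac{3}{2}}))$ and $\partial_t u \in L^\infty(0,T;H_0^1(\Omega))$ for the values of $p$ in \eqref{eqn-values of p}.

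Next, the temporal contributions are bounded directly by Proposition \ref{CFEM uk-ukh error}, whose hypotheses ($u_0 \in D(A^{\frac{3}{2}})$, $f \in H^1(0,T;H^1)$) coincide with those of the present theorem, giving $\|u_h - u_{kh}\|^2_{L^\infty(0,T;L^2)} + \nu\|u_h - u_{kh}\|^2_{L^2(0,T;H_0^1)} \leq c(\Delta t)^2(\|u_0\|^2_{H^2} + \|f\|^2_{H^1(0,T;H^1)} + C')$, with $C'$ as in \eqref{C'}. Adding the two sets of bounds and using $\|u_0\|_{H^2} \leq C\|u_0\|_{D(A^{\frac{3}{2}})}$ (valid since $D(A^{\frac{3}{2}}) \hookrightarrow D(A) \cong H^2(\Omega)\cap H_0^1(\Omega)$) produces the combined rate $C((\Delta t)^2 + h^2)$, and convergence as $\Delta t, h \to 0$ is immediate because the right-hand side vanishes in the limit.

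The argument is essentially a synthesis of earlier results, so the only genuine care required — and the one place the proof could break down — is confirming that every norm of $u$ entering the semidiscrete bound is actually finite under the stated data regularity. This is not automatic: terms such as $\int_0^T \|u(t)\|^p_{H^2}\,dt$ and $\|u\|^{\frac{2q_\ell - p - 2}{p - q_\ell + 1}}_{L^\infty(0,T;H^2)}$ require $u \in L^\infty(0,T;D(A))$, which is precisely where the restriction on $p$ in \eqref{eqn-values of p} and the compatibility conditions of Remark \ref{rem-comp} enter the argument; I would verify these embeddings and the boundedness of $C'$ and of the Gronwall exponential factor explicitly before concluding.
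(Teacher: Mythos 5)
Your proposal is correct and follows essentially the same route as the paper: split $u-u_{kh}=(u-u_h)+(u_h-u_{kh})$ by the triangle inequality, bound the spatial part by Theorem \ref{semidiscrete error analysis} and the temporal part by Proposition \ref{CFEM uk-ukh error}, then add. The paper's proof is exactly this (stated more tersely), and your extra care in checking that $D(A^{\frac{3}{2}})\hookrightarrow D(A)$, $H^1(0,T;H^1)\hookrightarrow H^1(0,T;L^2)$, and that the regularity results of Theorems \ref{regularity-f- H1} and \ref{thm-more-regular} keep every data-dependent term finite is a welcome, but not different, elaboration of the same argument.
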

\begin{proof}
	We can prove this estimate in the following way:
	\begin{align*}
		\|u-u_{kh}\|^2_{L^\infty(0,T;L^2)}&\leq 2\|u-u_h\|^2_{L^\infty(0,T;L^2)}+2\|u_h-u_{kh}\|^2_{L^\infty(0,T;L^2)},
	\end{align*}
	and
	\begin{align*}
		\|u-u_{kh}\|^2_{L^2(0,T;H_0^1)}&\leq 2\|u-u_h\|^2_{L^2(0,T;H_0^1)}+2\|u_h-u_{kh}\|^2_{L^2(0,T;H_0^1)}.
	\end{align*}
	Hence, combining above terms and using Theorem \ref{semidiscrete error analysis} and Proposition \ref{CFEM uk-ukh error}, we arrive at the desired estimate \eqref{eqn-error-cfem}.
\end{proof}

\begin{Remark}
	In the literature, results analogous to Proposition \ref{CFEM uk-ukh error} and Theorem \ref{fullydiscrete error} are typically derived under the additional assumption $u_{tt} \in L^2(0,T;L^2(\Omega))$ (see, for instance, \cite[Theorem 1.5, Chapter 1]{MR1479170} for the heat equation). In contrast, our analysis relies on the bound \eqref{eqn-fully-bound} established in Lemma \ref{lem-fully-bound}. Nevertheless, both approaches require the regularity conditions $u_0 \in D(A^{3/2})$ and $f \in H^1(0,T;H^1(\Omega))$ (see also Theorem \ref{thm-more-regular}), and we emphasize that neither assumption leads to any additional advantage in the final estimates.
	\end{Remark}

\subsection{Numerical studies}\label{Numerical studies for CFEM} 
The numerical simulations presented in this section are conducted to validate the theoretical findings established in Theorem \ref{fullydiscrete error}. All computations have been carried out using the open-source finite element package FEniCS, version $2019.1.0$, with Python $3.11.12$. 
The experiments were performed on a Windows $10$ machine running Ubuntu $22.04$ through the Windows Subsystem for Linux (WSL), configured with a $12$th Generation Intel Core i$5-1235$U processor ($1.30$ GHz), featuring $10$ physical cores and $12$ logical threads, and $8$GB of RAM.

In this section, our aim is to understand how pumping term affects the solution. Therefore, first we present the numerical experiment for the equation
\begin{equation}\label{without pumping}
\frac{\partial u}{\partial t} - \nu \Delta u + \alpha |u|^{p-2} u = f.
\end{equation}
Then, using the same initial condition and forcing term, we present the numerical experiment for the entire system \eqref{Damped Heat}. 
\begin{equation}\label{pumped}
\frac{\partial u}{\partial t} - \nu \Delta u + \alpha |u|^{p-2} u-\sum_{\ell=1}^M\beta_{\ell} |u|^{q_{\ell}-2}u = f.
\end{equation}

For all numerical experiments, we employ a Newton solver. To conduct the experiment corresponding to \eqref{without pumping}, we prescribe a manufactured solution and compute the corresponding forcing term using FEniCS. The resulting numerical approximation is then used to evaluate the errors (with respect to the norm defined below) and the corresponding convergence rates.
For the problem \eqref{pumped}, we use the same forcing term and initial condition as in the experiment for \eqref{without pumping}. In this case, however, deriving an exact analytical solution for error calculation is not straightforward. Therefore, we take the numerical solution obtained on a highly refined mesh (e.g., of size $256 \times 256$) as a reference solution, denoted by $u_{\mathrm{ref}}$, and compute the errors and convergence rates with respect to $u_{\mathrm{ref}}$.

The error tables below report the computed errors and the observed orders of convergence for both systems, measured with respect to the following norm:
\begin{equation}\label{norm for computation CFEM}
\triplenorm{u^k - u_h^k}^2  := \| u_h(t_N) - u_h^N \|_{L^2}^2 + \nu \sum_{k=1}^N \Delta t \| \nabla(u_h(t_k) - u_h^k) \|_{L^2}^2.
\end{equation}

In all of the experiments, we employ the backward Euler method for time discretization and finite element methods for spatial discretization. 
The time interval $[0,T]$ is discretized as $t_k = k\,\Delta t$ for $k = 0,1,\dots,M$, where $\Delta t = \frac{T}{M}$ is the time step size, and $h$ denotes the spatial discretization parameter.
We choose $\Delta t \propto h$, and the rate of convergence is computed by
\begin{equation*}
r = \frac{\log(\triplenorm{u - u_{h_1}} / \triplenorm{u - u_{h_1}})}{\log(h_1 / h_2)}.
\end{equation*}

\begin{Remark}
As demonstrated in Theorem \ref{fullydiscrete error}, the order of convergence in the $\triplenorm{\cdot}$ norm is $\mathcal{O}(\Delta t + h)$. Given that we employ the backward Euler method for time discretization (which is a first-order method), and conforming FEM with piecewise linear elements ($\mathbb{P}_1$) for spatial discretization (which is also first-order), the combined method is expected to converge with first-order accuracy. Therefore, by choosing $\Delta t \propto h$, the overall scheme exhibits first-order convergence.
\end{Remark}

\noindent \textbf{Example 1:} 
As discussed above, we first consider \eqref{without pumping} by choosing the parameters $\nu=1,\alpha=1,$ and $p=5$ on the unit square domain \(\Omega = (0,1)^2\) over the time interval \(t \in [0,1]\).
The exact solution of \eqref{without pumping} is  
\begin{equation*}
u(x, y, t) = e^{-t}  \sin(\pi x)(x - \tfrac{1}{2})  \sin(\pi y)\left(y - \frac{2}{3}\right),
\end{equation*}
for the given forcing term and initial condition
\begin{align*}
	f:= \frac{\partial u}{\partial t} - \nu \Delta u + \alpha |u|^{p-2} u \text{ and } u_0=\sin(\pi x)(x - \tfrac{1}{2})  \sin(\pi y)\left(y - \frac{2}{3}\right).
\end{align*}
To validate our numerical method, we compute the numerical solution using uniform triangular meshes of size \(N \times N\) and time step \(\Delta t = 0.01\). The numerical solution is then compared to the exact solution using the norm defined in \eqref{norm for computation CFEM}. We also compute the convergence rates.

The computed errors together with the corresponding convergence rates for \eqref{without pumping} are presented in Table \ref{tab: convergence for damped CFEM}, where the expected rates of convergence are observed. In addition, Figure \ref{fig:CFEM damped in row} provides a visual comparison between the exact and numerical solutions for mesh sizes $32 \times 32$, $64 \times 64$, and $128 \times 128$. The first row shows the exact solution, while the second row displays the corresponding numerical approximations.

\begin{table}[ht!]
	\centering
	\begin{tabular}{|c|c|c|c|}
		\hline
		\text{Grid Size} & $h$ & $\triplenorm{u-u_h^N}$ & \text{Convergence Rate} \\
		\hline
		4 $\times$ 4     & 3.54e{-01} & 7.8785e{-02} & N/A \\
		8 $\times$ 8     & 1.77e{-01} & 4.3238e{-02} & 0.87 \\
		16 $\times$ 16   & 8.84e{-02} & 2.2165e{-02} & 0.96 \\
		32 $\times$ 32   & 4.42e{-02} & 1.1153e{-02} & 0.99 \\
		64 $\times$ 64   & 2.21e{-02} & 5.5854e{-03} & 1.00 \\
		128 $\times$ 128 & 1.10e{-02} & 2.7938e{-03} & 1.00 \\
		\hline
	\end{tabular}
	\caption{Computed errors and rates of convergence for \eqref{without pumping} in CFEM.}
	\label{tab: convergence for damped CFEM}
\end{table}

\begin{figure}[ht!]
    \centering
    \begin{subfigure}{0.32\textwidth}
        \centering
        \includegraphics[width=\linewidth]{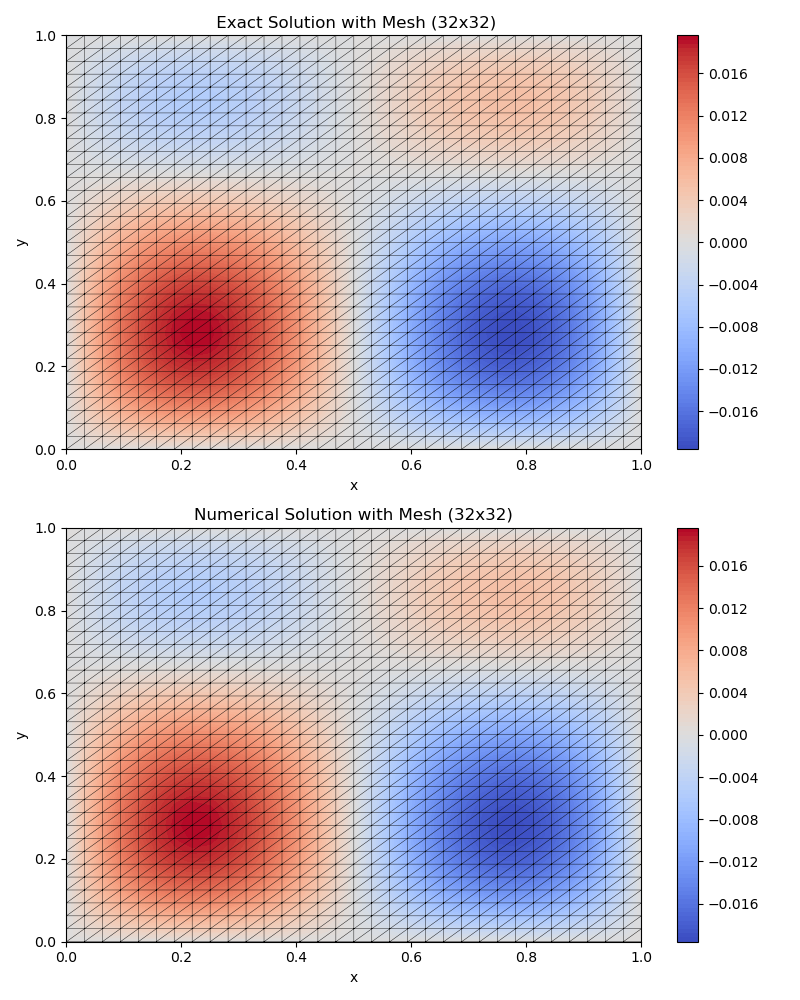}
    \end{subfigure}
    \begin{subfigure}{0.32\textwidth}
        \centering
        \includegraphics[width=\linewidth]{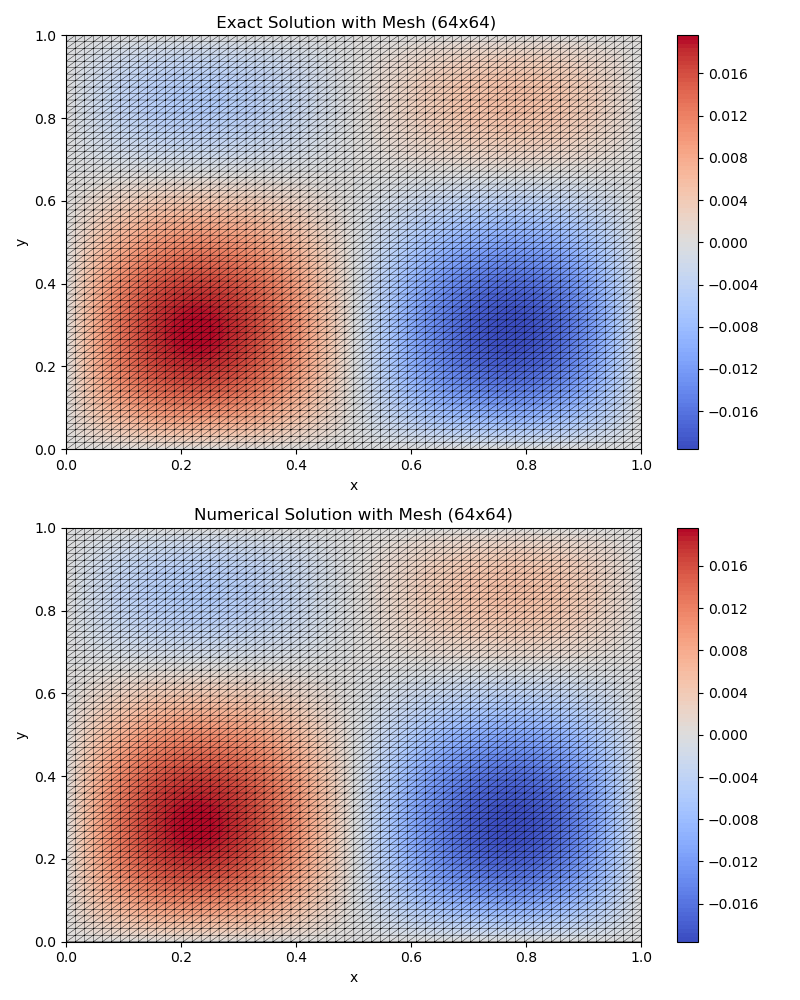}
    \end{subfigure}
    \begin{subfigure}{0.32\textwidth}
        \centering
        \includegraphics[width=\linewidth]{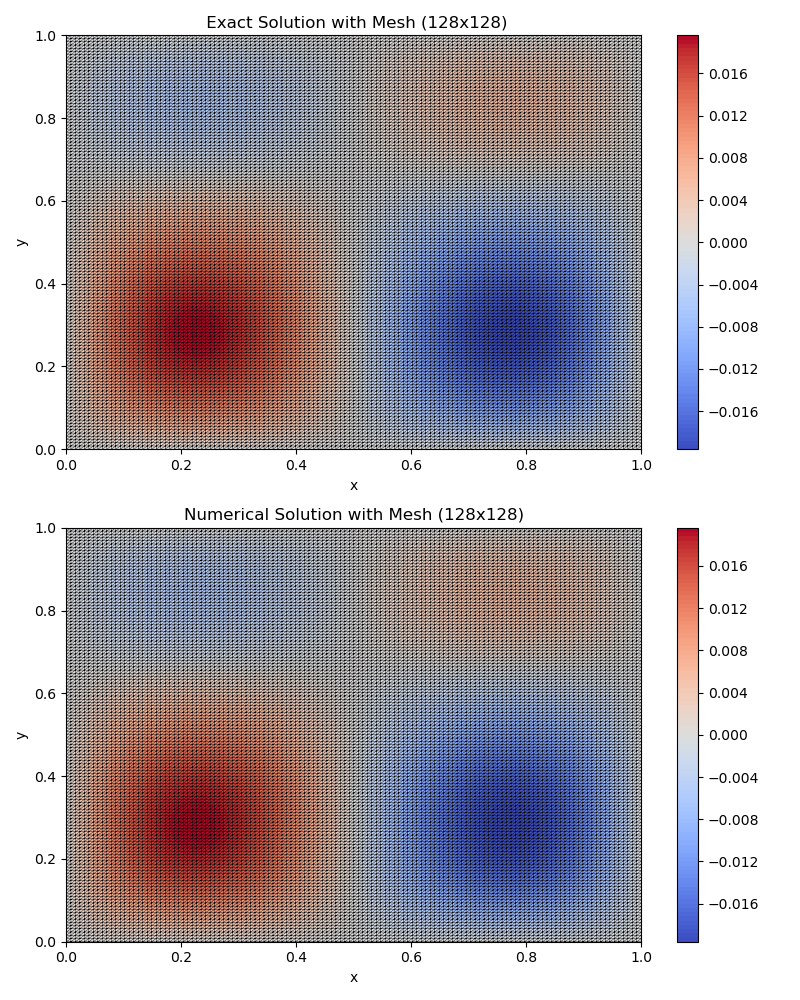}
    \end{subfigure}
    \caption{Exact solution and approximated solution of \eqref{without pumping} on different grid sizes.}
    \label{fig:CFEM damped in row}
\end{figure}
Next, we turn to the full system, namely \eqref{Damped Heat} (or equivalently \eqref{pumped}), with parameters chosen as $M=2$, $q_1=3$, $q_2=4$, $\beta_1=2$, $\beta_2=4$, while keeping all other parameters the same as before. In this setting, we compute the numerical solution on different mesh sizes. Since an analytical solution is not available, we take the solution obtained on the refined mesh $256 \times 256$ as the reference solution, for computing the errors. The corresponding errors and convergence rates for various mesh sizes are reported in Table \ref{tab:pumped-CFEM}, where the expected convergence behavior is observed. Furthermore, Figure \ref{fig:CFEM full pumped} illustrates visual comparisons between the reference solution $u_{\text{ref}}$ and the numerical solutions on meshes of size $32 \times 32$, $64 \times 64$, and $128 \times 128$.

\begin{table}[ht!]
	\centering
	\begin{tabular}{|c|c|c|c|}
		\hline
		\text{Grid Size} & $h$ & $\triplenorm{u_{\text{ref}}-u_{h}^N}$& \text{Convergence Rate} \\
		\hline
		4 $\times$ 4     & 3.54e{-01} & 7.884355e{-02} & N/A \\
		8 $\times$ 8     & 1.77e{-01} & 4.326943e{-02} & 0.87 \\
		16 $\times$ 16   & 8.84e{-02} & 2.218086e{-02} & 0.96 \\
		32 $\times$ 32   & 4.42e{-02} & 1.116111e{-02} & 0.99 \\
		64 $\times$ 64   & 2.21e{-02} & 5.589469e{-03} & 1.00 \\
		128 $\times$ 128 & 1.10e{-02} & 2.831218e{-03} & 0.98 \\
		\hline
	\end{tabular}
	\caption{Computed errors and rates of convergence for \eqref{pumped} in CFEM.}
	\label{tab:pumped-CFEM}
\end{table}

\begin{figure}[ht!]
    \centering
    \begin{subfigure}{0.32\textwidth}
        \centering
        \includegraphics[width=\linewidth]{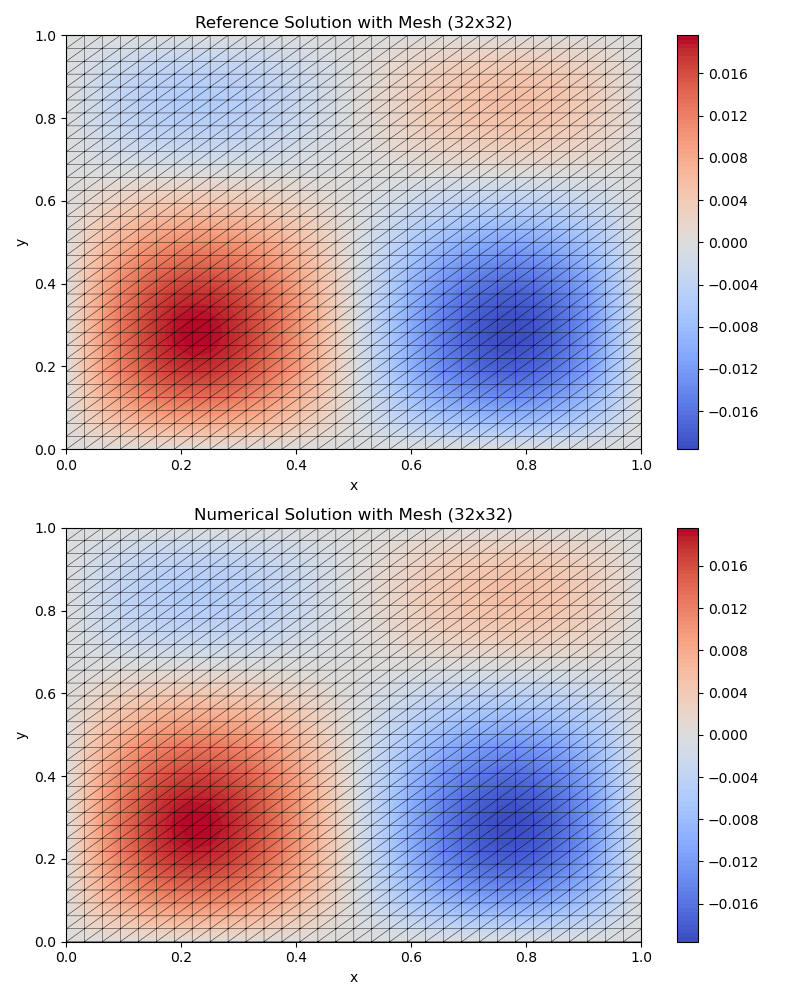}
    \end{subfigure}
    \begin{subfigure}{0.32\textwidth}
        \centering
        \includegraphics[width=\linewidth]{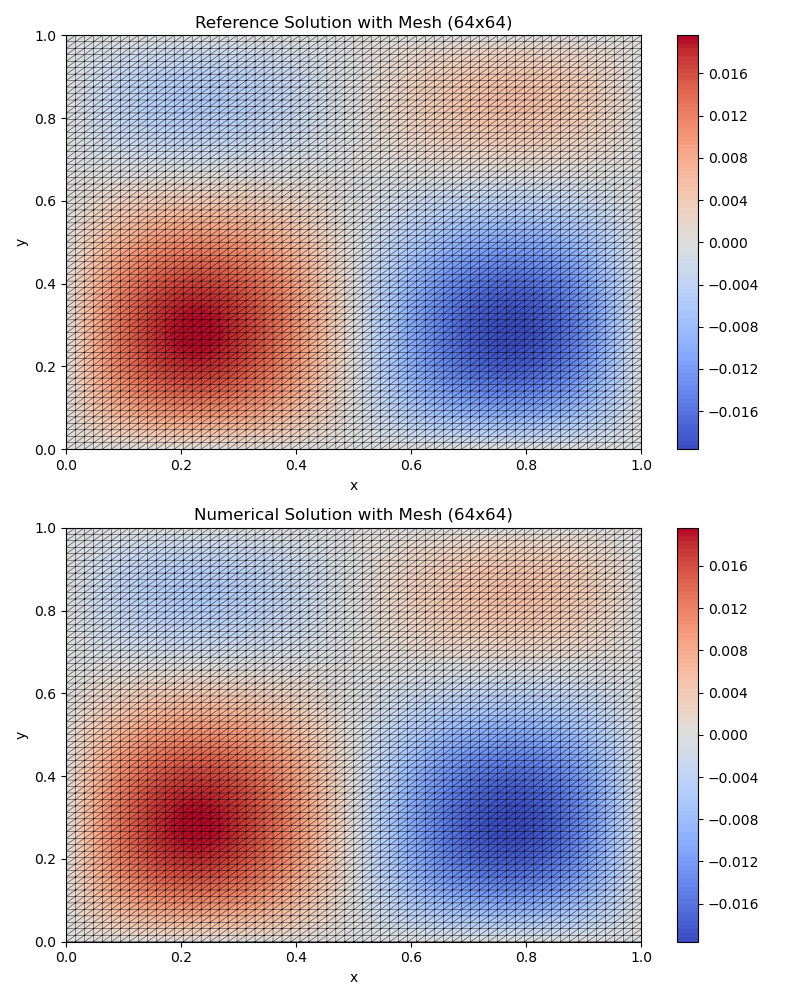}
    \end{subfigure}
    \begin{subfigure}{0.32\textwidth}
        \centering
        \includegraphics[width=\linewidth]{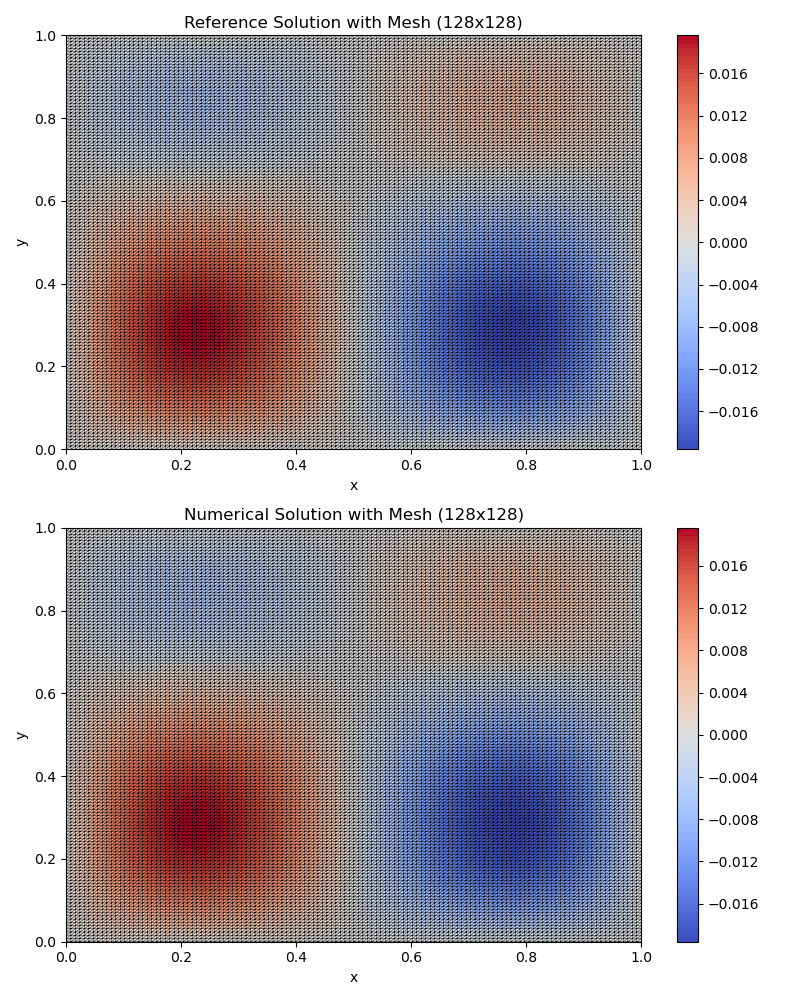}
    \end{subfigure}
    \caption{Reference and approximated solution of \eqref{pumped} on different grid sizes.}
    \label{fig:CFEM full pumped}
\end{figure}
\noindent 
\textbf{Example 2:} 
In this example, we consider \eqref{Damped Heat} with parameters $\nu = 1$, $\alpha = 1$, $M = 3$, $q_1 = 3$, $q_2 = 6$, $q_3 = 9$, $\beta_1 = 2.5$, $\beta_2 = 2$, $\beta_3 = 3$, and $p = 11$, on the unit cube domain $\Omega = (0,1)^3$ over the time interval $t \in [0,1]$. We prescribe the exact solution of \eqref{Damped Heat} as
$$
u(x,y,z,t) = e^{-t}\,\sin(\pi x)(x-0.50)\,\sin(\pi y)(y-0.56)\,\sin(\pi z)(z-0.48).
$$
To validate the proposed numerical method, we compute the solution using uniform triangular meshes of size $N \times N \times N$ with a time step of $\Delta t = 0.1$. The numerical solution is compared with the exact solution using the norm defined in \eqref{norm for computation CFEM}. The computed errors and corresponding convergence rates are presented in Table \ref{tab:pumped-3D}, which clearly demonstrate the optimal order of convergence. In addition, Figure \ref{fig:3DCFEM full} provides visual comparisons of the exact and numerical solutions for mesh sizes $15 \times 15 \times 15$, $20 \times 20 \times 20$, and $25 \times 25 \times 25$.

\begin{table}[ht!]
	\centering
	\begin{tabular}{|c|c|c|c|}
		\hline
		\text{Grid Size} & $h$ & $\triplenorm{u-u_h^N}$ & \text{Convergence Rate} \\
		\hline
		$5 \times 5 \times 5$     & 3.46e{-01} & 1.0531e{-02} & N/A \\
		$10 \times 10 \times 10$  & 1.73e{-01} & 5.9324e{-03} & 0.83 \\
		$15 \times 15 \times 15$  & 1.15e{-01} & 4.0511e{-03} & 0.94 \\
		$20 \times 20 \times 20$  & 8.66e{-02} & 3.0644e{-03} & 0.97 \\
		$25 \times 25 \times 25$  & 6.93e{-02} & 2.4614e{-03} & 0.98 \\
		\hline
	\end{tabular}
	\caption{Computed errors and rates of convergence for \eqref{pumped} in CFEM of 3D case.}
	\label{tab:pumped-3D}
\end{table}

\begin{figure}[ht!]
	\centering
	\begin{subfigure}{0.32\textwidth}
		\centering
		\includegraphics[width=\linewidth]{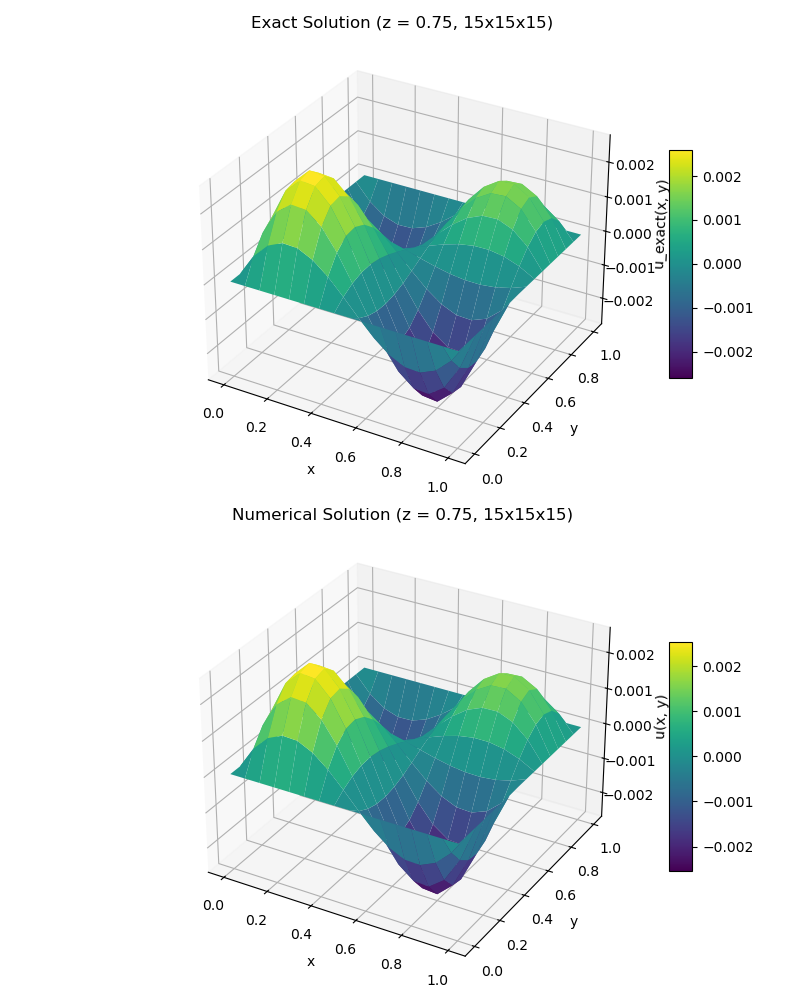}
	\end{subfigure}
	\begin{subfigure}{0.32\textwidth}
		\centering
		\includegraphics[width=\linewidth]{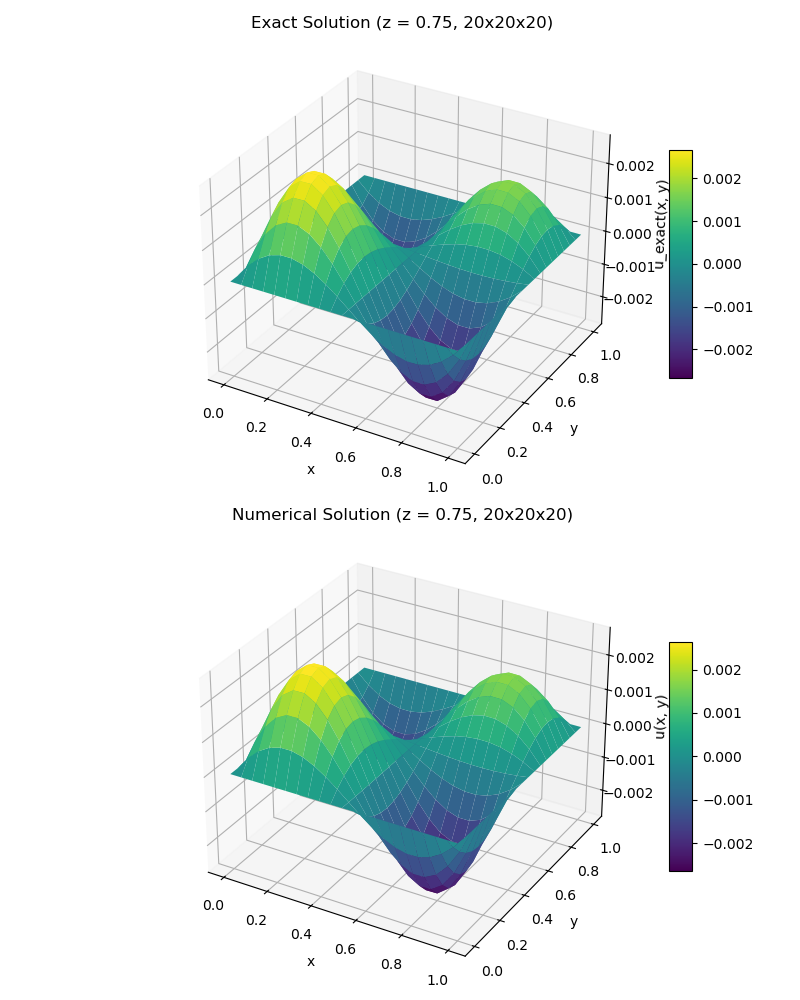}
	\end{subfigure}
	\begin{subfigure}{0.32\textwidth}
		\centering
		\includegraphics[width=\linewidth]{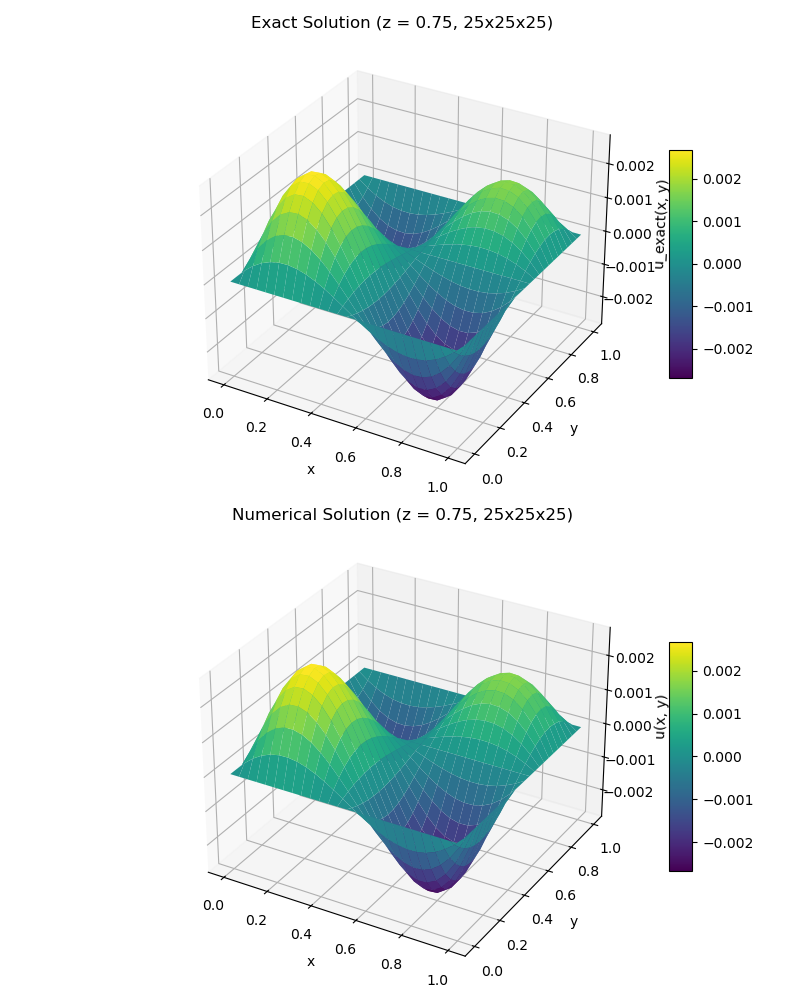}
	\end{subfigure}
	\caption{Exact and approximated solution of \eqref{pumped} on different grid sizes in 3D}
	\label{fig:3DCFEM full}
\end{figure}
\section{Non-conforming finite element method}\label{NCFEM section}

The previous section focused on the conforming finite element approximation. While this approach provides a robust framework with solid theoretical foundations, it can prove restrictive in practice, especially for problems involving low-regularity solutions, complex geometries, or irregular meshes. These challenges motivate the development of nonconforming finite element methods (NCFEM), which offer a more flexible alternative.

In contrast to conforming methods, which require the trial and test spaces to be strict subspaces of $H^1_0(\Omega)$, nonconforming methods permit finite element spaces that need not satisfy global continuity. Instead, continuity is imposed in a weaker sense, usually across element interfaces, thereby relaxing the constraints on the construction of approximation spaces and allowing the use of elements with only minimal inter-element continuity. This property is particularly advantageous when dealing with nonlinear PDEs, where the regularity conditions required by conforming elements may fail to hold.

Specifically, we employ the Crouzeix-Raviart (CR) element, a widely used nonconforming element based on piecewise linear functions that are continuous only at edge midpoints. This element combines computational simplicity with the ability to satisfy essential stability and convergence requirements. Furthermore, it is particularly well-suited for problems involving nonlinearities and discontinuities, such as those arising from damping, memory effects, or degenerate diffusion.

In the sections that follow, we develop both the semidiscrete and fully discrete formulations of \eqref{Damped Heat} using NCFEM. We also derive error estimates that highlight the effectiveness of this method and compare the results with the conforming formulation analyzed previously.

\subsection{Semidiscrete non-conforming FEM}\label{Semi NCFEM}
In this subsection, we develop the semidiscrete Galerkin formulation of \eqref{Damped Heat} on the NCFEM approach. The domain $\Omega$ is subdivided into regular-shape meshes. These meshes may consist of triangles or rectangles in two dimensions, or a tetrahedron in three dimensions, and are denoted by $\mathcal{T}_h$. The set of edges of the mesh, including the interior and boundary edges, is represented by $\mathcal{E}_h$, $\mathcal{E}_h^i$, and $\mathcal{E}_h^b$, respectively. For a given mesh $\mathcal{T}_h$, the spaces $C^0(\mathcal{T}_h)$ and $H^s(\mathcal{T}_h)$ denote the broken function spaces associated with continuous and differentiable functions, respectively.

The edge shared by two adjacent mesh cells $K_+$ and $K_-$ is denoted by $E = K_+ \cap K_- \in \mathcal{E}_h^i$. For a function $w \in C^0(\mathcal{T}_h)$, its traces on $E$ from $K_+$ and $K_-$ are denoted by $w_+$ and $w_-$, respectively. The average and jump operators on edge $E$ are given by
\begin{equation*}
\{\!\{ w \}\!\} = \frac{1}{2}(w_+ + w_-) \quad \text{and} \quad \llbracket w \rrbracket = w_+ \mathbf{n}_+ + w_- \mathbf{n}_-,
\end{equation*}
respectively. Here, $\mathbf{n}_\pm$ are the outward unit normals for the respective elements $K_\pm$. When $w \in C^1(\mathcal{T}_h)$, the jump in the normal derivative across $E$ is defined by
\begin{equation*}
\left\llbracket \frac{\partial w}{\partial n} \right\rrbracket = \nabla(w_+ - w_-) \cdot \mathbf{n}_{+}.
\end{equation*}
 On boundary edges $E \in \mathcal{E}_h \cap \partial \Omega$, we take $\llbracket w \rrbracket = w_+ \mathbf{n}_+$ and $\{\!\{ w \}\!\} = w_+$. The exterior trace of a function $u$ is denoted by $u^e$, and on the boundary we set $u^e = 0$.

Let $\mathbb{P}_1$ denote the space of polynomials of degree at most one. The Crouzeix-Raviart (CR) finite element space is defined by
\begin{equation}\label{NCFEM FEM space}
{V}^{CR}_{h} = \left\{ v \in L^2(\Omega) : \ \text{for all } K \in \mathcal{T}_h;\ v|_K \in \mathbb{P}_1 \ \text{and} \ \int_E \llbracket v \rrbracket = 0,\  E \in \mathcal{E}_h \right\}.
\end{equation}

For every mesh, the broken (piecewise) gradient operator $\nabla_h : H^1(\mathcal{T}_h) \to L^2(\Omega; \mathbb{R}^d)$ is defined by setting
\begin{equation*}
(\nabla_h v)|_K = \nabla (v|_K), \  \text{ for all } K \in \mathcal{T}_h.
\end{equation*}
Next, we introduce the Cl\'ement interpolation operator, following the definition in \cite{MR400739}:
\begin{Definition}[Modified Cl\'ement interpolation operator for the linear Crouzeix-Raviart space {\cite[Lemma 1.127]{MR2050138}}]\label{Clement interpolation}
	Let $\mathcal{T}_h$ be a shape-regular simplicial triangulation of $\Omega \subset \mathbb{R}^d$, and let $\mathcal{E}_h$ denote the set of all edges (faces if $d \geq 3$).  
	The Crouzeix--Raviart non-conforming finite element space of piecewise linear functions is denoted by $V_h^{CR}$.  
	
	The modified Cl\'ement interpolation operator is the linear mapping
	\[
	C_h : W^{s,p}(\Omega) \to V_h^{CR}, 
	\ (C_h v)|_{E} :=
	\begin{cases}
		\displaystyle \frac{1}{|E|}\int_E v \, dS, & E \in \mathcal{E}_h \text{ interior}, \\[1.2ex]
		0, & E \subset \partial \Omega \text{ (boundary edge)} ,
	\end{cases}
	\]
	so that $C_h v$ is the unique CR function whose edge averages coincide with those of $v$ on interior edges and vanish on boundary edges.  
	
	For an element $K \in \mathcal{T}_h$, let $\omega_K$ denote the patch consisting of all 
	elements sharing at least one vertex with $K$. Then $C_h$ satisfies:
	
	\begin{enumerate}
		\item \textbf{Stability.} For $1 \le p < \infty$ and $0 \le m \le 1$, there 
		exists a constant $C>0$, depending only on the mesh regularity, such that
		\[
		\|C_h v\|_{W^{m,p}(\Omega)} \le C \, \|v\|_{W^{m,p}(\Omega)} 
		\  \text{ for all } \ v \in W^{m,p}(\Omega).
		\]
		
		\item \textbf{Local approximation.} For $1 \le p < \infty$, 
		$0\leq m \leq s\leq 2$, there exists $C>0$, depending only on 
		the mesh regularity, such that for every element $K \in \mathcal{T}_h$,
		\[
		\|v - C_h v\|_{W^{m,p}(K)} \le C \, h_K^{\,s-m}\,
		\|v\|_{W^{s,p}(\omega_K)} \ 
		\text{ for all } \ v \in W^{s,p}(\omega_K).
		\]
		
		\item \textbf{Global estimate.} Summing over all elements yields
		\[
		\|v - C_h v\|_{W^{m,p}(\mathcal{T}_h)} \le C \, h^{\,s-m}\, 
		\|v\|_{W^{s,p}(\Omega)} ,
		\]
		where $\|\cdot\|_{W^{m,p}(\mathcal{T}_h)}$ denotes the broken Sobolev norm.
	\end{enumerate}
\end{Definition}

With this setup, the semidiscrete weak formulation of \eqref{Damped Heat} is described as: for each $t \in (0, T)$, find $u_h^{CR} \in V_h^{CR}$ such that
{\small
\begin{equation}\label{NCFEM semidiscrete}
	\left\lbrace
\begin{aligned}
		\displaystyle
&(\partial_tu_h^{CR}(t), \xi)+\nu a_{CR}(u_h^{CR}(t),\xi)+\alpha(|u_h^{CR}(t)|^{p-2}u_h^{CR}(t),\xi)- \sum_{\ell=1}^M \beta_{\ell} (|u_h^{CR}(t)|^{q_{\ell}-2}u_h^{CR}(t), \xi)\\&\quad=(f(t),\xi), \\
&(u_h^{CR}(0), \xi) = (C_hu_0,\xi),
\end{aligned}\right.
\end{equation} }
for all $\xi \in V_h^{CR}$ where $a_{CR}(u,v)=(\nabla_h u,\nabla_h v)$.

The discrete energy norm for the CR approximation is defined as 
\begin{equation}
\triplenorm{v}^2_{CR} := \int_0^T \|\nabla_h v(t)\|^2_{L^2(\mathcal{T}_h)}\,dt.
\end{equation}
The stability estimate for the semidiscrete system is proved in the following lemma:

\begin{Theorem}\label{ncfem existence}
	The semidiscrete system \eqref{NCFEM semidiscrete} has a unique solution $u_h^{CR} \in V_h^{CR}$.
\end{Theorem}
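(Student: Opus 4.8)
The plan is to exploit the finite dimensionality of $V_h^{CR}$ and reduce \eqref{NCFEM semidiscrete} to an initial value problem for a system of ordinary differential equations, then establish global solvability through a uniform energy estimate and uniqueness through the monotone structure of the damping term. First I would fix a basis $\{\xi_j\}_{j=1}^{\dim V_h^{CR}}$ of $V_h^{CR}$ and write $u_h^{CR}(t)=\sum_j c_j(t)\xi_j$. Testing \eqref{NCFEM semidiscrete} against each $\xi_j$ yields $M\dot{c}(t)=\Phi(t,c(t))$, where $M=[(\xi_i,\xi_j)]$ is the invertible Gram mass matrix and the right-hand side collects the stiffness, damping, pumping, and forcing contributions. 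Since the nonlinear maps $u\mapsto|u|^{p-2}u$ and $u\mapsto|u|^{q_\ell-2}u$ are locally Lipschitz from $L^p(\Omega)$ to $L^{p'}(\Omega)$ (Subsection \ref{Non-linear operator}) and $f\in L^2(0,T;H^{-1}(\Omega))$, the vector field $\Phi(t,\cdot)$ is locally Lipschitz in $c$ and measurable in $t$. Carath\'eodory's existence theorem (\cite[Theorem 1.1, Chapter 2]{MR69338}) then provides a unique absolutely continuous local solution on some $[0,t^*]$, with $u_h^{CR}(0)=C_hu_0$ recovered from the choice of initial coefficients.

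The second step is to upgrade the local solution to a global one on $[0,T]$ via an a priori bound. Choosing $\xi=u_h^{CR}(t)$ in \eqref{NCFEM semidiscrete} and using $(|u_h^{CR}|^{p-2}u_h^{CR},u_h^{CR})=\|u_h^{CR}\|_{L^p}^p$, I would absorb the destabilizing pumping terms into the $L^p$ damping by the Young-inequality argument of \eqref{u^q to u^p journey}, obtaining the differential inequality
\[
\frac{1}{2}\frac{d}{dt}\|u_h^{CR}(t)\|_{L^2}^2+\frac{\nu}{2}\|\nabla_h u_h^{CR}(t)\|_{L^2}^2+\frac{\alpha}{2}\|u_h^{CR}(t)\|_{L^p}^p\leq\frac{1}{2\nu}\|f(t)\|_{H^{-1}}^2+C^*|\Omega|,
\]
with $C^*$ as in \eqref{C^*}. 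Integrating gives a bound on $\sup_t\|u_h^{CR}(t)\|_{L^2}^2$ identical in form to \eqref{eqn-bound}. Because $V_h^{CR}$ is finite-dimensional, all norms on it are equivalent, so this controls $|c(t)|$ uniformly; the coefficient vector cannot blow up in finite time and hence $t^*=T$.

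Finally, for uniqueness I would let $u_{h,1}^{CR},u_{h,2}^{CR}$ be two solutions, set $w=u_{h,1}^{CR}-u_{h,2}^{CR}$, and test the difference of the two equations with $w$. The monotonicity estimate \eqref{eqn-mono-2} handles the damping contribution, while the pumping terms are bounded exactly as in the proof of Theorem \ref{damped uniqueness}, yielding $\frac{d}{dt}\|w(t)\|_{L^2}^2+2\nu\|\nabla_h w(t)\|_{L^2}^2\leq 4C_2\|w(t)\|_{L^2}^2$ with $C_2$ from \eqref{C_2}. Since $w(0)=0$, Gronwall's inequality forces $w\equiv 0$. The only genuinely method-specific point, and the step I expect to require the most care, is the passage from the continuous coercivity of $a(\cdot,\cdot)$ to its nonconforming analogue: one must invoke the discrete Poincar\'e--Friedrichs inequality valid on $V_h^{CR}$ to guarantee that $a_{CR}(\cdot,\cdot)$ is coercive with respect to the broken energy norm $\triplenorm{\cdot}_{CR}$, everything else being a routine transcription of the arguments already used for the continuous problem and for the conforming scheme in Lemma \ref{energy estimate of CFEM}.
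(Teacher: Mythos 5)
Your proposal is correct and follows essentially the same route as the paper: reduction to a finite-dimensional ODE system via a basis of $V_h^{CR}$, local existence from the locally Lipschitz nonlinearity, global extension through the $L^2$ energy estimate, and uniqueness by repeating the argument of Theorem \ref{damped uniqueness}. The only notable difference is that the paper cites Picard--Lindel\"of where you cite Carath\'eodory; your choice is actually the more appropriate one, since $f$ is merely square-integrable in time, and it matches what the paper itself does at the corresponding step of Theorem \ref{existenceweak}.
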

\begin{proof} 
	The existence of a solution can be shown using an argument similar to that in \cite[Theorem 3.1]{MR2495062}. Let the basis functions in $V_h^{CR}$ be denoted by $\phi_i^{CR}(\cdot)$, for $i=1,2,\ldots,m$. Then $u_h^{CR}$ can be expressed as
	\begin{align}\label{NCFFEM  basis}
		u_h^{CR}(x,t)=\sum_{i=1}^m h_i(t)\phi_i^{CR}(x), \quad (x,t)\in \Omega \times [0,T].
	\end{align}
	For $j = 1,2,\ldots,m$, we take $\xi=\phi_j^{CR}$ in \eqref{NCFEM semidiscrete} and utilize \eqref{NCFFEM  basis} to obtain, for all $t\in [0,T]$,
	\begin{align}\label{NCFEM ODE}
		\begin{cases}
			A\dfrac{dH(t)}{dt}+B(t)H(t) = F(t),\\[4pt]
			H(0)= H_0,
		\end{cases}
	\end{align}
	where
	\begin{align*}
		H(t)&=(h_1(t),h_2(t),\ldots,h_m(t))^T, \quad 
		A=(\phi_i^{CR},\phi_j^{CR})_{m\times m}, \quad
		F(t) = (f(t),\phi_j^{CR})_{m\times 1},\\
		B(t) &= (\nabla_h \phi_i^{CR},\nabla_h \phi_j^{CR})_{m \times m}
		+ \bigg(\bigg|\sum_{i=1}^m h_i(t)\phi_i^{CR}(x)\bigg|^{p-2}\phi_i^{CR}(x), \phi_j^{CR}(x)\bigg)_{m \times m}\\
		&\quad -\sum_{\ell=1}^M \beta_{\ell}\bigg(\bigg|\sum_{i=1}^m h_i(t)\phi_i^{CR}(x)\bigg|^{p-2}\phi_i^{CR}(x), \phi_j^{CR}(x)\bigg)_{m \times m}.
	\end{align*}
	
	The matrix $A$ is symmetric positive definite since $(\phi_i^{CR},\phi_j^{CR})$ defines an inner product on $V_h^{CR}$; hence $A$ is invertible. The initial condition $H(0)=H_0$ follows from the discrete initial condition $u_h^{CR}(x,0)=C_h u_0(x)$. Writing both in the basis $\{\phi_i^{CR}\}_{i=1}^m$ and using the linear independence of the basis functions, we obtain $h_i(0)=(H_0)_i$, that is, $H(0)=H_0$.
	
	Since the system \eqref{NCFEM ODE} is finite-dimensional and the nonlinear operator is locally Lipschitz in \(H\), the standard Picard-Lindel\"of theorem (\cite[Theorem 3.1, p. 12]{MR69338}) that guarantees the existence and uniqueness of a local solution. Furthermore, energy estimates for the semidiscrete system ensure that the solution cannot blow up in finite time, so the local solution extends to a global solution for all \(t\in [0,T]\). Hence, there exists a unique solution \(u_h^{CR} \in V_h^{CR}\) to \eqref{NCFEM semidiscrete}. 
	
	To prove uniqueness, let $u_{h,1}^{CR}$ and $u_{h,2}^{CR}$ be two solutions of \eqref{NCFEM semidiscrete}. By using an argument analogous to that in Theorem~\ref{damped uniqueness}, we obtain $u_{h,1}^{CR}=u_{h,2}^{CR}$, that is, the solution is unique.
\end{proof}

\begin{Theorem}[Energy estimates]\label{energy estimate for NCFEM}
	(a) If $f\in L^2(0,T;H^{-1}(\Omega))$ and $u_0 \in L^2(\Omega)$, then the solution $u_h^{CR}$ of \eqref{NCFEM semidiscrete} is unique and satisfies the following estimate:
	\begin{align*}
		\sup_{0\leq t \leq T}\|u_h^{CR}(t)\|^2_{L^2}&+\nu \int_0^T \|\nabla_h u_h^{CR}(t)\|^2_{L^2(\mathcal{T}_h)} \, ds + \alpha \int_0^T \|u_h^{CR}(t)\|^p_{L^p}\, dt \\&\leq \|u_0\|^2_{L^2}+\frac{1}{\nu} \int_0^T \|f(t)\|^2_{H^{-1}}\, dt + C^{*}T|\Omega|,
	\end{align*}
where $C^*$ is defined in \eqref{C^*}.

	\noindent
	(b) If $f\in L^2(0,T;L^2(\Omega))$ and $u_0 \in D(A)$, then 
	\begin{align*}
		\int_0^T \|\partial_t u_h^{CR}(t)\|^2_{L^2}\, dt &+ \sup_{0\leq t \leq T}\bigg(\nu\|\nabla_h u_h^{CR}(t)\|^2_{L^2(\mathcal{T}_h)}+\frac{2\alpha}{p}\|u_h^{CR}(t)\|^p_{L^p}\bigg) \\&\leq \nu \|\nabla u_0\|^2_{L^2}+\frac{2\alpha}{p}\|u_0\|^p_{D(A)} + \int_0^T \|f(t)\|^2_{L^2}\, dt.
	\end{align*}
	
\end{Theorem}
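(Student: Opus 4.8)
The plan is to mirror the conforming argument of Lemma \ref{energy estimate of CFEM} line by line, replacing the gradient by the broken gradient $\nabla_h$ and using the identity $a_{CR}(u_h^{CR},u_h^{CR}) = \|\nabla_h u_h^{CR}\|^2_{L^2(\mathcal{T}_h)}$ in place of $(\nabla u_h,\nabla u_h)$. Existence and uniqueness of $u_h^{CR}$ in part (a) are already furnished by Theorem \ref{ncfem existence}, so the real content is the derivation of the two a priori bounds. In both parts the damping nonlinearity is resolved by the identity $(|u_h^{CR}|^{p-2}u_h^{CR},u_h^{CR}) = \|u_h^{CR}\|^p_{L^p}$ recorded in Subsection \ref{Non-linear operator}, while the pumping terms are absorbed into the damping term plus a constant through \eqref{u^q to u^p journey}, exactly as in the continuous estimate \eqref{damped L2-estimate}.

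For part (a) I would test \eqref{NCFEM semidiscrete} with $\xi = u_h^{CR}(t)\in V_h^{CR}$ to obtain, for a.e. $t\in[0,T]$,
\begin{align*}
\frac{1}{2}\frac{d}{dt}\|u_h^{CR}(t)\|^2_{L^2} + \nu\|\nabla_h u_h^{CR}(t)\|^2_{L^2(\mathcal{T}_h)} + \alpha\|u_h^{CR}(t)\|^p_{L^p} = (f(t),u_h^{CR}(t)) + \sum_{\ell=1}^M\beta_\ell\|u_h^{CR}(t)\|^{q_\ell}_{L^{q_\ell}}.
\end{align*}
The pumping sum is controlled by $\tfrac{\alpha}{2}\|u_h^{CR}\|^p_{L^p} + C^*|\Omega|$ via \eqref{u^q to u^p journey}, with $C^*$ as in \eqref{C^*}. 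The delicate point here is the forcing term: since $u_h^{CR}\notin H_0^1(\Omega)$, the $H^{-1}$ duality pairing is not directly available, and I would invoke a discrete Poincaré--Friedrichs inequality for the Crouzeix--Raviart space, $\|v_h\|_{L^2}\leq C_P\|\nabla_h v_h\|_{L^2(\mathcal{T}_h)}$ for $v_h\in V_h^{CR}$, to justify $|(f,u_h^{CR})|\leq \|f\|_{H^{-1}}\|\nabla_h u_h^{CR}\|_{L^2(\mathcal{T}_h)}$ and then split off $\tfrac{\nu}{2}\|\nabla_h u_h^{CR}\|^2_{L^2(\mathcal{T}_h)} + \tfrac{1}{2\nu}\|f\|^2_{H^{-1}}$ by Young's inequality. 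Integrating from $0$ to $t$, using $\|u_h^{CR}(0)\|_{L^2} = \|C_h u_0\|_{L^2}\leq C\|u_0\|_{L^2}$ from the Clément stability of Definition \ref{Clement interpolation}, and taking the supremum over $t$ closes the estimate directly, without Gronwall, since no $\|u_h^{CR}\|_{L^2}^2$ term survives on the right.

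For part (b) I would test with $\xi = \partial_t u_h^{CR}(t)\in V_h^{CR}$ and apply the chain rule to the damping and pumping terms, giving
\begin{align*}
\|\partial_t u_h^{CR}(t)\|^2_{L^2} + \frac{\nu}{2}\frac{d}{dt}\|\nabla_h u_h^{CR}(t)\|^2_{L^2(\mathcal{T}_h)} + \frac{\alpha}{p}\frac{d}{dt}\|u_h^{CR}(t)\|^p_{L^p} = (f(t),\partial_t u_h^{CR}(t)) + \sum_{\ell=1}^M\frac{\beta_\ell}{q_\ell}\frac{d}{dt}\|u_h^{CR}(t)\|^{q_\ell}_{L^{q_\ell}}.
\end{align*}
Now $f\in L^2(0,T;L^2(\Omega))$, so the forcing term is a genuine $L^2$ inner product and Young's inequality yields $\tfrac{1}{2}\|f\|^2_{L^2} + \tfrac{1}{2}\|\partial_t u_h^{CR}\|^2_{L^2}$, absorbing half of the time-derivative norm into the left-hand side. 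Integrating in time and bounding the $q_\ell$-contribution at time $t$ by the damping term plus a constant via \eqref{u^q to u^p journey}, the remaining task is to control the discrete initial energy; here I would use the Clément stability estimates of Definition \ref{Clement interpolation} with $(m,p)=(1,2)$ and $m=0$ to write $\|\nabla_h u_h^{CR}(0)\|_{L^2} \leq C\|u_0\|_{H^1}\leq C\|u_0\|_{D(A)}$ and $\|u_h^{CR}(0)\|_{L^p} = \|C_h u_0\|_{L^p}\leq C\|u_0\|_{L^p}\leq C\|u_0\|_{D(A)}$, the last step using $D(A)\hookrightarrow L^p(\Omega)$ for the admissible range of $p$.

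I expect the main obstacle to lie entirely in the treatment of nonconformity rather than in the nonlinear terms, which are handled verbatim as in Lemma \ref{energy estimate of CFEM}. Concretely, justifying the action of the $H^{-1}$-forcing on the discontinuous Crouzeix--Raviart space in part (a) is the one genuinely new ingredient and rests on the discrete Poincaré--Friedrichs inequality, while controlling the initial data in part (b) requires the stability of the Clément interpolation simultaneously in the broken $H^1$ and the $L^p$ norms. Once these two facts are in hand, both estimates close by direct integration exactly as in the conforming case.
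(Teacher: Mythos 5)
Your overall strategy coincides with the paper's: the paper's proof of this theorem is literally a one-line reference to the conforming Lemma \ref{energy estimate of CFEM}, and your proposal carries out exactly that transposition (broken gradient in place of the gradient, identity $(|u_h^{CR}|^{p-2}u_h^{CR},u_h^{CR})=\|u_h^{CR}\|_{L^p}^p$, absorption of the pumping terms via \eqref{u^q to u^p journey}, testing with $u_h^{CR}$ for (a) and with $\partial_t u_h^{CR}$ for (b)). Your treatment of the initial data through the stability properties of the Cl\'ement operator in Definition \ref{Clement interpolation} (broken $H^1$ and $L^p$ stability, plus $D(A)\hookrightarrow L^p(\Omega)$) is sound and is the natural nonconforming replacement for the Ritz/Scott--Zhang bounds used in the conforming case.

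The one step that does not hold as written is your bound on the forcing term in part (a). The discrete Poincar\'e--Friedrichs inequality $\|v_h\|_{L^2}\leq C_P\|\nabla_h v_h\|_{L^2(\mathcal{T}_h)}$ controls the $L^2$-norm of $v_h$, so it yields $|(f,v_h)|\leq C_P\|f\|_{L^2}\|\nabla_h v_h\|_{L^2(\mathcal{T}_h)}$ \emph{when $f\in L^2(\Omega)$}; it cannot produce the coefficient $\|f\|_{H^{-1}}$ that you claim, and for genuine $f\in H^{-1}(\Omega)\setminus L^2(\Omega)$ the quantity $(f,u_h^{CR})$ is not even defined, since $u_h^{CR}\notin H_0^1(\Omega)$ and $\|f\|_{H^{-1}}$ only controls the action of $f$ on $H_0^1$-functions. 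To close this rigorously one needs either to assume $f\in L^2(0,T;L^2(\Omega))$ (at the price of $\|f\|_{L^2}$ replacing $\|f\|_{H^{-1}}$ in the estimate), or to introduce an enrichment (conforming companion) operator $E_h:V_h^{CR}\to H_0^1(\Omega)$ with $\|\nabla E_h v_h\|_{L^2}\leq C\|\nabla_h v_h\|_{L^2(\mathcal{T}_h)}$ and define the right-hand side of the scheme as $\langle f,E_h\xi\rangle$. You should be aware, however, that this defect is inherited from the paper itself: the statement of part (a) assumes $H^{-1}$-forcing while the scheme \eqref{NCFEM semidiscrete} pairs $f$ with discontinuous test functions, and the paper's own fully discrete stability lemma quietly switches to $f\in L^2(0,T;L^2(\Omega))$ and a constant $C_\Omega\|f^k\|_{L^2}$ for exactly this reason. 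So your proposal is faithful to the paper's intended argument; the gap you tried to patch is real, but your patch, as formulated, conflates $\|f\|_{H^{-1}}$ with $\|f\|_{L^2}$ and needs one of the two repairs above.
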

\begin{proof}
The proof can be derived using a similar reasoning to that employed in establishing Theorem \ref{energy estimate of CFEM}.
\end{proof}
We proceed to derive the optimal error estimates for the semidiscrete case under minimal regularity assumptions and for the values of  $p$ given in Table \ref{values of p}.
\begin{Theorem}\label{NCFEM Semiestimate error estimate}
Let $V_h^{CR}$ be the space defined in \eqref{NCFEM FEM space}. Assume that $u_0\in D(A)$ and $u(\cdot)$ is the solution of problem \eqref{Damped Heat}.
Then $u_h^{CR}$, the solution of \eqref{NCFEM semidiscrete}, satisfies the following estimate:

\noindent
(a) \textbf{For $f \in H^1(0,T,L^2(\Omega))$ and $2\leq p\leq \frac{2d}{d-2}$:}
\begin{align*}
	&\|u_h^{CR}-u\|^2_{L^\infty(0,T;L^2)}+\nu \triplenorm{u_h^{CR}-u}^2_{CR}+\frac{\alpha}{2^{p-3}}\|u_h^{CR}-u\|^p_{L^p(0,T;L^p)} \\
&\leq Ch^2\bigg\{\|u\|^2_{L^{\infty}(0,T;H_0^1)}+\|u\|^2_{L^\infty(0,T;H^2)}+\int_0^T\|\partial_t u(t)\|_{H_0^1}^2\, dt+\|u\|^p_{L^{\infty}(0,T;H^2)}\\& \quad +\left(\|u_0\|_{D(A)}^p+ \|f\|^2_{H^1(0,T;L^2)}+\|u\|^{2(p-2)}_{L^\infty(0,T;L^p)}\right)\int_0^T \|u(t)\|^2_{H^2}\,dt \bigg\},
\end{align*}
for some positive constant $C.$\\
(b)\textbf{For $f \in H^1(0,T;H^1(\Omega))$ and $\frac{2d}{d-2}<p\leq \frac{2d-6}{d-4}$:}\\
(i) if $2\leq q_{\ell}<1+\frac{p}{2}$, the following estimate holds:
\begin{align*}
&\|u_h^{CR}-u\|^2_{L^\infty(0,T;L^2)}+\nu \triplenorm{u_h^{CR}-u}^2_{CR} + \frac{\alpha}{2^{p-2}}\|u_h^{CR}-u\|^p_{L^p(0,T;L^p)} \\& \leq Ch^2\bigg( \|u\|^2_{L^\infty(0,T;H_0^1)}+\|u\|^2_{L^\infty(0,T;H^2)}+\|u\|^p_{L^\infty(0,T;H^2)}+ \int_0^T \|u(t)\|^2_{H^2}\,dt \\& \quad  +\int_0^T \|\partial_t u(t)\|^2_{H_0^1}\,dt+\int_0^T \|u(t)\|^2_{D(A^{\frac{3}{2}})}\,dt \bigg).
\end{align*}
 (ii) if $1+\frac{p}{2}\leq q_{\ell}<p$, the following estimate holds:
\begin{align*}
	&\|u_h^{CR}-u\|^2_{L^\infty(0,T;L^2)}+\nu \triplenorm{u_h^{CR}-u}_{CR}+\frac{\alpha}{2^{p-2}}\|u_h^{CR}-u\|^p_{L^p(0,T;L^p)}\\&\leq Ch^2\bigg\{\|u\|^2_{L^\infty(0,T;H_0^1)}+\|u\|^2_{L^\infty(0,T;H^2)}+\|u\|^p_{L^\infty(0,T;H^2)}+\int_0^T \|\partial_tu(t)\|^2_{H_0^1}\, dt \\& \quad + \int_0^T \|u(t)\|^2_{H^2}\,dt   +\|u\|^{\frac{p(q_{\ell}-3)+2}{p-1}}_{L^\infty(0,T;H^2)} \int_0^T \|u(t)\|^2_{D(A^{\frac{3}{2}})}\,dt \bigg\}.
\end{align*}
\end{Theorem}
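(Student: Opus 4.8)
The plan is to mirror the conforming error analysis of Theorem~\ref{semidiscrete error analysis}, but with the Ritz projection replaced by the Cl\'ement interpolation operator $C_h$ of Definition~\ref{Clement interpolation} and the bilinear form $a(\cdot,\cdot)$ replaced by the broken form $a_{CR}(\cdot,\cdot)=(\nabla_h\cdot,\nabla_h\cdot)$. First I would subtract the continuous weak formulation \eqref{eqn-strong-form} from the semidiscrete NCFEM scheme \eqref{NCFEM semidiscrete}, tested against an arbitrary $\xi\in V_h^{CR}$. Because $V_h^{CR}\not\subset H_0^1(\Omega)$, integrating $-\nu\Delta u$ by parts against a broken test function produces an extra \emph{consistency (nonconformity) term} of the form $\nu\sum_{E\in\mathcal{E}_h}\int_E\{\!\{\partial_n u\}\!\}\llbracket\xi\rrbracket\,dS$; I would isolate this term, since it is the genuinely new feature compared with the conforming case. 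Writing $u_h^{CR}-u=(u_h^{CR}-W)+(W-u)$ with $W=C_h u$ and choosing $\xi=u_h^{CR}-W\in V_h^{CR}$, I would then reproduce the same algebraic splitting as in \eqref{eqn-diff}--\eqref{SFEM err-03}: use the coercivity/monotonicity estimate \eqref{eqn-mono-2} together with the quantitative bound \eqref{eqn-nonlinear-est} (via \cite[Proposition 2.2]{AB+ZB+MTM-2025}) to move the damping term to the left-hand side as $\tfrac{\alpha}{2^{p-1}}\|u_h^{CR}-u\|_{L^p}^p$, and absorb the pumping contributions using the $C_2$ estimate from \eqref{C_2}.

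Next I would estimate the resulting right-hand side terms $I_1,\dots,I_4$ exactly as in the three sub-cases of Theorem~\ref{semidiscrete error analysis}, splitting according to $2\le p\le\frac{2d}{d-2}$ versus $\frac{2d}{d-2}<p\le\frac{2d-6}{d-4}$ and, in the latter regime, according to $2\le q_\ell<1+\frac{p}{2}$ versus $1+\frac{p}{2}\le q_\ell<p$. The approximation inputs now come from the Cl\'ement estimates in Definition~\ref{Clement interpolation}: the global bound $\|v-C_hv\|_{W^{m,p}(\mathcal{T}_h)}\le Ch^{s-m}\|v\|_{W^{s,p}(\Omega)}$ replaces \eqref{4.5.3} and \eqref{scott zhang approximation result}, giving $\|u-W\|_{L^2}^2\le Ch^2\|u\|_{H^2}^2$, $\|\partial_t(u-W)\|_{L^2}^2\le Ch^2\|\partial_t u\|_{H^2}^2$, and $\|\nabla_h(u-W)\|_{L^2(\mathcal{T}_h)}^2\le Ch^2\|u\|_{H^2}^2$. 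For the higher-integrability norms $\|u-W\|_{L^p}$, $\|u-W\|_{L^p}^p$ and $\|u-W\|_{L^{2(p-1)}}$ that appear in the $I_3,I_4$ estimates, I would combine the Cl\'ement stability/approximation properties with the interpolation-inequality argument of \eqref{to bound u-W in Lp}--\eqref{H3 regularity optimal order-q-01} to obtain the $Ch^2\|u\|_{D(A^{3/2})}^2$ and $Ch^2\|u\|_{H^2}^p$ bounds, exactly as the Scott--Zhang route does for CFEM. After integrating in time, bounding the jump-boundary products $-2(u_h^{CR}-u,u-W)$ and the initial data difference, and invoking Gronwall's inequality with the exponent controlled by $u\in L^\infty(0,T;H^2(\Omega))$ via Theorem~\ref{energy estimate for NCFEM} and Theorem~\ref{regularity-f- H1}, I would collect the stated bounds; the appearance of the extra $\|u\|_{L^\infty(0,T;H^2)}^p$ term in the NCFEM statement (absent from the pure CFEM bound) is precisely the footprint of the consistency term together with the broken-gradient approximation, which I would track carefully through the Gronwall step.

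The main obstacle will be controlling the consistency term $\nu\sum_{E}\int_E\{\!\{\partial_n u\}\!\}\llbracket u_h^{CR}-W\rrbracket\,dS$. Since $C_h$ preserves edge averages, $\int_E\llbracket C_h v\rrbracket\,dS=0$, so I can subtract the edge-mean of $\{\!\{\partial_n u\}\!\}$ for free and then apply a trace inequality on each element together with the Cl\'ement approximation estimate to bound this term by $Ch\|u\|_{H^2}\|\nabla_h(u_h^{CR}-W)\|_{L^2(\mathcal{T}_h)}$, which after Young's inequality yields a $\tfrac{\nu}{4}\|\nabla_h(u_h^{CR}-u)\|_{L^2(\mathcal{T}_h)}^2$ absorbable piece plus a $Ch^2\|u\|_{H^2}^2$ consistency remainder; I would invoke the standard orthogonality/trace estimate (as in \cite{MR343661,MR2373954}) rather than rederive it. A secondary subtlety is that, unlike the Ritz projection, the Cl\'ement operator does not satisfy a Galerkin orthogonality in $a_{CR}$, so the term $-\nu(\nabla_h(u_h^{CR}-u),\nabla_h(u-W))$ ($I_2$) does not vanish and must be Young-split against the coercive term, which is why the coefficient of $\|\nabla_h(u_h^{CR}-u)\|^2$ is tracked as $\tfrac{3\nu}{4}$ before absorption; this is exactly the device already used in part~(b) of Theorem~\ref{semidiscrete error analysis}, so no new idea is required beyond bookkeeping. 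Once the consistency term is handled, the remaining nonlinear estimates are identical in structure to the conforming proof, and the three sub-cases close with the optimal $O(h)$ rate.
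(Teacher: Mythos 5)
Your proposal follows essentially the same route as the paper's proof: split $u_h^{CR}-u$ via the Cl\'ement interpolant $W=C_hu$, derive the error equation containing the elementwise consistency term $\sum_{K\in\mathcal{T}_h}\int_{\partial K}\nu\,\partial_{n_K}u\,\xi\,dS$, test with $\xi=u_h^{CR}-W$, treat the damping/pumping terms by the same monotonicity and $C_2$ estimates as in Theorem~\ref{semidiscrete error analysis}, bound the consistency term by the standard zero-mean-jump trace estimate of \cite[Section 10.3]{MR2373954} (yielding $Ch\|u\|_{H^2}\|\nabla_h(u_h^{CR}-W)\|_{L^2(\mathcal{T}_h)}$, then Young), and close the three sub-cases with Gronwall. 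One small correction: the bound on the time-derivative term should read $\|\partial_t(u-W)\|_{L^2}^2\leq Ch^2\|\partial_t u\|_{H_0^1}^2$ (Cl\'ement estimate with $s=1$), not $Ch^2\|\partial_t u\|_{H^2}^2$, since only $\partial_t u\in L^2(0,T;H_0^1(\Omega))$ is available from Theorem~\ref{regularity-f- H1}; this is exactly the norm appearing in the stated estimate.
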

\begin{proof}
By an application of the triangle inequality, we have the following:
\begin{align*}
\triplenorm{u_h^{CR}-u}_{CR} \leq \triplenorm{u_h^{CR}-W}_{CR}+ \triplenorm{W-u}_{CR}.
\end{align*}
Note that $\triplenorm{W-u}_{CR}\leq Ch\|u\|_{H^2}$ follows from the result $$\displaystyle \|u-W\|_{H^1(K)}\leq \inf_{w \in V_h^{CR}}\|u-w\|_{H^1(K)}$$ from Definition \ref{Clement interpolation}. Also, we have $\|W-u\|^p_{L^p}\leq Ch^2\|u\|^p_{H^2}$ (see \eqref{u-w Lp in H2}). So, our aim to prove the estimate for $\triplenorm{u_h^{CR}-W}_{CR}$. Using the regularity result (see Theorem \ref{regularity-f- H1}), the following holds for a.e. $t\in[0,T]$:
\begin{align}\label{NCFEM SDE-01}
(\partial_t u(t), \xi)&+\nu a_{CR}(u(t),\xi)+\alpha(|u(t)|^{p-2}u(t),\xi)-\sum_{\ell=1}^M \beta_{\ell} (|u(t)|^{q_{\ell}-2}u(t), \xi)\nonumber \\& =(f,\xi)+\sum_{K \in \mathcal{T}_h}\int_{\partial K}\nu \frac{\partial u(s)}{\partial n_K}\xi\, dS ,
\end{align}
for all $\xi \in V_h^{CR}$. So from \eqref{NCFEM semidiscrete} and \eqref{NCFEM SDE-01}, we infer 
\begin{align*}
&(\partial_t (u_h^{CR}(t)-u(t)), \xi)+\nu a_{CR}(u_h^{CR}(t)-u(t), \xi)+\alpha (|u_h^{CR}(t)|^{p-2}u_h^{CR}(t)-|u(t)|^{p-2}u(t), \xi)\\&+ \sum_{\ell=1}^M \beta_{\ell} (|u_h^{CR}(t)|^{q_{\ell}-2}u_h^{CR}(t)-|u(t)|^{q_{\ell}-2}u(t), \xi) =-\sum_{K \in \mathcal{T}_h} \int_{\partial K} \nu \frac{\partial u(s)}{\partial n_K} \xi \, dS,
\end{align*}
for a.e. $t\in[0,T]$. Rearranging the equation by taking $u_h^{CR}-u = u_h^{CR}-W+W-u$, we obtain
\begin{align*}
&(\partial_t (u_h^{CR}(t)-W(t)), \xi)+\nu a_{CR}(u_h^{CR}(t)-W(t), \xi)+\alpha (|u_h^{CR}(t)|^{p-2}u_h^{CR}(t)-|W(t)|^{p-2}W(t), \xi)\\&=-(\partial_t (W(t)-u(t)), \xi)-\nu a_{CR}(W(t)-u(t),\xi)-\sum_{K\in \mathcal{T}_h}\int_{\partial K}\nu \frac{\partial u(s)}{\partial n_K}\xi \, dS\\&
\quad -\alpha (|W(t)|^{p-2}W(t)-|u(t)|^{p-2}u(t), \xi )+\sum_{\ell=1}^M \beta_{\ell} (|u_h^{CR}(t)|^{q_{\ell}-2}u_h^{CR}(t)-|W(t)|^{q_{\ell}-2}W(t),\xi)\\&\quad + \sum_{\ell=1}^M \beta_{\ell} (|W(t)|^{q_{\ell}-2}W(t)- |u(t)|^{q_{\ell}-2}u(t), \xi),
\end{align*}
for a.e. $t\in[0,T]$. Let us choose $\xi=u_h^{CR}-W$ in the above equation to find for a.e. $t\in[0,T]$ that 
\begin{align*}
&\frac{1}{2}\frac{d}{dt}\|u_h^{CR}(t)-W(t)\|^2_{L^2}+\nu \|\nabla_h (u_h^{CR}(t)-W(t))\|^2_{L^2(\mathcal{T}_h)}+J_1 = \sum_{i=2}^6 J_i - \sum_{K \in \mathcal{T}_h} \int_{\partial K} \nu \frac{\partial u(s)}{\partial n_K} \xi \, dS,
\end{align*}
where 
\begin{align*}
&J_1 = \alpha (|u_h^{CR}|^{p-2}u_h^{CR}-|W|^{p-2}W, u_h^{CR}-W), \, J_2 = -(\partial_t (W-u), u_h^{CR}-W), \\& J_3= -\nu a_{CR}(W-u,u_h^{CR}-W),\,
J_4=-\alpha (|W|^{p-2}W-|u|^{p-2}u,u_h^{CR}-W),\\&
J_5=\sum_{\ell=1}^M \beta_{\ell} (|u_h^{CR}|^{q_{\ell}-2}u_h^{CR}-|W|^{q_{\ell}-2}W,u_h^{CR}-W),\\
J_6&=\sum_{\ell=1}^M \beta_{\ell} (|W|^{q_{\ell}-2}W- |u|^{q_{\ell}-2}u,u_h^{CR}-W).
\end{align*}
The terms 
$J_1$
and 
$J_5$ can be estimated using calculations analogous to those in the proof of Theorem \ref{semidiscrete error analysis}, yielding
\begin{align*}
	|J_1|&\geq \frac{\alpha}{2}\||u_h^{CR}|^{\frac{p-2}{2}}(u_h^{CR}-W)\|^2_{L^2}+\frac{\alpha}{2}\||W|^{\frac{p-2}{2}}(u_h^{CR}-W)\|^2_{L^2},\\
	|J_5|&\leq \frac{\alpha}{4}\||u_h^{CR}|^{\frac{p-2}{2}}(u_h^{CR}-W)\|^2_{L^2}+\frac{\alpha}{4}\||W|^{\frac{p-2}{2}}(u_h^{CR}-W)\|^2_{L^2}+2C_2\|u_h^{CR}-W\|^2_{L^2}.
\end{align*}
Also, we use the fact that  (see \eqref{eqn-nonlinear-est} )
\begin{align*}
	\frac{\alpha}{4}\||u_h^{CR}|^{\frac{p-2}{2}}(u_h^{CR}-W)\|^2_{L^2}+\frac{\alpha}{4}\||W|^{\frac{p-2}{2}}(u_h^{CR}-W)\|^2\geq \frac{1}{2^{p-1}}\|u_h^{CR}-W\|^p_{L^p}.
\end{align*}
Combining these estimates, we get
\begin{align}\label{NCFEM error main step}
&	\frac{1}{2}\frac{d}{dt}\|u_h^{CR}(t)-W(t)\|^2_{L^2}+\nu \|\nabla_h (u_h^{CR}(t)-W(t))\|^2_{L^2(\mathcal{T}_h)}+\frac{\alpha}{2^{p-1}}\|u_h^{CR}(t)-W(t)\|^p_{L^p}\nonumber \\&\qquad\leq J_2+J_3+J_4+J_6-\sum_{K \in \mathcal{T}_h} \int_{\partial K} \nu \frac{\partial u(s)}{\partial n_K} \xi \, dS+2C_2\|u_h^{CR}(t)-W(t)\|^2_{L^2}, 
\end{align}
for a.e. $t\in[0,T]$. 

\vskip 0.1 cm
\noindent 
(a)\textbf{ For $2\leq p \leq \frac{2d}{d-2}$:} Estimating 
$J_2,J_3,J_4$ and $J_6$
by the same calculations as in the proof of Theorem \ref{semidiscrete error analysis}, we obtain
\begin{align*}
	|J_2|&\leq \frac{1}{2}\|\partial_t(W-u)\|^2_{L^2}+\frac{1}{2}\|u_h^{CR}-W\|^2_{L^2},\\
	|J_3|&\leq 2\nu \|\nabla_h(W-u)\|^2_{L^2(\mathcal{T}_h)}+\frac{\nu}{8}\|\nabla_h(u_h^{CR}-W)\|^2_{L^2({\mathcal{T}_h})},\\
	|J_4|&\leq \frac{\nu}{4}\|\nabla_h(u_h^{CR}-W)\|^2_{L^2(\mathcal{T}_h)}+\frac{\alpha^2(p-1)^2 2^{2p-5}}{\nu}(\|W\|^{2(p-2)}_{L^p}+\|u\|^{2(p-2)}_{L^p})\|W-u\|^2_{L^p},\\
	|J_6|&\leq \frac{\nu}{4}\|\nabla_h(u_h^{CR}-W)\|^2_{L^2(\mathcal{T}_h)}\nonumber\\&\quad+\sum_{\ell=1}^M \frac{M|\beta_{\ell}|^2(q_{\ell}-1)^2 2^{2q_{\ell}-5}}{\nu}(\|W\|^{2(q_{\ell}-2)}_{L^{q_{\ell}}}+\|u\|^{2(q_{\ell}-2)}_{L^{q_{\ell}}})\|W-u\|^2_{L^{q_{\ell}}}.
\end{align*}
Consequently, we may apply the estimate from \cite[Section 10.3]{MR2373954}, which gives
\begin{align*}
\left| \sum_{K \in \mathcal{T}_h} \int_{\partial K} \nu \, \frac{\partial u}{\partial n_K} \, \xi dS \right|
&\leq C \left( \sum_{K \in \mathcal{T}_h} \nu h_K^2 \|u\|_{H^2(K)}^2 \right)^{1/2} \|\nabla_h(u_h^{CR}(t)-W(t))\|_{L^2(\mathcal{T}_h)},\\
\left| \sum_{K \in \mathcal{T}_h} \int_{\partial K} \nu \, \frac{\partial u}{\partial n_K} \, \xi dS\right| & \leq 2C\bigg(\sum_{K \in \mathcal{T}_h}\nu h_K^2 \|u\|_{H^2(K)}^2\bigg)+ \frac{\nu}{8}\|\nabla_h(u_h^{CR}(t)-W(t))\|^2_{L^2(\mathcal{T}_h)}.
\end{align*}
By combining all these estimates, applying arguments similar to those in Theorem \ref{semidiscrete error analysis}, and using Definition \ref{Clement interpolation}, we obtain the desired error estimate. Note that, as in Theorem \ref{semidiscrete error analysis}, it is necessary to consider separate cases and perform analogous calculations for all terms. For the sake of brevity and to avoid repetition, we omit the detailed steps here.
\end{proof}
\subsection{Fully-discrete non-conforming FEM}\label{fully NCFEM}
In this section, we introduce a fully-discrete non-conforming finite element scheme. We partition the time interval $[0,T]$ into $0 = t_0 < t_1 < t_2 < \dots < t_N = T$ with uniform time stepping $\Delta t$, that is, $t_k = k \Delta t$. We replace the time derivative with a backward Euler discretization. We denote $C$ as a generic constant which is independent of $h$ and $\Delta t$. Also note that ${\partial_t} u^{CR}_{kh} \approx \frac{u_h^k - u_h^{k-1}}{\Delta t}$ (where for simplicity we are taking $(u_h^{CR})^{k}=u_h^k$). The fully-discrete non-conforming Galerkin approximation of the problem \eqref{damped weak} is defined in the following way: 

Find $u_h^k \in V_h^{CR}$, for $k = 1,2,\ldots,N$, such that for $\xi \in V_h^{CR}$:
\begin{equation}\label{NCFEM fully discrete formulation}
\begin{aligned}
	\begin{cases}
		\displaystyle
	 \left( \frac{u_h^k - u_h^{k-1}}{\Delta t}, \xi \right) 
    + \nu (\nabla_h u_h^k, \nabla_h \xi)
    + \alpha (|u_h^k|^{p-2} u_h^k, \xi)-\sum_{\ell=1}^M \beta_{\ell} (|u_h^k|^{q_{\ell}-2}u_h^k,\xi) 
    = (f^k, \xi), \\
    (u_h^0(x_i), \xi) =(C_hu_0(x_i),\xi),
\end{cases}
\end{aligned}
\end{equation}
where $f^k = (\Delta t)^{-1} \int_{t_{k-1}}^{t_k} f(s)\, ds$ for $f \in L^2(0,T;L^2(\Omega))$, and $u_h^{0}$ is the approximation of $u_0$ in $V_h^{CR}$.
We then define a generic non-conforming finite element approximate solution $u_{kh}^{CR}$ by 
\begin{equation}
u^{CR}_{kh}|_{[t_{k-1},t_k]} = u_h^{k-1} + \bigg(\frac{t - t_{k-1}}{\Delta t}\bigg)(u_h^k - u_h^{k-1}), \quad 1 \leq k \leq N, \ \text{ for }\  t \in [t_{k-1}, t_k].
\end{equation}
 The discrete bilinear form is given by $a_{CR}(u,v) = (\nabla_h u, \nabla_h v)$, and the discrete energy norm is defined as
\[
\|u_h^k\|_{CR}^2 := \Delta t \sum_{k=1}^N \|\nabla_h u_h^k\|_{L^2(\mathcal{T}_h)}^2.
\]
\begin{Lemma}\label{NCFEM f^k}
Let us define the set $\{f^k\}_{k=1}^N$ by $f^k = (\Delta t)^{-1} \int_{t_{k-1}}^{t_k} f(s)\, ds$. If the external forcing $f \in L^2(0,T;L^2(\Omega))$, then we have
\begin{align*}
\Delta t \sum_{k=1}^N \|f^k\|_{L^2}^2 \leq C \|f\|_{L^2(0,T;L^2)}^2\ 
\text{ and }\  \sum_{k=1}^N \int_{t_{k-1}}^{t_k} \|f^k - f(t)\|_{L^2}^2 dt \to 0, \quad \Delta t \to 0.
\end{align*}
 Further, if $f \in H^\epsilon(0,T;L^2(\Omega))$ for some $\epsilon \in [0,1]$, then
\begin{align*}
\sum_{k=1}^N \int_{t_{k-1}}^{t_k} \|f^k - f(t)\|_{L^2}^2 dt \leq C (\Delta t)^{2\epsilon} \|f\|_{H^\epsilon(0,T;L^2(\Omega))}^2.
\end{align*}
\end{Lemma}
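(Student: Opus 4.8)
The plan is to mirror the argument already invoked for Lemma~\ref{for fully fk}, simply replacing the $H^{-1}(\Omega)$-norm by the $L^2(\Omega)$-norm; indeed, all three assertions concern only the temporal averaging operator $\Pi_{\Delta t}$ defined by $(\Pi_{\Delta t}f)|_{(t_{k-1},t_k)}:=f^k$, and the spatial norm enters purely as an abstract Banach-space norm, so the reference \cite[Lemma 3.2]{MR2238170} applies verbatim. I nonetheless record the three steps.

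First I would prove the stability bound. By Minkowski's integral inequality followed by Cauchy--Schwarz in time,
\begin{align*}
\|f^k\|_{L^2}^2 = (\Delta t)^{-2}\bigg\|\int_{t_{k-1}}^{t_k} f(s)\,ds\bigg\|_{L^2}^2 \leq (\Delta t)^{-1}\int_{t_{k-1}}^{t_k}\|f(s)\|_{L^2}^2\,ds,
\end{align*}
whence $\Delta t\,\|f^k\|_{L^2}^2 \leq \int_{t_{k-1}}^{t_k}\|f(s)\|_{L^2}^2\,ds$, and summing over $k=1,\dots,N$ gives $\Delta t\sum_{k=1}^N\|f^k\|_{L^2}^2 \leq \|f\|_{L^2(0,T;L^2)}^2$. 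In particular, $\Pi_{\Delta t}$ is a contraction on $L^2(0,T;L^2(\Omega))$, which is the first claim.

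For the qualitative convergence $\sum_k\int_{t_{k-1}}^{t_k}\|f^k-f(t)\|_{L^2}^2\,dt\to 0$, I would use the standard $\varepsilon/3$ density argument: given $f\in L^2(0,T;L^2(\Omega))$, approximate it by $g\in C([0,T];L^2(\Omega))$ with $\|f-g\|_{L^2(0,T;L^2)}$ small; for the smooth piece, the uniform continuity of $g$ on $[0,T]$ forces $\|\Pi_{\Delta t}g-g\|_{L^2(0,T;L^2)}\to 0$, while the two remainder terms $\|\Pi_{\Delta t}(f-g)\|_{L^2(0,T;L^2)}$ and $\|f-g\|_{L^2(0,T;L^2)}$ are controlled by the contraction property just established. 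For the fractional rate, I would start from $f^k-f(t)=(\Delta t)^{-1}\int_{t_{k-1}}^{t_k}(f(s)-f(t))\,ds$, apply Jensen to obtain $\int_{t_{k-1}}^{t_k}\|f^k-f(t)\|_{L^2}^2\,dt \leq (\Delta t)^{-1}\iint_{(t_{k-1},t_k)^2}\|f(s)-f(t)\|_{L^2}^2\,ds\,dt$, and then exploit $|s-t|\leq\Delta t$ on each diagonal block to insert the Gagliardo weight:
\begin{align*}
\sum_{k=1}^N\int_{t_{k-1}}^{t_k}\|f^k-f(t)\|_{L^2}^2\,dt \leq (\Delta t)^{2\varepsilon}\sum_{k=1}^N\iint_{(t_{k-1},t_k)^2}\frac{\|f(s)-f(t)\|_{L^2}^2}{|s-t|^{1+2\varepsilon}}\,ds\,dt \leq (\Delta t)^{2\varepsilon}[f]_{H^\varepsilon(0,T;L^2)}^2,
\end{align*}
since the diagonal blocks $(t_{k-1},t_k)^2$ are disjoint subsets of $(0,T)^2$ and the Slobodeckij seminorm dominates their sum; the endpoint $\varepsilon=1$ is handled identically using $f(s)-f(t)=\int_t^s\partial_\tau f(\tau)\,d\tau$ together with $\partial_t f\in L^2$.

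The argument is entirely routine and presents no genuine obstacle; the only points requiring mild care are the density step for the second claim (ensuring that $C([0,T];L^2(\Omega))$ is dense and that $\Pi_{\Delta t}$ preserves the contraction constant uniformly in $\Delta t$) and, for the third claim, the restriction $\varepsilon\in(0,1)$ under which the Gagliardo--Slobodeckij characterization of $H^\varepsilon(0,T;L^2(\Omega))$ is valid, with the endpoints $\varepsilon=0$ (covered by the first claim) and $\varepsilon=1$ (covered by the fundamental theorem of calculus) treated separately.
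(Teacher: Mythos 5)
Your proof is correct. The paper itself does not actually prove this lemma: its ``proof'' is a one-line deferral to the same reference you invoke at the outset (\cite[Lemma 3.2]{MR2238170}, exactly as for the $H^{-1}$-valued version in Lemma \ref{for fully fk}), so your observation that the spatial norm enters only as an abstract Banach-space norm is precisely the paper's implicit reasoning. What you add is a self-contained argument, and all three steps hold up: the contraction bound via Minkowski plus Cauchy--Schwarz is exact (with $C=1$); the $\varepsilon/3$ density argument is the standard proof of the qualitative limit, and the contraction property does hold uniformly in $\Delta t$ as required; and the fractional rate follows correctly from Jensen together with the pointwise inequality $(\Delta t)^{-1}\leq(\Delta t)^{2\varepsilon}|s-t|^{-1-2\varepsilon}$ valid on each diagonal block $(t_{k-1},t_k)^2$, plus the disjointness of these blocks inside $(0,T)^2$. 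The only caveats are the ones you already flag yourself: for $\varepsilon\in(0,1)$ one needs the equivalence of $H^\varepsilon(0,T;L^2(\Omega))$ with the Gagliardo--Slobodeckij space, which costs only a multiplicative constant and is harmless since the statement allows a generic $C$; the endpoint $\varepsilon=1$ indeed gives the $(\Delta t)^2$ rate via $f(s)-f(t)=\int_t^s\partial_\tau f(\tau)\,d\tau$, and $\varepsilon=0$ is subsumed by the contraction. For comparison, results of this type in the literature (and presumably in the cited lemma) are typically obtained by interpolating the averaging-error operator $f\mapsto f-\Pi_{\Delta t}f$ between the two endpoint bounds $L^2(0,T;L^2)\to L^2(0,T;L^2)$ (norm at most $2$) and $H^1(0,T;L^2)\to L^2(0,T;L^2)$ (norm at most $C\Delta t$); your direct seminorm computation is more elementary, avoids operator-interpolation machinery, and produces explicit constants, at the price of having to treat the two endpoints separately.
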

\begin{proof}
The proof of this lemma follows the same arguments as \cite[Lemma 3.2]{MR2238170} and is therefore omitted.
\end{proof}
\begin{Lemma}[stability]
Assume $f \in L^2(0,T;L^{2}(\Omega))$ and $u_0\in L^2(\Omega)$. Let $\{u_h^k\}_{k=1}^N \subset V_h^{CR}$ be defined by \eqref{NCFEM FEM space}. Then 
\begin{align}\label{NCFEM stability fully ineq}
\triplenorm{u_h^k}_{CR}^2 \leq \frac{1}{\nu}\|f\|^2_{L^2(0,T;L^{2})}+\|u_0\|^2_{L^2}+2C^*|\Omega|,
\end{align}
where $C^*$ is defined in \eqref{C^*}.
\end{Lemma}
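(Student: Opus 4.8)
The plan is to follow the template of Lemma~\ref{Stability fully}, replacing the gradient by the broken gradient $\nabla_h$ and the $H^{-1}$-pairing by an $L^2$-pairing. First I would test the fully-discrete scheme \eqref{NCFEM fully discrete formulation} with $\xi = u_h^k \in V_h^{CR}$. The backward-difference term is rewritten with the elementary identity $\big(\tfrac{u_h^k-u_h^{k-1}}{\Delta t}, u_h^k\big) = \tfrac{1}{2\Delta t}\big(\|u_h^k\|_{L^2}^2 - \|u_h^{k-1}\|_{L^2}^2 + \|u_h^k-u_h^{k-1}\|_{L^2}^2\big)$, the diffusion term yields $\nu\|\nabla_h u_h^k\|_{L^2(\mathcal{T}_h)}^2$, and the damping term produces the nonnegative contribution $\alpha\|u_h^k\|_{L^p}^p$.

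Next, the pumping term $\sum_{\ell} \beta_{\ell}\|u_h^k\|_{L^{q_{\ell}}}^{q_{\ell}}$ is absorbed exactly as in the continuous energy estimate: inequality \eqref{u^q to u^p journey} gives $\sum_{\ell} \beta_{\ell}\|u_h^k\|_{L^{q_{\ell}}}^{q_{\ell}} \le \tfrac{\alpha}{2}\|u_h^k\|_{L^p}^p + C^*|\Omega|$ with $C^*$ as in \eqref{C^*}. After discarding the positive terms $\tfrac{1}{2\Delta t}\|u_h^k-u_h^{k-1}\|_{L^2}^2$ and the remaining fraction of $\alpha\|u_h^k\|_{L^p}^p$, I would sum over $k=1,\dots,N$; the $L^2$-terms telescope, leaving $\|u_h^N\|_{L^2}^2$ (nonnegative, hence discarded) and $-\|u_h^0\|_{L^2}^2$. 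Multiplying through by $\Delta t$ and invoking Lemma~\ref{NCFEM f^k} to bound $\Delta t\sum_{k=1}^N\|f^k\|_{L^2}^2 \le C\|f\|_{L^2(0,T;L^2)}^2$ then yields the claimed estimate \eqref{NCFEM stability fully ineq}.

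The one structurally new point, and the main obstacle, is the treatment of the forcing term $(f^k, u_h^k)$. In the conforming case one uses the $H^{-1}$--$H^1_0$ duality to write $(f^k,u_h^k)\le\|f^k\|_{H^{-1}}\|\nabla u_h^k\|_{L^2}$ and absorb into the diffusion term; this is unavailable here because $u_h^k\notin H^1_0(\Omega)$ and the Crouzeix--Raviart seminorm is only a broken quantity. I would instead rely on the discrete Poincar\'e--Friedrichs inequality for the Crouzeix--Raviart space, namely $\|v_h\|_{L^2(\Omega)}\le C_P\|\nabla_h v_h\|_{L^2(\mathcal{T}_h)}$ for all $v_h\in V_h^{CR}$, which holds with a mesh-independent constant $C_P$ by virtue of the vanishing edge-averages on $\partial\Omega$ built into the definition \eqref{NCFEM FEM space}. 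This gives $(f^k,u_h^k)\le \tfrac{C_P^2}{2\nu}\|f^k\|_{L^2}^2 + \tfrac{\nu}{2}\|\nabla_h u_h^k\|_{L^2(\mathcal{T}_h)}^2$, so that the forcing is again absorbed into the diffusion term.

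The remaining manipulations are then purely routine and run in exact parallel with the conforming argument of Lemma~\ref{Stability fully}; the only care needed is to verify that the constant from the discrete Poincar\'e inequality is independent of $h$, so that the resulting bound is uniform in both $h$ and $\Delta t$, and to absorb it into the generic constants appearing in \eqref{NCFEM stability fully ineq}.
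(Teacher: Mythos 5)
Your proposal is correct and follows essentially the same route as the paper: the paper also tests with $\xi = u_h^k$, absorbs the pumping term via \eqref{u^q to u^p journey}, and handles the forcing term exactly as you do, invoking a (discrete) Poincar\'e inequality together with Young's inequality to absorb $(f^k,u_h^k)$ into the broken-gradient term before telescoping and applying Lemma~\ref{NCFEM f^k}. The only cosmetic difference is that the paper writes the Poincar\'e constant as $C_\Omega$ and then quietly drops it from the final stated bound, the same bookkeeping looseness you flag at the end of your argument.
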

\begin{proof}
Taking $\xi=u_h^k$ in \eqref{NCFEM semidiscrete}, we get
\begin{align*}
\frac{1}{2\Delta t}\|u_h^k\|^2_{L^2}-\frac{1}{2\Delta t}\|u_h^{k-1}\|^2_{L^2}&+\frac{1}{2\Delta t}\|u_h^k-u_h^{k-1}\|^2_{L^2} + \nu \|\nabla_h u_h^k\|^2_{L^2(\mathcal{T}_h)}+ \alpha\|u_h^k\|^p_{L^p}\\& \, \leq \langle f^k, u_h^k \rangle + \sum_{\ell=1}^M \beta_{\ell} \|u_h^k\|^{q_{\ell}}_{L^{q_{\ell}}}.
\end{align*}
Applying \eqref{u^q to u^p journey} along with Cauchy’s, Poincar\'e’s, and Young’s inequalities, we obtain
\begin{align*}
\frac{1}{2\Delta t}\|u_h^k\|^2_{L^2}-\frac{1}{2\Delta t}\|u_h^{k-1}\|^2_{L^2}+\frac{1}{2\Delta t}\|u_h^k-u_h^{k-1}\|^2_{L^2} + \nu \|\nabla_h u_h^k\|^2_{L^2(\mathcal{T}_h)}+ \frac{\alpha}{2}\|u_h^k\|^p_{L^p}&\\ \, \leq \frac{C_{\Omega}}{2\nu}\|f^k\|^2_{L^{2}}+\frac{\nu}{2}\|\nabla_h u_h^k\|^2_{L^2(\mathcal{T}_h)}+C^*|\Omega|.
\end{align*}
Using the positivity of the third and fifth terms of the left-hand side, we see that
\begin{align*}
\frac{1}{\Delta t}\|u_h^k\|^2_{L^2}-\frac{1}{\Delta t}\|u_h^{k-1}\|^2_{L^2}+ \nu \|\nabla_h u_h^k\|^2_{L^2(\mathcal{T}_h)} \leq \frac{C_{\Omega}}{\nu}\|f^k\|_{L^{2}}+2C^*|\Omega|.
\end{align*}
Summing over $k$ for $k=1,2,3,\ldots, N$, we have
\begin{align*}
\frac{1}{\Delta t} \|u_h^N\|^2_{L^2}-\frac{1}{\Delta t}\|u_h^0\|^2_{L^2}+ \nu \sum_{k=1}^N \|\nabla_h u_h^k\|^2_{L^2(\mathcal{T}_h)} \leq \frac{C_{\Omega}}{\nu \Delta t} \Delta t \sum_{k=1}^N \|f^k\|^2_{L^{2}}+2C^*|\Omega|.
\end{align*}
Lemma \ref{NCFEM f^k} implies that
\begin{align*}
\frac{1}{\Delta t} \|u_h^N\|^2_{L^2}+ \nu \sum_{k=1}^N \|\nabla_h u_h^k\|^2_{L^2(\mathcal{T}_h)} \leq \frac{C_{\Omega}}{\nu \Delta t} \|f\|^2_{L^2(0,T;L^{2})}+\frac{1}{\Delta t}\|u_h^0\|^2_{L^2}+2C^*|\Omega|.
\end{align*}
Since,$ \|u_h^0\|_{L^2}\leq \|u_0\|_{L^2}$, we further conclude
\begin{align*}
\nu \sum_{k=1}^N \Delta t \|\nabla_h u_h^k\|^2_{L^2(\mathcal{T}_h)} \leq \frac{C_\Omega}{\nu}\|f\|^2_{L^2(0,T;L^{2})}+\|u^0\|^2_{L^2}+2C^*|\Omega|,
\end{align*}
so that the proof is completed.
\end{proof}
\begin{Proposition}
For all values of $p$ given in \eqref{eqn-values of p}, if $u_0\in D(A^{\frac{3}{2}})$, $f \in H^1(0,T;H^1(\Omega))$, and $u^{CR}_h$ be the semidiscrete solution of \eqref{NCFEM semidiscrete} and $u^{CR}_{kh}$ be the generic fully-discrete solution \eqref{NCFEM fully discrete formulation}. Then
\begin{align*}
	&\|u_h^{CR}-u_{kh}^{CR}\|^2_{L^\infty(0,T;L^2)}+\nu\int_0^T \triplenorm{u_h^{CR}(t)-u_{kh}^{CR}(t)}^2_{CR}\, dt\\& \quad \leq c(\Delta t)^2(\|u_0\|^2_{D(A^{\frac{3}{2}})}+\|f\|^2_{H^1(0,T;H^1)}+C'),
\end{align*}
where $C'$ is the same as \eqref{C'} by replacing $u_h$ by $u_h^{CR}$.
\end{Proposition}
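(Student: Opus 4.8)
The plan is to mirror the argument of Proposition~\ref{CFEM uk-ukh error}, exploiting the decisive simplification that both $u_h^{CR}$ and $u_{kh}^{CR}$ lie in the \emph{same} discrete space $V_h^{CR}$ and are governed by the \emph{same} discrete bilinear form $a_{CR}(\cdot,\cdot)=(\nabla_h\cdot,\nabla_h\cdot)$. Consequently, in contrast with the semidiscrete consistency analysis of Theorem~\ref{NCFEM Semiestimate error estimate}, \emph{no} interelement jump terms $\sum_{K}\int_{\partial K}\nu\,\tfrac{\partial u}{\partial n_K}\,\xi\,dS$ appear here: the whole discrepancy between the two approximations is of purely temporal origin. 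Thus the proof reduces to the conforming template with $\nabla$ replaced throughout by the broken gradient $\nabla_h$ and the $H_0^1$-seminorm replaced by $\|\nabla_h\cdot\|_{L^2(\mathcal{T}_h)}$.

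\textbf{Step 1 (nodal estimates).} First I would integrate the semidiscrete scheme~\eqref{NCFEM semidiscrete} over $[t_{k-1},t_k]$ and subtract the fully discrete scheme~\eqref{NCFEM fully discrete formulation}, then test the resulting identity with $\xi=u_h^{CR}(t_k)-u_h^k\in V_h^{CR}$. This produces six contributions exactly analogous to $I_1,\dots,I_6$ in Proposition~\ref{CFEM uk-ukh error}: the symmetric $L^2$ term, the time-averaging diffusion term (controlled by $(\Delta t)^2\int_{t_{k-1}}^{t_k}\|\nabla_h\partial_s u_h^{CR}(s)\|_{L^2(\mathcal{T}_h)}^2\,ds$ after a Cauchy--Schwarz argument), the coercive damping term (handled by the monotonicity estimate~\eqref{eqn-ces}), the time-averaging damping nonlinearity (bounded through $\||u_h^{CR}|^{\frac{p-2}{2}}\partial_t u_h^{CR}\|_{L^2}$ and Young's inequality), and the two pumping contributions (treated with~\eqref{u^q to u^p journey} and the constant $C_2$ of~\eqref{C_2}). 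Absorbing the coercive pieces via~\eqref{eqn-nonlinear-est}, summing over $k=1,\dots,N$, and applying the discrete Gronwall inequality yields the nodal bound on $\|u_h^{CR}(t_N)-u_h^N\|_{L^2}^2+\nu\Delta t\sum_k\|\nabla_h(u_h^{CR}(t_k)-u_h^k)\|_{L^2(\mathcal{T}_h)}^2$, with the constant $C'$ of~\eqref{C'} after replacing $u_h$ by $u_h^{CR}$.

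\textbf{Main obstacle.} The decisive ingredient, and the one requiring genuine work, is the nonconforming counterpart of the energy bound~\eqref{eqn-fully-bound} of Lemma~\ref{lem-fully-bound}, namely uniform control of $\|\partial_t u_h^{CR}\|_{L^\infty(0,T;L^2)}$, of $\int_0^T\|\nabla_h\partial_t u_h^{CR}(t)\|_{L^2(\mathcal{T}_h)}^2\,dt$, and of $\||u_h^{CR}|^{\frac{p-2}{2}}\partial_t u_h^{CR}\|_{L^2(0,T;L^2)}$. I would obtain this by differentiating~\eqref{NCFEM semidiscrete} in time and testing with $\partial_t u_h^{CR}$, following the scheme of Theorem~\ref{regularity-f- H1}(a) in the broken setting; the nonlinear terms are absorbed exactly as in~\eqref{ql2,grad transfomration}, and Gronwall closes the estimate. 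The only subtle point is the bound on $\|\partial_t u_h^{CR}(0)\|_{L^2}$, which I would extract from the scheme evaluated at $t=0$ (testing with $\partial_t u_h^{CR}(0)$), using the Clément stability and approximation properties of Definition~\ref{Clement interpolation} in place of the Ritz bound, together with the compatibility condition of Remark~\ref{rem-comp}. This is precisely where the hypotheses $u_0\in D(A^{\frac{3}{2}})$ and $f\in H^1(0,T;H^1(\Omega))$ are consumed, giving $\|\partial_t u_h^{CR}(0)\|_{L^2}\le C(\|u_0\|_{D(A^{\frac{3}{2}})},\|f(0)\|_{H^1})$.

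\textbf{Step 2 (interpolation in time).} Finally I would introduce the piecewise-linear temporal interpolant $\mathcal{I}u_h^{CR}$ of the semidiscrete solution at the nodes $t_k$, decompose $u_h^{CR}-u_{kh}^{CR}=(u_h^{CR}-\mathcal{I}u_h^{CR})+(\mathcal{I}u_h^{CR}-u_{kh}^{CR})$, and estimate the two pieces separately. The interpolation error $u_h^{CR}-\mathcal{I}u_h^{CR}$ is controlled in both $L^\infty(0,T;L^2)$ and the broken $\triplenorm{\cdot}_{CR}$ energy norm by $C(\Delta t)^2$ times temporal-derivative norms of $u_h^{CR}$, using \cite[Lemma 3.2, Corollary 3.1]{MR3432852}, while the remaining piece $\mathcal{I}u_h^{CR}-u_{kh}^{CR}$ is bounded directly by the nodal quantities of Step~1. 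Combining these with the broken energy bound established above yields the claimed $O(\Delta t)$ estimate and completes the proof.
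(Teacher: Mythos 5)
Your proposal is correct and follows essentially the same route as the paper, whose proof of this Proposition consists solely of the remark that it "can be carried out in the same manner as" Proposition~\ref{CFEM uk-ukh error}. Your expansion correctly identifies the two points on which that transfer rests: the absence of interelement consistency terms (both $u_h^{CR}$ and $u_{kh}^{CR}$ lie in $V_h^{CR}$ and are governed by the same broken form $a_{CR}$), and the need for a broken-gradient analogue of the time-derivative bound \eqref{eqn-fully-bound} of Lemma~\ref{lem-fully-bound}, with the Cl\'ement operator replacing the Ritz projection in the estimate of $\|\partial_t u_h^{CR}(0)\|_{L^2}$.
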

\begin{proof} 
The proof of this lemma can be carried out in the same manner as that of Lemma \ref{CFEM uk-ukh error}.
\end{proof}
\begin{Theorem}\label{NCFEM fully error estimate}
For all values of $p$ given in \eqref{eqn-values of p}, if $u_0 \in D(A^{\frac{3}{2}})$ and $f\in H^1(0,T;H^1(\Omega))$, the non-conforming finite element approximation $u^{CR}_{kh}$ converges to $u$ as $\Delta t, h  \to 0$. In addition, there exists a constant $C\geq 0$ such that the approximation $u^{CR}_{kh}$ satisfies the following error estimates:
\begin{align*}
	&	\|u-u^{CR}_{kh}\|^2_{L^\infty(0,T;L^2)}+\nu\int_0^T \triplenorm{u(t)-u^{CR}_{kh}(t)}^2_{CR}\, dt \\& \quad \leq C((\Delta t)^2+h^2) (\|f\|^2_{H^1(0,T;H^1)}+ \|u_0\|^2_{D(A^{\frac{3}{2}})}+C'),
	\end{align*}
	where $C'$ is same as \eqref{C'} by replacing $u_h$ by $u_h^{CR}$.
\end{Theorem}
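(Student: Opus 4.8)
The plan is to reproduce, in the non-conforming setting, the interposition argument used for the conforming case in Theorem \ref{fullydiscrete error}: I insert the semidiscrete Crouzeix--Raviart solution $u_h^{CR}$ between the exact solution $u$ and the fully-discrete solution $u_{kh}^{CR}$, and then split both error norms by the triangle inequality. Concretely, I would write
\begin{align*}
\|u-u_{kh}^{CR}\|_{L^\infty(0,T;L^2)}^2 &\leq 2\|u-u_h^{CR}\|_{L^\infty(0,T;L^2)}^2 + 2\|u_h^{CR}-u_{kh}^{CR}\|_{L^\infty(0,T;L^2)}^2,
\end{align*}
and, for the broken energy norm,
\begin{align*}
\int_0^T\triplenorm{u(t)-u_{kh}^{CR}(t)}_{CR}^2\,dt &\leq 2\int_0^T\triplenorm{u(t)-u_h^{CR}(t)}_{CR}^2\,dt + 2\int_0^T\triplenorm{u_h^{CR}(t)-u_{kh}^{CR}(t)}_{CR}^2\,dt.
\end{align*}
Since $u(t)\in H^2(\Omega)\cap H_0^1(\Omega)$ is globally continuous in space for a.e.\ $t$, the broken gradient satisfies $\nabla_h u=\nabla u$ elementwise, so the exact-solution component of the broken norm reduces to the ordinary $H_0^1$-seminorm and no consistency correction is needed on that piece.

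The first summand in each splitting is the spatial (semidiscrete) error, which is controlled directly by Theorem \ref{NCFEM Semiestimate error estimate}: this supplies the $O(h^2)$ contribution together with the norms $\|u\|_{L^\infty(0,T;H^2)}$, $\|u\|_{L^\infty(0,T;H_0^1)}$, $\int_0^T\|u(t)\|_{D(A^{3/2})}^2\,dt$, and $\int_0^T\|\partial_t u(t)\|_{H_0^1}^2\,dt$, with the appropriate splitting of the range of $q_\ell$ into $2\leq q_\ell<1+\tfrac p2$ and $1+\tfrac p2\leq q_\ell<p$ already built in. The second summand is the temporal error, which is controlled by the immediately preceding Proposition, yielding the $O((\Delta t)^2)$ contribution in terms of $\|u_0\|_{D(A^{3/2})}$, $\|f\|_{H^1(0,T;H^1)}$ and the constant $C'$. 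Adding these two bounds produces a right-hand side of the form $C\big((\Delta t)^2+h^2\big)$ multiplied by a quantity depending on assorted Sobolev and Bochner norms of $u$.

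The concluding step is to absorb all solution-dependent norms into the prescribed data. Because $u_0\in D(A^{\frac{3}{2}})$ and $f\in H^1(0,T;H^1(\Omega))$, the regularity results of Theorems \ref{regularity-f- H1} and \ref{thm-more-regular} furnish $u\in L^\infty(0,T;D(A))\cap L^2(0,T;D(A^{\frac{3}{2}}))$ and $\partial_t u\in L^\infty(0,T;H_0^1(\Omega))$, with all relevant norms bounded by $C(\|u_0\|_{D(A^{\frac{3}{2}})},\|f\|_{H^1(0,T;H^1)})$; the embedding $D(A)\hookrightarrow L^{2p-2}(\Omega)$, valid for the admissible range of $p$ in \eqref{eqn-values of p}, controls the nonlinear damping and pumping contributions. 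Substituting these bounds collapses the right-hand side to the stated form and yields convergence as $\Delta t,h\to 0$.

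I do not expect the combination itself to be the difficulty, since the genuine analysis has already been discharged upstream. The main obstacle resides in the two ingredient results: in Theorem \ref{NCFEM Semiestimate error estimate}, the non-conforming consistency term $\sum_{K\in\mathcal{T}_h}\int_{\partial K}\nu\,\frac{\partial u}{\partial n_K}\,\xi\,dS$ must be bounded via the edge estimate of \cite[Section 10.3]{MR2373954}; and in the semidiscrete-to-fully-discrete Proposition, the backward Euler consistency of the nonlinear terms must be closed using a uniform energy bound on $\|\partial_t u_h^{CR}\|$ analogous to Lemma \ref{lem-fully-bound}. Once those component estimates are in hand, the present theorem follows by direct summation of the two contributions and invocation of the regularity estimates.
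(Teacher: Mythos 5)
Your proposal is correct and follows essentially the same route as the paper: the paper's own proof simply invokes the argument of Theorem \ref{fullydiscrete error}, i.e., the triangle-inequality splitting through the semidiscrete solution $u_h^{CR}$, combining Theorem \ref{NCFEM Semiestimate error estimate} for the $O(h^2)$ part with the preceding Proposition for the $O((\Delta t)^2)$ part, and absorbing the solution norms via the regularity results. Your additional remarks (the identity $\nabla_h u = \nabla u$ for the $H^1_0$-exact solution and the location of the genuine difficulties in the upstream ingredient results) are accurate and consistent with the paper's treatment.
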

\begin{proof}
The proof of this theorem is similar to that of Theorem \ref{fullydiscrete error} and is therefore omitted to avoid repetition.
\end{proof}
\subsection{Numerical studies}\label{Numerical Studies for NCFEM}
In this section, we present a numerical example using the Crouzeix-Raviart (CR) finite element method. The error table below reports the error estimates and observed convergence rates for problems \eqref{without pumping} and \eqref{pumped} with parameters $\alpha=1$, $\nu=1$, $M=2$, $q_1=3$, $q_2=4$, $\beta_1=1$, $\beta_2=3$, and $p=5$ on the unit square domain $\Omega=(0,1)^2$ over the time interval $t \in [0,1]$, measured in the following norm:
\begin{align*}
\triplenorm{u_k-u_h^k}^2_{NC}=\| u_h(t_N) - u_h^N \|_{L^2(\Omega)}^2 + \nu   \triplenorm{u_h(t_k)-u_h^k}^2_{CR}.
\end{align*}
In this example, the exact solution of \eqref{without pumping} is
\begin{align*}
u(x,y,t)= (t-t^2+1)\cos(\pi x)\cos(\pi y)y(1-y)x(1-x),
\end{align*}
for the forcing term and initial condition
\begin{align*}
	f:=\partial_t u - \Delta u + |u|^3 u, \text{ and } u_0 = \cos(\pi x)\cos(\pi y)y(1-y)x(1-x).
\end{align*}
To validate the numerical method, we compute the solution on uniform triangular meshes of size 
$N\times N$ 
with a time step of  $\Delta t = 0.01$.

First, Table \ref{tab:NCFEM-damped} presents the errors and observed convergence rates for different grid sizes for problem \eqref{without pumping}, showing that the method achieves the expected convergence rates. We also provide visual comparisons between the exact and numerical solutions for mesh sizes $16 \times 16$, $32 \times 32$, and $64 \times 64$ in Figure \ref{fig:NCFEM damped}. From these visualizations, it is evident that the numerical solution closely approximates the exact solution, particularly on finer meshes.

\begin{table}[ht!]
	\centering
	\begin{tabular}{|c|c|c|c|}
		\hline
		\text{Grid Size} & $h$ & $\triplenorm{u-u_h^{CR}} $ & \text{Convergence Rate} \\
		\hline
		$4 \times 4$   & 3.54e{-01} & 1.7225e{-02} & N/A \\
		$8 \times 8$   & 1.77e{-01} & 7.2360e{-03} & 1.25 \\
		$16 \times 16$ & 8.84e{-02} & 3.4279e{-03} & 1.08 \\
		$32 \times 32$ & 4.42e{-02} & 1.6898e{-03} & 1.02 \\
		$64 \times 64$ & 2.21e{-02} & 8.4191e{-04} & 1.01 \\
		\hline
	\end{tabular}
	\caption{Computed errors and rates of convergence for \eqref{without pumping} in NCFEM.}
	\label{tab:NCFEM-damped}
\end{table}
\begin{figure}[ht!]
    \centering
    \begin{subfigure}{0.32\textwidth}
        \centering
        \includegraphics[width=\linewidth]{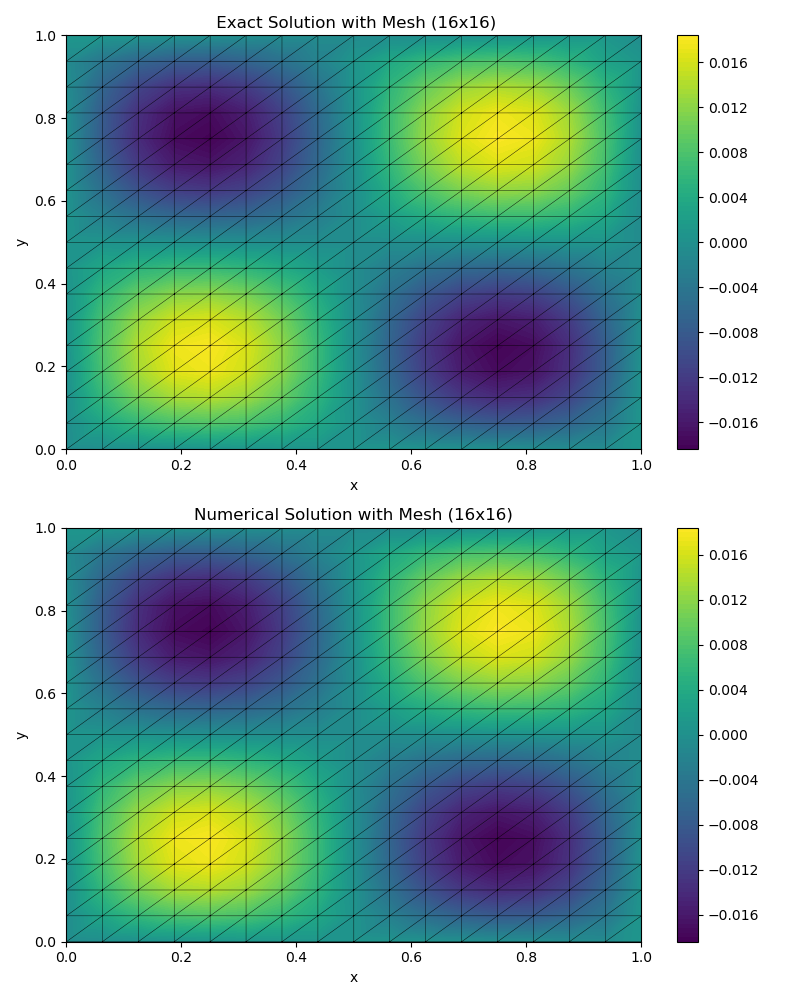}
    \end{subfigure}
    \begin{subfigure}{0.32\textwidth}
        \centering
        \includegraphics[width=\linewidth]{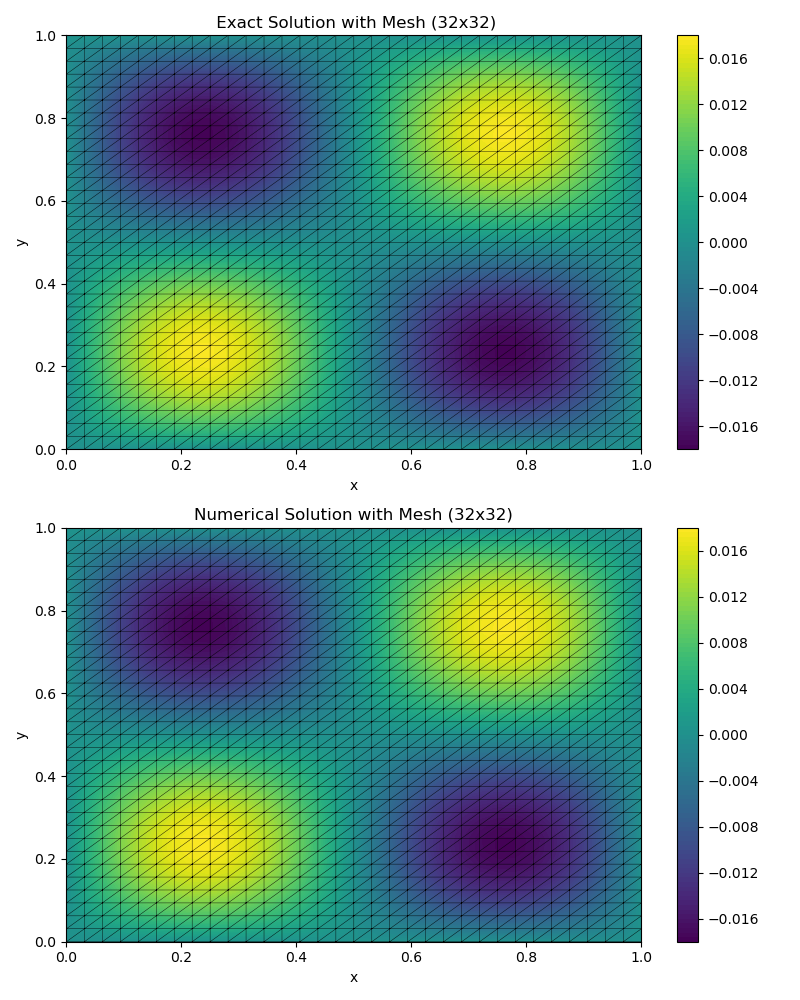}
    \end{subfigure}
    \begin{subfigure}{0.32\textwidth}
        \centering
        \includegraphics[width=\linewidth]{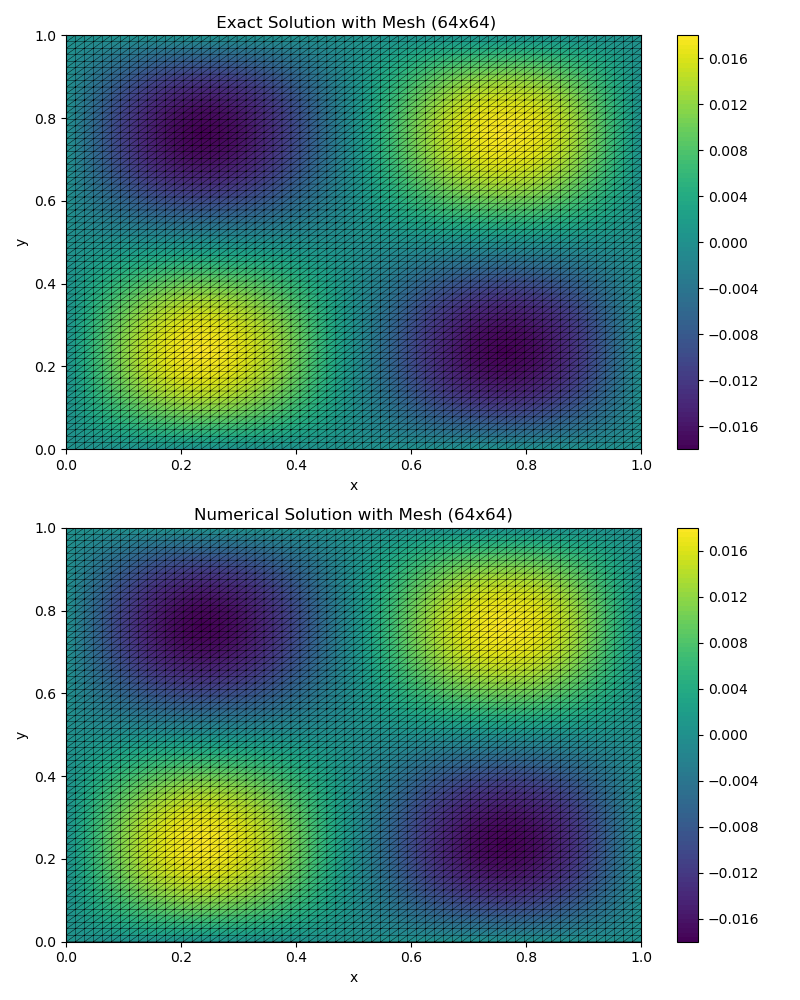}
    \end{subfigure}
    \caption{Exact and approximated solution of \eqref{without pumping} on different grid sizes.}
    \label{fig:NCFEM damped}
\end{figure}

Next, Table \ref{tab:NCFEM-pumped} presents the computed errors and corresponding convergence rates for problem \eqref{pumped}, using the solution on a $256 \times 256$ mesh as the reference. The expected rates of convergence are clearly observed. Additionally, Figure \ref{fig:NCFEM full} provides visual comparisons between the reference solution and numerical solutions on mesh sizes $16 \times 16$, $32 \times 32$, and $64 \times 64$.

\begin{table}[ht!]
	\centering
	\begin{tabular}{|c|c|c|c|}
		\hline
		\text{Grid Size} & $h$ & $\triplenorm{u_{ref}-u_h^{CR}}$ & \text{Convergence Rate} \\
		\hline
		$4 \times 4$   & 3.54e{-01} & 1.722952e{-02} & N/A \\
		$8 \times 8$   & 1.77e{-01} & 7.236985e{-03} & 1.25 \\
		$16 \times 16$ & 8.84e{-02} & 3.431414e{-03} & 1.08 \\
		$32 \times 32$ & 4.42e{-02} & 1.692934e{-03} & 1.02 \\
		$64 \times 64$ & 2.21e{-02} & 8.535687e{-04} & 0.99 \\
		\hline
	\end{tabular}
	\caption{Computed errors and rates of convergence for \eqref{pumped} in NCFEM.}
	\label{tab:NCFEM-pumped}
\end{table}

\begin{figure}[ht!]
    \centering
    \begin{subfigure}{0.32\textwidth}
        \centering
        \includegraphics[width=\linewidth]{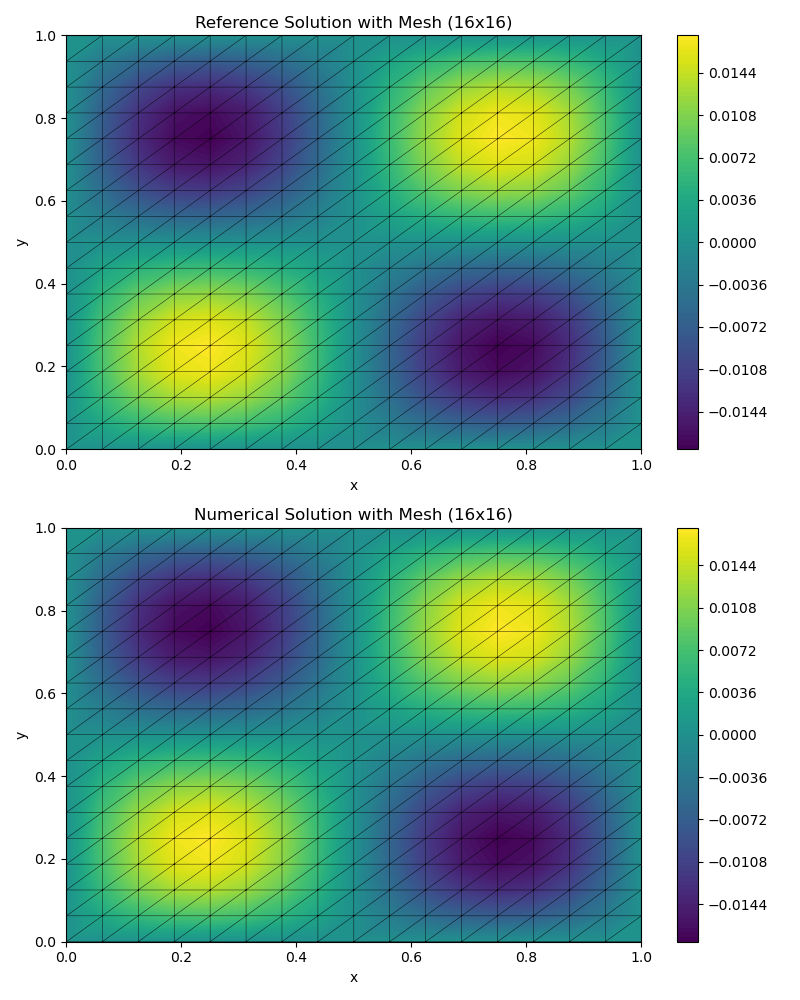}
    \end{subfigure}
    \begin{subfigure}{0.32\textwidth}
        \centering
        \includegraphics[width=\linewidth]{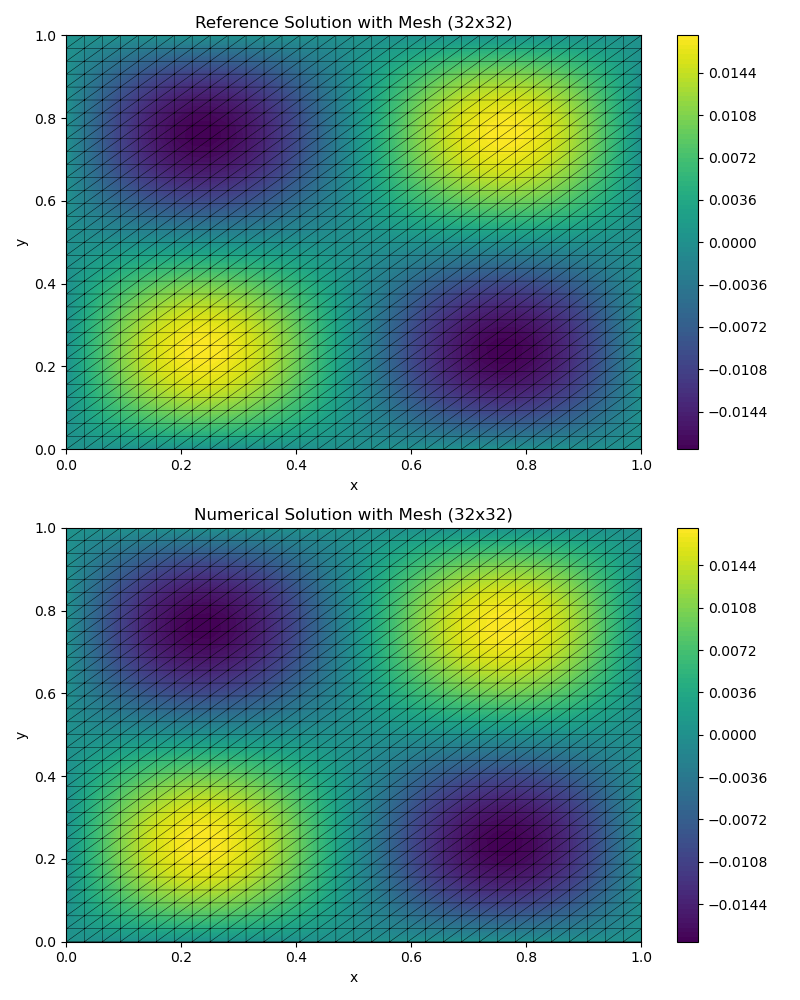}
    \end{subfigure}
    \begin{subfigure}{0.32\textwidth}
        \centering
        \includegraphics[width=\linewidth]{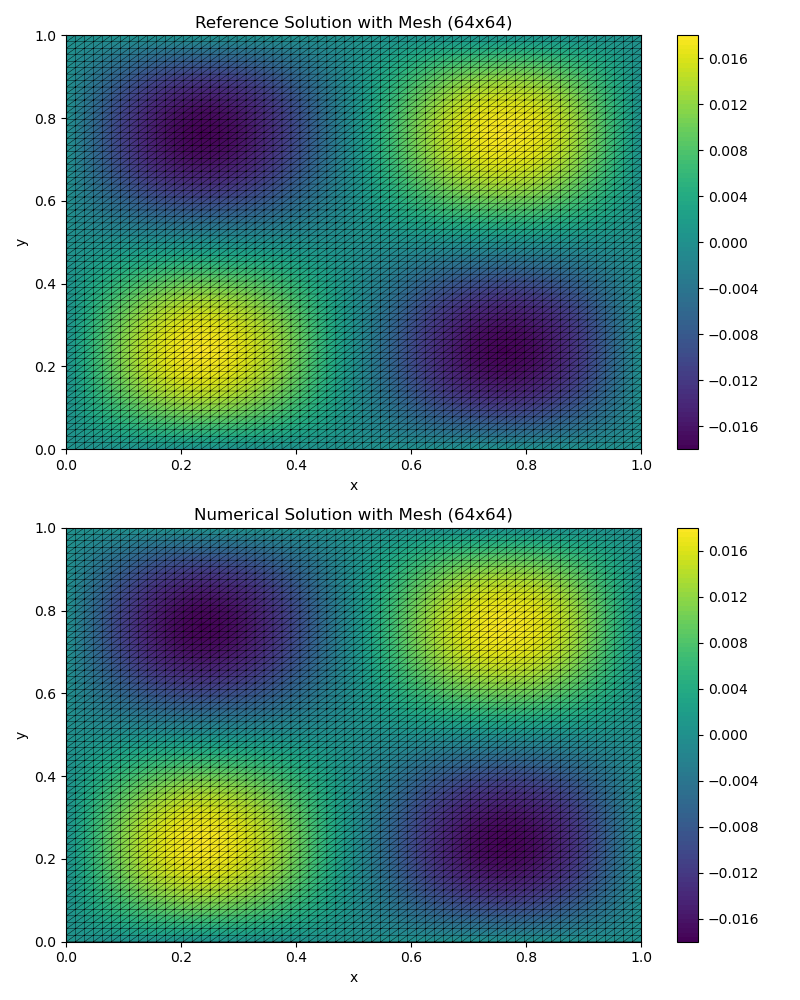}
    \end{subfigure}
    \caption{Reference and approximated solution of \eqref{pumped} on different grid sizes.}
    \label{fig:NCFEM full}
\end{figure}

\section{Discontinuous Galerkin method}\label{DG scheme}
The Discontinuous Galerkin (DG) method is a finite element approach in which the trial and test functions are permitted to be discontinuous across element boundaries. Unlike conforming methods, which enforce continuity, or nonconforming methods, which impose it weakly, DG does not require continuity between elements. Instead, inter-element interaction is managed through numerical fluxes and penalty terms, which ensure stability and consistency in a weak sense. DG methods are highly flexible, locally conservative, and well-suited for complex geometries as well as adaptive mesh refinement.

We define the discrete space for DG formulation as 
\begin{equation}\label{DG space}
V_h^{DG}=\left\{v \in L^2(\Omega)\big| \text{ for all } K \in \mathcal{T}_h: v|_K \in P_1(K)\right\},
\end{equation}
where $P_1(K)$ denotes the space of linear polynomials.
\subsection{Semidiscrete discontinuous Galerkin FEM}\label{Semi DGFEM}
We begin by defining the projection operator employed in the DG method.
\begin{Definition}[Elementwise $L^2$-projection onto discontinuous spaces {\cite[Section 1.6]{MR2050138}}]\label{DG projection}
	Let $\mathcal{T}_h$ be a partition of $\Omega$ into elements $K$, and let $P_1(K)$ denote the polynomials of degree $1$ on $K$. 
	Define the operator 
	\[
	\pi_h^{1} : L^2(\Omega) \to V_h^{DG},
	\]
	by requiring, for each $K \in \mathcal{T}_h$, 
	\begin{equation*}
		\int_K \big( \pi_h^{1} v - v \big) \, q \, dx = 0 
		\ \text{ for all }\ q \in P_1(K).
	\end{equation*}
	In other words, $\pi_h^1 v|_K$ is the local $L^2$-projection of $v|_K$ onto $P_1(K)$. 
	
	Moreover, there exists a constant $C>0$, independent of $h$, such that for all integers $0 \leq l \leq 2$, 
	for all $1 \leq p \leq \infty$, and for all $v \in W^{l,p}(\Omega)$, the following approximation estimate is satisfied:
	\begin{align*}
		\| v - \pi_h^1 v \|_{L^p(\Omega)} 
		&\leq C \, h^l \, | v |_{W^{l,p}(\Omega)}.
	\end{align*}
	\begin{Remark} 
	The above  estimate holds without assuming strong shape-regularity of the mesh. 
	The operator $\pi_h^1$ is the natural projection onto discontinuous finite element spaces, 
	and it is particularly useful in discontinuous Galerkin methods, e.g., for projecting initial data and in error analysis.
	\end{Remark}
\end{Definition}

We define the semidiscrete formulation of \eqref{Damped Heat} as follows: 
Find $u_h^{DG}(t) \in V_h^{DG}$ for $t\in [0,T]$ such that 
\small
\begin{equation}\label{DG semidiscrete}
\begin{aligned}
	\begin{cases}
		\displaystyle
&(\partial_t u_h^{DG}(t),\chi)+\nu a_{DG}(u_h^{DG}(t),\chi)+\alpha(|u_h^{DG}(t)|^{p-2}u_h^{DG}(t),\chi)-\sum_{\ell=1}^M \beta_{\ell} (|u_h^{DG}(t)|^{q_{\ell}-2}u_h^{DG}(t),\chi)\\&\quad=(f(t),\chi),\\
&(u_h^{DG}(0),\chi)=(\pi_h^1u_0,\chi),
\end{cases}
\end{aligned}
\end{equation}
for all $\chi \in V_h^{DG}$, where 
\begin{align*}
a_{DG}(u,v)&=(\nabla_h u,\nabla_h v)-\sum_{E \in \mathcal{E}_h}\int_E \{\!\{ \nabla_h u \}\!\} \llbracket v \rrbracket\ ds-\sum_{E \in \mathcal{E}_h}\int_E \{\!\{ \nabla_h v \}\!\} \llbracket u \rrbracket\, ds+\sum_{E \in \mathcal{E}_h} \int_E \gamma_h \llbracket u \rrbracket \llbracket v \rrbracket\, ds.
\end{align*}
Here $\gamma_h = \frac{\gamma}{h_E}$. The length of the edge $E$ is represented by the parameter $h_E$. In order to guarantee the stability of the formulation, the penalty parameter $\gamma$ is selected to be sufficiently large. The following discrete norm is used for further error analysis:
\begin{align*}
\triplenorm{v}^2_{DG}:=\sum_{K \in \mathcal{T}_h}\|\nabla_h v\|^2_{L^2(\mathcal{T}_h)}+ \sum_{E \in \mathcal{E}_h} \gamma_h \|\llbracket v\rrbracket\|^2_{L^2(E)}.
\end{align*}
Note that $a_{DG}$ can be bounded as (\cite[Section 2]{MR664882}):
\begin{align}\label{continuity of DG}
|a_{DG}(u,v)| \leq (1+\gamma)\triplenorm{u}_{DG}\triplenorm{v}_{DG}.
\end{align}
\begin{Lemma}[Coercivity]\label{Coercivity and stability}
For any \( v \in V_h^{DG} \), the bilinear form \( a_{DG} \) is \emph{coercive}, that is,
    \begin{equation}\label{coercivity}
    a_{DG}(v, v) \geq \alpha_a \triplenorm{v}_{DG}^2,
    \end{equation}
    for a positive constant \( \alpha_a \geq 0 \).
\end{Lemma}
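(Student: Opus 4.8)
The plan is to substitute $u=v$ into the definition of $a_{DG}$ and show that the only problematic contribution, namely the symmetry/consistency cross term $-2\sum_{E}\int_E \{\!\{\nabla_h v\}\!\}\llbracket v\rrbracket\, ds$, can be absorbed into the gradient and penalty parts of $\triplenorm{v}_{DG}^2$ once the penalty parameter $\gamma$ is taken large enough. Setting $u=v$ yields
\begin{align*}
a_{DG}(v,v)=\sum_{K\in\mathcal{T}_h}\|\nabla_h v\|_{L^2(K)}^2 - 2\sum_{E\in\mathcal{E}_h}\int_E \{\!\{\nabla_h v\}\!\}\llbracket v\rrbracket\, ds + \sum_{E\in\mathcal{E}_h}\gamma_h\|\llbracket v\rrbracket\|_{L^2(E)}^2,
\end{align*}
so that two of the three terms already coincide exactly with $\triplenorm{v}_{DG}^2$.

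First I would bound the cross term. By the Cauchy-Schwarz inequality on each edge, followed by a weighted Young's inequality with a free parameter $\delta>0$,
\begin{align*}
2\left|\sum_{E\in\mathcal{E}_h}\int_E \{\!\{\nabla_h v\}\!\}\llbracket v\rrbracket\, ds\right| \leq \sum_{E\in\mathcal{E}_h}\Big(\delta\, h_E\,\|\{\!\{\nabla_h v\}\!\}\|_{L^2(E)}^2 + \delta^{-1} h_E^{-1}\|\llbracket v\rrbracket\|_{L^2(E)}^2\Big).
\end{align*}
The key ingredient is the discrete (inverse) trace inequality for piecewise polynomials, $h_E\|\{\!\{\nabla_h v\}\!\}\|_{L^2(E)}^2\leq C_t\|\nabla_h v\|_{L^2(\omega_E)}^2$, where $\omega_E$ denotes the union of the (at most two) elements sharing $E$; this is the step where the polynomial degree and the shape regularity of $\mathcal{T}_h$ enter. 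Summing over edges and using shape regularity to bound the finite overlap of the patches $\omega_E$ by a constant $C_0$, the first contribution is controlled by $\delta C_t C_0 \sum_{K}\|\nabla_h v\|_{L^2(K)}^2$, while the second equals $\delta^{-1}\gamma^{-1}\sum_{E}\gamma_h\|\llbracket v\rrbracket\|_{L^2(E)}^2$ since $\gamma_h=\gamma/h_E$.

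Combining these estimates gives
\begin{align*}
a_{DG}(v,v)\geq (1-\delta C_t C_0)\sum_{K\in\mathcal{T}_h}\|\nabla_h v\|_{L^2(K)}^2 + (1-\delta^{-1}\gamma^{-1})\sum_{E\in\mathcal{E}_h}\gamma_h\|\llbracket v\rrbracket\|_{L^2(E)}^2.
\end{align*}
Finally I would fix the free parameters: choose $\delta=1/(2C_tC_0)$ so that the gradient coefficient equals $1/2$, and then require $\gamma>\delta^{-1}=2C_tC_0$ so that the penalty coefficient is strictly positive. Setting $\alpha_a=\min\{1/2,\,1-\delta^{-1}\gamma^{-1}\}>0$ yields the claimed coercivity. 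The main obstacle is the cross term: everything hinges on the discrete trace inequality to transfer the edge norm of the average normal gradient back onto the volumetric gradient, and on selecting $\gamma$ large enough to dominate the resulting penalty deficit, which is precisely the "sufficiently large penalty" condition already stipulated in the definition of $\gamma_h$.
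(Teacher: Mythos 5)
Your proof is correct and follows essentially the same route as the paper, which does not reproduce the argument but simply cites \cite[Section 3]{MR664882} (Arnold's interior penalty theory), where precisely this chain — Cauchy--Schwarz plus weighted Young's inequality on the cross term, the discrete (inverse) trace inequality for piecewise polynomials, and the choice of a sufficiently large penalty parameter $\gamma$ — is carried out. Your write-up merely fills in the standard details that the paper leaves to the reference.
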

\begin{proof}
The proof of coercitivity of $a_{DG}(\cdot,\cdot)$ follows directly from \cite[section 3]{MR664882}.
\end{proof}
\begin{Theorem}[Existence and energy estimates]\label{energy estimate of DGFEM}
	\textbf{(a)} If $f\in L^2(0,T;H^{-1}(\Omega))$ and $u_0 \in L^2(\Omega)$,  then the solution $u_h^{DG}$ of \eqref{DG semidiscrete} is unique and satisfies the following estimate:
	\begin{align*}
		\sup_{0\leq t \leq T}\|u_h^{DG}(t)\|^2_{L^2}&+\nu \int_0^T \triplenorm{u_h^{DG}(t)}^2_{DG} \, dt + \alpha \int_0^T \|u_h^{DG}(t)\|^p_{L^p}\, dt \\&\leq \|u_0\|^2_{L^2}+\frac{1}{\nu} \int_0^T \|f(t)\|^2_{H^{-1}}\, dt + C^{*}T|\Omega|,
	\end{align*}
	where $C^*$ is defined in \eqref{C^*}.
	
	\noindent
	\textbf{(b)}If $f\in L^2(0,T;L^2(\Omega))$ and $u_0 \in D(A)$, then 
	\begin{align*}
		\int_0^T \|\partial_t u_h^{DG}(t)\|^2_{L^2}\, dt &+ \sup_{0\leq t \leq T}\bigg(\nu\triplenorm{u_h^{DG}(t)}^2_{DG}+\frac{2\alpha}{p}\|u_h^{DG}(t)\|^p_{L^p}\bigg) \\&\leq \nu \|\nabla u_0\|^2_{L^2}+\frac{2\alpha}{p}\|u_0\|^p_{D(A)} + \int_0^T \|f(t)\|^2_{L^2}\, dt.
	\end{align*}
\end{Theorem}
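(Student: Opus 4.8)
The plan is to follow the template already used for the conforming and nonconforming energy estimates (Theorems~\ref{energy estimate of CFEM} and~\ref{energy estimate for NCFEM}), adapted to the broken setting. For existence and uniqueness I would argue as in Theorem~\ref{ncfem existence}: expanding $u_h^{DG}(t)=\sum_i h_i(t)\phi_i^{DG}$ in a basis of $V_h^{DG}$ and testing \eqref{DG semidiscrete} against each $\phi_j^{DG}$ reduces the problem to a finite-dimensional system $A H'(t)+B(t)H(t)=F(t)$, $H(0)=H_0$, with $A=(\phi_i^{DG},\phi_j^{DG})$ symmetric positive definite (hence invertible) and with $B$ carrying the symmetric DG stiffness block together with the locally Lipschitz nonlinear contributions. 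The Picard--Lindel\"of theorem gives a local solution, the a priori bound of part (a) rules out finite-time blow-up so the solution is global on $[0,T]$, and uniqueness follows from the monotonicity argument of Theorem~\ref{damped uniqueness} carried out inside $V_h^{DG}$.

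\textbf{Part (a).} I would test \eqref{DG semidiscrete} with $\chi=u_h^{DG}(t)$, which yields
\[
\tfrac12\tfrac{d}{dt}\|u_h^{DG}(t)\|_{L^2}^2+\nu\,a_{DG}(u_h^{DG}(t),u_h^{DG}(t))+\alpha\|u_h^{DG}(t)\|_{L^p}^p=\langle f(t),u_h^{DG}(t)\rangle+\sum_{\ell=1}^M\beta_\ell\|u_h^{DG}(t)\|_{L^{q_\ell}}^{q_\ell}.
\]
The coercivity of Lemma~\ref{Coercivity and stability} converts $a_{DG}(u_h^{DG},u_h^{DG})$ into control of $\triplenorm{u_h^{DG}}_{DG}^2$; the pumping sum is absorbed into $\tfrac{\alpha}{2}\|u_h^{DG}\|_{L^p}^p+C^*|\Omega|$ by the interpolation step \eqref{u^q to u^p journey}; and the forcing term is handled by Young's inequality. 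Integrating in time and taking the supremum reproduces the claimed bound, the coefficient $\tfrac1\nu$ in front of $\int_0^T\|f\|_{H^{-1}}^2$ arising from the Young split of the forcing pairing.

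\textbf{Part (b).} Here I would test with $\chi=\partial_t u_h^{DG}(t)$. The crucial structural fact is that $a_{DG}$ is symmetric, so $a_{DG}(u_h^{DG},\partial_t u_h^{DG})=\tfrac12\tfrac{d}{dt}a_{DG}(u_h^{DG},u_h^{DG})$, while the chain rule gives $(|u_h^{DG}|^{p-2}u_h^{DG},\partial_t u_h^{DG})=\tfrac1p\tfrac{d}{dt}\|u_h^{DG}\|_{L^p}^p$ and similarly for each $q_\ell$ term. After a Young split of $(f,\partial_t u_h^{DG})$ absorbing half of $\|\partial_t u_h^{DG}\|_{L^2}^2$, integration in time leaves $\int_0^t\|\partial_s u_h^{DG}\|_{L^2}^2\,ds+\nu\,a_{DG}(u_h^{DG}(t),u_h^{DG}(t))+\tfrac{2\alpha}{p}\|u_h^{DG}(t)\|_{L^p}^p$ on the left. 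Coercivity lower-bounds the stiffness term by $\triplenorm{u_h^{DG}(t)}_{DG}^2$, while the initial data are controlled via $u_h^{DG}(0)=\pi_h^1u_0$: continuity \eqref{continuity of DG} together with the $L^2$-projection stability in Definition~\ref{DG projection} bounds $a_{DG}(\pi_h^1u_0,\pi_h^1u_0)$ by $\|\nabla u_0\|_{L^2}^2$, and the embedding $D(A)\hookrightarrow L^p$ gives $\|\pi_h^1u_0\|_{L^p}\le\|u_0\|_{D(A)}$. The residual $q_\ell$ boundary terms are once more dominated through \eqref{u^q to u^p journey}.

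\textbf{Main obstacle.} The two places where the DG analysis genuinely departs from the conforming/nonconforming ones are: in part (a), pairing $f\in H^{-1}(\Omega)$ against the discontinuous $u_h^{DG}$, which forces the use of a broken Poincar\'e--Friedrichs inequality so that $|\langle f,u_h^{DG}\rangle|\le\|f\|_{H^{-1}}\triplenorm{u_h^{DG}}_{DG}$ (this is where the coercivity constant $\alpha_a$ and the penalty parameter $\gamma$ really enter the bookkeeping of constants); and in part (b), bounding the initial DG energy $a_{DG}(\pi_h^1u_0,\pi_h^1u_0)$ by the continuous quantity $\|\nabla u_0\|_{L^2}^2$ through projection stability in the mesh-dependent norm. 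Everything else is a faithful transcription of the manipulations already performed for $a(\cdot,\cdot)$ and $a_{CR}(\cdot,\cdot)$, which is why the full details can reasonably be abbreviated.
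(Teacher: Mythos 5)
Your proposal is correct and takes essentially the same approach as the paper, whose entire proof is a one-line deferral to the conforming case (Theorem \ref{energy estimate of CFEM}): test with $u_h^{DG}$ and $\partial_t u_h^{DG}$, use the coercivity of $a_{DG}$ from Lemma \ref{Coercivity and stability}, absorb the pumping terms via \eqref{u^q to u^p journey}, and control the initial data through the projection $\pi_h^1$. If anything, you are more careful than the paper on the two genuinely DG-specific points you flag --- the pairing of $f \in H^{-1}(\Omega)$ against a discontinuous test function, and the fact that the coercivity constant $\alpha_a$ and penalty parameter $\gamma$ enter the constants, so the literal CFEM-style constants in the stated bound only hold up to these factors --- both of which the paper's statement silently glosses over.
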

\begin{proof}
	The proof of this theorem follows directly from arguments similar to those in Theorem \ref{energy estimate of CFEM}.
\end{proof}
\begin{Theorem}\label{DGFEM Semidiscrete error estimate}
Let $V_h^{DG}$ be the space as defined in \eqref{DG space}. Assume that $u_0\in D(A)$, and $u(\cdot)$ is the solution of \eqref{Damped Heat}.
Then the $u_h^{DG}$, the solution of \eqref{DG semidiscrete} satisfies the following estimate:

\noindent
(a) \textbf{For $f \in H^1(0,T,L^2(\Omega))$ and $2\leq p\leq \frac{2d}{d-2}$:}
\begin{align*}
	&\|u_h^{DG}-u\|^2_{L^\infty(0,T;L^2)}+\nu \int_0^T \triplenorm{u_h^{DG}(t)-u(t)}^2_{DG}\, dt+\frac{\alpha}{2^{p-3}}\|u_h^{CR}-u\|^p_{L^p(0,T;L^p)} \\
	&\leq Ch^2\bigg\{\|u\|^2_{L^{\infty}(0,T;H_0^1)}+\|u\|^2_{L^\infty(0,T;H^2)}+\int_0^T\|\partial_t u(t)\|_{H_0^1}^2\, dt+\|u\|^p_{L^{\infty}(0,T;H^2)}\\& \quad +(\|u_0\|_{D(A)}^p+ \|f\|^2_{H^1(0,T;L^2)}+\|u\|^{2(p-2)}_{L^\infty(0,T;L^p)})\int_0^T \|u(t)\|^2_{H^2}\,dt \bigg\},
\end{align*}
for some positive constant $C.$\\
(b)\textbf{For $f \in H^1(0,T;H^1(\Omega))$ and $\frac{2d}{d-2}<p\leq \frac{2d-6}{d-4}$:}\\
(i) if $2\leq q_{\ell}<1+\frac{p}{2}$, the following estimate holds:
\begin{align*}
	&\|u_h^{DG}-u\|^2_{L^\infty(0,T;L^2)}+\nu \int_0^T\triplenorm{u_h^{DG}(t)-u(t)}^2_{DG} \, dt + \frac{\alpha}{2^{p-2}}\|u_h^{DG}-u\|^p_{L^p(0,T;L^p)} \\& \leq Ch^2\bigg( \|u\|^2_{L^\infty(0,T;H_0^1)}+\|u\|^2_{L^\infty(0,T;H^2)}+\|u\|^p_{L^\infty(0,T;H^2)}+ \int_0^T \|u(t)\|^2_{H^2}\,dt \\& \quad  +\int_0^T \|\partial_t u(t)\|^2_{H_0^1}\,dt+\int_0^T \|u(t)\|^2_{D(A^{\frac{3}{2}})}\,dt \bigg),
\end{align*}
(ii) if $1+\frac{p}{2}\leq q_{\ell}<p$, the following estimate holds:
\begin{align*}
	&\|u_h^{DG}-u\|^2_{L^\infty(0,T;L^2)}+\nu \int_0^T\triplenorm{u_h^{DG}(t)-u(t)}_{DG} \, dt+\frac{\alpha}{2^{p-2}}\|u_h^{DG}-u\|^p_{L^p(0,T;L^p)}\\&\leq Ch^2\bigg\{\|u\|^2_{L^\infty(0,T;H_0^1)}+\|u\|^2_{L^\infty(0,T;H^2)}+\|u\|^p_{L^\infty(0,T;H^2)}+\int_0^T \|\partial_tu(t)\|^2_{H_0^1}\, dt \\& \quad + \int_0^T \|u(t)\|^2_{H^2}\,dt   +\|u\|^{\frac{p(q_{\ell}-3)+2}{p-1}}_{L^\infty(0,T;H^2)} \int_0^T \|u(t)\|^2_{D(A^{\frac{3}{2}})}\,dt \bigg\},
\end{align*}
where $C>0 $.
\end{Theorem}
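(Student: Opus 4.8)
The plan is to mirror the conforming argument of Theorem~\ref{semidiscrete error analysis}, replacing the Ritz projection by the elementwise $L^2$-projection $W := \pi_h^1 u$ of Definition~\ref{DG projection} and the $H_0^1$-coercivity by the DG coercivity \eqref{coercivity}. First I would record the \emph{consistency} of the interior-penalty form: since the exact solution satisfies $u(t)\in H^2(\Omega)\cap H_0^1(\Omega)$ (Theorem~\ref{regularity-f- H1}), its interior and boundary jumps vanish, so elementwise integration by parts gives $\nu a_{DG}(u(t),\chi) = -\nu(\Delta u(t),\chi)$ for every $\chi\in V_h^{DG}$. Consequently $u$ solves the same variational identity \eqref{DG semidiscrete} as $u_h^{DG}$ (with the exact data), and subtracting yields the error equation
\begin{align*}
(\partial_t(u_h^{DG}-u),\chi) &+ \nu a_{DG}(u_h^{DG}-u,\chi) + \alpha(|u_h^{DG}|^{p-2}u_h^{DG}-|u|^{p-2}u,\chi) \\ &= \sum_{\ell=1}^M\beta_{\ell}(|u_h^{DG}|^{q_{\ell}-2}u_h^{DG}-|u|^{q_{\ell}-2}u,\chi),
\end{align*}
for all $\chi\in V_h^{DG}$ and a.e. $t\in[0,T]$. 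The crucial point, in contrast to the nonconforming case of Theorem~\ref{NCFEM Semiestimate error estimate}, is that the consistency of $a_{DG}$ removes the residual boundary term $\sum_K\int_{\partial K}\nu\,\partial_{n_K}u\,\xi\,dS$ altogether.

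Next I would split $u_h^{DG}-u = \theta + \rho$ with $\theta := u_h^{DG}-W\in V_h^{DG}$ and $\rho := W-u$, and test the error equation with $\chi=\theta$. Writing $a_{DG}(u_h^{DG}-u,\theta)=a_{DG}(\theta,\theta)+a_{DG}(\rho,\theta)$, the coercivity \eqref{coercivity} bounds the first term below by $\alpha_a\triplenorm{\theta}_{DG}^2$, while the continuity \eqref{continuity of DG} together with Young's inequality controls $\nu a_{DG}(\rho,\theta)$ by $\tfrac{\nu\alpha_a}{2}\triplenorm{\theta}_{DG}^2 + C\triplenorm{\rho}_{DG}^2$. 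For the damping nonlinearity I would use the sharp monotonicity \eqref{eqn-mono-2} and the lower bound \eqref{eqn-nonlinear-est} to extract $\tfrac{\alpha}{2^{p-1}}\|\theta\|_{L^p}^p$ on the left, and for the pumping terms the estimate from Theorem~\ref{damped uniqueness} producing the constant $C_2$ of \eqref{C_2} times $\|\theta\|_{L^2}^2$. The remaining right-hand terms $(\partial_t\rho,\theta)$ and the nonlinear differences tested against $\rho$ are handled exactly as the quantities $I_1,\dots,I_4$ in Theorem~\ref{semidiscrete error analysis}: Taylor's formula, H\"older, Young and Gagliardo--Nirenberg inequalities, split into the regimes $2\le p\le\frac{2d}{d-2}$ and $\frac{2d}{d-2}<p\le\frac{2d-6}{d-4}$, the latter with the further subdivision $2\le q_{\ell}<1+\frac{p}{2}$ versus $1+\frac{p}{2}\le q_{\ell}<p$.

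The projection errors are then quantified through Definition~\ref{DG projection}: the bounds $\|\rho\|_{L^p}\le Ch^l|u|_{W^{l,p}}$ (and the analogous $L^{2(p-1)}$, $L^{q_{\ell}}$, $L^{2(q_{\ell}-1)}$ estimates) supply the nonlinear contributions, while interpolation as in \eqref{to bound u-W in Lp} converts the higher integrability into the $h^2\|u\|_{D(A^{3/2})}^2$ and $h^2\|u\|_{H^2}^p$ terms appearing in the statement. For the DG energy norm I would bound the broken-gradient part of $\triplenorm{\rho}_{DG}$ by $Ch\|u\|_{H^2}$ and the penalty part $\sum_{E\in\mathcal{E}_h}\gamma_h\|\llbracket\rho\rrbracket\|_{L^2(E)}^2$ using a discrete trace inequality together with $\gamma_h=\gamma/h_E$, again yielding $Ch^2\|u\|_{H^2}^2$. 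After integrating in time, absorbing the coercive terms, invoking the uniform bounds of Theorem~\ref{energy estimate of DGFEM} to control $\|u_h^{DG}\|_{L^\infty(0,T;L^p)}$ (so the coefficients entering Gronwall are data-dependent but $h$-independent), and applying Gronwall's inequality, the three claimed estimates follow.

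I expect the main obstacle to be the control of the jump/penalty contributions to $\triplenorm{\rho}_{DG}$ and the verification that these, together with the continuity constant $(1+\gamma)$ and the coercivity constant $\alpha_a$ (valid only for $\gamma$ sufficiently large, Lemma~\ref{Coercivity and stability}), close the estimate without degrading the optimal $O(h)$ order; the second delicate point is, as in the conforming case, securing the correct power of $h$ in the high-exponent regime $\frac{2d}{d-2}<p\le\frac{2d-6}{d-4}$, where the nonlinear terms must be measured in $L^{2(p-1)}$ and $L^{2(q_{\ell}-1)}$ and the interpolation estimate \eqref{to bound u-W in Lp} is needed to trade integrability for the required $D(A^{3/2})$-regularity.
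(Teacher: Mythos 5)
Your proposal is correct and follows essentially the same route as the paper's proof: the paper likewise takes $W=\pi_h^1 u$, uses the consistency of the interior-penalty form for the $H^2\cap H_0^1$ exact solution (its equation \eqref{DG exact sol for error}, which eliminates the boundary residual present in the NCFEM case), splits $u_h^{DG}-u=(u_h^{DG}-W)+(W-u)$, tests with $u_h^{DG}-W$, invokes coercivity \eqref{coercivity} and continuity \eqref{continuity of DG}, and then defers the nonlinear and projection-error estimates to the calculations of Theorems \ref{semidiscrete error analysis} and \ref{NCFEM Semiestimate error estimate} before concluding with Gronwall's inequality. The only difference is presentational: where you bound the penalty part of $\triplenorm{W-u}_{DG}$ via a discrete trace inequality, the paper simply cites the estimate $\triplenorm{W-u}_{DG}\leq Ch|u|_{H^2}$ from Arnold's interior-penalty theory.
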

\begin{proof}
An application of the triangle inequality yields
	\begin{equation*}
		\triplenorm{u_h^{DG}-u}_{DG}\leq \triplenorm{u_h^{DG}-W}_{DG}+\triplenorm{W-u}_{DG},
	\end{equation*}
	where $W = \pi_h^1 u$. Given that $\triplenorm{W-u}_{DG}\leq Ch|u|_{H^2}$ (see \cite[Section 3]{MR664882}), we are thus led to estimating $\triplenorm{u_h^{DG}-W}_{DG}$. Using the regularity result (see Theorem \ref{regularity-f- H1}), the following holds for a.e. $t\in[0,T]$:
	\begin{equation}\label{DG exact sol for error}
		(\partial_t u(t), \chi)+\nu a_{DG}(u(t),\chi)+\alpha (|u(t)|^{p-2}u(t),\chi)-\sum_{\ell=1}^M \beta_{\ell}(|u(t)|^{q_{\ell}-2}u(t),\chi)=(f(t),\chi),
	\end{equation}
for all $\chi \in V_h^{DG}$. Therefore, from \eqref{DG semidiscrete} and \eqref{DG exact sol for error}, we get 
\begin{align*}
	(\partial_t(u_h^{DG}(t)-u(t)),\chi)&+\nu a_{DG}(u_h^{DG}(t)-u(t),\chi)+\alpha(|u_h^{DG}(t)|^{p-2}u_h^{DG}(t)-|u(t)|^{p-2}u(t),\chi)\\&=\sum_{\ell=1}^M \beta_{\ell}(|u_h^{DG}(t)|^{q_{\ell}-2}u_h^{DG}(t)-|u(t)|^{q_{\ell}-2}u(t),\chi),
\end{align*}
for a.e. $t\in[0,T]$. Rearranging the equation by taking $u_h^{DG}-u=u_h^{DG}-W+W-u$, we obtain 
\begin{align*}
	&(\partial_t (u_h^{DG}(t)-W(t)),\chi)+\nu a_{DG}(u_h^{DG}(t)-W(t),\chi)+\alpha(|u_h^{DG}(t)|^{p-2}u_h^{DG}(t)-|W(t)|^{p-2}W(t),\chi)\\&
	=-(\partial_t(W(t)-u(t)),\chi)-\nu a_{DG}(W(t)-u(t),\chi)-\alpha(|W(t)|^{p-2}W(t)-|u(t)|^{p-2}u(t),\chi)\\&\quad + \sum_{\ell=1}^M \beta_{\ell}(|u_h^{DG}(t)|^{q_{\ell}-2}u_h^{DG}(t)-|W(t)|^{q_{\ell}-2}W(t),\chi)+\sum_{\ell=1}^M\beta_{\ell}(|W(t)|^{q_{\ell}-2}W(t)-|u(t)|^{q_{\ell}-2}u(t),\chi).
\end{align*}
Taking $\chi=u_h^{DG}-W$ and using  \eqref{coercivity}, we arrive at 
\begin{align*}
	&\frac{1}{2}\frac{d}{dt}\|u_h^{DG}(t)-W(t)\|^2_{L^2}+\nu \alpha_a \triplenorm{u_h^{DG}(t)-W(t)}_{DG}^2\\&\quad+\alpha(|u_h^{DG}(t)|^{p-2}u_h^{DG}(t)-|W(t)|^{p-2}W(t),u_h^{DG}(t)-W(t))\\&
	=-(\partial_t(W(t)-u(t)),u_h^{DG}(t)-W(t))-\nu a_{DG}(W(t)-u(t),u_h^{DG}(t)-W(t))\\&\quad-\alpha(|W(t)|^{p-2}W(t)-|u(t)|^{p-2}u(t),u_h^{DG}(t)-W(t))\\&\quad+ \sum_{\ell=1}^M \beta_{\ell}(|u_h^{DG}(t)|^{q_{\ell}-2}u_h^{DG}(t)-|W(t)|^{q_{\ell}-2}W(t),u_h^{DG}(t)-W(t))\\&\quad+\sum_{\ell=1}^M\beta_{\ell}(|W(t)|^{q_{\ell}-2}W(t)-|u(t)|^{q_{\ell}-2}u(t),u_h^{DG}(t)-W(t)),
\end{align*} 
for a.e. $t\in[0,T]$. Hence, by applying \eqref{continuity of DG} and proceeding with calculations analogous to those in Theorem \ref{NCFEM Semiestimate error estimate}, we obtain the desired estimate.
\end{proof}
\subsection{Fully-discrete discontinuous Galerkin FEM} \label{Fully DGFEM}
The fully discrete weak formulation of \eqref{Damped Heat} is given as follows: Find $(u_h^{DG})^k=u_h^k \in V_h^{DG}$(for simplicity of notation we have taken $(u_h^{DG})^k= u_h^k$), such that 
\begin{equation}\label{DG fully discrete formulation}
\begin{aligned}
	\begin{cases}
		\displaystyle
    \left( \frac{u_h^k - u_h^{k-1}}{\Delta t}, \chi \right) 
    + \nu a_{DG}(u_h^k, \chi)
    + \alpha (|u_h^k|^{p-2} u_h^k, \chi) -\sum_{\ell=1}^M \beta_{\ell}(|u_h^{k}|^{q_{\ell}-2}u_h^{k},\chi)
    = (f^k, \chi), \\
    (u_h^0(x_i), \chi) = (\pi_{h}^1 u_0(x_i),\chi),
    	\end{cases}
\end{aligned}
\end{equation}
for $i=1,2,\ldots, N$ and where $f^k = (\Delta t)^{-1} \int_{t_{k-1}}^{t_k} f(s)\, ds$ for $f \in L^2(0,T;L^2(\Omega))$, and $u_h^{0}$ is the approximation of $u_0$ in $V_h^{DG}$.

We then define DG approximated solution $u^{DG}_{kh}$ by 
\begin{equation}\label{DG generic fully}
u^{DG}_{kh}|_{[t_{k-1},t_k]} = u_h^{k-1} + \bigg(\frac{t - t_{k-1}}{\Delta t}\bigg)(u_h^k - u_h^{k-1}), \quad 1 \leq k \leq N\  \text{ for }\  t \in [t_{k-1}, t_k].
\end{equation}
The error estimates for the fully discrete case are discussed in the following results:
\begin{Lemma}
For all the values of $p$ given in \eqref{eqn-values of p}, if $u_0\in D(A^{\frac{3}{2}})$ and $f \in H^1(0,T;H^1(\Omega))$, let $u^{DG}_h$ be the semidiscrete solution \eqref{DG semidiscrete} and $u^{DG}_{kh}$ be the generic fully-discrete solution \eqref{DG generic fully}. Then
\begin{align*}
&\|u^{DG}_h-u^{DG}_{kh}\|^2_{L^{\infty}(0,T;L^2)}+ \nu\int_0^T \triplenorm{u^{DG}_h(t)-u^{DG}_{kh}(t)}^2_{DG} dt\nonumber\\&\leq c(\Delta t)^2(\|u_0\|^2_{D(A^{\frac{3}{2}})}+\|f\|^2_{H^1(0,T;H^1)}+C'),
\end{align*}
where $C'$ is same as \eqref{C'} by replacing $u_h$ by $u_h^{DG}$.
\end{Lemma}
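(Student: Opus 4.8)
The plan is to follow the template of Proposition~\ref{CFEM uk-ukh error} almost verbatim, the only structural change being that the Dirichlet form $(\nabla\cdot,\nabla\cdot)$ is everywhere replaced by the discontinuous Galerkin form $a_{DG}(\cdot,\cdot)$, whose coercivity (Lemma~\ref{Coercivity and stability}) and continuity \eqref{continuity of DG} now play the roles that the stability and Cauchy--Schwarz bound for the plain gradient played before. As there, I would split the argument into two steps: first control the nodal errors $\|u_h^{DG}(t_k)-u_h^k\|_{L^2}$ and $\triplenorm{u_h^{DG}(t_k)-u_h^k}_{DG}$, and then pass from the nodal control to the full space--time norm through a piecewise-linear-in-time interpolant.

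For Step~1, I would integrate the semidiscrete scheme \eqref{DG semidiscrete} over $[t_{k-1},t_k]$, subtract the fully discrete scheme \eqref{DG fully discrete formulation}, and test with $\chi=u_h^{DG}(t_k)-u_h^k\in V_h^{DG}$. The nonlinear damping and pumping contributions are identical to the conforming case because they involve only $L^2$/$L^p$ pairings and not the bilinear form: the monotone damping term is kept on the left via \eqref{eqn-ces}, converted to an $L^p$-coercive term through \eqref{eqn-nonlinear-est}; the damping and pumping consistency terms are bounded exactly as $I_4$ and $I_5$ were, producing $(\Delta t)^2$ weighted by $\||u_h^{DG}|^{(p-2)/2}\partial_t u_h^{DG}\|_{L^2}^2$ and the $L^\infty(0,T;L^p)$ bound on $u_h^{DG}$; and the monotone pumping term is treated as in Theorem~\ref{damped uniqueness}, contributing $2C_2\Delta t\|u_h^{DG}(t_k)-u_h^k\|_{L^2}^2$. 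The genuinely new term is the linear consistency error
\begin{equation*}
\nu\,a_{DG}\!\Big(\textstyle\int_{t_{k-1}}^{t_k}\big(u_h^{DG}(t)-u_h^{DG}(t_k)\big)\,dt,\; u_h^{DG}(t_k)-u_h^k\Big),
\end{equation*}
which I would bound using \eqref{continuity of DG} by $\nu(1+\gamma)\triplenorm{\int_{t_{k-1}}^{t_k}(u_h^{DG}(t)-u_h^{DG}(t_k))\,dt}_{DG}\triplenorm{u_h^{DG}(t_k)-u_h^k}_{DG}$. Since the broken gradient and jump operators commute with the time integral, Jensen's inequality gives $\triplenorm{\int_{t_{k-1}}^{t_k}(u_h^{DG}(t)-u_h^{DG}(t_k))\,dt}_{DG}^2\le(\Delta t)^2\int_{t_{k-1}}^{t_k}\triplenorm{\partial_s u_h^{DG}(s)}_{DG}^2\,ds$, and the remaining $\triplenorm{u_h^{DG}(t_k)-u_h^k}_{DG}^2$ is absorbed by Young's inequality into the coercive term $\nu\alpha_a\Delta t\,\triplenorm{u_h^{DG}(t_k)-u_h^k}_{DG}^2$ supplied by Lemma~\ref{Coercivity and stability}. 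Summing over $k$ and applying the discrete Gronwall inequality then yields the DG analogue of \eqref{u(t_N)-uhN}, with the constant $C'$ of \eqref{C'} unchanged except that $u_h$ is replaced by $u_h^{DG}$.

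This reduction is useful only once the right-hand side of the summed inequality is shown to be $O((\Delta t)^2)$, which requires the DG semidiscrete regularity bound that is the counterpart of Lemma~\ref{lem-fully-bound}: a uniform control of $\partial_t u_h^{DG}$ in $L^\infty(0,T;L^2(\Omega))$, of $\int_0^T\triplenorm{\partial_t u_h^{DG}(t)}_{DG}^2\,dt$, and of $\||u_h^{DG}|^{(p-2)/2}\partial_t u_h^{DG}\|_{L^2(0,T;L^2)}$. I would obtain this by differentiating \eqref{DG semidiscrete} in time and testing with $\partial_t u_h^{DG}$, exactly as in the derivation of \eqref{eqn-fully-bound} from Theorem~\ref{energy estimate of DGFEM}; here the penalty and consistency terms in $a_{DG}$ cause no trouble because they are time-independent bilinear forms, so differentiation leaves them intact and coercivity still applies. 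The hypotheses $u_0\in D(A^{3/2})$ and $f\in H^1(0,T;H^1(\Omega))$ enter precisely to bound the initial quantity $\|\partial_t u_h^{DG}(0)\|_{L^2}$ via the compatibility estimate of Remark~\ref{rem-comp}.

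Finally, for Step~2, I would introduce the piecewise-linear-in-time interpolant $\mathcal{I}u_h^{DG}$ of the semidiscrete solution, write $u_h^{DG}-u_{kh}^{DG}=(u_h^{DG}-\mathcal{I}u_h^{DG})+(\mathcal{I}u_h^{DG}-u_{kh}^{DG})$, and estimate the two pieces separately in both $L^\infty(0,T;L^2)$ and the broken energy norm. The interpolation error $u_h^{DG}-\mathcal{I}u_h^{DG}$ is handled by the standard estimates \cite[Lemma~3.2, Corollary~3.1]{MR3432852} (read in the broken norm $\triplenorm{\cdot}_{DG}$), giving $O(\Delta t)$ in terms of $\partial_t u_h^{DG}$, while $\mathcal{I}u_h^{DG}-u_{kh}^{DG}$ reduces to the nodal bounds from Step~1. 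I expect the main obstacle to be the careful treatment of the single DG-specific consistency term: one must verify that time-integration commutes with the broken gradient and penalty contributions so that the $\triplenorm{\cdot}_{DG}$-norm may be moved inside the integral, and that the interplay between the continuity constant $1+\gamma$ and the coercivity constant $\alpha_a$ leaves a strictly positive coercive term after the Young absorption --- a point that is automatic in the conforming case but must be monitored here through the choice of the penalty parameter $\gamma$.
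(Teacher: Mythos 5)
Your proposal is correct and takes essentially the same approach as the paper: the paper's proof of this lemma is literally a one-line reference stating that it "proceeds in the same manner as that of Lemma \ref{CFEM uk-ukh error}," and your detailed adaptation (replacing the gradient form by $a_{DG}$, invoking its coercivity \eqref{coercivity} and continuity \eqref{continuity of DG}, keeping the nonlinear estimates unchanged, and supplying the DG counterpart of Lemma \ref{lem-fully-bound} for $\partial_t u_h^{DG}$) is exactly what that reference entails.
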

\begin{proof} 
The proof of this lemma proceeds in the same manner as that of Lemma \ref{CFEM uk-ukh error}.
\end{proof}
\begin{Theorem}\label{DGFEM fully discrete error estimate}
For all values of $p$ given in \eqref{eqn-values of p}, if $u_0\in D(A)$ and $f\in H^1(0,T;L^2(\Omega))$, the discontinuous Galerkin finite element approximation $u^{DG}_{kh}$ converges to $u$ as $\Delta t, h  \to 0$. In addition, there exists a constant $C> 0$ such that the approximation $u^{DG}_{kh}$ satisfies the following error estimate:
\begin{align*}
	&	\|u-u^{DG}_{kh}\|^2_{L^\infty(0,T;L^2)}+\nu \int_0^T \triplenorm{u(t)-u^{DG}_{kh}(t)}^2_{DG}\, dt  \nonumber\\&\leq C((\Delta t)^2+h^2) (\|u_0\|^2_{D(A^{\frac{3}{2}})}+\|f\|^2_{H^1(0,T;H^1)}+C'),
	\end{align*}
	where $C'$ is same as \eqref{C'} by replacing $u_h$ by $u_h^{DG}$.
\end{Theorem}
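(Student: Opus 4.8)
The plan is to prove Theorem~\ref{DGFEM fully discrete error estimate} by the same splitting strategy used for the conforming case in Theorem~\ref{fullydiscrete error}, namely by inserting the semidiscrete DG solution $u_h^{DG}$ between the exact solution $u$ and the fully-discrete solution $u_{kh}^{DG}$ and applying the triangle inequality in the $\triplenorm{\cdot}_{DG}$-norm together with the $L^\infty(0,T;L^2)$-norm. First I would write
\begin{align*}
\|u-u_{kh}^{DG}\|^2_{L^\infty(0,T;L^2)}&\leq 2\|u-u_h^{DG}\|^2_{L^\infty(0,T;L^2)}+2\|u_h^{DG}-u_{kh}^{DG}\|^2_{L^\infty(0,T;L^2)},
\end{align*}
and the analogous decomposition for $\int_0^T\triplenorm{u(t)-u_{kh}^{DG}(t)}^2_{DG}\,dt$. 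The first term on the right is controlled by the semidiscrete error estimate in Theorem~\ref{DGFEM Semidiscrete error estimate}, which provides an $O(h^2)$ bound in terms of the data norms; the second term is controlled by the preceding Lemma on the difference between the semidiscrete and generic fully-discrete solutions, which yields an $O((\Delta t)^2)$ bound. Combining the two then gives the stated $O((\Delta t)^2+h^2)$ estimate.

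Next I would note that the regularity hypotheses $u_0\in D(A^{\frac{3}{2}})$ and $f\in H^1(0,T;H^1(\Omega))$ ensure, via Theorems~\ref{regularity-f- H1} and~\ref{thm-more-regular}, that all the solution norms appearing on the right-hand sides of Theorem~\ref{DGFEM Semidiscrete error estimate} and the fully-discrete Lemma, namely $\|u\|_{L^\infty(0,T;H^2)}$, $\int_0^T\|u(t)\|^2_{D(A^{3/2})}\,dt$, $\int_0^T\|\partial_t u(t)\|^2_{H_0^1}\,dt$, and the constant $C'$, are finite and bounded solely in terms of $\|u_0\|_{D(A^{3/2})}$ and $\|f\|_{H^1(0,T;H^1)}$. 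This is the reason the hypotheses are stated at the level of $D(A^{3/2})$ rather than $D(A)$: the fully-discrete comparison Lemma requires the stronger bound \eqref{eqn-fully-bound} on $\partial_t u_h^{DG}$ in $L^\infty(0,T;L^2)\cap L^2(0,T;H_0^1)$, which in turn needs the compatibility condition from Remark~\ref{rem-comp}(2). I would therefore absorb all such solution-dependent quantities into the data-dependent right-hand side and the constant $C'$ of \eqref{C'}.

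The main obstacle, and the only genuinely nontrivial point, is ensuring that the semidiscrete--fully-discrete comparison Lemma is valid in the discontinuous setting: the argument parallel to Proposition~\ref{CFEM uk-ukh error} must be carried out with the DG bilinear form $a_{DG}(\cdot,\cdot)$ in place of $(\nabla\cdot,\nabla\cdot)$, using its coercivity \eqref{coercivity} and continuity \eqref{continuity of DG} to reproduce the estimates for the terms analogous to $I_1,\dots,I_6$, and with the broken energy norm $\triplenorm{\cdot}_{DG}$ replacing $\|\nabla\cdot\|_{L^2}$. Since the jump and penalty terms are linear and the nonlinear monotonicity estimates \eqref{eqn-ces} and \eqref{eqn-nonlinear-est} are purely $L^p$-based and so unaffected by the discontinuity, these steps go through verbatim; the interpolation error bounds needed are supplied by the $L^2$-projection estimates of Definition~\ref{DG projection}. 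Because the statement explicitly asserts that the proof follows the pattern of Theorem~\ref{fullydiscrete error} (via the two cited results), I would conclude simply that combining the semidiscrete estimate of Theorem~\ref{DGFEM Semidiscrete error estimate} with the fully-discrete comparison Lemma yields \eqref{eqn-error-cfem}-type bound, completing the proof.
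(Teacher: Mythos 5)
Your proposal is correct and follows essentially the same route as the paper: the paper's own proof is a one-line reference stating that the argument is carried out exactly as in Theorem~\ref{fullydiscrete error}, i.e., a triangle-inequality splitting through the semidiscrete solution $u_h^{DG}$, combining Theorem~\ref{DGFEM Semidiscrete error estimate} with the preceding semidiscrete--fully-discrete comparison lemma, which is precisely what you do (with the added, and correct, justification that the coercivity~\eqref{coercivity}, continuity~\eqref{continuity of DG}, and the $L^p$-monotonicity estimates make the conforming argument transfer to the broken setting). Your reading of the hypotheses as $u_0\in D(A^{3/2})$, $f\in H^1(0,T;H^1(\Omega))$ is also the consistent one, since the theorem's stated data $u_0\in D(A)$, $f\in H^1(0,T;L^2(\Omega))$ do not match the right-hand side of its own estimate or the assumptions of the comparison lemma it relies on.
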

\begin{proof}
The proof of this theorem can be carried out in the same manner as that of Theorem \ref{fullydiscrete error}.
\end{proof}
\subsection{Numerical studies}\label{Numerical Studies for DG method}
In this section, we consider an example solved using the DG finite element method. The error table below reports the error estimates and observed convergence rates for problems \eqref{without pumping} and \eqref{pumped}, with parameters $\alpha=1$, $\nu=1$, $M=2$, $q_1=3$, $q_2=4$, $\beta_1=1$, $\beta_2=3$, and $p=5$, on the unit square domain $\Omega=(0,1)^2$ over the time interval $t \in [0,1]$, measured in the following norm:
\begin{equation*}
\triplenorm{u_k-u_h^k}^2=\| u_h(t_N) - u_h^N \|_{L^2(\Omega)}^2 + \nu \Delta t \triplenorm{u_h(t_k)-u_h^k}^2_{DG}.
\end{equation*}
In this example, let the exact solution of \eqref{without pumping} to be 
\begin{equation*}
u(x,y,t)= (\cos(t)+1)\sin(3\pi x)\cos(2\pi y)y(1-y).
\end{equation*}
for the forcing term $
f= \partial_t u - \Delta u + |u|^3 u,
$. To validate our numerical method, we compute the numerical solution using uniform triangular meshes of size \(N \times N\) and time step \(\Delta t = 0.01\).

First, we present the error values together with the observed rates of convergence for different mesh sizes for problem \eqref{without pumping} in Table \ref{tab:DGFEM-damped}. Furthermore, Figure \ref{fig:DGFEM damped} provides visual comparisons between the exact and numerical solutions for mesh sizes 
$16\times 16, $
$32\times 32$, and 
$64\times 64.$
\begin{table}[ht!]
	\centering
	\begin{tabular}{|c|c|c|c|}
		\hline
		\text{Grid Size} & $h$ & $\triplenorm{u-u_{h}^{DG}}$ & \text{Convergence Rate} \\
		\hline
		$4 \times 4$   & 3.54e{-01} & 1.0407e{+00} & N/A \\
		$8 \times 8$   & 1.77e{-01} & 5.8507e{-01} & 0.83 \\
		$16 \times 16$ & 8.84e{-02} & 2.7907e{-01} & 1.07 \\
		$32 \times 32$ & 4.42e{-02} & 1.3394e{-01} & 1.06 \\
		$64 \times 64$ & 2.21e{-02} & 6.5800e{-02} & 1.03 \\
		\hline
	\end{tabular}
	\caption{Computed errors and rates of convergence for \eqref{without pumping} in DGFEM.}
	\label{tab:DGFEM-damped}
\end{table}

\begin{figure}[ht!]
    \centering
    \begin{subfigure}{0.32\textwidth}
        \centering
        \includegraphics[width=\linewidth]{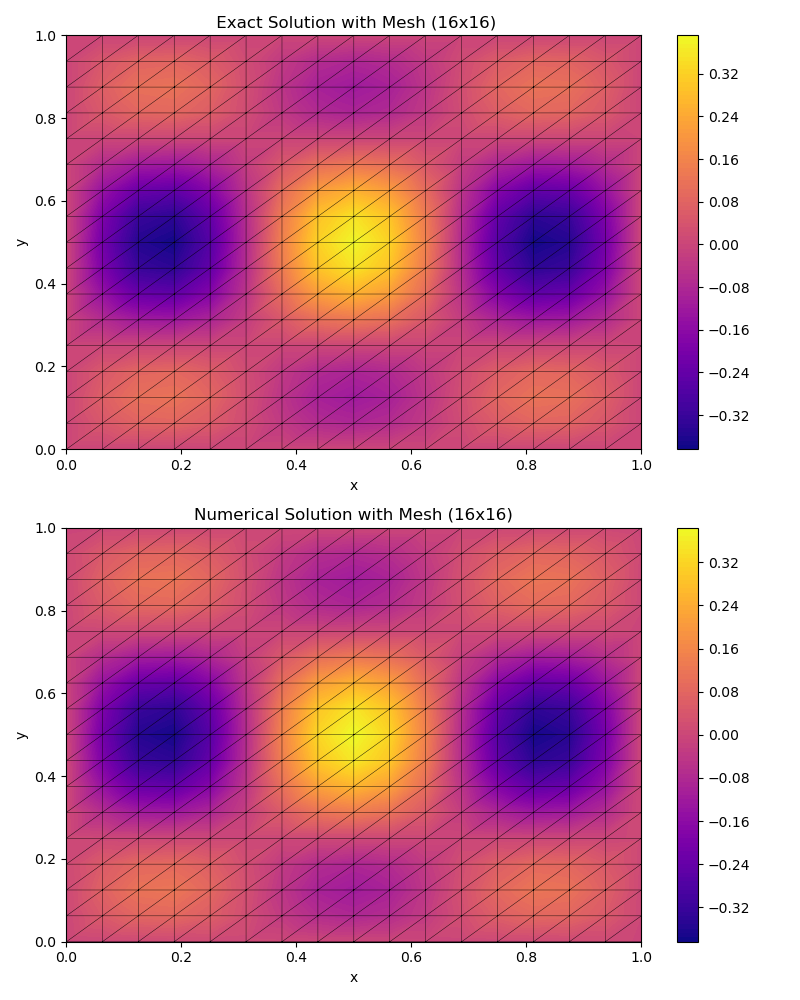}
    \end{subfigure}
    \begin{subfigure}{0.32\textwidth}
        \centering
        \includegraphics[width=\linewidth]{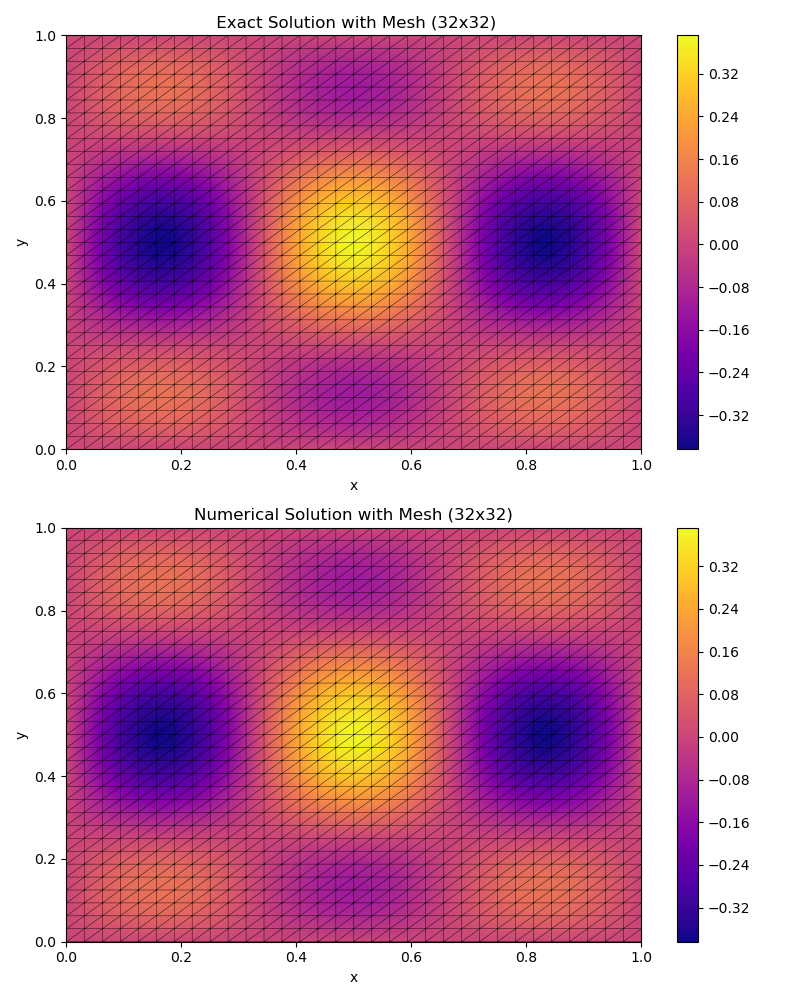}
    \end{subfigure}
    \begin{subfigure}{0.32\textwidth}
        \centering
        \includegraphics[width=\linewidth]{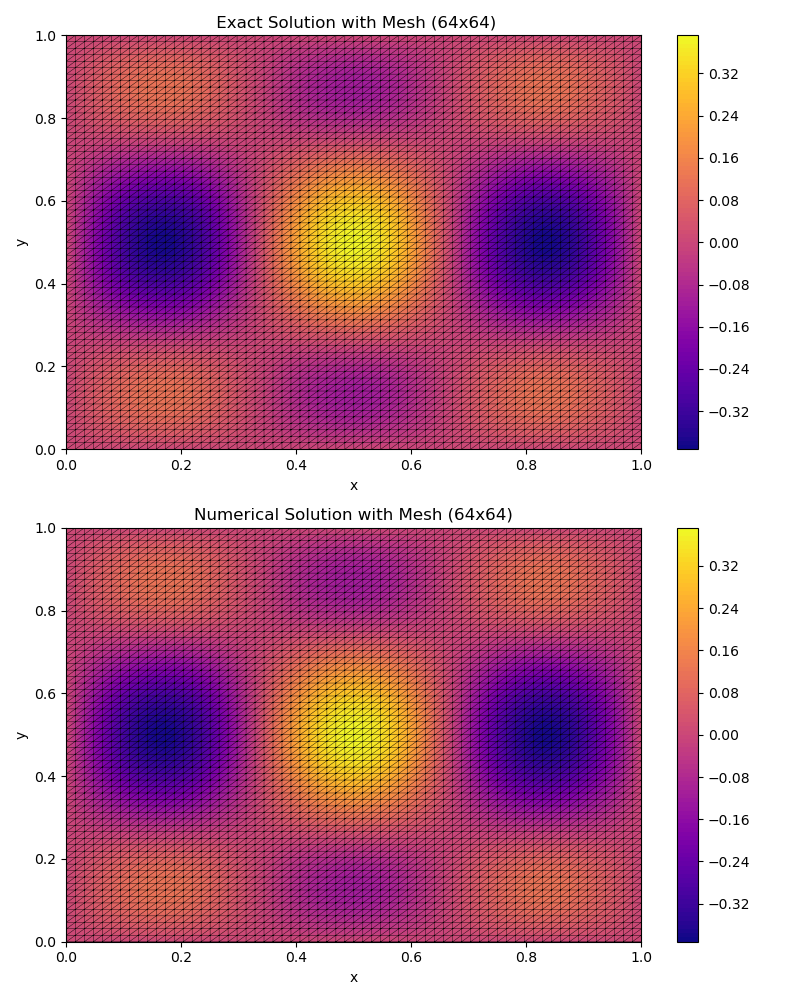}
    \end{subfigure}
    \caption{Exact and approximated solution of \eqref{without pumping} on different grid sizes.}
    \label{fig:DGFEM damped}
\end{figure}
Next, we compute and present the errors along with the corresponding rates of convergence for \eqref{pumped} in Table \ref{tab:DGFEM-full}, using the solution on a 
$256\times 256$ mesh as the reference for error computation. Visual comparisons between the reference solution and the numerical solutions for mesh sizes 
$16\times 16,$ 
$32\times 32$, and 
$64\times 64$ are shown in Figure \ref{fig:DGFEM full}.
\begin{table}[ht!]
	\centering
	\begin{tabular}{|c|c|c|c|}
		\hline
		\text{Grid Size} & $h$ & $\triplenorm{u_{ref}-u_h^{DG}}$ & \text{Convergence Rate} \\
		\hline
		$4 \times 4$   & 3.54e{-01} & 1.045534e{+00} & N/A \\
		$8 \times 8$   & 1.77e{-01} & 5.883905e{-01} & 0.83 \\
		$16 \times 16$ & 8.84e{-02} & 2.806583e{-01} & 1.07 \\
		$32 \times 32$ & 4.42e{-02} & 1.346609e{-01} & 1.06 \\
		$64 \times 64$ & 2.21e{-02} & 6.613109e{-02} & 1.03 \\
		\hline
	\end{tabular}
	\caption{Computed errors and rates of convergence for \eqref{pumped} in DGFEM.}
	\label{tab:DGFEM-full}
\end{table}

\begin{figure}[ht!]
    \centering
    \begin{subfigure}{0.32\textwidth}
        \centering
        \includegraphics[width=\linewidth]{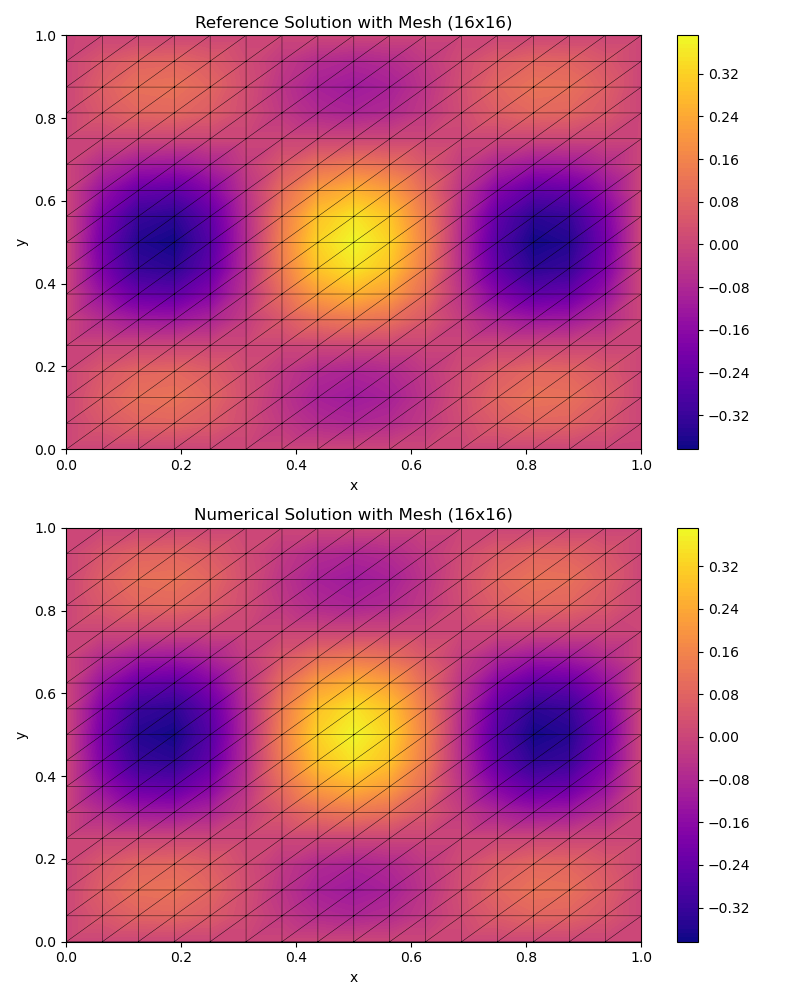}
    \end{subfigure}
    \begin{subfigure}{0.32\textwidth}
        \centering
        \includegraphics[width=\linewidth]{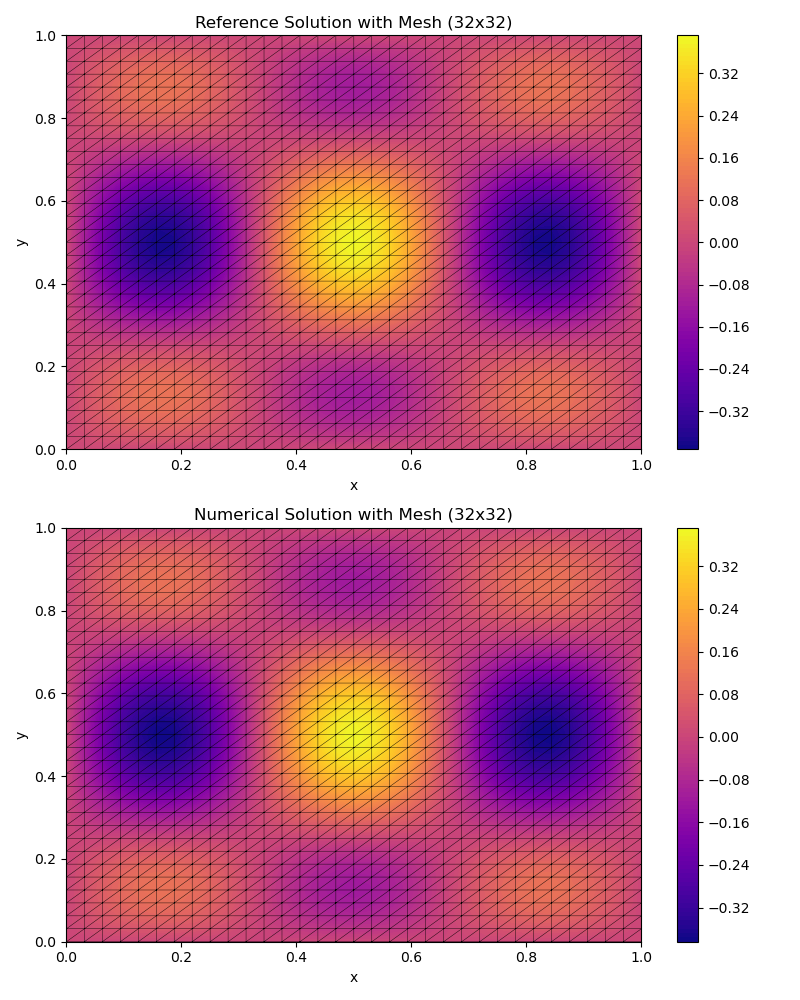}
    \end{subfigure}
    \begin{subfigure}{0.32\textwidth}
        \centering
        \includegraphics[width=\linewidth]{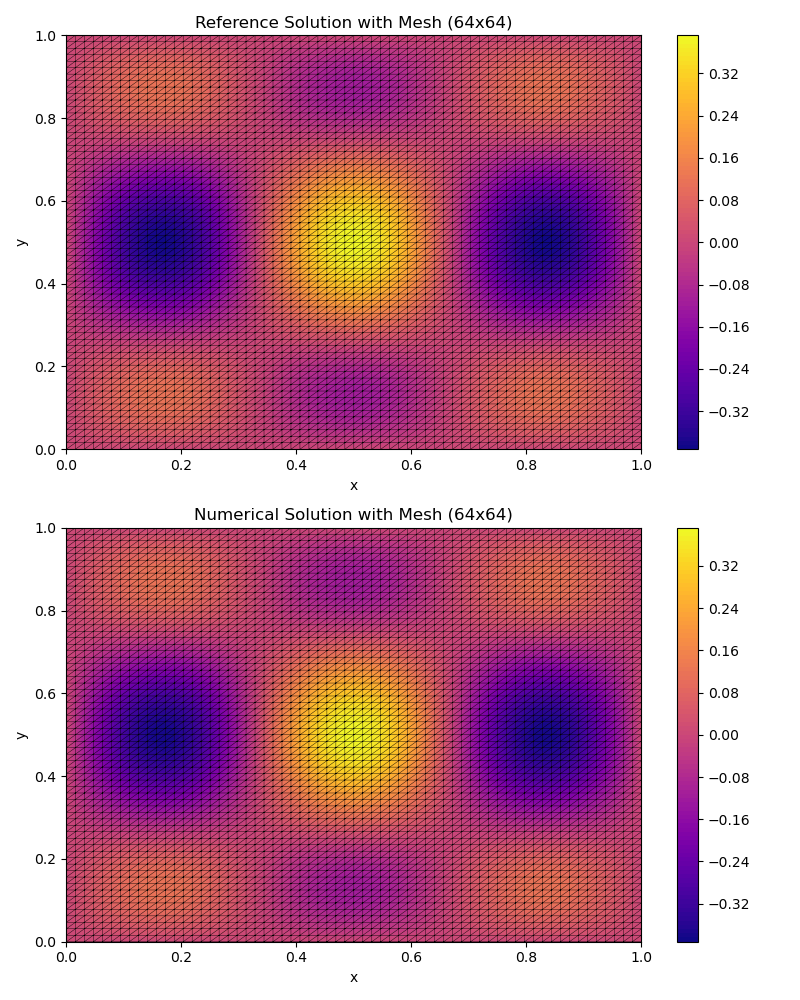}
    \end{subfigure}
    \caption{Reference and approximated solution of \eqref{pumped} on different grid sizes.}
    \label{fig:DGFEM full}
\end{figure}
\section{Comparison and 
Conclusion}\label{Conclusion and Comparison} 
\subsection{Fundamental differences between methods}\label{Theoretical differences}

In this subsection, we provide a brief overview of the principal theoretical differences among conforming, non-conforming, and discontinuous Galerkin (DG) finite element methods. These distinctions mainly stem from the selection of discrete function spaces, the continuity requirements across element interfaces, and the corresponding variational formulations.

\textbf{Conforming Finite Element Methods (CFEM):}
Conforming methods employ approximation spaces $V_h \subset H^1(\Omega)$, consisting of globally continuous functions with square-integrable derivatives. Basis functions are continuous across element boundaries, ensuring that integration by parts can be carried out over the entire domain without additional interface terms. This strong continuity leads to a simpler formulation and fewer degrees-of-freedom (DOFs) compared to DG methods of the same polynomial order, and the convergence theory is well established. However, the requirement of global continuity restricts the choice of approximation spaces and demands mesh conformity.

\textbf{Non-Conforming Finite Element Methods (NCFEM)}
Non-conforming methods employ approximation spaces that are not strict subsets of $H^1(\Omega)$ but satisfy certain weak continuity conditions. A notable example is the Crouzeix-Raviart (CR) element, where continuity is enforced only at edge midpoints rather than along the entire interface. In such methods, integration by parts is performed elementwise, and interface terms may vanish only under specific weak continuity constraints. These methods offer greater flexibility than conforming methods, particularly for problems with discontinuous coefficients, but the analysis is more complex and error estimates often require the use of broken Sobolev norms.

\textbf{Discontinuous Galerkin Methods (DG)}
DG methods use fully discontinuous polynomial spaces $V_h \subset L^2(\Omega)$ without any interelement continuity constraints. Continuity is, instead, enforced weakly through numerical fluxes and penalty terms. The variational formulation is constructed element-wise, and integration by parts produces explicit interface terms involving jumps and averages of the solution and its fluxes. DG methods are highly flexible, allowing non-matching meshes and local $hp$-refinement, and are locally conservative, making them attractive for convection-dominated problems. However, they introduce more degrees of freedom than conforming methods for the same polynomial order, increase computational cost, and require careful choice of penalty parameters for stability.

In summary, the discrete spaces satisfy the inclusion
\[
V_{\mathrm{DG}} \supset V_{\mathrm{NCFEM}} \supset V_{\mathrm{CFEM}},
\]
where each relaxation of continuity increases flexibility but also changes the structure of the variational formulation and typically increases the number of degrees of freedom.
\subsection{Numerical studies}\label{combined numerical}
In this section, we consider the Allen-Cahn equation solved using conforming, nonconforming, and discontinuous Galerkin methods, each measured under its respective norm. The table below reports the error estimates and observed convergence orders for problem \eqref{Damped Heat}, with parameters $\nu=1$, $\alpha=1$, $M=1$, $\beta_1=1$, $q_{\ell}=2$, $p=4$, and $\gamma=10$, on the unit square domain $\Omega = (0,1)^2$ over the time interval $t \in [0,1]$.

Let us assume the exact solution to be
\begin{align*}
u(x,y,t)=\exp(-t) \sin(2\pi x) \sin(3\pi y).
\end{align*}
Table \ref{tab:all-in-one} presents the errors and observed convergence rates for different grid sizes, with $\Delta t = 0.01$. The expected convergence rates are clearly observed for all three methods.
\begin{table}[ht!]
	\centering
	\begin{tabular}{|c|c|c|c|c|}
		\hline
		\text{Method} & \text{Grid Size} & $h$ & $\triplenorm{u-u_h^k}$  & \text{Convergence Rate} \\
		\hline
		\multirow{6}{*}{CFEM} 
		& $4 \times 4$     & 3.54e{-01} & 2.8717e{+00} & N/A \\
		& $8 \times 8$     & 1.77e{-01} & 1.6973e{+00} & 0.76 \\
		& $16 \times 16$   & 8.84e{-02} & 8.8968e{-01} & 0.93 \\
		& $32 \times 32$   & 4.42e{-02} & 4.5029e{-01} & 0.98 \\
		& $64 \times 64$   & 2.21e{-02} & 2.2584e{-01} & 1.00 \\
		& $128 \times 128$ & 1.10e{-02} & 1.1301e{-01} & 1.00 \\
		\hline
		\multirow{6}{*}{NCFEM} 
		& $4 \times 4$     & 3.54e{-01} & 9.7766e{-01} & N/A \\
		& $8 \times 8$     & 1.77e{-01} & 4.7234e{-01} & 1.05 \\
		& $16 \times 16$   & 8.84e{-02} & 2.3590e{-01} & 1.00 \\
		& $32 \times 32$   & 4.42e{-02} & 1.1800e{-01} & 1.00 \\
		& $64 \times 64$   & 2.21e{-02} & 5.9006e{-02} & 1.00 \\
		& $128 \times 128$ & 1.10e{-02} & 2.9504e{-02} & 1.00 \\
		\hline
		\multirow{6}{*}{DGFEM} 
		& $4 \times 4$     & 3.54e{-01} & 1.9102e{+00} & N/A \\
		& $8 \times 8$     & 1.77e{-01} & 1.0189e{+00} & 0.91 \\
		& $16 \times 16$   & 8.84e{-02} & 4.8240e{-01} & 1.08 \\
		& $32 \times 32$   & 4.42e{-02} & 2.3203e{-01} & 1.06 \\
		& $64 \times 64$   & 2.21e{-02} & 1.1405e{-01} & 1.02 \\
		& $128 \times 128$ & 1.10e{-02} & 5.6621e{-02} & 1.01 \\
		\hline
	\end{tabular}
	\caption{Computed errors and rates of convergence for \eqref{pumped} on different grid sizes and methods.}
	\label{tab:all-in-one}
\end{table}

\subsection{Conclusion}\label{conclusion}
In this study, we established the well-posedness of \eqref{Damped Heat} and derived corresponding regularity results. Subsequently, we developed a priori error estimates for conforming, nonconforming, and symmetric interior penalty discontinuous Galerkin (DG) methods, demonstrating that all three schemes attain optimal convergence rates in mesh-dependent norms. Numerical experiments conducted using FEniCS verified the theoretical convergence behavior and confirmed the stability of the computed solutions. The comparative assessment of the three discretizations provided valuable insights into how method-specific stability mechanisms interact with monotone nonlinearities in parabolic problems. Future work will aim to extend these findings by deriving a posteriori error estimates.

\medskip
\noindent
\textbf{Acknowledgments:}  Wasim Akram is supported by National Board of Higher Mathematics (NBHM)  postdoctoral fellowship, No. 0204/16(1)(2)/2024/R\&D-II/10823. Support for M. T. Mohan's research received from the National Board of Higher Mathematics (NBHM), Department of Atomic Energy, Government of India (Project No. 02011/13/2025/NBHM(R.P)/R\&D II/1137). 

\bibliographystyle{amsplain}
\bibliography{References} 	

\end{document}